\newcommand{\vech}[1]{\vec{#1}_h}
\newcommand{\dt}{\partial_t}
\newcommand{\dv}{\mathrm{div}\,}
\newcommand{\dvh}{\mathrm{div}_h\,}
\newcommand{\nablah}{\nabla_h}
\newcommand{\deltah}{\Delta_h}
\newcommand{\dz}{\partial_z}
\newcommand{\inth}{\int_{\Omega_h}}
\newcommand{\intw}{\int_{\Omega}}
\newcommand{\idxh}{\,d\vec{x}_h}
\newcommand{\idx}{\,d\vec{x}}
\newcommand{\subeqref}[2]{$\eqref{#1}_{#2}$}
\newcommand{\abs}[2]{\bigl| #1 \bigr|^{#2}}
\newcommand{\norm}[2]{\bigl\Arrowvert #1 \bigr\Arrowvert_{#2}}
\newcommand{\hnorm}[2]{\bigl| #1 \bigr|_{#2}}
\newcommand{\Lnorm}[1]{L^{#1}}
\newcommand{\Hnorm}[1]{H^{#1}}
\theoremstyle{plain}
\newtheorem{proposition}{Proposition}
\newtheorem{lemma}{Lemma}
\newtheorem{theorem}{Theorem}
\theoremstyle{remark}
\numberwithin{equation}{section}
\title{Local Well-posedness of Strong Solutions to the Three-dimensional Compressible Primitive Equations}
\date{June 26, 2018}
\author{Xin Liu\footnote{Department of Mathematics, Texas A{\&}M University, College Station, Texas 77843, USA. Email: stleonliu@gmail.com} \,\, and \,  Edriss S. Titi\footnote{Department of Mathematics, Texas A{\&}M University, College Station, Texas 77843. USA. Also: Department of Computer Science and Applied Mathematics, The Weizmann Institute of Science, Rehovot 76100, Israel. Email: titi@math.tamu.edu \, and \, edriss.titi@weizmann.ac.il}}
\newcommand\blfootnote[1]{%
  \begingroup
  \renewcommand\thefootnote{}\footnote{#1}%
  \addtocounter{footnote}{-1}%
  \endgroup
}
\begin{document}
	\allowdisplaybreaks
	\maketitle	
	\blfootnote{\textbf{Keywords}: compressible primitive equations, atmospheric dynamics, local well-posedness}
	\blfootnote{\textup{2010} \textit{Mathematics Subject Classification}: \textup{35Q30}, \textup{35Q35}, \textup{76N10}}
	\begin{abstract}
	This work is devoted to establishing the local-in-time well-posedness of strong solutions to the three-dimensional compressible primitive equations of atmospheric dynamics. It is shown that strong solutions exist, unique, and depend continuously on the initial data, for a short time in two cases: with gravity but without vacuum, and with vacuum but without gravity. We also introduce the free boundary problem for the compressible primitive equations.
	\end{abstract}

	\tableofcontents
	
	\section{Introduction}
	\subsection{The compressible primitive equations}
	The general hydrodynamic and thermodynamic equations (see, e.g.,  \cite{Lions1996}) with Coriolis force and gravity can be used to model the motion and states of the atmosphere, which is a specific compressible fluid. However, such equations are extremely complicated and prohibitively expensive computationally. However, since the vertical scale of the atmosphere is significantly smaller than the planetary horizontal  scale, the authors in \cite{Ersoy2011a} take advantage, as it is commonly done in planetary scale geophysical models,  of the smallness of this aspect ratio  between these two orthogonal directions to formally   derive the compressible primitive equations (CPE) from the compressible Navier-Stokes equations. Specifically,  in the CPE  the vertical component of the momentum in the compressible Navier-Stokes equations is replaced by the hydrostatic balance equation \subeqref{isen-CPE-g}{3}, which is also known as the quasi-static equilibrium equation. It turns out that the hydrostatic approximation equation is  accurate enough for practical applications and   has become a fundamental equation in  atmospheric science. It is the starting point of many large scale models in the theoretical investigations and practical weather predictions (see, e.g., \cite{Lions1992}). This has also been observed by meteorologists (see, e.g., \cite{Richardson1965,Washington2005}). In fact, such an approximation is reliable and useful in the sense that the balance of gravity and pressure dominates the dynamic in the vertical direction and that the vertical velocity is usually hard to observe in reality. In many simplified models, it is assumed that the atmosphere is under adiabatic process and therefore the entropy remains unchanged along the particle path. In particular, if the entropy is constant in the spatial variables initially, it remains so in the following time. On the other hand, instead of the molecular viscosity,  eddy viscosity is used to model the statistical effect of turbulent motion in the atmosphere. The observations above and more perceptions from the meteorological point of view can be found in \cite[Chapter 4]{Richardson1965}.
	Therefore, under the above assumptions, one can write down the isentropic compressible primitive equations as in \eqref{isen-CPE-g}.  Also, we study the problem by further neglecting the gravity in \eqref{isen-CPE}.
	We remark here that, although it does not cause any additional difficulty, we have omitted the Coriolis force in this work for the convenience of presentation. The local well-posedness theorems still work for the systems with the Coriolis force.
	
	The first mathematical treatment of the compressible primitive equations (CPE) can be tracked back to Lions, Temam and Wang \cite{Lions1992}. Actually, the authors formulated the compressible primitive equations in the pressure coordinates ($p$-coordinates) and show that in the new coordinate system, the equations are in the form of classical primitive equations (called primitive equations, or PE hereafter) with the incompressibility condition. In yet another work \cite{JLLions1992}, the authors modeled the nearly incompressible ocean by the PE. It is formulated as the hydrostatic approximation of the Boussinesq equations. The authors show the existence of global weak solutions and therefore indirectly study the CPE (see, e.g., \cite{JLLions1994,Lions2000} for additional work  by the authors).
	Since then, the PE have been the subject of intensive mathematical research.  For instance, Guill{\'e}n-Gonz{\'a}lez, Masmoudi and Rodr{\'\i}guez-Bellido in \cite{guillen2001anisotropic} study the local existence of strong solutions and global existence of strong solutions for small data to the PE. In \cite{HuTemamZiane2003} the authors address the global existence of strong solutions to PE in a domain with small depth. In \cite{Petcu2005}, the authors study the Sobolev and Gevrey regularity of the solutions to PE. The first breakthrough concerning the global well-posedness of PE is obtained by Cao and Titi in \cite{Cao2007}, in which the authors show the existence of unique global strong solutions (see, also, \cite{Cao2003,Kobelkov2006,Kukavica2007a,Kukavica2007,Kukavica2014,Zelati2015,Hieber2016,Li2017a,Ignatova2012} and the references therein for related study). On the other hand, with partial anisotropic diffusion and viscosity, Cao, Li and Titi in \cite{Cao2012,Cao2014b,Cao2014,Cao2017,Cao2016,Cao2016a} establish the global well-posedness of strong solutions to PE. For the inviscid primitive equations, or hydrostatic incompressible Euler equations, in \cite{Brenier1999,Wong2012,Kukavica2011}, the authors show the existence of solutions in the analytic function space and in $H^s $ space. More recently, the authors in \cite{Wong2014,Cao2015} construct finite-time blowup for the inviscid PE in the absence of rotation. Also, in \cite{Gerard-Varet2018}, the authors establish the Gevrey regularity of hydrostatic Navier-Stokes equations with only vertical viscosity.
	
	Despite the fruitful study of the primitive equations, it still remains interesting to study the compressible equations. On the one hand, it is a more direct model to study the atmosphere and perform practical weather predictions. On the other hand, the former deviation of the PE from the CPE in the $ p $-coordinates did not treat the corresponding derivation of the boundary conditions. In fact, due to the change of pressure on the boundary, the appropriate studying domain for the PE should be evolving together with the flows in order to recover the solutions to the CPE. Thus even though the formulation of the PE significantly simplifies the equations of the CPE, the boundary conditions are more complicated than before in order to study the motion of the atmosphere. We believe that this might be one of the reasons responsible for the not-completely successful prediction of the weather by using the PE.
	
	Recently, Gatapov, Kazhikhov, Ersoy, Ngom construct a global weak solution to some variant of two-dimentional compressible primitive equations in \cite{Gatapov2005,Ersoy2012}. Meanwhile, Ersoy, Ngom, Sy, Tang, Gao study the stability of weak solutions to the CPE in \cite{Ersoy2011a,Tang2015} in the sense that a sequence of weak solutions satisfying some entropy conditions  contains a subsequence converging to another weak solution. However, the existence of such sequence of weak solutions to the CPE satisfying these entropy conditions is still open.
	
	In this and subsequent works, we aim to address several problems concerning the compressible primitive equations. In this work, we start by studying the local well-posedness of strong solutions to the CPE. That is, we will establish the local strong solutions to \eqref{isen-CPE-g} and \eqref{isen-CPE}, below, in the domain $ \Omega = \Omega_h \times (0,1) $, with $ \Omega_h = \mathbb{T}^2 = [0,1]^2 \subset \mathbb R^2 $ being the periodic domain. In comparison with the compressible Navier-Stokes equations \cite{Feireisl2004}, the absence of evolutionary equations for the vertical velocity (vertical momentum) causes the main difficulty. This is the same difficulty as in the case of primitive equations. In fact, the procedure of recovering the vertical velocity is a classical one in the modeling of the atmosphere \cite[Chapter 5]{Richardson1965}. This is done with the help of the hydrostatic equation, which causes the stratification of density profiles in the CPE. On the one hand, in \eqref{isen-CPE-g}, as one will see later, the hydrostatic equation implies that if there is vacuum area in the studied domain $ \Omega $, the sound speed will be at most $ 1/2 $-H\"older continuous. Thus the $ H^2 $ estimate of the density is not available in the presence of vacuum. However in \eqref{isen-CPE}, such an obstacle no longer exists. For this reason, the local well-posedness established in this work doesn't allow vacuum in the presence of gravity, but vacuum is allowed in the case without gravity. On the other hand, the hydrostatic equation does have some benefits. Indeed, such a relation yields that the density admits a stratified profile along the vertical direction. This fact will help us recover the vertical velocity from the continuity equation (see \eqref{vertical-isen-g} and \eqref{vertical-isen}, below).
	
	In this work, we will first reformulate the compressible primitive equations \eqref{isen-CPE-g}, \eqref{isen-CPE} by making use of the stratified density profile. Then we will study the local well-posedness of the reformulated systems under the assumption that there is no vacuum initially. This is done via a fixed point argument. Next, in order to obtain the existence of strong solutions to \eqref{isen-CPE} with non-negative density, we establish some uniform estimates independent of the lower bound of the density. We point out that in comparison to the compressible Navier-Stokes equations (see, e.g., \cite{Cho2006a,Cho2006c,Cho2004,Choe2003}), we will require $ H^2 $ estimate of $ \rho^{1/2} $ in order to derive the above mentioned uniform estimates. Such estimates are not available in the case with gravity \eqref{isen-CPE-g}. To this end, continuity arguments are used to establish the solutions with vacuum. We also study the continuous dependence on the initial data and the uniqueness of the strong solutions. Eventually, we derive in the last section  a formulation of a free boundary problem for the upper atmosphere employing  the compressible primitive equations \eqref{isen-CPE-g} (see \eqref{rfeq:isen-CPE-fb}), where the density connects to the vacuum continuously on the evolving interface. As mentioned above, the sound speed on this interface is only $ 1/2$-H\"older continuous. Such a singularity is called physical vacuum in classical literatures (see, e.g.,  \cite{Liu1996,Jang2008a,Jang2009b,Coutand2010,Coutand2011a,Coutand2012,Jang2010,Jang2011,Jang2013a,Jang2014,Jang2015,LuoXinZeng2014,LuoXinZeng2015,LuoXinZeng2016} and the reference therein). We leave such problems for future study.

We further remark here that there are
 many other attempts of  mathematical understanding of the complex dynamics of atmosphere. See, for instance,  the monograph by Majda \cite{majda2003introduction} for a comprehensive survey. In particular, the most widely used Oberbeck-Boussinesq equations of  low stratification of flows are obtained as a limiting system of the low Mach number limit of the compressible Navier-Stokes equations with the Froude number ($ \mathrm{Fr} $) and the Mach number ($ \mathrm{Ma} $) satisfying the relations $ \mathrm{Fr} \sim \sqrt{\mathrm{Ma}} $. On the other hand, with strong stratification, i.e., $ \mathrm{Fr} \sim \mathrm{Ma} $, the compressible Navier-Stokes equations formally converge to the anelastic equations as the Mach number goes to zero. See \cite[Chapter 4,5,6]{feireisl2009singular} and \cite{feireisl2011flows,feireisl2016singular,rajagopal1996oberbeck,masmoudi2007rigorous} and the references therein for introductions  and rigorous derivations. The investigation of the  low Mach number limit of the compressible primitive equations will be  a subject of future study.

Through out this work, we will use
$ \vec{x} := (x,y,z)^\top, \vech{x} := (x,y)^\top $
to represent the coordinates in $ \Omega $ and $ \Omega_h $. In addition, we will use the following notations to denote the differential operators in the horizontal direction,
\begin{gather*}
	\nablah := ( \partial_x, \partial_y)^\top,~ \partial_h \in \lbrace \partial_x, \partial_y \rbrace,\\
	\dvh  :=  \nablah \cdot,~ \deltah  := \dvh \nablah,
\end{gather*}

	As it has been mentioned, we will study the local well-posedness of the isentropic compressible primitive equations in the domain $ \Omega = \Omega_h \times (0,1) $ with $ \Omega_h = \mathbb{T}^2 $. We consider such problems in two cases: with and without gravity. To be more precise,
	the isentropic compressible primitive equations with gravity are governed by the following system:
	\begin{equation}\label{isen-CPE-g}
	\begin{cases}
	\dt \rho + \dvh (\rho v) + \dz (\rho w) = 0 & \text{in} ~ \Omega, \\
	\dt (\rho v) + \dvh (\rho v \otimes v) + \dz (\rho w v) + \nablah P = \mu \deltah v + \mu \partial_{zz} v \\
	~~~~ ~~~~ ~~~~  + (\mu +\lambda) \nablah \dvh v & \text{in} ~ \Omega,\\
	\dz P - \rho g = 0 & \text{in} ~ \Omega,
	\end{cases}
	\end{equation}
	with $ P := \rho^\gamma $. We will study in this work only the case when $ \gamma = 2 $ for \eqref{isen-CPE-g}.
	
	On the other hand, the isentropic compressible primitive equations without gravity are governed by the following system:
	\begin{equation}\label{isen-CPE}
	\begin{cases}
	\dt \rho + \dvh (\rho v) + \dz (\rho w) = 0 & \text{in} ~ \Omega, \\
	\dt (\rho v) + \dvh (\rho v \otimes v) + \dz (\rho w v) + \nablah P = \mu \deltah v + \mu \partial_{zz} v \\
	~~~~ ~~~~ ~~~~  + (\mu +\lambda) \nablah \dvh v & \text{in} ~ \Omega,\\
	\dz P = 0 & \text{in} ~ \Omega,
	\end{cases}
	\end{equation}
	with $ P := \rho^\gamma, \gamma > 1 $.

	The above systems, \eqref{isen-CPE-g} and \eqref{isen-CPE}, are supplemented with the following boundary conditions:
	\begin{equation}\label{bd-cnds}
	w = 0, ~ \dz v = 0 ~~~~ \text{on} ~~ \Omega_h \times\lbrace 0,1 \rbrace.
	\end{equation}

The rest of this work will be organized as follows.
In section \ref{sec:formulation}, we present a reformulation of \eqref{isen-CPE-g} and \eqref{isen-CPE} by making use of the stratified density profiles. Also, we present the formula for recovering the vertical velocity and the main theorems of this work. After listing some useful inequalities and  notation, we study in section \ref{sec:existencetheory} the existence theory. In particular, in section \ref{sec:lin-isen-g} and \ref{sec:lin-isen} we show that the linear equations associated with our reformulation, \eqref{rfeq:isen-CPE-g} and \eqref{rfeq:isen-CPE}, are well-posed together with some a priori estimates. Such estimates will be used in section \ref{sec:exist-thy} to show the existence of solutions with non-vacuum initial density profiles via the Schauder-Tchonoff fixed point theorem. Then in section \ref{sec:exist-vacuum}, we show the existence of strong solutions with vacuum for \eqref{isen-CPE}. Next, in section \ref{sec:cn-ddy-uni} we show the continuous dependence on the initial data and the uniqueness of strong solutions. This in turn concludes the proof of our main theorems. In section \ref{sec:fm-fb}, we show the aforementioned formulation of the free boundary problem of the upper atmosphere.
	
\subsection{Reformulation, analysis and main theorems}\label{sec:formulation}
	In this section, we will reformulate \eqref{isen-CPE-g} and \eqref{isen-CPE} and point out how to recover the vertical velocity from the density and the horizontal velocity.	
	\subsubsection*{The case with gravity and $ \gamma = 2 $:}
	We first consider \eqref{isen-CPE-g}. From \subeqref{isen-CPE-g}{3}, one has
	\begin{align*}
	\dz \rho^{\gamma-1} & = \dfrac{\gamma - 1}{\gamma} g, ~~ \text{or}\\
	\rho^{\gamma-1}(\vec{x},t) & = \dfrac{\gamma-1}{\gamma}gz + \rho^{\gamma-1}(\vec{x}_h,0,t),
	\end{align*}
	Denote $ \xi = \xi(\vec{x}_h,t) : = \rho^{\gamma-1}(\vec{x}_h,0,t) $. Now we derive the equation satisfied by $ \xi $. Notice, the continuity equation \subeqref{isen-CPE-g}{1} implies
	\begin{equation*}
	\dt \rho^{\gamma-1} + v \cdot \nablah \rho^{\gamma-1} + w \dz \rho^{\gamma-1} + (\gamma-1) \rho^{\gamma-1} (\dvh v + \dz w) = 0,
	\end{equation*}
	which, by substituting $ \rho^{\gamma-1}(\vec{x},t) = \xi(\vech{x},t) + \frac{\gamma-1}{\gamma}gz $, yields,
	\begin{equation*}
	\dt \xi + v \cdot \nablah \xi + (\gamma - 1)(\xi + \dfrac{\gamma-1}{\gamma}gz)(\dvh v + \dz w) + \dfrac{\gamma-1}{\gamma} g w = 0 ~ \text{in}~ \Omega.
	\end{equation*}
%
%
	In particular, since $ \gamma = 2 $, $ \rho(\vec{x},t) = \xi(\vech{x},t) + \dfrac{1}{2} gz $
	and
	\eqref{isen-CPE-g} can be written as
	\begin{equation}\label{rfeq:isen-CPE-g}
	\begin{cases}
	\dt \xi + v \cdot \nablah \xi + (\xi + \dfrac{1}{2}gz) \dvh v + \dz (\xi w + \dfrac{1}{2}g z w ) = 0 & \text{in} ~ \Omega,\\
	(\xi + \dfrac{1}{2}gz) ( \dt v + v \cdot\nablah v + w \dz v) + (2\xi +gz) \nablah \xi \\
	~~~~ ~~~~ ~~~~ = \mu \deltah v + \mu \partial_{zz} v + (\mu +\lambda) \nablah \dvh v & \text{in} ~ \Omega,\\
	\dz \xi = 0 & \text{in} ~ \Omega.
	\end{cases}
	\end{equation}
	Notice that the condition \subeqref{rfeq:isen-CPE-g}{3} implies that $ \xi $ is independent of the vertical variable.
	Hereafter, we denote for any $ f :\Omega \mapsto \mathbb{R} $,
	\begin{equation}\label{averaging-vert}
	\overline{f}: = \int_0^1 f \,dz , ~ \widetilde{f} := f - \overline{f}.
	\end{equation}
	Then averaging over the vertical variable in \subeqref{rfeq:isen-CPE-g}{1} yields, thanks to  \eqref{bd-cnds},
	\begin{equation}\label{conti-isen-g}
	\dt \xi + \overline{v} \cdot \nablah \xi + \xi \overline{\dvh v} + \dfrac{1}{2} g \overline{z \dvh v} = 0.
	\end{equation}
	Then comparing \eqref{conti-isen-g} with \subeqref{rfeq:isen-CPE-g}{1} implies
	\begin{equation*}
	\dz(\rho w) = \dz(\xi w + \dfrac{1}{2}gz w) = - \widetilde{v}\cdot\nablah \xi - \xi \dvh \widetilde{v} - \dfrac{g}{2} \widetilde{z \dvh v} ~~ \text{in}~ \Omega.
	\end{equation*}
	Therefore, the vertical velocity $ w $ is determined, thanks to the boundary condition \eqref{bd-cnds}, by the relation
	\begin{equation}\label{vertical-isen-g}
	\rho w = (\xi + \dfrac{1}{2}gz) w = - \int_0^z \bigl( \dvh (\xi \widetilde{v}) + \dfrac{g}{2} \widetilde{z \dvh v} \bigr) \,dz.
	\end{equation}
	System \eqref{rfeq:isen-CPE-g} is complemented with the initial data
	\begin{equation}\label{isen-initial-g}
	(\xi, v)|_{t=0} = (\xi_0, v_0),
	\end{equation}
	with $ \xi_0, v_0 \in H^2(\Omega) $.
	Also the following compatible conditions are imposed:
	\begin{equation}\label{isen-cptbl-conds-g}
	\begin{gathered}
		\rho_0 = \xi_0 + \dfrac{1}{2} gz \geq \underline{\rho} > 0 ~ \text{in} ~ \Omega, ~ \text{and} ~ \dz v_0|_{z=0,1} = 0,\\
		\mu \deltah v_0 + \mu \partial_{zz} v_0 + (\mu+\lambda) \nablah \dvh v_0 - (2\xi_0 + gz) \nablah \xi_0  \\
		- \rho_0 v_0 \cdot \nablah v_0 - \rho_0 w_0 \dz v_0 =: \rho_0 V_1, ~~~~ \text{with} ~ V_1 \in L^{2}(\Omega),\\
		 \text{and} ~~ \rho_0 w_0 = - \int_0^z \bigl( \dvh (\xi_0 \widetilde{v_0}) + \dfrac{g}{2} \widetilde{z \dvh v_0} \bigr) \,dz.
	\end{gathered}
	\end{equation}
	Also, we will denote the bounds
	\begin{equation}\label{bound-of-initial-g}
		\norm{\xi_0}{\Hnorm{2}}^2 \leq B_{g,1}, ~~ \norm{v_0}{\Hnorm{2}}^2 + \norm{V_1}{\Lnorm{2}}^2 \leq B_{g,2}.
	\end{equation}
	Our main theorem concerning the short time well-posedness of strong solutions to \eqref{isen-CPE-g} is the following:
	\begin{theorem}\label{thm:gravity}
	Suppose the initial data $ (\rho_0, v_0) = (\xi_0 + \frac{1}{2}gz, v_0) $ satisfy \eqref{bound-of-initial-g} and the compatible conditions \eqref{isen-cptbl-conds-g}. Then there is a unique strong solution $ (\rho,v) $ to system \eqref{isen-CPE-g}, with the boundary condition \eqref{bd-cnds}, in $ \Omega \times (0,T) $, for some positive constant $ T = T(B_{g,1},B_{g,2},\underline\rho) > 0 $. Also, the solution satisfies
	\begin{gather*}
		\rho \in L^\infty(0,T;H^2(\Omega)), \dt \rho \in L^\infty(0,T;H^1(\Omega)), \\
	v \in L^\infty(0,T;H^2(\Omega))\cap L^2(0,T;H^3(\Omega)), \\
	\dt v \in L^\infty(0,T;L^2(\Omega))\cap L^2(0,T;H^1(\Omega)).
	\end{gather*}
	Furthermore, for some positive constant $ \mathcal C(B_{g,1},B_{g,2},\underline\rho) $,
	\begin{gather*}
		\inf_{(\vec{x},t)\in \Omega\times (0,T)} \rho(\vec{x},t )  \geq \dfrac{1}{2} \underline{\rho} > 0,  \\
		\sup_{0\leq t\leq T} \bigl( \norm{\rho(t)}{\Hnorm{2}}^2 + \norm{\dt \rho(t)}{\Hnorm{1}}^2 + \norm{v(t)}{\Hnorm{2}}^2 + \norm{\dt v(t)}{\Lnorm{2}}^2 \bigr) \\
		 + \int_0^T \bigl( \norm{v(t)}{\Hnorm{3}}^2
		+ \norm{\dt v(t)}{\Hnorm{1}}^2 \bigr) \,dt \leq \mathcal C(B_{g,1},B_{g,2},\underline\rho).
	\end{gather*}	
	Moreover, for any two solutions $ (\rho_i, v_i), i = 1,2 $ with initial data $ (\rho_{i,0},v_{i,0}) , i = 1,2 $ satisfying the conditions mentioned above, we have the following inequality
	\begin{equation*}
	\begin{aligned}
		& \norm{\rho_1 - \rho_2}{L^\infty(0,T;L^2(\Omega))} + \norm{v_1- v_2}{L^\infty(0,T;L^2(\Omega))}\\
		& ~~~~ + \norm{\nabla (v_1 - v_2)}{L^2(0,T;L^2(\Omega))} \leq C_{\mu, \lambda, B_{g,1}, B_{g,2},\underline\rho, T} \\
		& ~~~~ ~~~~ \times \bigl(\norm{\rho_{1,0}-\rho_{2,0}}{L^2(\Omega))} + \norm{v_{1,0}-v_{2,0}}{L^2(\Omega))}\bigr),
	\end{aligned}
	\end{equation*}
	for some positive constant $ C_{\mu, \lambda, B_{g,1}, B_{g,2},\underline\rho, T} $.
	\end{theorem}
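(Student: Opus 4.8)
The plan is to prove Theorem~\ref{thm:gravity} through a linearization/fixed-point scheme for the reformulated system \eqref{rfeq:isen-CPE-g}, using crucially that \subeqref{rfeq:isen-CPE-g}{3} forces $\xi$ to be independent of $z$ and that \eqref{vertical-isen-g} reconstructs $w$ from $\xi$ and $v$. First I would freeze the nonlinearity: given an approximate pair $(\bar\xi,\bar v)$, with $\bar\xi$ independent of $z$, obeying the compatibility conditions \eqref{isen-cptbl-conds-g} and lying in the regularity class of the conclusion, I recover $\bar w$ from \eqref{vertical-isen-g} and solve (i) the linear transport-type equation \subeqref{rfeq:isen-CPE-g}{1} for $\xi$ with advecting velocity built from $\bar v,\bar w$, and (ii) the linear system \subeqref{rfeq:isen-CPE-g}{2} for $v$, whose principal part $\mu\deltah + \mu\partial_{zz} + (\mu+\lambda)\nablah\dvh$ is uniformly parabolic because the coefficient $\bar\rho = \bar\xi + \frac12 gz \geq \underline\rho > 0$ is bounded away from zero. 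Well-posedness of each linear problem is classical — characteristics (or an $L^2$ energy identity) for the transport equation, propagating the $H^2(\Omega_h)$-regularity of $\xi$, and linear parabolic theory for $v$ — so the substance is entirely in the uniform a priori estimates.

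The heart of the argument is a closed family of energy estimates, uniform on a short interval $[0,T]$ with $T = T(B_{g,1},B_{g,2},\underline\rho)$. For the density one gets an $L^\infty_tH^2(\Omega_h)$ bound on $\xi$, hence $\rho = \xi + \frac12 gz \in L^\infty_tH^2(\Omega)$, and an $L^\infty_tH^1$ bound on $\dt\xi$, by differentiating the $\xi$-equation up to second order and applying Gr\"onwall, absorbing the loss of one derivative through $\int_0^T\norm{v}{\Hnorm3}\,dt \leq \sqrt T\,\norm{v}{L^2(0,T;\Hnorm3)}$. For the velocity one runs successive estimates at the $L^2$, $H^1$, $H^2$ levels, the parabolic dissipation upgrading these to $v\in L^2_t\Hnorm3$; the $\dt v\in L^\infty_t\Lnorm2\cap L^2_t\Hnorm1$ bound follows by differentiating \subeqref{rfeq:isen-CPE-g}{2} in time, the initial value $\dt v|_{t=0}$ being controlled precisely by the compatibility hypothesis $\rho_0 V_1\in\Lnorm2$. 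I expect the main obstacle to be the terms carrying the reconstructed vertical velocity, namely $\bar w\,\dz v$ in \subeqref{rfeq:isen-CPE-g}{2} and $\dz(\xi w + \frac12 gzw)$ in \subeqref{rfeq:isen-CPE-g}{1}: since \eqref{vertical-isen-g} produces $w$ as the vertical integral of $\nablah$-derivatives of $\xi\widetilde v$ and $z\,\dvh v$ divided by $\rho$, bounding these nonlinearities in the norms needed to close the $H^2$ estimates forces one to track carefully both the $L^2_t\Hnorm3$-regularity of $v$ and the dependence on the lower bound of $\rho$; this is exactly where $\gamma = 2$ and the affine stratification $\rho = \xi + \frac12 gz$ are essential.

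With the uniform bounds established, I would define $\mathcal T:(\bar\xi,\bar v)\mapsto(\xi,v)$ on the closed, bounded, convex subset of, say, $C([0,T];\Hnorm1)$ of pairs satisfying the a priori estimates, show $\mathcal T$ maps this set into itself for $T$ small, and that it is continuous and precompact — the latter via an Aubin--Lions-type argument exploiting the parabolic/transport gain of regularity — so the Schauder--Tychonoff theorem provides a fixed point, i.e.\ a strong solution of \eqref{isen-CPE-g} with \eqref{bd-cnds} enjoying all the stated regularity with $\mathcal C = \mathcal C(B_{g,1},B_{g,2},\underline\rho)$. The pointwise lower bound $\rho \geq \frac12\underline\rho$ on $[0,T]$ then follows from $\dt\rho\in L^\infty_t\Hnorm1\hookrightarrow L^\infty_t L^\infty$ by further shrinking $T$.

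Finally, for continuous dependence and uniqueness I would take two solutions $(\rho_i,v_i)$, $i=1,2$, write the system governing the differences $\rho_1-\rho_2$ and $v_1-v_2$, and perform an $L^2$ energy estimate: testing the difference of the continuity equations against $\rho_1-\rho_2$ and the difference of the momentum equations against $v_1-v_2$ yields the dissipation $\norm{\nabla(v_1-v_2)}{\Lnorm2}^2$ on the left, while every other contribution — in particular the pressure difference $\sim(\rho_1-\rho_2)\nablah\xi_2$ and the difference of the reconstructed vertical velocities — is controlled by the strong norms of $(\rho_i,v_i)$ and Sobolev embedding; Gr\"onwall then gives the asserted inequality, and equal initial data forces $\rho_1 = \rho_2$, $v_1 = v_2$, proving uniqueness. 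Note that the hyperbolic character of the density equation is what confines this comparison to the $L^2$ level, matching the statement.
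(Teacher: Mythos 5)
Your overall scheme coincides with the paper's: freeze the state, solve the linear transport equation for $\xi$ (the paper uses its vertically averaged form \eqref{conti-isen-g}, which is what guarantees $\partial_z\xi=0$ is preserved) and the linear parabolic problem for $v$ with coefficient $\rho^o=\xi^o+\tfrac12 gz\ge\tfrac12\underline\rho$, close the same family of estimates ($\xi\in L^\infty_tH^2$, $\dt\xi\in L^\infty_tH^1$, $v\in L^\infty_tH^2\cap L^2_tH^3$, $\dt v\in L^\infty_tL^2\cap L^2_tH^1$, the $w$-terms handled by Minkowski after substituting \eqref{vertical-isen-g}), apply Schauder--Tychonoff on a bounded convex set with Aubin--Lions compactness, and finish with the same $L^2$-level difference estimate for continuous dependence and uniqueness.

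The one step that fails as written is your derivation of the lower bound: ``$\dt\rho\in L^\infty(0,T;H^1)\hookrightarrow L^\infty(0,T;L^\infty)$'' is false, since $H^1$ does not embed into $L^\infty$ in dimension two or three. Moreover the bound $\rho\ge\tfrac12\underline\rho$ cannot be deferred to the fixed point alone: it must be built into the invariant set and propagated by the map (otherwise the frozen coefficient $\bar\rho$ is not uniformly bounded below and the linear parabolic step and the division by $\rho^o$ in the energy estimates are not justified). The repair is routine: either argue as the paper does, with a Stampacchia-type truncation on the linear transport equation showing $\xi+\tfrac12 gz\ge\underline\rho-C\int_0^T\norm{v^o}{\Hnorm{3}}\,dt\ge\tfrac12\underline\rho$ for $T$ small, or salvage your route by interpolation, e.g.
\begin{equation*}
\norm{\xi(t)-\xi_0}{\Lnorm{\infty}}\lesssim \norm{\xi(t)-\xi_0}{\Lnorm{2}}^{1/2}\norm{\xi(t)-\xi_0}{\Hnorm{2}}^{1/2}\lesssim \bigl(T\sup_{0\le t\le T}\norm{\dt\xi}{\Lnorm{2}}\bigr)^{1/2}\bigl(\sup_{0\le t\le T}\norm{\xi}{\Hnorm{2}}+\norm{\xi_0}{\Hnorm{2}}\bigr)^{1/2},
\end{equation*}
which is $O(T^{1/2})$ by the already-established $H^2$ and $\dt\xi$ bounds, and then shrink $T$. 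With that correction the argument matches the paper's proof.
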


	\subsubsection*{The case without gravity and $ \gamma > 1 $:}
	Concerning system \eqref{isen-CPE}, since \subeqref{isen-CPE}{3} already yields the independence of the density of the vertical variable, the vertical velocity is determined through \subeqref{isen-CPE}{1}. 
	In fact, after taking the vertical average of \subeqref{isen-CPE}{1}, as before, one has
	\begin{equation}\label{conti-isen}
	\dt \rho + \dvh(\rho \overline v) = 0.
	\end{equation}
	Comparing \eqref{conti-isen} with \subeqref{isen-CPE}{1} yields, thanks to the boundary condition \eqref{bd-cnds}, that the vertical velocity $ w $ is determined by the relation
	\begin{equation}\label{vertical-isen}
		\rho w = - \int_0^z \dvh(\rho \widetilde v )\,dz.
	\end{equation}
	In particular, by denoting $ \sigma := \rho^{1/2} $, from \eqref{conti-isen} and \eqref{vertical-isen}, one has either $ \sigma = 0 $ or
	\begin{gather}
		\dt \sigma + \overline{v} \cdot \nablah \sigma + \frac{1}{2} \sigma \overline{\dvh v}  = 0, \label{conti-isen-02} \\
		\sigma w = - \int_0^z \bigl( \sigma {\widetilde{\dvh v}} + 2 \widetilde{v} \cdot \nablah \sigma \bigr) \,dz. \label{vertical-isen-02}
	\end{gather}
	In fact, for $ (\sigma, v) $ regular enough, \eqref{conti-isen-02}, \eqref{vertical-isen-02} hold regardless of whether $ \sigma = 0 $ or not. See also, the justification in the beginning of section \ref{sec:continuous-dependence}.
	
	System \eqref{isen-CPE} is complemented with the initial data
	\begin{equation}\label{isen-initial}
		(\rho, v)|_{t=0} = (\rho_0, v_0) ~~ \text{or equivalently} ~ (\sigma, v)|_{t=0} = (\sigma_0, v_0),
	\end{equation}
	with $ \sigma_0 = \rho_0^{1/2}, v_0 \in
		H^2(\Omega) $ and the initial total mass and physical energy satisfy
	\begin{equation}\label{bound-of-initial-energy}
	\begin{gathered}
		0 < \intw \rho_0 \idx = \intw \sigma_0^2 \idx = M < \infty, \\
		0 < \intw \rho_0 \abs{v_0}{2} \idx + \dfrac{1}{\gamma-1} \intw \rho_0^\gamma \idx = \intw \sigma_0^2 \abs{v_0}{2} \idx\\
		 + \dfrac{1}{\gamma-1} \intw \sigma_0^{2\gamma} \idx = E_0 < \infty.
	\end{gathered}
	\end{equation}
	Also the following compatible conditions are imposed:
	\begin{equation}\label{isen-cptbl-conds}
		\begin{gathered}
		\rho_0 \geq 0, ~~ \dz v_0|_{z=0,1} = 0, \\
			\mu \deltah v_0 + \mu \partial_{zz} v_0 + (\mu+\lambda) \nablah \dvh v_0 - \nablah \rho_0^\gamma - \rho_0 v_0 \cdot \nablah v_0 \\
			 - \rho_0 w_0 \dz v_0 =: \rho_0^{1/2} h_1, ~~~~ \text{with} ~ h_1 \in L^{2}(\Omega),\\
			\text{and} ~~ \rho_0 w_0 = - \int_0^z \dvh (\rho_0 \widetilde{v_0}) \,dz.
		\end{gathered}
	\end{equation}
	Also, we will denote the bounds
	\begin{equation}\label{bound-of-initial}
		\norm{\sigma_0}{\Hnorm{2}}^2 = \norm{\rho_0^{1/2}}{\Hnorm{2}}^2 \leq B_1, ~~ \norm{v_0}{\Hnorm{2}}^2 + \norm{h_1}{\Lnorm{2}}^2 \leq B_2.
	\end{equation}
	Moreover, if $ \rho = \sigma^2 > 0 $, \eqref{isen-CPE} can be written as
	\begin{equation}\label{rfeq:isen-CPE}
	\begin{cases}
	\dt \sigma + v \cdot \nablah \sigma + w \dz \sigma + \dfrac{1}{2} \sigma (\dvh v + \dz w) = 0 & \text{in} ~ \Omega,\\
	\sigma^2 (\dt v + v \cdot \nablah v + w \dz v) + \nablah \sigma^{2\gamma} \\
	~~~~ ~~~~ = \mu \deltah v + \mu \partial_{zz} v + (\mu + \lambda) \nablah \dvh v & \text{in} ~ \Omega,\\
	\dz \sigma = 0 & \text{in} ~ \Omega.
	\end{cases}
	\end{equation}
	Our main theorem concerning the short time well-posedness of strong solutions of system \eqref{isen-CPE} is stated in the following:
	\begin{theorem}\label{thm:vacuum}
	Suppose the initial data $ (\rho_0,v_0) = (\sigma_0^2, v_0) $ satisfy \eqref{bound-of-initial-energy}, \eqref{bound-of-initial} and the compatible conditions \eqref{isen-cptbl-conds}. Then there is a unique strong solution $ (\rho, v) $ to system \eqref{isen-CPE}, with the boundary condition \eqref{bd-cnds}, in $ \Omega \times (0,T^*) $ for some positive constant $ T^* = T^*(B_1,B_2) > 0 $. Also, the solution satisfies
	\begin{gather*}
		\rho^{1/2} \in L^\infty(0,T^*;H^2(\Omega)), ~~ \dt \rho^{1/2} \in L^\infty(0,T^*; H^1(\Omega)), \\
	v \in L^\infty(0,T^*;H^2(\Omega))\cap L^2(0,T^*;H^3(\Omega)), ~~ \dt v \in L^2(0,T^*;H^1(\Omega))\\
	\rho^{1/2} \dt v \in L^\infty(0,T^*;L^2(\Omega)).
	\end{gather*}
	Furthermore, for some positive constant $ \mathcal C(B_1,B_2) $,
	\begin{gather*}
		 \inf_{(\vec{x},t) \in \Omega\times (0,T^*)} \rho(\vec{x},t) \geq 0,\\
		\sup_{0\leq t\leq T^*} \bigl( \norm{\rho^{1/2}(t)}{\Hnorm{2}}^2+\norm{\dt \rho^{1/2}(t)}{\Hnorm{1}}^2 +   \norm{v(t)}{\Hnorm{2}}^2 + \norm{(\rho^{1/2} v_{t})(t)}{\Lnorm{2}}^2 \bigr) \\
		 + \int_0^{T^*} \bigl( \norm{v(t)}{\Hnorm{3}}^2 + \norm{v_{t}(t)}{\Hnorm{1}}^2 \bigr) \,dt  \leq \mathcal C(B_1,B_2).
	\end{gather*}
	Moreover, for any two strong solutions $ (\rho_i, v_i), i = 1,2 $, with initial data $ (\rho_{i,0},v_{i,0}) , i = 1,2 $, satisfying the conditions mentioned above, we have the following inequality
	\begin{equation*}
	\begin{aligned}
		& \norm{\rho_1^{1/2} - \rho_2^{1/2}}{L^\infty(0,T^*;L^2(\Omega))} + \norm{\rho_1^{1/2}(v_1- v_2)}{L^\infty(0,T^*;L^2(\Omega))} \\
		& ~~~~ + \norm{\rho_2^{1/2}(v_1- v_2)}{L^\infty(0,T^*;L^2(\Omega))} + \norm{v_1 - v_2}{L^2(0,T^*;L^2(\Omega))} \\
		& ~~~~ + \norm{\nabla (v_1 - v_2)}{L^2(0,T^*;L^2(\Omega))}\\
		& ~~ \leq C_{\mu, \lambda, B_{1}, B_{2}, T^*}\bigl( \norm{\rho_{1,0}^{1/2}-\rho_{2,0}^{1/2}}{L^2(\Omega))} + \norm{v_{1,0}-v_{2,0}}{L^2(\Omega))}\bigr),
	\end{aligned}
	\end{equation*}
	for some positive constant $ C_{\mu, \lambda, B_{1}, B_{2}, T^*} $.
	\end{theorem}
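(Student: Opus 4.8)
The plan is to prove Theorem \ref{thm:vacuum} in three stages: first solve the reformulated system \eqref{rfeq:isen-CPE} under the strict positivity assumption $\rho_0 \ge \underline\rho > 0$, but with a lifespan and a priori bounds that do \emph{not} depend on $\underline\rho$; then pass to the vacuum limit $\underline\rho \to 0^+$ along regularized data; and finally obtain uniqueness and continuous dependence from a low-order energy estimate on the difference of two solutions.

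\emph{Stage 1: solutions without vacuum via a fixed point.} For $\underline\rho>0$ I linearize \eqref{rfeq:isen-CPE}: given a velocity $v^*$ in a suitable ball of $L^\infty_t H^2 \cap L^2_t H^3$, first solve the transport equation \subeqref{rfeq:isen-CPE}{1} for $\sigma$ with $w^*$ recovered from $(\sigma^*,v^*)$ through \eqref{vertical-isen-02}; this preserves the strict positive lower bound and, since $v^*\in L^2_t H^3\hookrightarrow L^2_t W^{1,\infty}$, propagates the $H^2$ regularity of $\sigma_0$. Then solve the linear parabolic system \subeqref{rfeq:isen-CPE}{2} for $v$, with coefficient $\sigma^2$ and forcing assembled from $v^*$, $\sigma$. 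The well-posedness and the basic estimates for these two linear problems are exactly what section \ref{sec:lin-isen} supplies. The a priori estimates then close by the now-standard energy scheme for compressible flows with possible vacuum, adapted to the hydrostatic setting: testing \subeqref{rfeq:isen-CPE}{2} with $\dt v$ controls $\norm{\sigma\dt v}{\Lnorm{2}}$ and $\norm{\nabla v}{\Lnorm{2}}$; differentiating \subeqref{rfeq:isen-CPE}{2} in $t$ and testing again with $\dt v$ yields $\norm{\sigma\dt v(t)}{\Lnorm{2}}^2 + \int_0^t \norm{\nabla\dt v}{\Lnorm{2}}^2\,ds$; viewing \subeqref{rfeq:isen-CPE}{2} at fixed time as the elliptic system $-\mu\deltah v - \mu\partial_{zz}v - (\mu+\lambda)\nablah\dvh v = F$ upgrades this to $v\in L^\infty_t H^2\cap L^2_t H^3$; and running transport estimates for $\sigma$, $\nabla\sigma$, $\nabla^2\sigma$ gives the $L^\infty_t H^2$ bound for $\sigma$ together with $\dt\sigma\in L^\infty_t H^1$. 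The compatibility condition in \eqref{isen-cptbl-conds} is precisely what makes $\sigma_0 \dt v|_{t=0} = \sigma_0 h_1 \in L^2(\Omega)$ meaningful, so the time-differentiated estimate starts from a finite datum; note that in the vacuum case one only controls $\sigma\dt v$ in $L^\infty_t L^2$, not $\dt v$ itself, unlike the gravity case \eqref{isen-CPE-g}. Feeding these bounds back produces a closed, bounded, convex set mapped continuously into a compact subset of itself, and the Schauder--Tychonoff theorem (as in section \ref{sec:exist-thy}) gives a fixed point, i.e.\ a strong solution of \eqref{isen-CPE} on $[0,T]$ with $T = T(B_1,B_2)$ independent of $\underline\rho$, after recovering $w$ from \eqref{vertical-isen} and setting $\rho = \sigma^2$.

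\emph{Stage 2: vacuum limit.} Apply Stage 1 to the regularized data $\sigma_0^\varepsilon := (\rho_0+\varepsilon)^{1/2}$, $v_0^\varepsilon := v_0$; since $\norm{\sigma_0^\varepsilon}{\Hnorm{2}}$ and $\norm{h_1^\varepsilon}{\Lnorm{2}}$ remain bounded as $\varepsilon\to 0^+$, the lifespan $T^*$ and the bound $\mathcal C(B_1,B_2)$ are uniform in $\varepsilon$. The uniform estimates — $v^\varepsilon$ bounded in $L^\infty_t H^2\cap L^2_t H^3$ with $\dt v^\varepsilon$ bounded in $L^2_t H^1$, $\sigma^\varepsilon$ bounded in $L^\infty_t H^2$ with $\dt\sigma^\varepsilon$ bounded in $L^\infty_t H^1$, and $\sigma^\varepsilon\dt v^\varepsilon$ bounded in $L^\infty_t L^2$ — together with Aubin--Lions and the continuity equation \eqref{conti-isen} give strong convergence of $(\sigma^\varepsilon,v^\varepsilon)$ in intermediate norms, enough to pass to the limit in every nonlinear term, including $\rho^\varepsilon w^\varepsilon \dz v^\varepsilon$ where $w^\varepsilon$ is the explicit functional \eqref{vertical-isen-02} of $(\sigma^\varepsilon,v^\varepsilon)$. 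The limit $(\rho,v)$ with $\rho = \sigma^2 \ge 0$ is then a strong solution of \eqref{isen-CPE}, and the asserted bounds follow by weak lower semicontinuity; the total-mass/energy relations \eqref{bound-of-initial-energy} are inherited from the approximating sequence. This is the content of section \ref{sec:exist-vacuum}.

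\emph{Stage 3: uniqueness and continuous dependence.} Given two solutions with the stated regularity, set $\sigma := \sigma_1-\sigma_2$, $v := v_1-v_2$. Then $\sigma$ solves a transport equation with source $-\overline v\cdot\nablah\sigma_2 - \tfrac12(\sigma\,\overline{\dvh v_1} + \sigma_2\,\overline{\dvh v})$, and $v$ solves the parabolic system with right-hand side collecting the differences of the convective, pressure, and vertical-transport terms; each of these is estimated with the high-norm bounds on $(\rho_i,v_i)$ from Stages 1--2. Because vacuum is allowed one cannot divide by $\rho$, so instead I run the estimate for $\sigma$ in $L^\infty_t L^2$ and for $v$ in the weighted quantities $\norm{\rho_1^{1/2}v}{\Lnorm{2}}$, $\norm{\rho_2^{1/2}v}{\Lnorm{2}}$, $\norm{v}{L^2_t L^2}$, $\norm{\nabla v}{L^2_t L^2}$ that appear in the statement, treating the pressure difference $\nablah(\sigma_1^{2\gamma}-\sigma_2^{2\gamma})$ and the vacuum-degenerate inertial terms by Young's inequality and the $H^2$ control of $\sigma_i$; Gr\"onwall closes the estimate and in particular forces $\sigma_1=\sigma_2$, $v_1=v_2$ when the data agree. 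This is carried out in section \ref{sec:cn-ddy-uni} and completes the proof.

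\emph{Main obstacle.} The crux is making the Stage 1 a priori estimate \emph{uniform in $\underline\rho$}. The vertical velocity $w$ is nonlocal in $\rho$ through \eqref{vertical-isen}, so the term $\rho w \dz v$ in the momentum equation cannot be absorbed by the naive compressible Navier--Stokes bookkeeping; controlling it forces the full $H^2$ estimate of $\sigma = \rho^{1/2}$ (not merely $H^1$), which must then be propagated by the transport equation using only the available $L^2_t H^3$ regularity of $v$ — a balance that, as remarked in the introduction, breaks down in the gravity case \eqref{isen-CPE-g} (where the hydrostatic relation makes the sound speed only $\tfrac12$-H\"older near vacuum) and is exactly why vacuum is admissible here but not there.
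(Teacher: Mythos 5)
Your overall architecture --- linearize and run a Schauder--Tychonoff fixed point for strictly positive density, derive $\underline\rho$-independent estimates built on the $H^2$ control of $\sigma=\rho^{1/2}$ and the weighted quantity $\sigma\dt v$, pass to the vacuum limit, and close uniqueness with weighted $L^2$ difference estimates via \eqref{ineq:embedding-weighted} --- is the same as the paper's. Your Stage 3 in particular matches Proposition \ref{prop:uniqueness-vacuum} (the paper additionally verifies, through the $\rho+\varepsilon$ argument at the start of section \ref{sec:continuous-dependence}, that vacuum solutions actually solve the $\sigma$-formulation \eqref{rf:isen-CPE} before taking differences; you should record that step).

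The genuine gap is in Stages 1--2: you assert that the fixed point itself already yields a lifespan $T=T(B_1,B_2)$ independent of $\underline\rho$, and your Stage 2 claim that ``the lifespan $T^*$ is uniform in $\varepsilon$'' rests entirely on this. But the fixed-point construction cannot be uniform as you state it: the set $\mathfrak X$ carries an \emph{unweighted} bound on $\dt v$ whose initial datum is $V_1=h_1/\rho_0^{1/2}$, so $\norm{V_1}{\Lnorm{2}}\leq \underline\rho^{-1/2}\norm{h_1}{\Lnorm{2}}$ blows up as $\underline\rho\to 0$, and the linear estimates behind Propositions \ref{prop:maps-set2set-g} and \ref{prop:maps-set2set} use the lower bound $\rho^o\geq\underline\rho/2$, giving $T_v=T_v(M_0,M_1,\mu,\lambda,\underline\rho)$ with $M_1\sim B_2+\underline\rho^{-1}B_2$. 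Running the scheme in weighted form inside the linearization is not straightforward either, because the weight there is the input density $(\sigma^o)^2$ rather than the solution's. The paper resolves this by a different mechanism: the fixed point gives a solution only on a time $T_1(n)$ depending on the lower bound $1/n$ of the approximating data $\rho_{0,n}$; Proposition \ref{prop:lwbd-inde-ests} then supplies bounds independent of the lower bound under the a priori assumption \eqref{priori-a-isen}; and a continuation/iteration in time (restart at $t=T_2$, extend by $\delta T(n)$, reapply the uniform estimates, and repeat until the $n$-independent time $T^*(B_1,B_2)$ is reached) produces a common existence interval before letting $n\to\infty$. Without this continuation argument, or an equivalent device, your claim of a $\underline\rho$-uniform existence time is unproved, and the vacuum limit in Stage 2 does not go through as written.
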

	
	 \subsection{Preliminaries}
	We will use $ \hnorm{\cdot}{}, \norm{\cdot}{} $ to denote norms in $ \Omega_h \subset \mathbb R^2 $ and $ \Omega \subset \mathbb R^3 $, respectively.
	After applying the Ladyzhenskaya's and Agmon's inequalities
	in $ \Omega_h $ and $ \Omega $, directly we have
	\begin{equation}\label{ineq-supnorm}
	\begin{gathered}
		\hnorm{f}{\Lnorm{4}} \leq C \hnorm{f}{\Lnorm{2}}^{1/2} \hnorm{f}{\Hnorm{1}}^{1/2}, ~ 
		\hnorm{f}{\Lnorm\infty}
		\leq C \hnorm{f}{\Lnorm{2}}^{1/2} \hnorm{f}{\Hnorm{2}}^{1/2}, \\
		\norm{f}{\Lnorm{3}} \leq C \norm{f}{\Lnorm{2}}^{1/2} \norm{f}{\Hnorm{1}}^{1/2},
	\end{gathered}
	\end{equation}
	for any function $ f $ with bounded right-hand sides.
	Also, $ \hnorm{\overline f}{\Lnorm{p}}, \norm{\widetilde f}{\Lnorm{p}} \leq C \norm{f}{\Lnorm{p}} $, for every $ p \geq 1 $.
	Considering any quantities $ A, B $, we use the notation $ A \lesssim B $ to denote $ A \leq CB $ for some generic positive constant $ C $ which may be different from line to line. In what follows $ \delta, \omega > 0 $ are arbitrary constants which will be chosen later in the relevant paragraphs to be adequately small. $ C_q $ represents a positive constant depending on the quantity $ q $.
	We will also need the following classical inequality.
	\begin{lemma} Let $ 2\leq p \leq 6  $, and $ \rho \geq 0 $ such that $ 0< \intw \rho \idx = M <\infty $, and $ \intw \rho^\gamma \idx \leq E_0 $, for some $ \gamma \in (1, \infty) $. Then one has
		\begin{equation}\label{ineq:embedding-weighted}
			\norm{f}{\Lnorm{p}} \leq C \norm{\nabla f}{\Lnorm{2}} + C \norm{\rho^{1/2} f}{\Lnorm{2}},
		\end{equation}
		for some constant $ C = C(M, E_0) $,
		provided the right-hand side is finite.
	\end{lemma}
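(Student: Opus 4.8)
\emph{Proof sketch.} The plan is to reduce the statement to the single exponent $p=2$ and then prove the resulting weighted Poincar\'e-type estimate by isolating a portion of $\Omega$ on which $\rho$ stays bounded while still carrying a definite fraction of the total mass $M$. For the reduction, note that since $\Omega$ is bounded and $H^1(\Omega)$ embeds in $L^6(\Omega)$ in three dimensions, for every $p\in[2,6]$ one has $\|f\|_{L^p}\le C\|f\|_{L^6}\le C(\|\nabla f\|_{L^2}+\|f\|_{L^2})$; hence \eqref{ineq:embedding-weighted} will follow at once if we show $\|f\|_{L^2}\le C(M,E_0)(\|\nabla f\|_{L^2}+\|\rho^{1/2}f\|_{L^2})$.

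The crucial ingredient is the choice of a good set. Put $L_0:=(2E_0/M)^{1/(\gamma-1)}$ and $S:=\{\vec x\in\Omega:\rho(\vec x)\le L_0\}$. On $\{\rho>L_0\}$ we have $\rho^\gamma>L_0^{\gamma-1}\rho=(2E_0/M)\rho$, so that $\int_{\{\rho>L_0\}}\rho\idx\le (M/2E_0)\intw\rho^\gamma\idx\le M/2$; therefore $\int_S\rho\idx\ge M/2$, and since $\rho\le L_0$ on $S$ this forces $|S|\ge M/(2L_0)=:\delta_0>0$. Both $L_0$ and $\delta_0$ depend only on $M$, $E_0$, $\gamma$. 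This is the only place the hypotheses are used, and it uses $\gamma>1$ essentially: the finite $L^\gamma$ bound prevents the mass of $\rho$ from concentrating on an arbitrarily small set.

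With $\langle f\rangle_S:=|S|^{-1}\int_S f\idx$ and $\langle f\rangle_\Omega:=|\Omega|^{-1}\intw f\idx$, the Poincar\'e--Wirtinger inequality on $\Omega$ together with $|\langle f\rangle_S-\langle f\rangle_\Omega|\le |S|^{-1/2}\|f-\langle f\rangle_\Omega\|_{L^2}$ gives $\|f-\langle f\rangle_S\|_{L^2(\Omega)}\le C_1(\delta_0,\Omega)\|\nabla f\|_{L^2}$. To control the mean, expand
\[
\int_S\rho f^2\idx=\int_S\rho(f-\langle f\rangle_S)^2\idx+2\langle f\rangle_S\int_S\rho(f-\langle f\rangle_S)\idx+\langle f\rangle_S^2\int_S\rho\idx ,
\]
and combine $\int_S\rho\idx\ge M/2$, the Cauchy--Schwarz bound $(\int_S\rho(f-\langle f\rangle_S)\idx)^2\le M\int_S\rho(f-\langle f\rangle_S)^2\idx$, Young's inequality, and $\rho\le L_0$ on $S$ to absorb the terms carrying $\langle f\rangle_S^2$; this yields $\frac{M}{4}\langle f\rangle_S^2\le \|\rho^{1/2}f\|_{L^2}^2+5L_0\|f-\langle f\rangle_S\|_{L^2(\Omega)}^2\le \|\rho^{1/2}f\|_{L^2}^2+5L_0C_1^2\|\nabla f\|_{L^2}^2$. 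Since $\|f\|_{L^2}\le\|f-\langle f\rangle_S\|_{L^2}+|\Omega|^{1/2}|\langle f\rangle_S|$, the desired estimate follows with a constant depending only on $M$ and $E_0$ (and on the fixed quantities $\gamma$ and $\Omega$).

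I expect the substantive point to be just the one isolated in the second paragraph; steps one and three are routine. An alternative is a compactness argument: if the estimate failed there would exist sequences $\rho_n,f_n$ with $\|f_n\|_{L^2}=1$ and $\|\nabla f_n\|_{L^2}+\|\rho_n^{1/2}f_n\|_{L^2}\to0$; passing to a subsequence one gets $f_n\to c$ in $L^2$ with $c\neq0$, while the good-set estimate gives $\intw\rho_nf_n^2\idx\gtrsim c^2M>0$, a contradiction.
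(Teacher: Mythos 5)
Your proof is correct, and it is worth noting that the paper itself does not prove this lemma at all: it simply cites \cite{Feireisl2004} (Lemma 3.2 there), where the standard argument is a contradiction/compactness one of exactly the type you sketch in your last sentence, yielding a non-explicit constant. Your direct route — the good set $S=\{\rho\le L_0\}$ with $L_0=(2E_0/M)^{1/(\gamma-1)}$, the Chebyshev-type bound showing $\int_S\rho\,d\vec x\ge M/2$ and hence $|S|\ge M/(2L_0)$, Poincar\'e--Wirtinger to control $\|f-\langle f\rangle_S\|_{L^2}$ by $\|\nabla f\|_{L^2}$, and the expansion of $\int_S\rho f^2$ to capture the mean — is a quantitative version of the same underlying mechanism (the mass and $L^\gamma$ bounds prevent $\rho$ from concentrating on a null set), and it buys explicit dependence of $C$ on $M,E_0$ (and on the fixed $\gamma,\Omega$, consistent with how the paper uses the lemma). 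The computations check out: the reduction to $p=2$ via $H^1\hookrightarrow L^6$ on the bounded domain $\mathbb T^2\times(0,1)$ is fine, the weighted Cauchy--Schwarz and Young steps give the stated bound on $\langle f\rangle_S^2$ up to harmless constants. The only point to flag is a small regularity caveat, not a gap: to invoke Poincar\'e--Wirtinger and to know $f\in L^2(\Omega)$ you implicitly need $f$ locally integrable with $\nabla f\in L^2$ (the hypothesis $\rho^{1/2}f\in L^2$ alone says nothing where $\rho$ vanishes); on this extension domain that is the standard Deny--Lions fact, and in the paper the inequality is only ever applied to functions already known to lie in $H^1$, so nothing is lost.
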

	\begin{proof}
		This is standard. See, e.g., \cite[Lemma 3.2]{Feireisl2004}.
	\end{proof}
	In this work, we will apply \eqref{ineq:embedding-weighted} frequently. In fact, as we will see below, the conservations of energy and mass \eqref{conservation-energy}, \eqref{conservation-mass} satisfied by the strong solutions imply that one can apply this inequality with the constant $ C $ depending only on the initial physical energy and the total mass.
	
	\section{Associated linear systems and existence theory}\label{sec:existencetheory}
	In this section, we will establish the local existence theory of \eqref{isen-CPE-g} and \eqref{isen-CPE}. To do this, we will first study the local existence of solutions to \eqref{rfeq:isen-CPE-g} and \eqref{rfeq:isen-CPE} via the Schauder-Tchonoff fixed point theorem
	capitalizing on
	 some a priori estimates.
	In fact, under the assumption that \begin{equation}\label{lower-bound-initial} \rho_0 = \begin{cases}
	\xi_0 + \dfrac{1}{2} gz &\text{in the case with gravity}\\
	(\sigma_0)^2 & \text{in the case without gravity}
	\end{cases}  > \underline{\rho} > 0,
	\end{equation}
	we will first introduce linear systems associated with \eqref{rfeq:isen-CPE-g} and \eqref{rfeq:isen-CPE} with 
	some given input states $ (\xi^o,v^o) $ and $(\sigma^o,v^o)$, respectively. Then we will construct the maps $ (\xi^o, v^o) \leadsto (\xi,v) $ and $ (\sigma^o, v^o) \leadsto (\sigma,v) $, respectively, by means of the unique solution operators of
	 the associated linear systems,
	 and establish corresponding regularity estimates. Then we 
	 show that these maps have fixed points in some function spaces, which are the solutions to the reformulated compressible primitive equations \eqref{rfeq:isen-CPE-g} and \eqref{rfeq:isen-CPE}. This will yield the existence of strong solutions to \eqref{isen-CPE-g} and \eqref{isen-CPE} under the assumption \eqref{lower-bound-initial}. Furthermore,
	  in the case when there is no gravity, we will also be able to derive some uniform estimates independent of the lower bound $ \underline \rho $ of the initial density profile. Such a fact enables us to establish the existence of strong solutions with vacuum to system \eqref{isen-CPE}.
	
	Specifically, in section \ref{sec:lin-isen-g} and \ref{sec:lin-isen} we will introduce the associated linear systems and the function spaces $ \mathfrak{Y} $ for \eqref{rfeq:isen-CPE-g} and \eqref{rfeq:isen-CPE}, respectively, where $ \mathfrak Y $ are compactly embedded in some corresponding spaces $ \mathfrak V $. Also, we will show that the maps $ \mathcal{T}: \mathfrak X \mapsto \mathfrak X $, for some convex bounded subsets $ \mathfrak X $ of $ \mathfrak Y $, given by
	\begin{align*}
	(\xi^o, v^o) \leadsto (\xi,v) ~~~~ & \text{in the case with gravity, and}\\
	(\sigma^o, v^o) \leadsto (\sigma,v) ~~~~ & \text{in the case without gravity,}
	\end{align*}
	are well-defined; observing that $ \mathfrak X $ are convex subsets of $\mathfrak Y $ and hence compact in $ \mathfrak  V $. We will use the same notations $ \mathfrak X, \mathfrak Y, \mathfrak V, \mathcal{T} $ to denote the convex bounded sets, the compact function spaces, the embedded function spaces and the constructed maps in both cases. We summarize the relevant regularity estimates in section \ref{sec:exist-thy} and show that the Schauder-Tchonoff fixed point theorem will yield the existence of solutions to \eqref{rfeq:isen-CPE-g} and \eqref{rfeq:isen-CPE} in the corresponding set. Recall that the Schauder-Tchonoff fixed point theorem states that for a Banach space $ V $ with a convex compact subset $ X \subset V $, if $F : X \mapsto X$ is continuous, then $F$ has at least one fixed point in $ X $. In our case, we will take $ X = \mathfrak X $ and
	$ V = \mathfrak V := \lbrace (\xi,v)| \xi , v \in L^\infty(0,T;L^2(\Omega)), \nabla v\in L^2(0,T;L^2(\Omega)) \rbrace
	$ in the case with gravity, or $ V= \mathfrak V := \lbrace (\sigma,v)| \sigma , v \in L^\infty(0,T;L^2(\Omega)), \nabla v\in L^2(0,T;L^2(\Omega)) \rbrace $ in the case without gravity, with the corresponding norms.
	
Notice that, under assumption \eqref{lower-bound-initial},  systems \eqref{rfeq:isen-CPE-g} and \eqref{rfeq:isen-CPE} are equivalent to \eqref{isen-CPE-g} and \eqref{isen-CPE}, respectively. In particular, the fixed point arguments establish the existence of strong solutions to \eqref{isen-CPE-g} and \eqref{isen-CPE} with strictly positive initial density.
	On the other hand, in the case without gravity, in section \ref{sec:exist-vacuum}, we give some estimates independent of the lower bound $ \underline{\rho} $. Therefore, by taking an approximating sequence, we will eventually get the existence of strong solutions to \eqref{isen-CPE} with vacuum. The reason for this is that we have the $ H^2(\Omega) $ estimate for $ \sigma = \rho^{1/2} $, in the case without gravity, and hence $ \rho^{1/2} w $ is sufficiently regular (see \eqref{vertical-isen-02}). This will enable us to recover the spatial derivative estimates of $ v $ (see \eqref{Impt-H2-sigma}, below) . We emphasize that similar estimates are not available in the case with gravity. In fact, the hydrostatic equation \subeqref{isen-CPE-g}{3} implies that the sound speed ($ \sqrt{2}\rho^{1/2} $ in our setting) is only $ 1/2 $-H\"older continuous across the gas-vacuum interface if there is any vacuum. Thus $ \rho^{1/2} $ can not belong to the space $ H^2(\Omega) $. Therefore, in our setting, no vacuum is allowed in the case with gravity.
		
	\subsection{The case with gravity and $ \gamma = 2 $}\label{sec:lin-isen-g}
	
	\subsubsection{Associated linear inhomogeneous system}
	Consider a finite positive time $ T $, which will be determined later.
	Let $ \mathfrak{Y} = \mathfrak Y_T $ be the function space
	defined by
	\begin{equation}\label{fnc-space-isen-g-02}
	\begin{aligned}
	\mathfrak Y = \mathfrak Y_T: = & \lbrace (\xi, v)| \xi \in L^\infty(0,T;H^2(\Omega)), \dt \xi \in L^\infty(0,T;H^1(\Omega)), \\
	& v \in L^\infty(0,T;H^2(\Omega))\cap L^2(0,T;H^3(\Omega)), \\
	& \dt v \in L^\infty(0,T;L^2(\Omega))\cap L^2(0,T;H^1(\Omega)) \rbrace,
	\end{aligned}
	\end{equation}
	with the norm
	\begin{align*}
		& \norm{(\xi,v)}{\mathfrak Y} := \norm{\xi}{L^\infty(0,T;\Hnorm{2}(\Omega))} + \norm{\dt\xi}{L^\infty(0,T;\Hnorm{1}(\Omega))} + \norm{v}{L^\infty(0,T;\Hnorm{2}(\Omega))}\\
		& ~~~~ + \norm{v}{L^2(0,T;\Hnorm{3}(\Omega))} + \norm{\dt v}{L^\infty(0,T;\Lnorm{2}(\Omega))} + \norm{\dt v}{L^2(0,T;\Hnorm{1}(\Omega))}.
	\end{align*}
	Notice that for this space, one can make sense of the initial value $ \xi_0 $ and $ v_0 $ for $ \xi $ and $ v $, respectively.
	Hereafter, the notation $ \mathfrak S_T $ for the function space $ \mathfrak S \in \lbrace \mathfrak Y, \mathfrak V, \mathfrak X \rbrace $ is used to emphasized the time dependence of the function space on $ T > 0 $, while the notation $ \mathfrak S $ and $ \mathfrak S_T $ will be used alternatively.
	Notice that, thanks to Aubin compactness theorem (see, e.g., \cite[Theorem 2.1]{temam1977} and \cite{Chen2012,Simon1986}),  every bounded subset of $ \mathfrak Y $ is a compact subset of the space
	\begin{equation}\label{embedd-space-g}
		\mathfrak V = \mathfrak V_T := \lbrace (\xi,v)| \xi , v \in L^\infty(0,T;L^2(\Omega)), \nabla v\in L^2(0,T;L^2(\Omega)) \rbrace,
	\end{equation}
	with the norm
	\begin{equation}\label{norm-bspace-g}
		\begin{aligned}
		\norm{(\xi,v)}{\mathfrak V} := & \norm{\xi}{L^\infty(0,T;\Lnorm{2}(\Omega))} + \norm{v}{L^\infty(0,T;\Lnorm{2}(\Omega))} \\
		& ~~~~ + \norm{v}{L^2(0,T;\Hnorm{1}(\Omega))}.
		\end{aligned}
	\end{equation}
	Let $ \mathfrak X = \mathfrak X_{T} $ be a bounded subset of $ \mathfrak Y $ defined by
	\begin{equation}\label{fnc-space-isen-g}
	\begin{aligned}
		\mathfrak X = & \mathfrak X_T:= \bigl\lbrace (\xi, v)\in \mathfrak Y| (\xi, v)|_{t=0} = (\xi_0,v_0), \dz v|_{z=0,1} = 0, \dz \xi = 0,  \\
		& \xi + \dfrac{1}{2}gz \geq \dfrac{1}{2} \underline{\rho} > 0,\sup_{0\leq t\leq T} \norm{\xi(t)}{\Hnorm{2}}^2 \leq 2 M_0,  \sup_{0\leq t\leq T} \norm{\dt \xi(t)}{\Hnorm{1}}^2 \leq C_2, \\
		& \sup_{0\leq t\leq T} \lbrace \norm{v(t)}{\Hnorm{2}}^2 + \norm{\dt v(t)}{\Lnorm{2}}^2 \rbrace + \int_0^T \biggl( \norm{v(t)}{\Hnorm{3}}^2 \\
		& + \norm{\dt v(t)}{\Hnorm{1}}^2 \biggr) \,dt \leq C_1 M_1   \bigr\rbrace,
	\end{aligned}
	\end{equation}
	where $ M_0, M_1 $ are the bounds of initial data in \eqref{bound-of-initial-data-linear-g} and $C_1 = C_1(M_0,\mu,\lambda,\underline\rho)$, $ C_2 = C_2(M_0,C_1M_1) $ are given below in \eqref{def:C_1}, \eqref{def:C2}, respectively.
	Notice, for $ (\xi, v) \in \mathfrak X $,
	$$\int_0^1\bigl( \dvh (\xi \widetilde{v}) + \dfrac{g}{2} \widetilde{z \dvh v} \bigr) \,dz = 0.$$
	
	Let $ (\xi^o,v^o) \in \mathfrak X $. The following inhomogeneous linear system is inferred from \eqref{rfeq:isen-CPE-g} using $(\xi^o,v^o) $ as an input:	
	\begin{equation}\label{eq:lin-isen-g}
	\begin{cases}
		\dt \xi + \overline{v^o} \cdot \nablah \xi + \xi \overline{\dvh v^o} + \dfrac{g}{2} \overline{z \dvh v^o} = 0 & \text{in} ~ \Omega,\\
		(\xi^o + \dfrac{1}{2}gz) ( \dt v + v^o \cdot\nablah v^o + w^o \dz v^o) + (2\xi^o +gz) \nablah \xi^o \\
		~~~~ ~~~~ ~~~~ = \mu \deltah v + \mu \partial_{zz} v + (\mu +\lambda) \nablah \dvh v & \text{in} ~ \Omega,\\
		\dz \xi = 0 & \text{in} ~ \Omega,
	\end{cases}
	\end{equation}
	where $ w^o $ is given by 
	\eqref{vertical-isen-g} with $ (\xi^o, v^o) $ instead of $ (\xi, v) $, i.e.,
	\begin{equation}\label{vertical-lin-isen-g}
		\rho^o w^o = (\xi^o + \dfrac{1}{2}gz) w^o := - \int_0^z \bigl( \dvh (\xi^o \widetilde{v^o}) + \dfrac{g}{2} \widetilde{z \dvh v^o} \bigr) \,dz.
	\end{equation}
	Notice that \subeqref{eq:lin-isen-g}{1} is inferred from \eqref{conti-isen-g}. For details, see the deviation from \eqref{rfeq:isen-CPE-g} to \eqref{vertical-isen-g}.
	Hereafter, denote by
	$ \rho^o := \xi^o + \dfrac{1}{2}gz $
	. The initial and boundary conditions for the linear system \eqref{eq:lin-isen-g} are given by
	\begin{equation}\label{bd-conds-lin-g}
	(\xi,v)|_{t=0} = (\xi_0,v_0),~ \dz v|_{z=0,1} = 0.
	\end{equation}
	The compatible conditions in \eqref{isen-cptbl-conds-g} are still imposed and we require \begin{equation}\label{bound-of-initial-data-linear-g} \norm{\xi_0}{\Hnorm{2}}^2 \leq M_0, \norm{v_0}{\Hnorm{2}}^2 + \norm{V_1}{\Lnorm{2}}^2 \leq M_1. \end{equation}
	Recall that $ V_1 $ is given in \eqref{isen-cptbl-conds-g}, essentially, $ V_1 = v_t|_{t=0} $.

	Then the map $ \mathcal{T} $, in this case, is defined as
	\begin{equation}\label{def:map-isen-g}
	\mathcal T: (\xi^o,v^o) \leadsto (\xi, v),
	\end{equation}
	where $ (\xi, v) $ is the unique solution to the linear system \eqref{eq:lin-isen-g} with $ (\xi^o, v^o) \in \mathfrak X $, to be established below. The rest of this subsection is devoted to show that $ \mathcal{T} $ is a well defined map from $ \mathfrak{X} $ to $ \mathfrak{X} $.
	
	\subsubsection{Analysing the linear system}\label{sec:solving-linear-g}
	In this section we will show the existence of solutions to the linear system \eqref{eq:lin-isen-g} with 
	$ (\xi^o, v^o) $ in $ C^{\infty}(\overline\Omega \times [0,T])\cap \mathfrak X_T $ 
	by the Galerkin method.
	
	 Equations \subeqref{eq:lin-isen-g}{1} and \subeqref{eq:lin-isen-g}{2} are linear hyperbolic and parabolic equations, respectively. The standard existence theory for the corresponding problem will yield the existence of solutions to our linear system \eqref{eq:lin-isen-g}. We will only sketch the proof. In this subsection, the positive constant $ C $, which may be different from line to line, depends on $ \underline\rho $, $ T ,\norm{\xi _0}{\Hnorm{2}} , \norm{v_0}{\Hnorm{2}} $, and the $ \Lnorm{\infty} $-norm of $ \xi^o, v^o $ and their derivatives which are finite.
	
	Equation \subeqref{eq:lin-isen-g}{2} can be solved by standard Galerkin approximation. For the sake of simplicity, equation \subeqref{eq:lin-isen-g}{2} is written as
	\begin{equation}{\label{eq:linear-parabolic-g}}
		(\xi^o + \dfrac{1}{2}gz) \dt v = \mu \deltah v + \mu \partial_{zz} v + (\mu + \lambda)\nablah \dvh v + \mathcal F(\xi^o, v^o),
	\end{equation}
	with temporary assumption that $ \mathcal F(\xi^o, v^o) \in C^\infty(\overline{\Omega}\times[0,T]) $.
	Consider the orthonormal basis $ \lbrace \vec{e}_i \in H^{20}(\Omega) \rbrace_{i = 1, 2 \cdots} $ of the Hilbert space $ H^1(\Omega) $.
	Denote by 
	$ v_m(t):= \sum_{i=1}^{m} \beta_i(t) \vec{e}_i (\vec{x}) $, which is the solution to the following ODE system:
	for $ i=1,2\cdots m $,
	\begin{equation}\label{ODE-g-2}
		\begin{aligned}
			& \sum_{j=1}^m \beta'_j(t) \intw (\xi^o + \dfrac{1}{2} gz) \vec{e}_j \cdot \vec{e}_i \idx + \sum_{j=1}^{m}\intw \Big\lbrack \beta_j(t) (\mu \nablah \vec e_j : \nablah \vec e_i \\
			& ~~~~ + \mu \partial_{z} \vec{e}_j \cdot \partial_z \vec{e}_i + (\mu+\lambda) \bigl(\dvh \vec e_j \bigr) \bigl( \dvh \vec e_i \bigr) \Big\rbrack \idx = \intw \mathcal F(\xi^o, v^o) \cdot e_i\idx,
		\end{aligned}
	\end{equation}
%
	where
	the above system is supplemented by
	the initial data $\lbrace \beta_i(0) \rbrace_{i=1,2\cdots m} $ satisfying
	\begin{equation*}
		\beta_i(0) = \intw \vec{e}_i \cdot v_0 \idx ~~~~ i \in \lbrace 1,2\cdots m \rbrace.
	\end{equation*}
	Notice that $ \rho^o = \xi^o + \dfrac{1}{2} gz \geq \dfrac{1}{2} \underline\rho > 0 $. Therefore the matrix $ (a_{ij})_{m \times m} $ defined by
	\begin{equation*}
		a_{ij} = \intw (\xi^o + \dfrac{1}{2} gz) \vec{e}_j \cdot \vec{e}_i \idx
	\end{equation*}
	is invertible.
	
	This can be justified by contradiction as follows. Suppose $ (a_{ij})_{m\times m} $ is not invertible. Consider the vectors $ b_i := (a_{i1}, a_{i2} \cdots a_{i m}) , i = 1,2\cdots m $. Then there exists $ (\lambda_i)_{ i=1,2\cdots m} \neq 0 $ such that $ \sum_{i=1}^m \lambda_i b_i = 0 $. This is equivalent to say
	\begin{equation*}
		0 = \intw (\xi^o + \dfrac{1}{2} gz) \vec{e}_j \cdot \sum_{i=1}^m \lambda_i \vec{e}_i \idx, ~~ \forall ~ j = 1,2\cdots m.
	\end{equation*}
	Multiply the above equation with $ \lambda_j $ and sum over $ j = 1,2\cdots m $. We have
	\begin{equation*}
		\intw (\xi^o + \dfrac{1}{2} gz) \abs{ \sum_{i=1}^m \lambda_i \vec{e}_i}{2} \idx = 0,
	\end{equation*}
	which yields  $ \sum_{i=1}^m \lambda_i \vec{e}_i = 0 $, since $ \xi^o + \dfrac{1}{2} gz \geq \dfrac{1}{2} \underline\rho > 0 $. But the linear independence of $ \lbrace \vec{e}_i \rbrace_{i = 1,2\cdots m} $ in $ H^1(\Omega) $ implies that $ (\lambda_i)_{i=1,2\cdots m} = 0 $, which is contradictory to the previous assumption.
		
	Then the standard ODE theory yields that there exist unique solutions $ \lbrace \beta_i \rbrace_{i=1,2\cdots m} $ for $ t \in [0,T_m') $, with maximal existence time $  T_{m}' \in (0,T\rbrack $. Now we show that the maximal existence time $ T_m' = T $. Indeed, let us assume $ T_m' < T $. Multiply \eqref{ODE-g-2} with $ \beta_i $ and sum up the results over $ i \in \lbrace 1 , 2 \cdots m \rbrace $. Then in terms of $ v_m $, one will have
	\begin{align*}
		& \dfrac{1}{2} \dfrac{d}{dt} \intw (\xi^o + \dfrac{1}{2} gz) \abs{v_m}{2}\idx + \intw \biggl( \mu \abs{\nablah v_m}{2} + \mu \abs{\dz v_m}{2} \\
		& ~~~~ ~~~~  + (\mu+\lambda) \abs{\dvh v_m}{2} \biggr) \idx  = \dfrac{1}{2} \intw \dt \xi^o \abs{v_m}{2} \idx \\
		& ~~~~  + \intw  \mathcal F(\xi^o, v^o) \cdot  v_m \idx \leq C \intw (\xi^o + \dfrac{1}{2} gz) \abs{v_m}{2}\idx + C,
	\end{align*}
	for some positive constant $ C $ depending on $ \underline \rho $ and \begin{equation*}
		 \sup_{0\leq t \leq T}\bigl\lbrace \norm{\xi^o(t)}{\Lnorm{\infty}} , \norm{\dt \xi^o(t)}{\Lnorm{\infty}}, \norm{\mathcal F(\xi^o, v^o)(t)}{\Lnorm{\infty}}  \bigr\rbrace.
	\end{equation*}
	Then the Gr\"onwall's inequality yields
	\begin{align*}
		& \sup_{0\leq t\leq T_m'} \norm{v_m(t)}{\Lnorm{2}}^2 + \int_0^{T_m'} \norm{\nabla v_m(t)}{\Lnorm{2}}^2 \,dt \leq C,
	\end{align*}
	for some positive constant $ C $ independent of $ m $.
	This implies that one can extend the solution such that $ T_m' = T $. On the other hand, multiply \eqref{ODE-g-2} with $ \beta'_i(t) $ and sum up the results over $ i\in \lbrace 1,2\cdots m\rbrace $. Then one has
	\begin{align*}
		& \norm{\sqrt{\rho^o} \dt v_m}{\Lnorm{2}}^2
		 + \dfrac{1}{2} \dfrac{d}{dt} \biggl( \mu \norm{\nablah v_m}{\Lnorm{2}}^2 + \mu \norm{\dz v_m}{\Lnorm{2}}^2 \\
		& ~~~~ + (\mu+\lambda) \norm{\dvh v_m}{\Lnorm{2}}^2 \biggr) = \int_\Omega \mathcal F(\xi^o, v^o) \cdot \dt v_m \idx \\
		& ~~~~ \leq \dfrac{1}{2} \int \rho^o \abs{\dt v_m}{2} \idx + C,
 	\end{align*}
 	for some positive constant $ C $, observing that $ \rho^o \geq \dfrac{1}{2} \underline \rho > 0 $. Then
	integrating the above with respect to $ t \in [0,T] $ yields
	\begin{equation*}
		 \sup_{0\leq t\leq T}\norm{\nabla v_m(t)}{\Lnorm{2}}^2 + \int_0^{T} \norm{\dt v_m(t)}{\Lnorm{2}}^2\,dt \leq C,
	\end{equation*}
	for some positive constant $ C $.
	Therefore, we have the following $m$-independent bounds for $ v_m $:
	\begin{equation*}
		\norm{v_m}{L^\infty(0,T;H^1(\Omega))} + \norm{\dt v_m}{L^2(0,T;L^2(\Omega))} < C.
	\end{equation*}
	Then after taking a subsequence if necessary, there is $ v \in L^\infty(0,T;H^1(\Omega))
	$, with $ \dt v\in L^2(0,T;L^2(\Omega)) $ such that
	\begin{gather*}
		v_m \buildrel\ast\over\rightharpoonup v ~~ \text{in} ~ L^\infty(0,T;H^1(\Omega)), \\
		\dt v_m \rightharpoonup \dt v ~~ \text{in} ~ L^2(0,T;L^2(\Omega)).
	\end{gather*}
	By Aubin compactness theorem (\cite[Theorem 2.1]{temam1977} and \cite{Chen2012,Simon1986}) we have also
	$$ v_m \rightarrow v ~~ \text{in} ~ L^2(0,T;L^2(\Omega)) ~ \text{strongly}. $$
	With the above convergences, one can derive, from \eqref{ODE-g-2}, that the following equality holds for $ a.e., s_1, s_2 \in [0,T] $ and $ s_1 < s_2 $:
	\begin{equation}
		\begin{aligned}\label{weak-sol-g-2}
			& \intw (\rho^o v \cdot \vec{\psi})|_{t=s_2} \idx -  \intw (\rho^o v \cdot \vec{\psi})|_{t=s_1} \idx + \int_{s_1}^{s_2} \intw \biggl( - v \cdot \dt (\rho^o \vec{\psi})  \\
			& ~~~~ + \mu \nablah v : \nablah \vec{\psi}
			 + \mu \dz v \cdot \dz \vec{\psi} + (\mu+\lambda) \dvh v \dvh \vec{\psi} \biggr) \idx\,dt \\
			 & =  \int_{s_1}^{s_2} \intw \mathcal F(\xi^o, v^o) \cdot \vec{\psi} \,dt,
		\end{aligned}
	\end{equation}
	for $ \forall \vec{\psi} \in C^\infty(\overline{\Omega} \times [0,T]) $.
	In addition, since $ \dt v \in L^2(0,T;L^2(\Omega)) $ and $ v \in L^\infty(0,T;H^1(\Omega)) $, $ v $ is continuous as a $ L^2 $ function; moreover, \eqref{weak-sol-g-2} holds for any $ s_1, s_2 \in [0,T] $. In particular, it holds for $ s_1 = 0, s_2 = T $. Also, $ v $ is the weak solution to \eqref{eq:linear-parabolic-g} (equivalently \subeqref{eq:lin-isen-g}{2}). 
We remark that, argued by taking a convergence sequence of $ C^1(\overline\Omega\times[0,T]) $ functions in $ C^\infty(\overline\Omega\times[0,T]) $,
$ \vec{\psi} $ in \eqref{weak-sol-g-2} needs not to be $ C^\infty $ as above. In particular, $ \vec{\psi} \in C^1(\overline\Omega\times[0,T]) $ is sufficient to apply the arguments in the following paragraphs.

Now we employ standard difference quotient method in the tangential direction by choosing $ \vec{\psi} = D^{-h}(D^h v_m )) $ in \eqref{weak-sol-g-2} for $ h \neq 0 $ where $ v_m $ is as before and $ D^h $ is the different quotient in the tangential direction:
$$ D^h (f)(\cdot) := \dfrac{f(\cdot + h \vech{e}) - f(\cdot) }{h}, ~ h \neq 0, $$
with $ \vech{e} \in \lbrace \vec{e}_x, \vec{e}_y \rbrace $ being the horizontal unit vector. Then it implies,
\begin{align*}
	& \intw ( D^h( \rho^o v ) \cdot D^h v_m)|_{t=T} \idx -  \intw (D^h(\rho^o v_0) \cdot D^h v_m)|_{t=0} \idx \\
	& ~~~~ + \int_0^T \intw \biggl( D^{-h} (D^h v) \cdot \dt (\rho^o v_m) + D^{-h} v \cdot \dt ((D^h\rho^o) v_m) \\
	& ~~~~ + D^{h} v \cdot \dt ((D^{-h}\rho^o) v_m) + v \cdot \dt (D^{-h}(D^h\rho^o) v_m)   \\
	& ~~~~ + \mu \nablah D^h v : \nablah D^h v_m
		 + \mu \dz D^h v \cdot \dz D^h v_m \\
	& ~~~~ + (\mu+\lambda) \dvh D^h v \dvh D^h v_m \biggr) \idx\,dt\\
	& ~~~~ =  - \int_0^T \intw D^{-h} (D^h \mathcal F(\xi^o, v^o)) \cdot v_m \,dt.
\end{align*}
After expanding the above expression, taking $ m \rightarrow \infty $ and then $ h \rightarrow 0 $, we have
\begin{equation*}
	\nabla \nablah v \in L^2(0,T; L^2(\Omega)).
\end{equation*}
Moreover, \eqref{weak-sol-g-2} can be written as, after applying integration by parts in time, thanks to the regularity we have obtained so far, i.e., $ \dt v \in L^2(0,T;L^2(\Omega)), \nabla \nablah v \in L^2(0,T;L^2(\Omega)), v \in L^\infty(0,T;H^1(\Omega)) $,
\begin{equation*}{\tag{\ref{weak-sol-g-2}'}}
		\begin{aligned}
			& \int_0^T \intw \mu \dz v \cdot \dz \vec{\psi} \idx\,dt = - \int_0^T \intw \biggl( \rho^o \dt v
			-  \mu \deltah v  \\
			& ~~~~ - (\mu+\lambda) \nablah \dvh v - \mathcal F(\xi^o, v^o) \biggr) \cdot \vec{\psi} \idx\,dt,
		\end{aligned}
\end{equation*}
where the right-hand side is finite for any $ \vec{\psi} \in C^\infty(\overline\Omega\times[0,T]) $ with $ \norm{\vec{\psi}}{\Lnorm{2}(0,T;L^2(\Omega))} < \infty $. In particular, by taking $ \vec{\psi} \in C_0^{\infty}(\overline\Omega\times[0,T]) $, this identity implies that the distributional derivative $ \partial_{zz} v = \partial_z (\partial_z v) $ is a function in $ L^2(0,T;L^2(\Omega)) $, thus $ v \in L^2(0,T;H^2(\Omega)) $. Therefore, \eqref{eq:linear-parabolic-g} holds in $ L^2(0,T;L^2(\Omega)) $ and a.e. in $ \Omega \times (0,T) $. Consequently,
\begin{equation}\label{21Jun001}
\begin{aligned}
	& \int_0^T \intw \biggl( \rho^o \dt v
			-  \mu \deltah v  - (\mu+\lambda) \nablah \dvh v - \mu \partial_{zz} v\\
	& ~~~~ - \mathcal F(\xi^o, v^o) \biggr) \cdot \vec{\psi} \idx\,dt = 0,
\end{aligned}
\end{equation}
for any $ \vec{\psi} \in C^\infty(\overline{\Omega}\times [0,T]) $. Now, thanks to the regularity we have obtained, from \eqref{weak-sol-g-2}, after applying integration by parts, for every $ \vec{\psi} \in C^\infty(\overline{\Omega}\times[0,T])  $, it holds
	\begin{align*}
		& \int_0^T \inth \mu \dz v \cdot \vec{\psi} \idxh\,dt|_{z=1} - \int_0^T \inth \mu \dz v \cdot \vec{\psi} \idxh\,dt|_{z=0} \\
		& ~~~~ = -  \int_0^T \intw \biggl( \rho^o v_t - \mu \deltah v - \mu \partial_{zz} v - (\mu+\lambda) \nablah \dvh v \\
		& ~~~~ - \mathcal F(\xi^o, v^o) \biggr) \cdot \vec{\psi} \idx \,dt= 0,
	\end{align*}
due to \eqref{21Jun001}.
Therefore, by taking $ \vec{\psi} $ with support near the boundaries $ \lbrace z=0 \rbrace $ and $ \lbrace z = 1 \rbrace $, one can check $ \dz v \bigr|_{z=0,1} = 0 $ in the distribution sense and also in $ L^2(\Omega_h) $ thanks to the trace theorem. This verifies the boundary condition \eqref{bd-conds-lin-g} and $ v $ is the solution to \subeqref{eq:lin-isen-g}{2} (equivalently \eqref{eq:linear-parabolic-g}) with $ \partial_z v \bigr|_{z=0,1} = 0 $ and
	\begin{equation}\label{09Jun}
	\begin{gathered}
		v \in L^\infty(0,T;H^1(\Omega)) \cap L^2(0,T;H^2(\Omega)), \\
		\dt v \in L^2(0,T;L^2(\Omega)).
	\end{gathered}
	\end{equation}	
	Observing that the ODE theory we apply before in \eqref{ODE-g-2} implies that $ \lbrace \beta_j' \rbrace_{j=1,2 \cdots m} $  are differentiable in time, and hence $ \lbrace \beta_j'' \rbrace_{j=1,2 \cdots m}  $ exist.
	To derive higher order regularity, consider the time derivative of the Galerkin approximation system \eqref{ODE-g-2}: for $ i \in \lbrace 1 ,2 \cdots m \rbrace $,
	\begin{align*}
			& \sum_{j=1}^m \beta''_j(t) \intw (\xi^o + \dfrac{1}{2} gz) \vec{e}_j \cdot \vec{e}_i \idx + \sum_{j=1}^m \beta'_j(t) \intw \dt\xi^o \vec{e}_j \cdot \vec{e}_i \idx \\
			& ~~~~  + \sum_{j=1}^{m}\intw \Big\lbrack \beta_j'(t) (\mu \nablah \vec e_j : \nablah \vec e_i + \mu \partial_{z} \vec{e}_j \cdot \partial_z \vec{e}_i \\
			& ~~~~ + (\mu+\lambda) \bigl(\dvh \vec e_j \bigr) \bigl( \dvh \vec e_i \bigr) \Big\rbrack \idx = \intw \dt (\mathcal F(\xi^o, v^o)) \cdot e_i\idx.
	\end{align*}

	Then multiply the above with $ \beta_i'(t) $ and sum the results over $ i \in \lbrace 1,2 \cdots m \rbrace $. Integration in time yields $ \dt v_m \in L^\infty(0,T;L^2(\Omega)) \cap L^2(0,T;H^1(\Omega)) $, and the norm is bounded independent of $ m $. By taking $ m \rightarrow\infty $, one concludes that $$ \dt v \in L^\infty(0,T;L^2(\Omega)) \cap L^2(0,T;H^1(\Omega)) .$$ Now, since $ v $ is the solution to \eqref{eq:linear-parabolic-g} (equivalently \subeqref{eq:lin-isen-g}{2}), consider the elliptic problem,
	\begin{equation}\label{09Jun02}
	\begin{gathered}
		- \mu \deltah v - \mu \partial_{zz} v - (\mu + \lambda)\nablah \dvh v = - (\xi^o + \dfrac{1}{2}gz) \dt v  + \mathcal F(\xi^o, v^o)\\
		\text{in} ~ \Omega, ~~~~ \text{with} ~~ \dz v\bigr|_{z=0,1}, ~ \text{periodic in} ~ \vech{x}.
	\end{gathered}
	\end{equation}
	Then the regularity estimate of such an elliptic problem implies that $$ v \in L^\infty(0,T;H^2(\Omega)) \cap L^2(0,T;H^3(\Omega)) . $$
	We remark here about how to get the $ H^3 $ regularity:  first using difference quotient method to obtain the estimates of the horizontal derivatives; and then using the equation to represent the vertical derivative in terms of others and thus obtaining the regularity of the vertical derivative. Similar arguments also applied to the $ H^4 $ regularity below. See, for instance, \cite[Proposition 7.5]{taylorPDE01}

	
	Additionally, supposed $ v_0 \in H^3(\Omega) $, from the equation \eqref{eq:linear-parabolic-g}, one has $ \dt v\bigr|_{t=0} \in H^1(\Omega) $. Repeating the above arguments from \eqref{ODE-g-2} to \eqref{09Jun}
	 with $ v $ replaced by $ \dt v $ yields $ \dt v \in L^\infty(0,T;H^1(\Omega)) \cap L^2(0,T;H^2(\Omega)) $ and the elliptic estimate of elliptic problem \eqref{09Jun02} implies $ v \in L^\infty(0,T;H^3(\Omega)) \cap L^2(0,T;H^4(\Omega)) $.
	
	In order to solve the hyperbolic equation \subeqref{eq:lin-isen-g}{1}, we employ a parabolic  regularizing procedure. Consider the regularization of \subeqref{eq:lin-isen-g}{1}: for any $ \iota > 0 $,
	\begin{equation}\label{eq:regularized-hyper-g}
		\dt \xi + \overline{v^o} \cdot \nablah \xi + \xi \overline{\dvh v^o} + \dfrac{g}{2} \overline{z \dvh v^o} = \iota \deltah \xi  ~~~ \text{in} ~ \Omega_h,
	\end{equation}
	subject to periodic boundary condition.
The existence and the regularity of  solutions to \eqref{eq:regularized-hyper-g} follow with similar arguments as above. In fact, one can repeat similar steps  to show that for $ \xi_0 \in H^3(\Omega_h) $, the solution to \eqref{eq:regularized-hyper-g} satisfies
\begin{gather*}
	\xi \in L^\infty(0,T;H^3(\Omega)) \cap L^2(0,T;H^4(\Omega)),\\
	\dt \xi \in L^\infty(0,T; H^1(\Omega)) \cap L^2(0,T;H^2(\Omega)).
\end{gather*}

Now we derive the estimate independent of $ \iota > 0 $. In fact, taking $ \partial_{hh} = \partial_h^2 $, $ \partial_h \in \lbrace \partial_x, \partial_y \rbrace $, in \eqref{eq:regularized-hyper-g}, and taking $ L^2 $-inner product of the resultant with $ \partial_{hh} \xi $ yield, after applying integration by parts and H\"older inequality,
\begin{align*}
	\dfrac{1}{2} \dfrac{d}{dt}\hnorm{\partial_{hh} \xi}{\Lnorm{2}}^2 + \iota \hnorm{\nablah \partial_{hh} \xi }{\Lnorm{2}}^2 \leq C + C \hnorm{\partial_{hh}\xi }{\Lnorm{2}}^2.
\end{align*}
Then the Gr\"onwall's inequality yields $ \hnorm{\xi}{\Hnorm{2}} < C\hnorm{\xi_0}{\Hnorm{2}} + C $ for some positive constant $ C $, independent of $ \iota $ and $ \hnorm{\xi_0}{\Hnorm{3}} $. Also, taking $ L^2 $-inner product of \eqref{eq:regularized-hyper-g} with $ \dt \xi $ and applying integration by parts in the resultants yield,
\begin{equation*}
	 \dfrac{\iota}{2} \dfrac{d}{dt} \hnorm{\nablah \xi}{\Lnorm{2}}^2 + \hnorm{\dt \xi}{\Lnorm{2}}^2 \leq \dfrac{1}{2} \hnorm{\dt \xi}{\Lnorm{2}}^2 + C \hnorm{\xi}{\Hnorm{2}} + C.
\end{equation*}
Thus $ \hnorm{\dt \xi}{L^2(0,T;\Lnorm{2}(\Omega))} < C \hnorm{\xi_0}{\Hnorm{2}} + C  $ for some positive constant $ C $, independent of $ \iota $ and $ \hnorm{\xi_0}{\Hnorm{3}} $.
 Therefore by taking $ \iota \rightarrow 0^+ $ in \eqref{eq:regularized-hyper-g}, we obtain a solution $ \xi $ to the hyperbolic equation \subeqref{eq:lin-isen-g}{1} with $$ \xi \in L^\infty(0,T;H^2(\Omega)), ~ \dt \xi \in L^2(0,T;L^2(\Omega)). $$
From \subeqref{eq:lin-isen-g}{1}, one can infer that $ \dt \xi \in L^\infty(0,T;H^1(\Omega)) $. In particular, the bounds depend only on $ \norm{\xi_0}{\Hnorm{2}}$. Then taking an approximating sequence of $ \xi_0 \in H^2 $ in $ H^3 $ space concludes the existence of solutions to the hyperbolic equation \subeqref{eq:lin-isen-g}{1} for $ \xi_0 \in H^2 $.

In addition, suppose $ \xi_0 \in H^3(\Omega) $. By approximating it in $ H^4 $ space, similar arguments will establish a solution $ \xi $ to the hyperbolic equation \eqref{eq:lin-isen-g} with
$$ \xi \in L^\infty(0,T;H^3(\Omega)), ~ \dt \xi \in L^\infty(0,T;H^2(\Omega)). $$

	We summarize the above discussion in the following:
	\begin{proposition}\label{prop:exist-lin-isen-g}
		For given $ (\xi^o,v^o) \in C^\infty(\overline{\Omega}\times[0,T]) \cap \mathfrak X_T $, there is a unique strong solution $ (\xi, v) \in \mathfrak Y_T $ of system \eqref{eq:lin-isen-g} with the initial and boundary conditions \eqref{bd-conds-lin-g}.
		
		Supposed, in addition, $ \xi_0, v_0 \in H^3(\Omega) $, 
		one will have the following regularity of the unique solution $ (\xi, v) $ of system \eqref{eq:lin-isen-g}:
		\begin{equation}\label{rg-aprox}
		\begin{gathered}
			\xi \in L^\infty(0,T;H^3(\Omega)), ~ \dt \xi \in L^\infty(0,T; H^2(\Omega)), \\
			v \in L^\infty(0,T;H^3(\Omega))\cap L^2 (0,T;H^4(\Omega)), \\
			\dt v \in L^\infty(0,T;H^1(\Omega)) \cap L^2(0,T;H^2(\Omega)).
		\end{gathered}
		\end{equation}
	\end{proposition}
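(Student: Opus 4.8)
The plan is to exploit that system \eqref{eq:lin-isen-g} in fact \emph{decouples}: equation \subeqref{eq:lin-isen-g}{1} is a linear transport equation for $\xi$ whose coefficients depend only on the given input $v^o$, while equation \subeqref{eq:lin-isen-g}{2} is a linear second-order parabolic system for the horizontal velocity $v$ whose leading coefficient $\rho^o = \xi^o + \frac12 gz$ and right-hand side $\mathcal F(\xi^o,v^o) := -(2\xi^o+gz)\nablah\xi^o - \rho^o\bigl(v^o\cdot\nablah v^o + w^o\dz v^o\bigr)$ depend only on $(\xi^o,v^o)$ and on $w^o$ from \eqref{vertical-lin-isen-g}. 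Since $(\xi^o,v^o)\in C^\infty(\overline\Omega\times[0,T])\cap\mathfrak X_T$, the lower bound $\rho^o\geq\frac12\underline\rho>0$ holds, $w^o$ is smooth (the defining $z$-integral is finite, and the identity $\int_0^1(\dvh(\xi^o\widetilde{v^o})+\frac g2\widetilde{z\dvh v^o})\,dz=0$ noted for elements of $\mathfrak X_T$ gives $w^o=0$ at $z=0,1$), and $\mathcal F$ is smooth in space-time. Hence each equation can be solved separately by classical methods, and uniqueness for the system is immediate: two solutions share the same $\xi$ (by an $L^2$ estimate and Grönwall for the transport equation), and the difference of their $v$-components solves the homogeneous parabolic system, which the basic energy identity together with $\rho^o\geq\frac12\underline\rho$ and Grönwall forces to vanish.

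For \subeqref{eq:lin-isen-g}{2}, I would run a Galerkin scheme in an $H^1(\Omega)$-orthonormal basis $\{\vec e_i\}$ adapted to the boundary condition $\dz(\cdot)|_{z=0,1}=0$ and periodicity, solve the resulting ODE system \eqref{ODE-g-2} (the mass matrix $a_{ij}=\intw\rho^o\vec e_i\cdot\vec e_j\idx$ being invertible thanks to $\rho^o\geq\frac12\underline\rho$), and extract $m$-independent bounds by testing with $\beta_i$ (Grönwall yields $v_m$ bounded in $L^\infty_tL^2_x\cap L^2_tH^1_x$) and with $\beta_i'$ (using $\rho^o\geq\frac12\underline\rho$ to absorb $\rho^o|\dt v_m|^2$, this yields $v_m$ bounded in $L^\infty_tH^1_x$ and $\dt v_m$ in $L^2_tL^2_x$). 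After passing to a weak limit I would upgrade regularity in three steps: (i) apply tangential difference quotients in the weak formulation to get $\nabla\nablah v\in L^2_tL^2_x$, then read $\partial_{zz}v$ off the equation to obtain $v\in L^2_tH^2_x$, and recover the boundary condition $\dz v|_{z=0,1}=0$ by testing with functions supported near $\{z=0\}$ and $\{z=1\}$ and using the trace theorem; (ii) differentiate the Galerkin system in time (legitimate since $\rho^o,\mathcal F$ are smooth in $t$) and repeat the energy estimates to get $\dt v\in L^\infty_tL^2_x\cap L^2_tH^1_x$; (iii) freeze time and regard \subeqref{eq:lin-isen-g}{2} as the elliptic problem \eqref{09Jun02} with right-hand side $-\rho^o\dt v+\mathcal F\in L^\infty_tL^2_x\cap L^2_tH^1_x$, whose anisotropic elliptic regularity (tangential difference quotients first, then the equation for the $z$-derivatives) gives $v\in L^\infty_tH^2_x\cap L^2_tH^3_x$. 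For the case $\xi_0,v_0\in H^3$, one uses the equation to see $\dt v|_{t=0}\in H^1$ and runs one more turn of this bootstrap to reach the regularity \eqref{rg-aprox} for $v$.

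For \subeqref{eq:lin-isen-g}{1} I would use the parabolic regularization \eqref{eq:regularized-hyper-g}: solve it by the same Galerkin/parabolic machinery, so that for $\xi_0\in H^3(\Omega_h)$ the regularized solution lies in $L^\infty_tH^3\cap L^2_tH^4$ with $\dt\xi\in L^\infty_tH^1\cap L^2_tH^2$; then derive bounds independent of $\iota>0$ (and of $\hnorm{\xi_0}{H^3}$): differentiating in $\partial_{hh}$ and testing with $\partial_{hh}\xi$ gives, via Grönwall, $\hnorm{\xi}{H^2}\leq C\hnorm{\xi_0}{H^2}+C$, and testing the regularized equation with $\dt\xi$ gives $\hnorm{\dt\xi}{L^2(0,T;L^2)}\leq C\hnorm{\xi_0}{H^2}+C$, after which \subeqref{eq:lin-isen-g}{1} itself yields $\dt\xi\in L^\infty_tH^1$. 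Letting $\iota\to0^+$ produces $\xi\in L^\infty_tH^2$ with $\dt\xi\in L^\infty_tH^1$, and approximating a general $\xi_0\in H^2$ by $H^3$ data finishes the construction; the same estimates one derivative higher handle the $H^3$-data statement in \eqref{rg-aprox}. Combining the two pieces places $(\xi,v)$ in $\mathfrak Y_T$.

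I expect the main obstacle to be the regularity bootstrap for the parabolic system \subeqref{eq:lin-isen-g}{2}: because only the Neumann-type condition $\dz v|_{z=0,1}=0$ (and periodicity) is available, one cannot directly differentiate in $z$, so all vertical derivatives must be produced indirectly --- from tangential difference quotients and from the equation itself --- and the time-derivative estimates and the elliptic space-derivative estimates must be kept mutually consistent at each level. Verifying the boundary condition rigorously at the stage where $v$ is still only a weak solution, before the trace theorem can be invoked, is the delicate bookkeeping point. The transport part is conceptually easier, but still requires care to make the $H^2$ and $\dt\xi$ bounds genuinely uniform in the regularization parameter $\iota$.
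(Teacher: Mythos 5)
Your proposal follows essentially the same route as the paper: the same decoupling of the transport and parabolic equations, the same Galerkin scheme with the invertible weighted mass matrix, the same tangential difference-quotient plus read-off-$\partial_{zz}v$ bootstrap, the time-differentiated Galerkin system and elliptic regularity for \eqref{09Jun02}, the $\iota$-regularization \eqref{eq:regularized-hyper-g} with $\iota$-independent bounds for $\xi$, and the same $L^2$--Gr\"onwall uniqueness argument. The argument is correct and matches the paper's proof in all essential steps.
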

	\begin{proof}
		With the discussion above, what is left is to prove the uniqueness. It is sufficient to consider the following homogeneous system with vanishing initial data:
	\begin{equation*}
	\begin{cases}
		\dt \xi + \overline{v^o} \cdot \nablah \xi + \xi \overline{\dvh v^o} = 0 & \text{in} ~ \Omega,\\
		(\xi^o + \dfrac{1}{2}gz) \dt v = \mu \deltah v + \mu \partial_{zz} v + (\mu +\lambda) \nablah \dvh v & \text{in} ~ \Omega,\\
		\dz \xi = 0 & \text{in} ~ \Omega,\\
		\dz v\bigr|_{z=0,1} = 0, ~~ (\xi, v)_{t=0} = 0.
	\end{cases}
	\end{equation*}
	However, the standard $ L^2 $ estimate yields
	\begin{align*}
		& \dfrac{d}{dt}\norm{\xi}{\Lnorm{2}}^2 \leq C \norm{v^o}{\Hnorm{3}} \norm{\xi }{\Lnorm{2}}^2, \\
		&  \dfrac{d}{dt}\norm{(\xi^o + \dfrac{1}{2} gz)^{1/2} v}{\Lnorm{2}}^2 + 2\mu \norm{\nabla v}{\Lnorm{2}}^2 + 2(\mu+\lambda) \norm{\dvh v}{\Lnorm{2}}^2\\
		& ~~~~ 
		\leq \norm{\dt \xi^o}{\Lnorm{2}} \norm{v}{\Lnorm{3}} \norm{v}{\Lnorm{6}} \leq C \norm{\dt \xi^o}{\Lnorm{2}} \norm{v}{\Lnorm{2}}^{1/2} \norm{v}{\Hnorm{1}}^{3/2}\\
		& ~~~~ \leq \mu \norm{\nabla v}{\Lnorm{2}}^2 + C \bigl( \norm{\dt \xi^o}{\Lnorm{2}}^4 + 1\bigr) \norm{v}{\Lnorm{2}}^2.
	\end{align*}
	 Thus applying Gr\"onwall's inequality to above implies $ (\xi, v) \equiv 0 $ for every $ t \in [0,T] $ thanks to the fact $\xi^o + \dfrac{1}{2} gz \geq \dfrac{1}{2} \underline\rho > 0 $. This finishes the proof.
	\end{proof}
	
	
	\subsubsection{A priori estimates for the inhomogeneous linear system}\label{sec:apriori-linear-g}
	In this subsection, we show that the map $ \mathcal{T} $ defined in \eqref{def:map-isen-g} is a well defined map from $ \mathfrak X $ into $ \mathfrak X $.
	Indeed, we will show the following proposition:
	\begin{proposition}\label{prop:maps-set2set-g}
	Consider the initial data with the bounds $ M_0, M_1 $ in \eqref{bound-of-initial-data-linear-g} and $ (\xi^o, v^o) \in \mathfrak X = \mathfrak X_T $.
		There is a $ T_g = T_g(M_0,M_1,\mu,\lambda,\underline\rho) > 0 $ sufficiently small such that for any $ T \in (0, T_g) $,
		there exists a unique solution to \eqref{eq:lin-isen-g}. Moreover, the solution		
		belongs to $ \mathfrak X = \mathfrak X_T $. Therefore, for any such $ T $, the map $ \mathcal T $ in \eqref{def:map-isen-g} is a well defined map from $ \mathfrak X $ into $ \mathfrak X $.
	\end{proposition}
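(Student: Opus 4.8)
The plan is to derive \emph{a priori} estimates for the unique solution $(\xi,v)$ of the linear system \eqref{eq:lin-isen-g} (whose existence and regularity are guaranteed by Proposition \ref{prop:exist-lin-isen-g}, after first approximating $(\xi^o,v^o)$ by smooth inputs and $(\xi_0,v_0)$ in $H^3$, and then passing to the limit), which show that if the bounds defining $\mathfrak X_T$ hold for the input on $[0,T]$, then they also hold for the output, provided $T$ is small enough depending only on $M_0,M_1,\mu,\lambda,\underline\rho$. The membership $\xi+\tfrac12 gz\ge \tfrac12\underline\rho$, $\dz\xi=0$, $\dz v|_{z=0,1}=0$ and the initial condition are either structural or already established, so the work is in the quantitative bounds.

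First I would treat the transport equation \subeqref{eq:lin-isen-g}{1} for $\xi$. Differentiating twice in the horizontal variables, taking the $L^2$ inner product with $\partial^2_h\xi$, integrating by parts and using that $\overline{v^o},\overline{\dvh v^o}$ and their horizontal derivatives are controlled by $\norm{v^o}{H^2}$ (hence by $C_1M_1$ through the $\mathfrak X$-bound) and that the forcing term $\tfrac g2\overline{z\dvh v^o}$ is likewise controlled, I get
\[
\frac{d}{dt}\norm{\xi(t)}{H^2}^2 \le P\bigl(\norm{v^o}{H^2}\bigr)\bigl(\norm{\xi}{H^2}^2+1\bigr),
\]
for a polynomial $P$; Gr\"onwall then gives $\sup_{[0,T]}\norm{\xi(t)}{H^2}^2 \le (M_0+1)e^{CT}-1 \le 2M_0$ once $T$ is small. (Using the $L^2(0,T;H^3)$ integrability of $v^o$ in place of the $L^\infty$ bound, as in the uniqueness computation in Proposition \ref{prop:exist-lin-isen-g}, is a cleaner route and I would follow that.) Reading $\dt\xi$ off the equation then yields $\sup_{[0,T]}\norm{\dt\xi(t)}{H^1}^2\le C_2(M_0,C_1M_1)$, which is exactly the bound \eqref{def:C2} appearing in the definition of $\mathfrak X$.

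Next I would treat the parabolic equation \eqref{eq:linear-parabolic-g} for $v$, where the forcing is $\mathcal F(\xi^o,v^o)=-(2\xi^o+gz)\nablah\xi^o-\rho^o(v^o\cdot\nablah v^o+w^o\dz v^o)$ with $\rho^o=\xi^o+\tfrac12 gz$ and $w^o$ from \eqref{vertical-lin-isen-g}; all these are bounded in the relevant norms by $M_0$ and $C_1M_1$ via \eqref{ineq-supnorm}, the algebra of $H^2(\Omega)$, and the lower bound $\rho^o\ge\tfrac12\underline\rho$. The energy-type estimates mimic the Galerkin computations already displayed: testing with $\dt v$ gives control of $\sup_t\norm{\nabla v}{L^2}^2$ and $\int_0^T\norm{\sqrt{\rho^o}\dt v}{L^2}^2$; testing the time-differentiated equation with $\dt v$ gives $\sup_t\norm{\sqrt{\rho^o}\dt v}{L^2}^2$ and $\int_0^T\norm{\nabla\dt v}{L^2}^2$, using $\dt v|_{t=0}=V_1$ with $\norm{V_1}{L^2}^2\le M_1$ (here the compatibility condition \eqref{isen-cptbl-conds-g} is essential to control the initial time-slice); and the elliptic regularity for \eqref{09Jun02} converts these into $\sup_t\norm{v}{H^2}^2$ and $\int_0^T\norm{v}{H^3}^2$. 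Collecting these and the $\dt v$ bounds, one obtains
\[
\sup_{[0,T]}\bigl(\norm{v(t)}{H^2}^2+\norm{\dt v(t)}{L^2}^2\bigr)+\int_0^T\bigl(\norm{v(t)}{H^3}^2+\norm{\dt v(t)}{H^1}^2\bigr)\,dt \le C_1 M_1,
\]
with $C_1=C_1(M_0,\mu,\lambda,\underline\rho)$ as in \eqref{def:C_1}, again after shrinking $T$ so that the accumulated Gr\"onwall factors $e^{CT}$ and the terms $\int_0^T(\cdots)\,dt$ coming from the input stay below the threshold. The delicate point throughout is to arrange that \emph{all} constants genuinely depend only on $(M_0,M_1,\mu,\lambda,\underline\rho)$ and not on the higher-regularity norms used to justify the formal manipulations; I would handle this by first proving the estimates for the smooth approximations (where every integration by parts is legitimate) with constants of the right form, and then passing to the limit by weak/weak-$*$ lower semicontinuity.

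The main obstacle I anticipate is the bookkeeping in the $v$-estimate: getting the quadratic right-hand sides into the shape $\frac{d}{dt}(\text{energy})\le C(\text{energy})+C$ (rather than a supercritical power that would blow up in finite time) requires carefully absorbing the top-order terms—e.g. the $\mu\norm{\nabla v}{L^2}^2$ dissipation must absorb the cross terms generated by $\partial_t\rho^o$ and by differentiating $\mathcal F$, and the convective terms $v^o\cdot\nablah v^o$, $w^o\dz v^o$ must be estimated so that no norm of $v$ stronger than what the dissipation provides appears. The boundary terms in the elliptic step (the $\mu\dz v|_{z=0,1}$ traces) vanish thanks to $\dz v|_{z=0,1}=0$, which has already been verified, so that part is routine. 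Once the two propagation-of-bounds statements are in hand, $\mathcal T$ maps $\mathfrak X_T$ into $\mathfrak X_T$, proving the proposition.
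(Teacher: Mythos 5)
Your overall strategy is the paper's: approximate the input and the data, use Proposition \ref{prop:exist-lin-isen-g} to justify the manipulations, derive quantitative a priori bounds for the transport part and for the parabolic part separately, and close by Gr\"onwall with $T$ small; the $H^2$ bound for $\xi$, the $H^1$ bound for $\dt\xi$ read off the equation, and the energy estimates built on testing with $v_t$ and on the time-differentiated equation all match the paper (which, instead of quoting elliptic regularity quantitatively, performs the $\partial_{hh}$ and $\partial_{zz}v_t$ energy estimates and then recovers $\partial_{zz}v$ from the equation, but that difference is cosmetic). However, there are two concrete gaps. First, you dismiss the pointwise bound $\xi+\tfrac12 gz\ge\tfrac12\underline\rho$ as ``structural or already established''; it is neither. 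The output $\xi$ solves \subeqref{eq:lin-isen-g}{1}, whose zeroth-order and forcing terms $-\xi\,\overline{\dvh v^o}-\tfrac g2\overline{z\,\dvh v^o}$ can drive $\xi$ down, and membership in $\mathfrak X_T$ (see \eqref{fnc-space-isen-g}) requires a separate argument. The paper proves it by a Stampacchia-type truncation applied to $\eta=\xi-\underline\rho+\int_0^tM(s)\,ds$, using $\xi_0\ge\underline\rho$ and the smallness $C\int_0^T\norm{v^o}{\Hnorm3}\,dt\le CC_1^{1/2}M_1^{1/2}T^{1/2}$, leading to \eqref{lower-bound-conti-g}; this is one of the constraints that actually fixes $T_g$, and it cannot be skipped.

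Second, your closing mechanism for the $v$-estimate — ``shrink $T$ so that the terms $\int_0^T(\cdots)\,dt$ coming from the input stay below the threshold'' — fails for the quadratic input terms. The input only satisfies $\int_0^T(\norm{v^o}{\Hnorm3}^2+\norm{v^o_t}{\Hnorm1}^2)\,dt\le C_1M_1$, a bound that does not shrink as $T\to0$, so these contributions cannot be made small by time smallness alone. The paper instead arranges (via Young's inequality) that every occurrence of $\norm{v^o}{\Hnorm3}^2$ and $\norm{v_t^o}{\Hnorm1}^2$ on the right-hand side carries a small prefactor $\omega$, as in \eqref{energy-ineq-g}, and then chooses $\omega\le\tfrac12\min\{\tfrac{c_{\mu,\lambda,\underline\rho}}2,\tfrac1{C_1}\}$ together with the self-consistent definition of $C_1$ in \eqref{def:C_1} so that $\omega C_1M_1$ fits under the target $C_1M_1$ in \eqref{ttl-en-ineq-final-g-1}; only the remaining $C_\omega\mathcal H\,T$ terms are handled by shrinking $T$. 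Without this $\omega$--$C_1$ bookkeeping the bootstrap does not close. (A minor further point: uniqueness for inputs merely in $\mathfrak X_T$, not smooth, still needs the $L^2$ energy argument of Proposition \ref{prop:exist-lin-isen-g}, which uses $\dt\xi^o\in L^\infty(0,T;L^2)$ and $\rho^o\ge\tfrac12\underline\rho$; it is quick, but it is part of the statement.)
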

	To show this proposition, we choose approximating sequences of $ (\xi^o, v^o) $ (respectively, $ (\xi_0, v_0) $) in $ C^\infty(\overline\Omega\times[0,T]) \cap \mathfrak X_T $ (respectively, $ H^3(\Omega) $). Then Proposition \ref{prop:exist-lin-isen-g} guarantees there exists a unique solution to system \eqref{eq:lin-isen-g} with the regularity stated in \eqref{rg-aprox}. In particular,  the estimates below on the equations are allowed and rigorous. Also, as one will see, the estimates depend only on $ M_0, M_1 $. Then after taking a subsequence if necessary, the estimates below hold for $ (\xi^o,v^o) $ (respectively, $ (\xi_0, v_0) $) in $ \mathfrak X_T $ (respectively, $ H^2(\Omega) $). Therefore we conclude the existence of solutions to system \eqref{eq:lin-isen-g} with $ (\xi^o, v^o) \in \mathfrak X $ and initial data in \eqref{bound-of-initial-data-linear-g}.
	 \begin{proof}[Proof of Proposition \ref{prop:maps-set2set-g}]
		The existence of solutions is a direct consequence of Proposition \ref{prop:estimates-xi-g} and Proposition \ref{prop:estimates-v-g}, below, and the approximating arguments discussed above. The uniqueness of solutions follows from the same arguments in the proof of Proposition \ref{prop:exist-lin-isen-g}.
	\end{proof}
%
%
%
Hereafter, to simplify the arguments, we assume that the solution $ (\xi, v) $  to the linear system \eqref{eq:lin-isen-g} is smooth enough so that the regularity in \eqref{rg-aprox} holds and the following estimates are rigorous. In particular, all the operations applied on the equations below are allowed.

We start by establishing some estimates for the solutions of \subeqref{eq:lin-isen-g}{1}.
	In particular, we will establish the following estimates on $ \xi $:
	\begin{itemize}
		\item The lower bound for $ \xi $;
		\item The $ H^2(\Omega) $ norm for $ \xi $;
		\item The $ \Hnorm{1} $ norm for $ \dt \xi $.
	\end{itemize}	
	Indeed, we will show the following:
	\begin{proposition}\label{prop:estimates-xi-g}
	There exists a
		$  T'= T'(M_0,C_1 M_1,\underline\rho) > 0 $ sufficiently small such that for any $ T \in (0,T'] $, the solution $ \xi $ to \subeqref{eq:lin-isen-g}{1} 
		satisfies  the following,
		\begin{itemize}
			\item $ \xi + \dfrac{1}{2} gz \geq \dfrac{1}{2} \underline{\rho} $;
			\item $ \sup_{0\leq t\leq T} \norm{\xi(t)}{\Hnorm{2}}^2 \leq 2 M_0 $;
			\item $ \sup_{0\leq t\leq T} \norm{\dt \xi(t)}{\Hnorm{1}}^2 \leq C_2 $,
		\end{itemize}
		where $ M_0 $ is as in \eqref{bound-of-initial-data-linear-g}.
	\end{proposition}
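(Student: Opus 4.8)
The plan is to establish the three bullets in the order stated; all three concern only the scalar transport equation \subeqref{eq:lin-isen-g}{1}, which, since $\dz\xi=0$, is an equation on the two--dimensional torus $\Omega_h$ driven by $v^o$ alone (not $\xi^o$). The membership $(\xi^o,v^o)\in\mathfrak X_T$ supplies $\norm{v^o}{L^\infty(0,T;\Hnorm2)}^2+\norm{v^o}{L^2(0,T;\Hnorm3)}^2\leq C_1M_1$, and since the vertical average obeys $\norm{\overline f}{\Hnorm k(\Omega_h)}\lesssim\norm{f}{\Hnorm k(\Omega)}$, the quantities $\norm{\overline{\dvh v^o}}{\Lnorm\infty(\Omega_h)}$, $\norm{\overline{\dvh v^o}}{\Hnorm2(\Omega_h)}$, $\norm{\overline{z\dvh v^o}}{\Hnorm2(\Omega_h)}$ are controlled by $\norm{v^o}{\Hnorm2}$ and $\norm{v^o}{\Hnorm3}$, whose relevant time integrals are $\lesssim T^{\theta}(C_1M_1)^{1/2}$ for some $\theta>0$ (for instance $\int_0^T\norm{v^o}{\Hnorm3}\,dt\leq T^{1/2}\norm{v^o}{L^2_t\Hnorm3}$ and, by Agmon in $\Omega_h$ and H\"older in time, $\int_0^T\norm{\overline{\dvh v^o}}{\Lnorm\infty}\,dt\lesssim\int_0^T\norm{v^o}{\Hnorm1}^{1/2}\norm{v^o}{\Hnorm3}^{1/2}\,dt\lesssim T^{3/4}(C_1M_1)^{1/2}$). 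This smallness--in--$T$ of all the coefficients is the engine behind the first two bullets.

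For the lower bound I would integrate \subeqref{eq:lin-isen-g}{1} along the characteristics of $\overline{v^o}$: let $X(t;\vec x_h)$ solve $\tfrac{d}{dt}X=\overline{v^o}(X,t)$, $X(0)=\vec x_h$, which is well defined because $\overline{v^o}\in L^1(0,T;C^1(\Omega_h))$ (indeed $H^3(\Omega_h)\hookrightarrow C^1$ and $\int_0^T\norm{v^o}{\Hnorm3}\,dt<\infty$), and is genuinely smooth in the approximating setting of the previous subsection. Along a characteristic $\xi$ solves the scalar linear ODE $\tfrac{d}{dt}\xi=-a(t)\xi-b(t)$ with $a=\overline{\dvh v^o}\circ X$ and $b=\tfrac g2\,\overline{z\dvh v^o}\circ X$, so Duhamel's formula together with $\int_0^t|a|,\int_0^t|b|\lesssim T^{\theta}(C_1M_1)^{1/2}$ gives $|\xi(\vec x_h,t)-\xi_0(X^{-1}(t;\vec x_h))|\leq\tfrac12\underline\rho$ once $T$ is small (depending only on $C_1M_1,\underline\rho$). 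Since the compatibility condition \eqref{isen-cptbl-conds-g} forces $\xi_0=\rho_0|_{z=0}\geq\underline\rho$, we conclude $\xi(\vec x_h,t)\geq\tfrac12\underline\rho$, hence $\xi+\tfrac12gz\geq\tfrac12\underline\rho$.

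For the $H^2$ bound I would apply $\partial^\alpha$ with $|\alpha|\leq2$ to \subeqref{eq:lin-isen-g}{1}, pair with $\partial^\alpha\xi$ in $L^2(\Omega_h)$ and integrate by parts. The transport term contributes $\tfrac12\int(\dvh\overline{v^o})|\partial^\alpha\xi|^2\lesssim\norm{\overline{\dvh v^o}}{\Lnorm\infty}\norm\xi{\Hnorm2}^2$; the commutators $[\partial^\alpha,\overline{v^o}\cdot\nablah]\xi$ are handled by two--dimensional product and Ladyzhenskaya inequalities (splitting the top--order piece $\nabla^2\overline{v^o}\cdot\nabla\xi$ by an $L^4$--$L^4$ pairing so the $\xi$--factor stays at the $H^2$ level), producing terms $\lesssim(\norm{v^o}{\Hnorm2}^{1/2}\norm{v^o}{\Hnorm3}^{1/2}+\norm{v^o}{\Hnorm3})\norm\xi{\Hnorm2}^2$; the zeroth--order term $\xi\,\overline{\dvh v^o}$ is bounded in $H^2$ by $\norm\xi{\Hnorm2}\norm{v^o}{\Hnorm3}$ since $H^2(\Omega_h)$ is an algebra; and the source $\tfrac g2\overline{z\dvh v^o}$ by $\norm{v^o}{\Hnorm3}\norm\xi{\Hnorm2}$. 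Collecting, $\tfrac{d}{dt}\norm{\xi}{\Hnorm2}^2\leq\Phi(t)(\norm{\xi}{\Hnorm2}^2+1)$ where $\Phi\geq0$ depends only on $v^o$ with $\norm{\Phi}{\Lnorm1(0,T)}\to0$ as $T\to0$; Gr\"onwall's inequality yields $\norm{\xi(t)}{\Hnorm2}^2\leq(\norm{\xi_0}{\Hnorm2}^2+\norm{\Phi}{\Lnorm1})e^{\norm{\Phi}{\Lnorm1}}$, which is $\leq2M_0$ once $T'$ is chosen small depending on $M_0$ and $C_1M_1$. Finally, reading off $\dt\xi=-\overline{v^o}\cdot\nablah\xi-\xi\,\overline{\dvh v^o}-\tfrac g2\overline{z\dvh v^o}$ and using $\norm\xi{\Hnorm2}^2\leq2M_0$ with the $v^o$ bounds gives $\norm{\dt\xi(t)}{\Hnorm1}^2\lesssim M_0\,C_1M_1+C_1M_1=:C_2(M_0,C_1M_1)$ uniformly on $[0,T]$.

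The main obstacle is the $H^2$ estimate: one must (i) arrange the top--order commutator splitting so that no derivative of $\xi$ beyond second order is needed (this is why the estimates are performed on the smooth approximations of Section~\ref{sec:solving-linear-g}, where $\xi\in H^3$ is legitimate, and then passed to the limit), and (ii) verify that $\xi$ enters the multiplier $\Phi$ only through $\norm\xi{\Hnorm2}^2$ with a coefficient built purely from $v^o$, so that Gr\"onwall closes with no circular dependence and the continuation of the bound over $[0,T']$ rests solely on the $L^1_t$--smallness of $\Phi$ recorded in the first paragraph. By comparison, the characteristic argument for the lower bound and the pointwise--in--time reading off of $\dt\xi$ in $H^1$ are routine.
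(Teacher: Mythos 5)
Your proposal is correct, and two of its three parts coincide with the paper's argument: the $H^2(\Omega)$ bound is obtained exactly as in the paper (differentiate \subeqref{eq:lin-isen-g}{1} twice horizontally, pair with $\partial_{hh}\xi$, integrate by parts on the transport term, estimate the remaining products by Ladyzhenskaya/Sobolev in $\Omega_h$, and close with Gr\"onwall using $\int_0^T\norm{v^o}{\Hnorm{3}}\,dt\lesssim T^{1/2}(C_1M_1)^{1/2}$), and the $H^1$ bound on $\dt\xi$ is, as in the paper, read off pointwise in time from the equation, giving a constant of the form $C(1+2M_0)C_1M_1$, i.e.\ the $C_2$ of \eqref{def:C2}. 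Your remark that these estimates are legitimately performed on the smooth approximations of Proposition \ref{prop:exist-lin-isen-g} and then passed to the limit matches the paper's setup.

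The genuine difference is the lower bound. The paper uses a Stampacchia-type truncation: it introduces $\eta=\xi-\underline\rho+\int_0^t M(s)\,ds$ with $M(t)\simeq\max\{\hnorm{\overline{\dvh v^o}}{\Lnorm{\infty}},\hnorm{\overline{z\dvh v^o}}{\Lnorm{\infty}}\}$, multiplies the equation for $\eta$ by $-\mathbbm{1}_{\{\eta<0\}}$, and shows $\frac{d}{dt}\inth\eta_-\idxh\leq 0$, so $\eta_-\equiv 0$; this only needs $\xi\in H^1(\Omega\times[0,T])$ and no flow map. You instead integrate along the characteristics of $\overline{v^o}$ and use Duhamel for the resulting linear ODE, exploiting the same $L^1_t$-smallness of $\overline{\dvh v^o}$ and $\overline{z\dvh v^o}$; this is perfectly valid in the smooth approximating setting (where $\overline{v^o}\in L^1(0,T;C^1(\Omega_h))$ certainly holds), and the pointwise bound survives the limit. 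The truncation argument buys lower regularity requirements and avoids the flow map altogether; the characteristic argument is more transparent and gives slightly more (an explicit deviation bound $|\xi(X(t),t)-\xi_0|$), at the cost of invoking the approximation to justify the flow. One small inaccuracy: your smallness threshold for $T$ also depends on $M_0$ through $\norm{\xi_0}{\Lnorm{\infty}}\lesssim M_0^{1/2}$ (via the factor $|e^{-\int a}-1|\,\norm{\xi_0}{\Lnorm{\infty}}$), not only on $C_1M_1$ and $\underline\rho$; this is harmless since the proposition allows $T'=T'(M_0,C_1M_1,\underline\rho)$.
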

	
	{\par\hfill\par\noindent\bf The lower bound for $ \xi $ \par}
	
	In order to derive the lower bound of $ \xi $, we employ the following Stampaccia-like argument. Let $ M = M(t) > 0 $ be a nonnegative integrable function to be determined later. Consider $ \eta = \eta(x,y,t) := \xi - \underline\rho + \int_0^t M(s) \,ds  $. Then according to \subeqref{eq:lin-isen-g}{1}, $ \eta $ satisfies the equation
	\begin{align*}
		& \dt \eta + \overline{v^o} \cdot \nablah \eta + \eta \overline{\dvh v^o} = -(\underline\rho - \int_0^t M(s) \,ds ) \overline{\dvh v^o} \\
		& ~~~~ - \dfrac{g}{2} \overline{z \dvh v^o} + M(t).
	\end{align*}
	Let $$ \mathbbm{1}_{\lbrace \eta < 0 \rbrace} = \begin{cases}
		1 & \text{in} ~ \lbrace \eta < 0 \rbrace,\\
		0 & \text{otherwise},
	\end{cases} $$ be the characteristic function of the set $ \lbrace \eta < 0 \rbrace $ and denote by $ \eta_- := - \eta \mathbbm{1}_{\lbrace \eta < 0 \rbrace} \geq 0  $. Observe that since $ \xi \in H^1(\Omega\times[0,T]) $, so it $ \eta_- $.
	Then multiplying the above equation with $ - \mathbbm{1}_{\lbrace \eta < 0 \rbrace} $ and integrating the resultant in $ \Omega_h $ yield
	\begin{equation*}
		 \dfrac{d}{dt} \inth \eta_- \idxh = \int_{\lbrace \eta < 0\rbrace}\biggl( (\underline\rho - \int_0^t M(s)\,ds) \overline{\dvh v^o} + \dfrac{g}{2} \overline{z \dvh v^o} - M(t) \biggr) \idxh.
	 \end{equation*}
	 Now, let $ 0 < M(t) := C \max\lbrace \hnorm{\overline{\dvh v^o}}{\Lnorm\infty}, \hnorm{\overline{z \dvh v^o}}{\Lnorm{\infty}} \rbrace \leq C \norm{v^o}{\Hnorm{3}} < \infty, ~ a.e., $ for some constant $ C>0 $. Then the integrand on the right-hand side of the above equation satisfies
	 \begin{align*}
	 	& (\underline\rho - \int_0^t M(s)\,ds) \overline{\dvh v^o} + \dfrac{g}{2} \overline{z \dvh v^o} - M(t) \\
	 	& ~~~~ \leq \dfrac{1}{C} \bigl( \underline\rho + C \int_0^T \norm{v^o}{\Hnorm{3}}(s) \,ds + \dfrac{g}{2} \bigr)M(t) - M(t) \\
	 	& ~~~~ \leq \dfrac{1}{C} \bigl( \underline\rho + 1 + \dfrac{g}{2} \bigr)M(t) - M(t)  < 0,
	 \end{align*}
	 provided $ C $ is large enough and $ T $ is small enough such that
	 \begin{align*}
		  \dfrac{1}{C} \bigl( \underline\rho + 1 + \dfrac{g}{2} \bigr) & < 1, ~ \text{and}  \\
		 C \int_0^T \norm{v^o}{\Hnorm{3}}(s) \,ds & \leq C T^{1/2} \bigl( \int_0^T \norm{v^o}{\Hnorm{3}}^2(s)\,ds\bigr)^{1/2} \\
		 & \leq C C_1^{1/2}M_1^{1/2}T^{1/2} < 1 .
	 \end{align*}
	 Therefore we have
	 $$ \dfrac{d}{dt} \inth \eta_- \idxh \leq 0 ~~ a.e. $$
	 which, after
	 integrating over $ [0,t_0] $ for any $ t_0 \in [0,T] $, thanks to the fact $ \eta_-(0) \equiv 0 $, yields
	\begin{equation}\label{lowerbounds-g}
		\inth \eta_-(t_0) \idxh \leq 0.
	\end{equation}
	Hence $ \eta_- = 0 ~ \text{in} ~ \Omega_h \times[0,T] $. That is,
	$ \eta(t) = \xi(t) - \underline\rho + \int_0^t M(s) \,ds \geq 0 $ and
	\begin{equation}\label{lower-bound-conti-g}
	\begin{aligned}
		\xi(t) + \dfrac{1}{2} gz & > \xi(t) \geq \underline\rho - C \int_0^T \norm{v^o}{\Hnorm{3}}(s) \,ds \\
		&  \geq \underline\rho - C C_1^{1/2} M_1^{1/2} T^{1/2} \geq \dfrac{1}{2} \underline\rho,
	\end{aligned}	
	\end{equation}
	for $ t \in [0,T], ~ T \leq T_1 $, with $ T_1= T_1(C_1M_1, \underline\rho) $ sufficiently small.

{\par \hfill \par\noindent\bf The $ H^2(\Omega) $ norm for $\xi $\par}
	Since $ \xi $ is independent of the vertical variable, it is sufficient to estimate the horizontal derivatives. Denote, hereafter, $ \partial_h \in \lbrace \partial_{x} , \partial_y \rbrace $ and $ \partial_{hh} = \partial_h^2 $, etc. Applying $ \partial_{hh} $ to \subeqref{eq:lin-isen-g}{1} will give us the following,
	\begin{equation}\label{eq:horizontal-conti-g}
		\begin{aligned}
			& \dt \partial_{hh} \xi + \overline{v^o} \cdot \nablah \partial_{hh} \xi + 2 \overline{\partial_h v^o} \cdot\nablah \partial_h \xi + \partial_{hh} \xi \overline{\dvh v^o} + \overline{\partial_{hh} v^o} \cdot \nablah \xi \\
			& ~~~~ + 2 \partial_h \xi \overline{\dvh \partial_h v^o} + \xi \overline{\dvh \partial_{hh} v^o} + \dfrac{g}{2} \overline{z \dvh \partial_{hh} v^o} = 0.
		\end{aligned}
	\end{equation}
	Multiply \eqref{eq:horizontal-conti-g} with $ \partial_{hh}\xi $ and integrate the resultant in $ \Omega_h $. It holds
	\begin{align*}
		& \dfrac{1}{2} \dfrac{d}{dt}\hnorm{\partial_{hh}\xi}{\Lnorm{2}}^2 + \inth \biggl( \dfrac{1}{2} \overline{\dvh v^o} \abs{\partial_{hh}\xi}{2} + ( 2 \overline{\partial_h v^o} \cdot \nablah \partial_h \xi ) \partial_{hh}\xi \biggr) \idxh \\
		& ~~~~ + \inth \biggl( (\overline{\partial_{hh} v^o} \cdot \nablah \xi) \partial_{hh} \xi + 2 \partial_h \xi \overline{\dvh \partial_h v^o}  \partial_{hh} \xi \biggr) \idxh \\
		& ~~~~ + \inth \xi \overline{\dvh \partial_{hh} v^o} \partial_{hh}\xi \idxh
		 + \dfrac g 2 \inth \overline{z \dvh \partial_{hh} v^o} \partial_{hh} \xi \idxh = 0.
	\end{align*}
%
	Then after applying H\"older inequality and Sobolev embedding inequalities, we have
	\begin{equation*}
		\dfrac{d}{dt} \hnorm{\partial_{hh}\xi}{\Lnorm{2}}^2 \lesssim \norm{v^o}{\Hnorm{3}} \hnorm{\xi}{\Hnorm{2}}^2 + \norm{v^o}{\Hnorm{3}} \hnorm{\xi}{\Hnorm{2}}.
	\end{equation*}
	Similar arguments also hold for the lower order derivatives. Therefore, one has, since $ \hnorm{\xi }{\Hnorm{2}} = \norm{\xi}{\Hnorm{2}} $,
	\begin{equation*}
		\dfrac{d}{dt} \norm{\xi}{\Hnorm{2}}^2 \leq C \norm{v^o}{\Hnorm{3}}\norm{\xi}{\Hnorm{2}}^2 + C \norm{v^o}{\Hnorm{3}}\norm{\xi}{\Hnorm{2}}.
	\end{equation*}	
	This will imply after applying the Gr\"onwall's inequality
	\begin{equation}\label{001-g}
		\begin{aligned}
			& \sup_{0\leq t\leq T}\norm{\xi(t)}{\Hnorm{2}}^2 \leq e^{C C_1^{1/2}M_1^{1/2}T^{1/2}} \norm{\xi_0}{\Hnorm{2}}^2 \\
			& ~~~~ ~~~~ + C C_1^{1/2}M_1^{1/2}T^{1/2} e^{C C_1^{1/2}M_1^{1/2}T^{1/2}} \sup_{0<t<T} \norm{\xi }{\Hnorm{2}}\\
			& ~~ \leq \dfrac{1}{4} \sup_{0\leq t\leq T} \norm{\xi}{\Hnorm{2}}^2  + C^2 C_1M_1 e^{2C C_1^{1/2}M_1^{1/2}T^{1/2}}T \\
			& ~~~~ ~~~~ + e^{C C_1^{1/2}M_1^{1/2}T^{1/2}} M_0,
		\end{aligned}
	\end{equation}
	where we have plugged in above the estimate
	\begin{equation*}
		C \int_0^T \norm{v^o}{\Hnorm{3}}\,dt \leq C \bigl( \int_0^T \norm{v^o}{\Hnorm{3}}^2\,dt\bigr)^{1/2} T^{1/2} \leq C C_1^{1/2}M_1^{1/2}T^{1/2}.
	\end{equation*}
	Then for $ T \in (0, T_2] $ with $ T_2(M_0,C_1M_1) $ sufficiently small, \eqref{001-g} yields
	\begin{equation}\label{H2-conti-g}
		\sup_{0\leq t\leq T} \norm{\xi(t)}{\Hnorm{2}}^2 \leq 2 M_0.
	\end{equation}
	{\par\hfill\par\noindent\bf The $ H^1(\Omega) $ norm for $ \dt \xi $ \par}
	Applying $ \partial_h $ to \subeqref{eq:lin-isen-g}{1} gives us the following,
	\begin{align*}
		& \dt \partial_h \xi = - \overline{v^o} \cdot \nablah \partial_h \xi - \overline{\partial_h v^o} \cdot \nablah \xi - \partial_h \xi \overline{\dvh v^o} \\
		& ~~~~ - \xi \overline{\dvh \partial_h v^o} - \dfrac g 2 \overline{z \dvh \partial_h v^o}.
	\end{align*}
	Therefore, direct estimates imply
	\begin{equation*}
		\hnorm{\dt \partial_h \xi }{\Lnorm{2}} \lesssim \hnorm{\overline{v^o}}{\Hnorm{2}}\hnorm{\xi}{\Hnorm{2}} + \hnorm{\overline{v^o}}{\Hnorm{2}}.
	\end{equation*}
	Similar estimates also hold for $ \norm{\dt \xi }{\Lnorm{2}} $. Hence
	\begin{equation}\label{H1-t-conti-g}
		\norm{\dt \xi }{\Hnorm{1}}^2 \leq C \norm{v^o}{\Hnorm{2}}^2\norm{\xi}{\Hnorm{2}}^2  + C \norm{v^o}{\Hnorm{2}}^2 \leq C_2,
	\end{equation}
	where $ C_2 = C_2(M_0,C_1M_1) $ is given by
	\begin{equation}\label{def:C2}
		C (1 + 2M_0)C_1M_1 =: C_2.
	\end{equation}
	\begin{proof}[Proof of Proposition \ref{prop:estimates-xi-g}]
		By choosing $ T' = \min\lbrace T_1,T_2 \rbrace $, the proof of the proposition is the direct consequence of \eqref{lower-bound-conti-g}, \eqref{H2-conti-g} and \eqref{H1-t-conti-g}.
	\end{proof}
	
	Next, we shall perform some estimates for the solutions of \subeqref{eq:lin-isen-g}{2}. In particular, we will establish the following estimates:
	\begin{itemize}
		\item Horizontal spatial derivative estimates for $ v $;
		\item Time derivative estimates for $ v $;
		\item Vertical derivative estimates for $ v $.
	\end{itemize}
	Then we will use the estimates mentioned above to show the following:
	\begin{proposition}\label{prop:estimates-v-g}
	There exists a
		$  T''=T''(M_0,M_1,C_1,C_2,\underline\rho) \in(0,\infty) $ sufficiently small such that for every $ T \in (0, T''] $, the solution $ v $ to \subeqref{eq:lin-isen-g}{2} 
		satisfies
		\begin{equation*}
			\sup_{0\leq t\leq T} (\norm{v(t)}{\Hnorm{2}}^2 + \norm{v_t(t)}{\Lnorm{2}}^2 ) + \int_0^T \biggl( \norm{v(t)}{\Hnorm{3}}^2 + \norm{v_t(t)}{\Hnorm{1}}^2\biggr) \,dt 
			\leq C_1M_1.
		\end{equation*}
	\end{proposition}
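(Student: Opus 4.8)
The idea is to treat \subeqref{eq:lin-isen-g}{2} as a \emph{linear} parabolic system for $v$ — the convective terms $v^o\cdot\nablah v^o+w^o\dz v^o$ being frozen into the source — with coefficient $\rho^o:=\xi^o+\frac12 gz$ satisfying $\frac12\underline\rho\le\rho^o\le C(M_0,g)$, and to run the three standard energy estimates: test the equation with $v_t$; test the $t$-differentiated equation with $v_t$; and apply elliptic regularity. Writing the equation as
\[\rho^o v_t=\mu\deltah v+\mu\partial_{zz}v+(\mu+\lambda)\nablah\dvh v+\mathcal F,\qquad \mathcal F=-\rho^o(v^o\cdot\nablah v^o+w^o\dz v^o)-(2\xi^o+gz)\nablah\xi^o,\]
with $w^o$ as in \eqref{vertical-lin-isen-g}, I would first — using $(\xi^o,v^o)\in\mathfrak X_T$ (so $\norm{\xi^o}{\Hnorm{2}}^2\le2M_0$, $\norm{\dt\xi^o}{\Hnorm{1}}^2\le C_2$, and the $v^o$-bounds with constant $C_1M_1$), the inequalities \eqref{ineq-supnorm}, and Sobolev multiplication in $\mathbb R^3$ — record source bounds of the form $\sup_{[0,T]}(\norm{\mathcal F}{\Lnorm{2}}^2+\norm{\mathcal F}{\Hnorm{1}}^2)\le P$ and $\int_0^T\norm{\dt\mathcal F}{\Lnorm{2}}^2\,dt\le T^{\alpha}P'$ for fixed polynomials $P,P'$ in $M_0,g,\mu,\lambda,\underline\rho,C_1M_1,C_2$ and some $\alpha>0$; here the Sobolev norms of $w^o$ (and $\dt w^o$) are extracted from \eqref{vertical-lin-isen-g}, which costs only one horizontal derivative, and the $\dt v^o$-contributions to $\dt\mathcal F$ are bounded by trading $\int_0^T\norm{\dt v^o}{\Hnorm{1}}^2$ against $\sup_{[0,T]}\norm{\dt v^o}{\Lnorm{2}}\cdot\int_0^T\norm{\dt v^o}{\Hnorm{1}}\lesssim T^{1/2}C_1M_1$.

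\textbf{The energy estimates.} (i) Pairing with $v_t$ and integrating by parts (using $\dz v|_{z=0,1}=0$) gives
\[\norm{\sqrt{\rho^o}v_t}{\Lnorm{2}}^2+\tfrac{d}{dt}\bigl(\mu\norm{\nablah v}{\Lnorm{2}}^2+\mu\norm{\dz v}{\Lnorm{2}}^2+(\mu+\lambda)\norm{\dvh v}{\Lnorm{2}}^2\bigr)\lesssim\norm{\mathcal F}{\Lnorm{2}}^2,\]
hence $\sup_{[0,T]}\norm{\nabla v}{\Lnorm{2}}^2+\int_0^T\norm{\sqrt{\rho^o}v_t}{\Lnorm{2}}^2\lesssim\norm{\nabla v_0}{\Lnorm{2}}^2+TP\le M_1+TP$. (ii) Differentiating the equation in $t$, pairing with $v_t$, integrating by parts, controlling $\intw\dt\rho^o\abs{v_t}{2}$ via $\norm{\dt\xi^o}{\Hnorm{1}}\le C_2^{1/2}$, \eqref{ineq-supnorm} and Young, and absorbing the $\dt v^o$- and $\dt\mathcal F$-terms into $\mu\norm{\nabla v_t}{\Lnorm{2}}^2$, yields $\frac{d}{dt}\norm{\sqrt{\rho^o}v_t}{\Lnorm{2}}^2+\mu\norm{\nabla v_t}{\Lnorm{2}}^2\lesssim C(C_2)\norm{\sqrt{\rho^o}v_t}{\Lnorm{2}}^2+(\text{forcing integrating to }T^{\alpha}P')$; the initial value is controlled by the compatibility condition \eqref{isen-cptbl-conds-g}, $\rho_0 v_t|_{t=0}=\rho_0 V_1$, so $\norm{\sqrt{\rho_0}v_t|_{t=0}}{\Lnorm{2}}^2\lesssim(M_0^{1/2}+g)M_1$, and Grönwall gives $\sup_{[0,T]}\norm{\sqrt{\rho^o}v_t}{\Lnorm{2}}^2+\int_0^T\norm{\nabla v_t}{\Lnorm{2}}^2\le e^{C(C_2)T}(C(M_0,g)M_1+T^{\alpha}P')$ (and the same with $\norm{v_t}{\Lnorm{2}}$, since $\rho^o\ge\frac12\underline\rho$). (iii) Viewing the equation as the elliptic system $-\mu\deltah v-\mu\partial_{zz}v-(\mu+\lambda)\nablah\dvh v=-\rho^o v_t+\mathcal F$ with $\dz v|_{z=0,1}=0$ and periodicity in $\vech{x}$, elliptic regularity gives $\int_0^T\norm{v}{\Hnorm{3}}^2\lesssim\norm{\rho^o}{\Hnorm{2}}^2\int_0^T\norm{v_t}{\Hnorm{1}}^2+\int_0^T\norm{\mathcal F}{\Hnorm{1}}^2$ (the $H^3$ step by horizontal difference quotients and then reading $\partial_{zz}v$ off the equation, as in the excerpt), while $\sup_{[0,T]}\norm{v}{\Hnorm{2}}^2$ is obtained not from (iii) directly but from $\norm{v(t)}{\Hnorm{2}}^2\le\norm{v_0}{\Hnorm{2}}^2+2\int_0^T\norm{v}{\Hnorm{3}}\norm{v_t}{\Hnorm{1}}\lesssim M_1+\int_0^T\norm{v}{\Hnorm{3}}^2+\int_0^T\norm{v_t}{\Hnorm{1}}^2$.

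\textbf{Closing.} Chaining (i)--(iii) (noting $\int_0^T\norm{v_t}{\Hnorm{1}}^2\lesssim\int_0^T\norm{\nabla v_t}{\Lnorm{2}}^2+T\sup_{[0,T]}\norm{v_t}{\Lnorm{2}}^2$ and $\int_0^T\norm{\mathcal F}{\Hnorm{1}}^2\le TP$), the whole left-hand side of the proposition is bounded by $C_\star e^{C(C_2)T}M_1+T^{\alpha}\widetilde P$, where $C_\star=C_\star(M_0,\mu,\lambda,\underline\rho)$ ($g$ being a fixed model constant) and $\widetilde P=\widetilde P(M_0,M_1,C_1,C_2,\underline\rho)$. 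One then \emph{defines} $C_1=C_1(M_0,\mu,\lambda,\underline\rho)$ in \eqref{def:C_1} so that $C_1\ge 4C_\star$, which in turn fixes $C_2$ via \eqref{def:C2} and hence $\widetilde P$; choosing $T''=T''(M_0,M_1,C_1,C_2,\underline\rho)$ so small that $e^{C(C_2)T''}\le 2$ and $2(T'')^{\alpha}\widetilde P\le\frac12 C_1M_1$, the left-hand side is $\le 2C_\star M_1+\frac12 C_1M_1\le C_1M_1$ for all $T\le T''$. As in Proposition \ref{prop:maps-set2set-g}, the estimates are first carried out on the smooth approximations supplied by Proposition \ref{prop:exist-lin-isen-g} and then passed to the limit.

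\textbf{Main obstacle.} The energy identities themselves are routine; the delicate part is the uniform-in-$T$ bookkeeping — arranging that \emph{every} source contribution carries a positive power of $T$, so that, after fixing the structural constant $C_1$, it is absorbed into $\frac12 C_1M_1$. This requires care in two places: (a) the terms built from $w^o$, whose Sobolev bounds must be read off \eqref{vertical-lin-isen-g} with Hölder exponents chosen so that no $H^3$-norm of $v^o$ enters $\norm{\mathcal F}{\Hnorm{1}}$ — that norm must stay $\sup$-bounded, so its time integral gains a factor $T$; and (b) the terms of $\dt\mathcal F$ involving $\dt v^o$, which lies only in $L^2(0,T;H^1)$, where one must trade integrability for a power of $T$ via interpolation and Young's inequality. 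A secondary technical point is the $H^3$ elliptic estimate under the $\dz$-Neumann / horizontally periodic boundary conditions.
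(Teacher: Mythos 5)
Your overall scheme — freeze $(\xi^o,v^o)\in\mathfrak X_T$, test \subeqref{eq:lin-isen-g}{2} with $v_t$, test the $t$-differentiated equation with $v_t$, recover $\sup_t\norm{v}{\Hnorm 2}^2$ and $\int_0^T\norm{v}{\Hnorm 3}^2\,dt$ from elliptic regularity, and only then fix the structural constant $C_1$ and shrink $T$ — is essentially the paper's (the paper additionally runs direct $\partial_{hh}$- and $\partial_{zz}$-energy estimates, but that is a stylistic difference). The genuine gap is in your step (ii), namely the input $\int_0^T\norm{\dt\mathcal F}{\Lnorm 2}^2\,dt\le T^{\alpha}P'$. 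The forcing contains $\rho^o w^o\dz v^o$, so $\dt\mathcal F$ contains $\rho^o w^o\,\dz\dt v^o$ and $\dt(\rho^o w^o)\,\dz v^o$. From \eqref{vertical-lin-isen-g}, the sharpest pointwise-in-time bounds available on $\mathfrak X_T$ are of the type $\norm{\rho^o w^o}{\Lnorm\infty}\lesssim(\norm{\xi^o}{\Hnorm 2}+1)\norm{v^o}{\Hnorm 2}^{1/2}\norm{v^o}{\Hnorm 3}^{1/2}$, so $\norm{\dt\mathcal F}{\Lnorm 2}^2$ contains contributions of size $\norm{v^o}{\Hnorm 3}\norm{\dt v^o}{\Hnorm 1}^2$; since $\norm{v^o}{\Hnorm 3}$ and $\norm{\dt v^o}{\Hnorm 1}$ are only square-integrable in time, this product is not time-integrable under the $\mathfrak X_T$ bounds — no choice of $\alpha,P'$ works, and in fact $\int_0^T\norm{\dt\mathcal F}{\Lnorm 2}^2\,dt$ is simply not controlled. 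A milder instance of the same phenomenon defeats your claim (a): in $\norm{\mathcal F}{\Hnorm 1}$ the products $\partial_h(\rho^o w^o)\dz v^o$ and $\rho^o w^o\,\partial\dz v^o$ cannot avoid a positive (fractional) power of $\norm{v^o}{\Hnorm 3}$, because $\partial_h(\rho^o w^o)$ is only $L^\infty_z L^2_{xy}$ and pairing it with $\dz v^o$ costs sup-in-$\vech x$ control of $\dz v^o$ slice-wise; this part is repairable, since the $H^3$-norm enters sublinearly after squaring and the time integral still gains a power of $T$, but the literal statement "no $H^3$-norm enters, so $\norm{\mathcal F}{\Hnorm 1}$ stays sup-bounded" is false.

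The repair is exactly the paper's treatment of the terms $I_2$, $I_7$, $I_8$: never place $\dt\mathcal F$ in $L^2$; substitute \eqref{vertical-lin-isen-g} into the pairing $\intw\dt(\rho^o w^o\dz v^o)\cdot v_t\idx$, integrate by parts in the horizontal variables so that no factor carries more derivatives than its $\mathfrak X_T$-regularity allows, and distribute by H\"older/Young so that the super-critical quantities appear only as $\omega\norm{v^o}{\Hnorm 3}^2+\omega\norm{\dt v^o}{\Hnorm 1}^2$ plus admissible terms. Crucially, $\int_0^T(\norm{v^o}{\Hnorm 3}^2+\norm{\dt v^o}{\Hnorm 1}^2)\,dt\le C_1M_1$ carries no power of $T$, so these contributions are absorbed not by shrinking $T$ (your only mechanism) but by choosing $\omega\le\frac12\min\{c_{\mu,\lambda,\underline\rho}/2,\,1/C_1\}$ before defining $C_1$ in \eqref{def:C_1}; only the remaining $C_\omega\mathcal H(M_0,C_1M_1,C_2,\underline\rho)\,T$ terms are killed by the smallness of $T''$. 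Unless you either adopt this $\omega$-versus-$1/C_1$ absorption or rework every Young split so that the $L^2$-in-time quantities enter each term with total power strictly less than $2$ (thereby genuinely producing a factor $T^{\alpha}$, $\alpha>0$), the Gr\"onwall step in your (ii) does not close.
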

	
	{\par\hfill \par\noindent\bf Horizontal spatial derivative estimates for $ v $\par}
	Applying $ \partial_{hh} $ to \subeqref{eq:lin-isen-g}{2} will yield the following,
	\begin{equation}\label{eq:horizontal-v-g}
		\begin{aligned}
			& \rho^o \dt \partial_{hh} v - \mu \deltah \partial_{hh} v - \mu \partial_{zz} \partial_{hh} v - (\mu + \lambda) \nablah \dvh \partial_{hh} v \\
			& ~~  = - 2 \partial_h \rho^o \dt \partial_h v - \partial_{hh} \rho^o \dt v - \partial_{hh} (\rho^o v^o \cdot \nablah v^o) - \partial_{hh} (\rho^o w^o \dz v^o) \\
			& ~~~~ - \partial_{hh}((2\xi^o + gz) \nablah \xi^o),
		\end{aligned}
	\end{equation}
	where $ \rho^o  = \xi^o + \frac{1}{2} gz $. Notice that the boundary condition in \eqref{bd-conds-lin-g} will ensure that after integration by parts there are has no boundary terms in the following estimates. After taking the inner product of \eqref{eq:horizontal-v-g} with $ \partial_{hh} v $ and integrating by parts, one has
	\begin{align}\label{002-g}
			& \dfrac{d}{dt} \biggl\lbrace \dfrac{1}{2} \intw \rho^o \abs{\partial_{hh}v}{2} \idx \biggr\rbrace + \intw \biggl( \mu \abs{\nablah \partial_{hh}v}{2} + \mu \abs{\partial_{hhz} v}{2} \nonumber
			 + (\mu+\lambda) \\
			 & ~~~~ \times \abs{\dvh \partial_{hh} v}{2} \biggr) \idx {\nonumber}
			 = - \intw \biggl( 2 \partial_h \rho^o \dt \partial_h v \cdot \partial_{hh} v + \partial_{hh} \rho^o \dt v \cdot \partial_{hh} v \biggr) \idx \\
			& ~~~~ +  \dfrac{1}{2} \intw \dt \rho^o \abs{\partial_{hh}v^o}{2} \idx{\nonumber}
			 + \intw \partial_h ( \rho^o v^o \cdot\nablah v^o) \cdot \partial_{hhh} v \idx \nonumber\\
			& ~~~~ + \intw \partial_h(\rho^o w^o \dz v^o) \cdot \partial_{hhh} v \idx {\nonumber}
			 + \intw \partial_h((2\xi^o + gz) \nablah \xi^o) \cdot \partial_{hhh} v \idx\\
			 & ~~~~  =: \sum_{i=1}^{5}I_{i}.
	\end{align}
	Notice that the regularity in \eqref{rg-aprox} guarantees the inner product above is allowed. For instance, $ \rho^o \dt \partial_{hh} v \in L^2(0,T;L^2(\Omega)) $ and $ \partial_{hh}v \in L^2(0,T;H^2(\Omega)) $, and therefore $ \int \rho^o \dt \partial_{hh} v \cdot \partial_{hh} v \idx $ is allowed as an integrable quantity and the fundamental theorem of calculus can be applied.
	We list below the estimates of the $ I_i$ terms. We will use the fact $ \norm{\rho^o}{\Hnorm{2}}^2 \leq C \norm{\xi^o}{\Hnorm{2}}^2 + C g^2 \leq C M_0 + C $, $ \norm{\dt \rho^o}{\Lnorm{2}}^2 = \norm{\dt \xi^o}{\Lnorm{2}}^2 \leq C_2$. Also, hereafter the following estimates hold for every $ \delta, \omega >0 $ which will be chosen later to be adequately small. Correspondingly, $ C_{\delta}, C_{\omega}, C_{\delta,\omega} $ are some positive constants depending on $ \delta, \omega $.
	\begin{align*}
		& I_1
		\lesssim \norm{\partial_h \rho^o}{\Lnorm{6}} \norm{\nablah v_t}{\Lnorm{2}} \norm{\partial_{hh}v}{\Lnorm{3}} \\
		& ~~~~ ~~~~  + \norm{\partial_{hh}\rho^o}{\Lnorm{2}} \norm{\dt v}{\Lnorm{3}} \norm{\partial_{hh} v}{\Lnorm{6}} \\
		& ~~~~  \lesssim \norm{\rho^o}{\Hnorm{2}} \norm{\nablah v_t}{\Lnorm{2}} \norm{v}{\Hnorm{2}}^{1/2} \norm{\partial_{hh}v}{\Hnorm{1}}^{1/2}\\
		& ~~~~ ~~~~ + \norm{\rho^o}{\Hnorm{2}}\norm{\dt v}{\Lnorm{2}}^{1/2} \norm{\dt v}{\Hnorm{1}}^{1/2} \norm{\partial_{hh}v}{\Hnorm{1}}  \\
		& ~~~~ \lesssim \delta \norm{\partial_{hh}v}{\Hnorm{1}}^2 + \omega \norm{v_t}{\Hnorm{1}}^2 + C_{\delta,\omega}
		(M_0^2 + 1)  (\norm{v}{\Hnorm{2}}^2 \\
		& ~~~~ ~~~~ ~~~~ + \norm{\dt v}{\Lnorm{2}}^2). \\
		& I_2
		\lesssim \norm{\dt \rho^o}{\Lnorm{2}} \norm{\partial_{hh} v^o}{\Lnorm{3}} \norm{\partial_{hh} v^o}{\Lnorm{6}} \lesssim \norm{\dt \rho^o}{\Lnorm{2}} \norm{v^o}{\Hnorm{2}}^{1/2} \\
		& ~~~~ ~~~~ \times \norm{v^o}{\Hnorm{3}}^{3/2} \lesssim \omega \norm{v^o}{\Hnorm{3}}^2 + C_\omega
		C_2^2 C_1M_1. \\
		& I_3 \lesssim ( \norm{\partial_h \rho^o}{\Lnorm{6}} \norm{v^o}{\Lnorm{\infty}} \norm{\nablah v^o}{\Lnorm{3}} + \norm{\rho^o}{\Lnorm{\infty}}\norm{\partial_h v^o}{\Lnorm{6}} \norm{\nablah v^o}{\Lnorm{3}} \\
		& ~~~~ ~~~~ ~~~~ + \norm{\rho^o}{\Lnorm{\infty}} \norm{v^o}{\Lnorm{\infty}} \norm{\nablah^2 v^o}{\Lnorm{2}}    ) \norm{\partial_{hhh}v}{\Lnorm{2}} \\
		& ~~~~ \lesssim \norm{\rho^o}{\Hnorm{2}} \norm{v^o}{\Hnorm{2}}^2 \norm{\partial_{hh} v}{\Hnorm{1}} \lesssim \delta \norm{\partial_{hh} v}{\Hnorm{1}}^2 + C_\delta (M_0 + 1) C_1^2 M_1^2.  \\
		& I_5 \lesssim (\norm{\nablah \xi^o}{\Lnorm{4}}^2 + \norm{\xi^o}{\Lnorm{\infty}} \norm{\nablah^2 \xi^o}{\Lnorm{2}} + \norm{\nablah^2 \xi^o }{\Lnorm{2}} ) \norm{\partial_{hhh}v}{\Lnorm{2}}\\
		& ~~~~ \lesssim (\norm{\xi^o}{\Hnorm{2}}^2 + 1) \norm{\partial_{hh} v}{\Hnorm{1}} \lesssim \delta \norm{\partial_{hh} v}{\Hnorm{1}}^2 + C_\delta( M_0^2+ 1).
	\end{align*}
	In order to estimate $ I_4 $, we shall plug in \eqref{vertical-lin-isen-g}. One has
	\begin{align*}
		& I_4 = \intw \partial_h (\rho^o w^o) \dz v^o \cdot \partial_{hhh} v\idx + \intw \rho^o w^o \partial_{hz} v^o \cdot \partial_{hhh} v\idx \\
		& ~~~~ = - \int_0^1 \inth \biggl\lbrack \int_0^z \bigl\lbrack \partial_h\dvh(\xi^o \widetilde{v^o}) + \dfrac{g}{2} \widetilde{z \dvh \partial_h v^o} \bigr\rbrack  \,dz'  \dz v^o \cdot \partial_{hhh} v  \biggr\rbrack \idxh \,dz \\
		& ~~~~ ~~~~ - \int_0^1  \inth \biggl\lbrack \int_0^z \bigl\lbrack \dvh(\xi^o \widetilde{v^o}) + \dfrac{g}{2} \widetilde{z \dvh  v^o} \bigr\rbrack  \,dz'  \partial_{hz} v^o \cdot \partial_{hhh} v \biggr\rbrack  \idxh \,dz \\
		& ~~~~ = :I_4' + I_4''.
	\end{align*}
	Then applying the Minkowski's and the Sobolev embedding inequalities yields
	\begin{align*}
		& I_4'' = - \int_0^1  \int_0^z \inth \bigl\lbrack \dvh(\xi^o \widetilde{v^o}) + \dfrac{g}{2} \widetilde{z \dvh  v^o} \bigr\rbrack{(\vech{x},z',t)} \\
		& ~~~~ ~~~~ ~~~~ \times \bigl\lbrack \partial_{hz} v^o \cdot \partial_{hhh} v \bigr\rbrack(\vech{x},z,t)  \idxh \,dz' \,dz \\
		&  \lesssim \int_0^1 \biggl( \hnorm{\nablah \xi^o}{\Lnorm{4}} \hnorm{\widetilde{v^o}}{\Lnorm{\infty}} + \hnorm{\xi^o}{\Lnorm{\infty}} \hnorm{\widetilde{\nablah v^o}}{\Lnorm{4}} + \hnorm{\widetilde{\nablah v^o}}{\Lnorm{4}} \biggr) \,dz'\\
		& ~~~~  \times \int_0^1 \hnorm{\partial_{hz} v^o}{\Lnorm{4}} \hnorm{\partial_{hhh} v}{\Lnorm{2}} \,dz \lesssim (\norm{\xi^o}{\Hnorm{2}} + 1)\norm{v^o}{\Hnorm{2}}^{3/2}\norm{v^o}{\Hnorm{3}}^{1/2}\\
		& ~~~~ \times \norm{\partial_{hh} v}{\Hnorm{1}} \lesssim \delta \norm{\partial_{hh} v}{\Hnorm{1}}^2 +\omega\norm{v^o}{\Hnorm{3}}^2 + C_{\delta,\omega} ( M_0^2 + 1) C_1^3M_1^{3},\\
		& I_4' = - \int_0^1 \int_0^z \inth \bigl\lbrack \partial_h\dvh(\xi^o \widetilde{v^o}) + \dfrac{g}{2} \widetilde{z \dvh \partial_h v^o} \bigr\rbrack(\vech{x},z',t)  \\
		& ~~~~ ~~~~ ~~~~~ \times \bigl\lbrack \dz v^o \cdot \partial_{hhh} v  \bigr\rbrack(\vech{x},z,t) \idxh \,dz' \,dz \\
		& ~~~~ \lesssim \int_0^1 \biggl( \hnorm{\nablah^2 \xi^o}{\Lnorm{2}} \hnorm{\widetilde{v^o}}{\Lnorm{\infty}} + \hnorm{\nablah \xi^o}{\Lnorm{4}} \hnorm{\widetilde{\nablah v^o}}{\Lnorm{4}} + \hnorm{\xi^o}{\Lnorm{\infty}} \hnorm{\widetilde{\nablah^2 v^o}}{\Lnorm{2}}\\
		& ~~~~ ~~~~ + \hnorm{\widetilde{\nablah^2 v^o}}{\Lnorm{2}} \biggr)  \,dz'  \times \int_0^1 \hnorm{\dz v^o}{\Lnorm{\infty}} \hnorm{\partial_{hhh} v}{\Lnorm{2}} \,dz \\
		& ~~~~ \lesssim \int_0^1 (\hnorm{\xi^o}{\Hnorm{2}} + 1) \hnorm{\widetilde{v^o}}{\Hnorm{2}} \,dz' \times \int_0^1 \hnorm{\dz v^o}{\Hnorm{1}}^{1/2} \hnorm{\dz v^o}{\Hnorm{2}}^{1/2} \hnorm{\partial_{hhh} v}{\Lnorm{2}} \,dz\\
		& ~~~~ \lesssim (\norm{\xi^o}{\Hnorm{2}} + 1) \norm{v^o}{\Hnorm{2}}^{3/2} \norm{v^o}{\Hnorm{3}}^{1/2} \norm{\partial_{hh} v}{\Hnorm{1}} \lesssim \delta \norm{\partial_{hh}v}{\Hnorm{1}}^2 + \omega \norm{v^o}{\Hnorm{3}}^2 \\
		& ~~~~ ~~~~ + C_{\delta,\omega} (M_0^2 + 1) C_1^3M_1^3,
	\end{align*}
	where we have employed \eqref{ineq-supnorm}. Summing the above inequalities with $ \delta $ small enough yields the following estimate
	\begin{equation*}
		\begin{aligned}
			& \dfrac{d}{dt} \norm{\sqrt{\rho^o}\partial_{hh} v}{\Lnorm{2}}^2 + c_{\mu, \lambda} \norm{\partial_{hh} v}{\Hnorm{1}}^2 \lesssim \omega ( \norm{\dt v}{\Hnorm{1}}^2 + \norm{v^o}{\Hnorm{3}}^2 ) \\
			& ~~~~ + C_\omega \mathcal H(M_0,C_1M_1,C_2) (\norm{v}{\Hnorm{2}}^2 + \norm{\dt v}{\Lnorm{2}}^2 + 1).
		\end{aligned}
	\end{equation*}
	Hereafter, $ \mathcal H $ will be used to denote a polynomial quantity of its arguments (i.e., the norms of the initial data and $\xi^o, v^o$) which may be different from line to line. Also $ c_{\mu,\lambda}, C_\omega $ denote positive constants depending on $ \mu, \lambda $ and $ \omega $, respectively. Similar arguments also hold for the lower order derivatives. Then after suitable choice of $ \omega $, one has
	\begin{equation}\label{Horiz-v-g}
		\begin{aligned}
			& \dfrac{d}{dt}\bigl(  \norm{\sqrt{\rho^o}v}{\Lnorm{2}}^2 +\norm{\sqrt{\rho^o}\nablah v}{\Lnorm{2}}^2 + \norm{\sqrt{\rho^o}\nablah^2 v}{\Lnorm{2}}^2 \bigr) + c_{\mu, \lambda}\bigl( \norm{v}{\Hnorm{1}}^2 \\
			& ~~~~ ~~~~ + \norm{\nablah v}{\Hnorm{1}}^2 + \norm{\nablah^2 v}{\Hnorm{1}}^2 \bigr) \leq \omega \bigl( \norm{\dt v}{\Hnorm{1}}^2 + \norm{v^o}{\Hnorm{3}}^2 \bigr) \\
			& ~~~~ + C_\omega \mathcal H(M_0,C_1M_1,C_2) \bigl(\norm{v}{\Hnorm{2}}^2 + \norm{\dt v}{\Lnorm{2}}^2 + 1\bigr).
		\end{aligned}
	\end{equation}
	{\par\hfill\par\noindent\bf Time derivative estimates for $ v $\par}
	Observe, from \subeqref{eq:lin-isen-g}{2}, we have
	\begin{align*}
		& \dt v = - v^o \cdot\nablah v^o - w^o \dz v^o - 2 \nablah \xi^o \\
		& ~~~~ + (\rho^o)^{-1} \bigl( \mu \deltah v + \mu \partial_{zz} v + (\mu +\lambda) \nablah \dvh v \bigr).
	\end{align*}
	with the right-hand side which is differentiable in time with value in $ L^2(0,T;L^2(\Omega)) $. Hence $ \dt v_t \in L^2(0,T;L^2(\Omega)) $, and  we can apply
	$ \dt $ to \subeqref{eq:lin-isen-g}{2}. This yields
	\begin{equation}\label{eq:temporal-v-g}
		\begin{aligned}
			& \rho^o \partial_t v_t - \mu \deltah v_t - \mu \partial_{zz} v_t - (\mu + \lambda) \nablah \dvh v_t = - \dt \rho^o \dt v \\
			& ~~~~ - \dt (\rho^o v^o \cdot \nablah v^o) - \dt (\rho^o w^o \dz v^o) - \dt ((2\xi^o + gz) \nablah \xi^o ).
		\end{aligned}
	\end{equation}
	Taking the inner product of \eqref{eq:temporal-v-g} with $ v_t $ and integrating the resultant yield
	\begin{align}\label{003-g}
		& \dfrac{d}{dt} \dfrac{1}{2} \intw \rho^o \abs{\dt v}{2} \idx + \intw \biggl( \mu \abs{\nablah v_t}{2} + \mu \abs{\dz v_t}{2} + (\mu+\lambda) \abs{\dvh v_t}{2} \biggr) \idx {\nonumber} \\
		& ~~ = - \dfrac{1}{2} \intw \dt \rho^o \abs{v_t}{2} \idx - \intw \dt (\rho^o v^o \cdot\nablah v^o) \cdot \dt v \idx \nonumber \\
		& ~~~~ - \intw \dt(\rho^o w^o \dz v^o) \cdot \dt v\idx - \intw \dt ((2\xi^o + gz) \nablah \xi^o ) \cdot \dt v \idx \nonumber\\
		& ~~~~ =: \sum_{i=6}^{9}I_i.
	\end{align}
	We now provide estimates for the right-hand side terms of \eqref{003-g}.
	\begin{align*}
		& I_6 \lesssim \norm{\dt \rho^o}{\Lnorm{2}} \norm{\dt v}{\Lnorm{3}} \norm{\dt v}{\Lnorm{6}} \lesssim \norm{\dt \rho^o}{\Lnorm{2}} \norm{\dt v}{\Lnorm{2}}^{1/2} \norm{\dt v}{\Hnorm{1}}^{3/2} \\
		& ~~~~ \lesssim \delta \norm{\dt v}{\Hnorm{1}}^2 + C_\delta C_2^2 \norm{\dt v}{\Lnorm{2}}^2. \\
		& I_7 \lesssim (\norm{\dt \rho^o}{\Lnorm{2}}\norm{v^o}{\Lnorm{\infty}} \norm{\nablah v^o}{\Lnorm{6}}+ \norm{\rho^o}{\Lnorm{\infty}} \norm{\dt v^o}{\Lnorm{2}}\norm{\nablah v^o}{\Lnorm{6}} \\
		& ~~~~ ~~~~ + \norm{\rho^o}{\Lnorm{\infty}}\norm{v^o}{\Lnorm{6}} \norm{\nablah v^o_t}{\Lnorm{2}} ) \norm{\dt v}{\Lnorm{3}}\lesssim (\norm{\dt \rho^o}{\Lnorm{2}} \norm{v^o}{\Hnorm{2}}^2 \\
		& ~~~~ ~~~~ + \norm{\rho^o}{\Hnorm{2}}\norm{\dt v^o}{\Lnorm{2}}\norm{v^o}{\Hnorm{2}} + \norm{\rho^o}{\Hnorm{2}}\norm{v^o}{\Hnorm{1}}\norm{v_t^o}{\Hnorm{1}} ) \\
		& ~~~~ \times \norm{\dt v}{\Lnorm{2}}^{1/2} \norm{\dt v}{\Hnorm{1}}^{1/2} \lesssim \delta \norm{\dt v}{\Hnorm{1}}^2 +\omega (\norm{v^o}{\Hnorm{3}}^2 + \norm{\dt v^o}{\Hnorm{1}}^2) \\
		& ~~~~ + C_{\delta,\omega} (C_2^2 + M_0^2 + 1) C_1^2M_1^2  \norm{\dt v}{\Lnorm{2}}^2. \\
		& I_{9} = \intw (2\xi^o + gz) \dt \xi^o \dvh v_t \idx \lesssim (\norm{\xi^o}{\Lnorm{\infty}} + 1) \norm{\dt \xi^o}{\Lnorm{2}} \norm{\dvh \dt v}{\Lnorm{2}} \\
		& ~~~~ \lesssim \delta \norm{\dt v}{\Hnorm{1}}^2 + C_\delta (M_0 + 1) C_2.
	\end{align*}
	In order to estimate $ I_8 $, we first substitute \eqref{vertical-lin-isen-g} and thus we have,
	\begin{align*}
		& I_8 = - \intw \dt (\rho^o w^o) \dz v^o \cdot \dt v\idx - \intw \rho^o w^o \dz v_t^o \cdot \dt v\idx \\
		& ~~~~ = \int_0^1 \inth \biggl\lbrack \int_0^z \bigl\lbrack \dvh(\xi^o_t \widetilde{v^o}) + \dvh (\xi^o \widetilde{v^o_t}) + \dfrac{g}{2}\widetilde{z\dvh v^o_t} \bigr\rbrack \,dz' \\
		& ~~~~ ~~~~ \times \bigl( \dz v^o \cdot \dt v \bigr) \biggr\rbrack \idxh \,dz
		 + \int_0^1 \inth \biggl\lbrack \int_0^z \bigl\lbrack \dvh(\xi^o \widetilde{v^o}) + \dfrac{g}{2}\widetilde{z\dvh v^o} \bigr\rbrack \,dz' \\
		 & ~~~~ ~~~~ \times \bigl( \dz v_t^o \cdot \dt v \bigr) \biggr\rbrack \idxh \,dz =: I_8' + I_8''.
	\end{align*}
	Then we apply \eqref{ineq-supnorm}, the Minkowski's and the Sobolev embedding inequalities as follows,
	\begin{align*}
		& I_8' = - \int_0^1  \int_0^z \inth \bigl\lbrack \xi^o_t \widetilde{v^o} + \xi^o \widetilde{v_t^o} + \dfrac g 2 \widetilde{z v_t^o} \bigr\rbrack(\vech{x},z',t) \\
		& ~~~~ ~~~~ ~~~~ \cdot \bigl\lbrack\nablah (\dz v^o \cdot \dt v)\bigr\rbrack(\vech{x},z,t) \idxh \,dz'  \,dz  \\
		& ~~~~ \lesssim \int_0^1 \biggl( \hnorm{\xi_t^o}{\Lnorm{2}}\hnorm{\widetilde{v^o}}{\Lnorm{\infty}} +  \hnorm{\xi^o}{\Lnorm{\infty}}\hnorm{\widetilde{v^o_t}}{\Lnorm{2}} + \hnorm{\widetilde{v^o_t}}{\Lnorm{2}} \biggr) \,dz' \times \int_0^1 \biggl( \hnorm{\nablah \dz v^o}{\Lnorm{4}} \hnorm{\dt v}{\Lnorm{4}} \\
		& ~~~~ ~~~~ ~~~~ + \hnorm{\dz v^o}{\Lnorm{\infty}} \hnorm{\nablah v_t}{\Lnorm{2}} \biggr) \,dz \lesssim (\norm{\xi^o_t}{\Lnorm{2}}\norm{v^o}{\Hnorm{2}} + \norm{\xi^o}{\Hnorm{2}}\norm{v^o_t}{\Lnorm{2}}\\
		& ~~~~ ~~~~ + \norm{v^o_t}{\Lnorm{2}})\norm{v^o}{\Hnorm{2}}^{1/2} \norm{v^o}{\Hnorm{3}}^{1/2} \norm{\dt v}{\Hnorm{1}} \lesssim \delta \norm{\dt v}{\Hnorm{1}}^2 + \omega \norm{v^o}{\Hnorm{3}}^2 \\
		& ~~~~ ~~~~ + C_{\delta,\omega} (C_2^2 + M_0^2 + 1 ) C_1^3M_1^3,\\
		& I_8'' \lesssim \int_0^1 \biggl( \hnorm{\nablah \xi^o}{\Lnorm{4}}\hnorm{\widetilde{v^o}}{\Lnorm{\infty}} + \hnorm{\xi^o}{\Lnorm{\infty}}\hnorm{\widetilde{\nablah v^o}}{\Lnorm{4}} + \hnorm{\widetilde{\nablah v^o}}{\Lnorm{4}} \biggr)  \,dz' \\
		& ~~~~ ~~~~ \times \int_0^1 \hnorm{\dz v_t^o}{\Lnorm{2}}\hnorm{\dt v}{\Lnorm{4}} \,dz \lesssim (\norm{\xi^o}{\Hnorm{2}} \norm{v^o}{\Hnorm{2}} + \norm{v^o}{\Hnorm{2}})\norm{v_t^o }{\Hnorm{1}}\\
		& ~~~~ ~~~~ \times \norm{\dt v}{\Lnorm{2}}^{1/2} \norm{\dt v}{\Hnorm{1}}^{1/2} \lesssim \delta \norm{\dt v}{\Hnorm{1}}^2 + \omega \norm{\dt v^o}{\Hnorm{1}}^2 \\
		& ~~~~ ~~~~ + C_{\delta,\omega} ( M_0^2 + 1) C_1^2M_1^2 \norm{\dt v}{\Lnorm{2}}^2.
	\end{align*}
	Summing up the above inequalities with small enough $ \delta $ and $ \omega $ yields
	\begin{equation}\label{Temp-v-g}
		\begin{aligned}
		& \dfrac{d}{dt} \norm{\sqrt{\rho^o}v_t}{\Lnorm{2}}^2 + c_{\mu,\lambda} \norm{v_t}{\Hnorm{1}}^2 \leq \omega \bigl( \norm{v^o}{\Hnorm{3}}^2 + \norm{\dt v^o}{\Hnorm{1}}^2 \bigr) \\
		& ~~~~  + C_\omega \mathcal H (M_0,C_1M_1,C_2) \bigl( \norm{\dt v}{\Lnorm{2}}^2  + 1 \bigr).
		\end{aligned}
	\end{equation}

	{\par\hfill\par\noindent\bf Vertical derivative estimates for $ v $\par}
	Taking the inner product of \subeqref{eq:lin-isen-g}{2} with $ v_t $ and integrating the resultant will yield,
	\begin{align}\label{004-g}
		& \dfrac{1}{2} \dfrac{d}{dt}\bigl( \mu \norm{\nablah v}{\Lnorm{2}}^2 + \mu\norm{\dz v}{\Lnorm{2}}^2 + (\mu+\lambda) \norm{\dvh v}{\Lnorm{2}}^2 \bigr) + \norm{\sqrt{\rho^o}v_t}{\Lnorm{2}}^2 \nonumber \\
		& ~~ =  - \intw ( \rho^o v^o \cdot \nablah v^o ) \cdot \dt v \idx - \intw \rho^o w^o \dz v^o \cdot \dt v \idx \nonumber \\
		& ~~~~ - \intw (2\xi^o + gz) \nablah \xi^o \cdot \dt v\idx =: \sum_{i=10}^{12} I_i.
	\end{align}
	Then the H\"older and the Sobolev embedding inequalities yield
	\begin{align*}
		& I_{10} \lesssim \norm{\rho^o}{\Lnorm{\infty}} \norm{v^o}{\Lnorm{\infty}} \norm{\nablah v^o}{\Lnorm{2}} \norm{\dt v}{\Lnorm{2}} \lesssim \delta \norm{\sqrt{\rho^o} v_t}{\Lnorm{2}}^2 \\
		& ~~~~ ~~~~ ~~~~ + C_{\delta,\underline\rho} (M_0+1)C_1^2M_1^2, \\
		& I_{12} \lesssim \delta \norm{\sqrt{\rho^o}v_t}{\Lnorm{2}}^2 + C_{\delta,\underline\rho} (M_0^2 + 1),
	\end{align*}
	where we have used the fact that $ \rho^o = \xi^o + \frac{1}{2} gz \geq \frac{1}{2} \underline \rho > 0 $.
	As before, plugging in \eqref{vertical-lin-isen-g} yields
	\begin{align*}
		& I_{11} = \int_0^1 \inth \biggl\lbrack  \int_0^z \bigl\lbrack \dvh (\xi^o \widetilde{v^o}) + \dfrac{g}{2} \widetilde{z \dvh v^o} \bigr\rbrack \,dz'  \times \bigl( \dz v^o \cdot \dt v \bigr) \biggr\rbrack  \idxh \,dz\\
		& ~~~~ \lesssim \int_0^1 \biggl( \hnorm{\nablah \xi^o }{\Lnorm{4}} \hnorm{\widetilde{v^o}}{\Lnorm{\infty}} + \hnorm{\xi^o}{\Lnorm{\infty}} \hnorm{\widetilde{\nablah v^o}}{\Lnorm{4}} + \hnorm{\widetilde{\nablah v^o}}{\Lnorm{4}} \biggr) \,dz' \\
		& ~~~~ ~~~~ \times \int_0^1 \hnorm{\dz v^o}{\Lnorm{4}} \hnorm{\dt v}{\Lnorm{2}} \,dz \lesssim (\norm{\xi^o}{\Hnorm{2}} + 1) \norm{v^o}{\Hnorm{2}}^2 \norm{\dt v}{\Lnorm{2}}\\
		& ~~~~ \lesssim \delta \norm{\sqrt{\rho^o}v_t}{\Lnorm{2}}^2 + C_{\delta,\underline\rho} (M_0+1)C_1^2M_1^2.
	\end{align*}
	Therefore, \eqref{004-g} implies
	\begin{equation}\label{Vert-v-g-01}
		\begin{aligned}
			& \dfrac{d}{dt} \bigl( \mu \norm{\nablah v}{\Lnorm{2}}^2 + \mu\norm{\dz v}{\Lnorm{2}}^2 + (\mu+\lambda) \norm{\dvh v}{\Lnorm{2}}^2 \bigr) + \norm{\sqrt{\rho^o}v_t}{\Lnorm{2}}^2 \\
			& ~~~~ ~~~~ \leq \mathcal H(M_0,C_1M_1,C_2,\underline\rho).
		\end{aligned}
	\end{equation}
	\par On the other hand, \subeqref{eq:lin-isen-g}{2} can be written as
	\begin{equation}\label{eq:vertical-v-g-01}
		\begin{aligned}
			& \mu \partial_{zz} v - \rho^o \dt v = - \mu \deltah v - (\mu+\lambda) \nablah \dvh v + \rho^o v^o \cdot \nablah v^o \\
			& ~~~~ ~~~~ + \rho^o w^o \dz v^o + (2\xi^o + gz) \nablah \xi^o.
		\end{aligned}
	\end{equation}
	Taking the inner product of \eqref{eq:vertical-v-g-01} with $ \dt  \partial_{zz}v $ and integrating the resultant will yield, after using the boundary condition \eqref{bd-conds-lin-g} and integrating by parts,
	\begin{align}\label{005-g}
		& \dfrac{1}{2} \dfrac{d}{dt} ( \mu \norm{\nablah \partial_z v}{\Lnorm{2}}^2 + \mu \norm{\partial_{zz} v}{\Lnorm{2}}^2 + (\mu+\lambda)\norm{\dvh \partial_z v}{\Lnorm{2}}^2  )  \nonumber \\
		& ~~~~ + \norm{\sqrt{\rho^o}\partial_z v_t}{\Lnorm{2}}^2 = -  \intw \partial_z \rho^o v_t \cdot \dz v_t \idx - \intw \dz (\rho^o v^o \cdot \nablah v^o) \cdot \dz v_t \idx \nonumber \\
		& ~~~~ ~~~~ - \intw \dz (\rho^o w^o \dz v^o) \cdot \dz v_t \idx - \intw g \nablah \xi^o \cdot \dz v_t \idx =: \sum_{i=13}^{16} I_i.
	\end{align}
	Then we have, noticing that $ \dz \rho^o = \frac{1}{2} g $,
	\begin{align*}
		& I_{13} = - \intw \dfrac{g}{2} v_t \cdot \dz v_t \idx \lesssim \norm{v_t}{\Lnorm{2}} \norm{\dz v_t}{\Lnorm{2}} \lesssim \delta \norm{\sqrt{\rho^o}\dz v_t}{\Lnorm{2}}^2  \\
		& ~~~~ ~~~~ + C_{\delta,\underline\rho} \norm{v_t}{\Lnorm{2}}^2, \\
		&I _{14} \lesssim ( \norm{v^o}{\Lnorm{3}} \norm{\nablah v^o}{\Lnorm{6}} + \norm{\rho^o}{\Lnorm{\infty}} \norm{\dz v^o}{\Lnorm{6}} \norm{\nablah v^o}{\Lnorm{3}} \\
		& ~~~~ ~~~~ + \norm{\rho^o}{\Lnorm{\infty}} \norm{v^o}{\Lnorm{\infty}} \norm{\nablah \dz v^o}{\Lnorm{2}} )\norm{\dz v_t}{\Lnorm{2}}  \lesssim \delta \norm{\sqrt{\rho^o}v_t}{\Lnorm{2}}^2 \\
		& ~~~~ ~~~~ + C_{\delta,\underline\rho} (M_0 + 1) C_1^2M_1^2, \\
		& I_{16} \lesssim \norm{\xi^o}{\Hnorm{1}}\norm{\dz v_t}{\Lnorm{2}} \lesssim \delta \norm{\sqrt{\rho^o}v_t}{\Lnorm{2}}^2 + C_{\delta,\underline\rho} M_0.
	\end{align*}
	Next, we observe that
	\begin{align*}
		& I_{15} = - \intw \dz(\rho^o w^o) \dz v^o \cdot \dz v_t \idx - \intw \rho^o w^o \partial_{zz} v^o \cdot \dz v_t \idx \\
		& ~~ = \intw (\dvh(\xi^o \widetilde{v^o}) + \dfrac{g}{2} \widetilde{z \dvh v^o}) \times (\dz v^o \cdot \dz v_t) \idx \\
		& ~~~~  + \int_0^1 \inth \biggl\lbrack \int_0^z \bigl\lbrack \dvh(\xi^o \widetilde{v^o}) + \dfrac{g}{2} \widetilde{z\dvh v^o} \bigr\rbrack \,dz'  \times \bigl( \partial_{zz} v^o \cdot \dz v_t \bigr) \biggr\rbrack \idxh   \,dz \\
		& ~~~~ =: I_{15}' + I_{15}''.
	\end{align*}
	Then one has
	\begin{align*}
		& I_{15}' \lesssim ( \norm{\nablah \xi^o}{\Lnorm{3}}\norm{\widetilde{v^o}}{\Lnorm{\infty}} + \norm{\xi^o}{\Lnorm{\infty}} \norm{\widetilde{\nablah v^o}}{\Lnorm{3}} + \norm{\widetilde{\nablah v^o}}{\Lnorm{3}}) \\
		& ~~~~ ~~~~ \times \norm{\dz v^o}{\Lnorm{6}} \norm{\dz v_t}{\Lnorm{2}} \lesssim \delta \norm{\sqrt{\rho^o}\dz v_t}{\Lnorm{2}}^2 + C_{\delta,\underline\rho}(M_0 + 1)C_1^2M_1^2,\\
		& I_{15}'' \lesssim \int_0^1 \biggl( \hnorm{\nablah \xi^o}{\Lnorm{4}} \hnorm{\widetilde{v^o}}{\Lnorm{\infty}} + \hnorm{\xi^o}{\Lnorm{\infty}}\hnorm{\widetilde{\nablah v^o}}{\Lnorm{4}} + \hnorm{\widetilde{\nablah v^o}}{\Lnorm{4}} \biggr) \,dz' \\
		& ~~~~ ~~~~ \times  \int_0^1 \hnorm{\partial_{zz}v^o}{\Lnorm{4}} \hnorm{\dz v_t}{\Lnorm{2}} \,dz \lesssim (\norm{\xi^o}{\Hnorm{2}} + 1 )\norm{v^o}{\Hnorm{2}}^{3/2} \norm{v^o}{\Hnorm{3}}^{1/2}\\
		& ~~~~ ~~~~ \times \norm{\dz v_t}{\Lnorm{2}} \lesssim \delta \norm{\sqrt{\rho^o}\dz v_t}{\Lnorm{2}}^2 + \omega \norm{v^o}{\Hnorm{3}}^2 + C_{\delta,\omega,\underline\rho} (M_0^2 + 1) C_1^3M_1^3.
	\end{align*}
	Therefore \eqref{005-g} yields
	\begin{equation}\label{Vert-v-g-02}
		\begin{aligned}
			& \dfrac{d}{dt} ( \mu \norm{\nablah \partial_z v}{\Lnorm{2}}^2 + \mu \norm{\partial_{zz} v}{\Lnorm{2}}^2 + (\mu+\lambda)\norm{\dvh \partial_z v}{\Lnorm{2}}^2  )  \\
			& ~~~~ + c_{\mu,\lambda} \norm{\sqrt{\rho^o}\dz v_t}{\Lnorm{2}}^2 \leq \omega \norm{v^o}{\Hnorm{3}}^2 \\
			& ~~~~ ~~~~ + C_\omega \mathcal H(M_0,C_1M_1,C_2,\underline\rho) (\norm{v_t}{\Lnorm{2}}^2 +1).
		\end{aligned}
	\end{equation}
	\par Next, applying $ \partial \in \lbrace \partial_x, \partial_y, \partial_z \rbrace $ to \eqref{eq:vertical-v-g-01} yields
	\begin{equation}\label{eq:vertical-v-g-03}
		\begin{aligned}
			& \mu \partial \partial_{zz} v - \rho^o \partial v_t = \partial \rho^o \dt v - \mu \deltah \partial v - (\mu + \lambda) \nablah\dvh \partial v \\
			& ~~~~ + \partial (\rho^o v^o \cdot \nablah v^o) + \partial (\rho^o w^o \dz v^o) + \partial ((2\xi^o + gz) \nablah \xi^o) .
		\end{aligned}
	\end{equation}
	This implies
	\begin{align*}
		& \mu \norm{\partial \partial_{zz} v}{\Lnorm{2}} \lesssim \norm{\rho^o}{\Lnorm{\infty}} \norm{\partial v_t}{\Lnorm{2}} + \norm{\partial \rho^o}{\Lnorm{6}} \norm{\dt v}{\Lnorm{3}} \\
		& ~~~~ + \norm{\partial \nablah^2 v}{\Lnorm{2}} + \norm{\partial \rho^o}{\Lnorm{6}} \norm{v^o}{\Lnorm{\infty}} \norm{\nablah v^o}{\Lnorm{3}} \\
		& ~~~~ + \norm{\rho^o}{\Lnorm{\infty}} \norm{\partial v^o}{\Lnorm{6}} \norm{\nablah v^o}{\Lnorm{3}} + \norm{ \rho^o}{\Lnorm{\infty}} \norm{v^o}{\Lnorm{\infty}} \norm{\nablah \partial v^o}{\Lnorm{2}} \\
		& ~~~~ + \norm{\partial (2\xi^o + gz) }{\Lnorm{3}} \norm{\nablah \xi^o}{\Lnorm{6}} + \norm{2\xi^o + gz }{\Lnorm{\infty}} \norm{\nablah \partial \xi^o}{\Lnorm{2}} \\
		& ~~~~ + \norm{\partial(\rho^o w^o) \dz v^o}{\Lnorm{2}} + \norm{\rho^o w^o  \partial \dz v^o}{\Lnorm{2}} \\
		& \lesssim \norm{\nablah^2 v}{\Hnorm{1}} + (M_0^{1/2} + 1) (\norm{v_t}{\Hnorm{1}} + M_0^{1/2} + C_1 M_1 )\\
		& ~~~~   + \norm{\partial(\rho^o w^o) \dz v^o}{\Lnorm{2}} + \norm{\rho^o w^o  \partial \dz v^o}{\Lnorm{2}}.
	\end{align*}
	Also, by employing the Minkowski's inequality, one obtains
	\begin{align*}
		& \norm{\partial_h (\rho^o w^o) \dz v^o}{\Lnorm{2}}^2 \lesssim \int_0^1 \hnorm{\partial_h(\rho^o w^o)}{\Lnorm{2}}^2 \hnorm{\dz v^o}{\Lnorm{\infty}}^2   \,dz \\
		& ~~~~ \lesssim \biggl( \int_0^1 \bigl( \hnorm{\nablah^2 (\xi^o \widetilde{v^o})}{\Lnorm{2}} + \hnorm{\widetilde{\nablah^2 v^o}}{\Lnorm{2}} \bigr) \,dz' \biggr)^2 \times \int_0^1 \hnorm{\dz v^o}{\Hnorm{1}}\hnorm{\dz v^o}{\Hnorm{2}}\,dz \\
		& ~~~~ \lesssim (\norm{\xi^o}{\Hnorm{2}}^2 + 1) \norm{v^o}{\Hnorm{2}}^3 \norm{v^o}{\Hnorm{3}} \lesssim \omega \norm{v^o}{\Hnorm{3}}^2 + C_\omega (M_0^2 + 1) C_1^3 M_1^3, \\
		& \norm{\rho^o w^o  \partial \dz v^o}{\Lnorm{2}}^2 \lesssim \biggl( \int_0^1 \bigl( \hnorm{\nablah (\xi^o \widetilde{v^o})}{\Lnorm{4}} + \hnorm{\widetilde{\nablah v^o} }{\Lnorm{4}} \bigr)  \,dz'\biggr)^2 \times \int_0^1 \hnorm{\partial \partial_z v^o}{\Lnorm{4}}^2 \,dz\\
		& ~~~~ \lesssim (\norm{\xi^o}{\Hnorm{2}}^2 + 1 ) \norm{v^o}{\Hnorm{2}}^3 \norm{v^o}{\Hnorm{3}} \lesssim \omega \norm{v^o}{\Hnorm{3}}^2 + C_\omega (M_0^2 + 1) C_1^3 M_1^3, \\
		& \norm{\dz (\rho^o w^o) \dz v^o}{\Lnorm{2}}^2 \lesssim (\norm{\nablah (\xi^o \widetilde{v^o})}{\Lnorm{3}}^2 + \norm{\widetilde{\nablah v^o} }{\Lnorm{3}}^2) \norm{\dz v^o}{\Lnorm{6}}^2 \\
		& ~~~~ \lesssim ( M_0 + 1) C_1^2M_1^2.
	\end{align*}
	Therefore, we have
	\begin{equation} \label{Vert-v-g-04}
		\begin{aligned}
			& \norm{\partial_{zz} v}{\Hnorm{1}}^2 \leq C\norm{\nablah^2 v}{\Hnorm{1}}^2 + C ( M_0 + 1) \norm{v_t}{\Hnorm{1}}^2 + \omega \norm{v^o}{\Hnorm{3}}^2\\
			& ~~~~ + C_\omega \mathcal H (M_0,C_1M_1,C_2).
		\end{aligned}
	\end{equation}
	
	Now we have the required estimates to prove Proposition \ref{prop:estimates-v-g}.
	\begin{proof}[Proof of Proposition \ref{prop:estimates-v-g}]
		From \eqref{Horiz-v-g}, \eqref{Temp-v-g}, \eqref{Vert-v-g-01} and \eqref{Vert-v-g-02}, there is a constant $ c_{\mu, \lambda, \underline\rho} $ such that
		\begin{equation}\label{energy-ineq-g}
			\begin{aligned}
				& \dfrac{d}{dt} \mathcal E_g(t) + c_{\mu,\lambda,\underline \rho}\bigl( \norm{v}{\Hnorm{1}}^2 + \norm{\nablah v}{\Hnorm{1}}^2 + \norm{\nablah^2 v}{\Hnorm{1}}^2 + \norm{v_t}{\Hnorm{1}}^2 \bigr) \\
				& ~~~~ ~~~~ \leq \omega \norm{v_t}{\Hnorm{1}}^2 + \omega \bigl( \norm{v^o}{\Hnorm{3}}^2 + \norm{v_t^o}{\Hnorm{1}}^2 \bigr)\\
				& ~~~~ ~~~~ ~~~~  + C_\omega \mathcal H (M_0,C_1M_1,C_2,\underline\rho ) ( \mathcal E_g(t) + 1),
			\end{aligned}
		\end{equation}
		where
		\begin{equation}\label{total-energy-g}
			\begin{aligned}
				& \mathcal E_g(t) : = \norm{\sqrt{\rho^o} v}{\Lnorm{2}}^2 + \norm{\sqrt{\rho^o} \nablah v}{\Lnorm{2}}^2 + \norm{\sqrt{\rho^o} \nablah^2  v}{\Lnorm{2}}^2 \\
				& ~~~~ + \norm{\sqrt{\rho^o} v_t}{\Lnorm{2}}^2 + \mu \norm{\nabla v}{\Lnorm{2}}^2 + (\mu+\lambda) \norm{\dvh v}{\Lnorm{2}}^2 \\
				& ~~~~ + \mu \norm{\nabla \partial_{z} v}{\Lnorm{2}}^2 + (\mu+\lambda) \norm{\dvh\partial_{z}v}{\Lnorm{2}}^2.
			\end{aligned}
		\end{equation}
		Notice that for some positive constants $ C_{i,\mu, \lambda, \underline \rho, M_0}, i = 1,2 $, depending on $ \mu, \lambda, \underline \rho, M_0 $, we have
		\begin{equation}
			C_{1,\mu, \lambda, \underline \rho,M_0} (\norm{v}{\Hnorm{2}}^2 + \norm{v_t}{\Lnorm{2}}^2) \leq \mathcal E_g(t) \leq C_{2,\mu, \lambda, \underline \rho,M_0} (\norm{v}{\Hnorm{2}}^2 + \norm{v_t}{\Lnorm{2}}^2).
		\end{equation}
		For $ 0 < \omega \leq \frac{c_{\mu,\lambda,\underline \rho}}{2}$, one infers from \eqref{energy-ineq-g},
		\begin{equation*}
			\dfrac{d}{dt} \mathcal E_g(t) \leq \omega  ( \norm{v^o}{\Hnorm{3}}^2 + \norm{v_t^o}{\Hnorm{1}}^2 )  + C_\omega \mathcal H (M_0,C_1M_1,C_2,\underline\rho ) ( \mathcal E_g(t) + 1 ).
		\end{equation*}
		Therefore, applying the Gr\"onwall's inequality yields
		\begin{align*}
			& \sup_{0\leq t\leq T} \mathcal E_g(t) \leq e^{C_\omega \mathcal H (M_0,C_1M_1,C_2,\underline \rho) T } \biggl( \mathcal E_g(0) + \omega \int_0^T ( \norm{v^o}{\Hnorm{3}}^2 + \norm{v_t^o}{\Hnorm{1}}^2) \,dt \\
			& ~~~~ + \int_0^T C_\omega \mathcal H( M_0,C_1M_1,C_2,\underline\rho)\,dt  \biggr)  \leq e^{C_\omega \mathcal H (M_0,C_1M_1,C_2,\underline \rho) T} \biggl( C_{2,\mu, \lambda, \underline \rho,M_0} M_1 \\
			& ~~~~  + \omega C_1M_1 + C_\omega \mathcal H( M_0,C_1M_1,C_2,\underline\rho) T \biggr), ~~ \text{where $ \omega$ is as above}.
		\end{align*}
		Now, we integrate with respect to the time variable inequality \eqref{energy-ineq-g}. It follows, since $ 0< \omega < \frac{c_{\mu,\lambda,\underline\rho}}{2} $, that
		\begin{align*}
			& \dfrac{c_{\mu,\lambda,\underline\rho}}{2} \int_0^T  \biggl( \norm{v}{\Hnorm{1}}^2 + \norm{\nablah v}{\Hnorm{1}}^2 + \norm{\nablah^2 v}{\Hnorm{1}}^2 + \norm{v_t}{\Hnorm{1}}^2\biggr) \,dt \leq \mathcal E_g(0) + \mathcal E_g(t) \\
			& ~~~~ + \omega \int_0^T \biggl( \norm{v^o}{\Hnorm{3}}^2 + \norm{v_t^o}{\Hnorm{1}}^2 \biggr) \,dt + C_\omega \mathcal H (M_0,C_1M_1,C_2,\underline\rho ) \int_0^T \bigl( \mathcal E_g(t) + 1 \bigr) \,dt \\
			& ~~~~ \leq (2 + TC_\omega \mathcal H (M_0,C_1M_1,C_2,\underline\rho )) e^{C_\omega \mathcal H (M_0,C_1M_1,C_2,\underline \rho) T} \\
			& ~~~~ ~~~~  \times ( C_{2,\mu, \lambda, \underline \rho,M_0} M_1 + \omega C_1M_1 + C_\omega \mathcal H( M_0,C_1M_1,C_2,\underline\rho) T ).
		\end{align*}
		Additionally, from \eqref{Vert-v-g-04}, we have
		\begin{align*}
			& \int_0^T \norm{\partial_{zz}v}{\Hnorm{1}}^2 \,dt \leq C \int_0^T \norm{\nablah^2 v}{\Hnorm{1}}^2 + (M_0+1) \norm{v_t}{\Hnorm{1}}^2 \,dt \\
			& ~~~~ + \omega C_1M_1 + C_\omega \mathcal H(M_0,C_1M_1,C_2) T.
		\end{align*}
	Therefore, we conclude that
	\begin{equation}\label{ttl-en-ineq-final-g-1}
		\begin{aligned}
			& \sup_{0\leq t\leq T} (\norm{v(t)}{\Hnorm{2}}^2 + \norm{v_t(t)}{\Lnorm{2}}^2 ) + \int_0^T \biggl( \norm{v}{\Hnorm{3}}^2 + \norm{v_t}{\Hnorm{1}}^2\biggr) \,dt \\
			&~~~~ \leq C_{1,\mu\lambda,\underline\rho, M_0}^{-1} \sup_{0\leq t\leq T} \mathcal E_g(t)
			 + \int_0^T \biggl( \norm{v}{\Hnorm{1}}^2 + \norm{\nablah v}{\Hnorm{1}}^2 + \norm{\nablah^2 v}{\Hnorm{1}}^2\\
			 & ~~~~ + \norm{v_t}{\Hnorm{1}}^2 \biggr) \,dt
			 + \int_0^T  \norm{\partial_{zz}v}{\Hnorm{1}}^2 \leq (M_0+1) \bigl(C_{3,\mu,\lambda,\underline\rho,M_0} \\
			 & ~~~~ + C_\omega \mathcal H (M_0,C_1M_1,C_2,\underline\rho )T \bigr)
			\times  e^{C_\omega \mathcal H (M_0,C_1M_1,C_2,\underline \rho) T}\bigl( C_{2,\mu, \lambda, \underline \rho,M_0} M_1 \\
			& ~~~~ + \omega C_1M_1
			 + C_\omega \mathcal H( M_0,C_1M_1,C_2,\underline\rho) T \bigr) + \omega C_1M_1  \\
			 & ~~~~ + C_\omega \mathcal H(M_0,C_1M_1,C_2,\underline\rho )T,
		\end{aligned}
	\end{equation}
	for some positive constant $ C_{3,\mu,\lambda,\underline\rho,M_0} $ depending on $ \mu, \lambda, \underline\rho,M_0 $. Now fix  $ \omega = \frac{1}{2} \min\lbrace \frac{c_{\mu,\lambda,\underline\rho}}{2}, \frac{1}{C_1} \rbrace $ and let $ T \in (0,T''] $, where
	$ T'' = T''(M_0,M_1,C_1,C_2,\underline\rho) $ is small enough and satisfying $$ C_\omega \mathcal H(M_0,C_1M_1,C_2,\underline\rho )T'' \leq \min \lbrace 1,M_1\rbrace. $$
	Then \eqref{ttl-en-ineq-final-g-1} yields
	\begin{equation}
			\sup_{0\leq t\leq T} (\norm{v(t)}{\Hnorm{2}}^2 + \norm{v_t(t)}{\Lnorm{2}}^2 ) + \int_0^T \biggl( \norm{v}{\Hnorm{3}}^2 + \norm{v_t}{\Hnorm{1}}^2 \biggr)\,dt 
			\leq C_1M_1,
	\end{equation}
	where $ C_1 $ is given by
	\begin{equation}\label{def:C_1}
		(M_0+1)( C_{3,\mu,\lambda,\underline\rho,M_0}e + e) (C_{2,\mu,\lambda,\underline\rho,M_0} +2) + 2 =: C_1.
	\end{equation}
	This concludes the proof.
	\end{proof}
	
	\subsection{The case without gravity and $\gamma > 1$}\label{sec:lin-isen}
	Consider a finite positive time $ T $, which will be determined later. Let $ \mathfrak Y = \mathfrak Y_T $ be the function space defined by
	\begin{equation}\label{fnc-space-isen-02}
		\begin{aligned}
			\mathfrak Y = \mathfrak Y_T := & \lbrace  (\sigma, v) | \sigma \in L^\infty(0,T;H^2(\Omega)), \dt \sigma \in L^\infty (0,T;H^1(\Omega)), \\
			& v \in L^\infty(0,T; H^2(\Omega)) \cap L^2(0,T;H^3(\Omega)), \\
			& \dt v \in L^\infty(0,T;L^2(\Omega)) L^2(0,T;H^1(\Omega)) \rbrace.
		\end{aligned}
	\end{equation}
	For this space, one can make sense of the initial value for $ \sigma_0 $ and $ v_0 $.
	Notice that, thanks to Aubin compactness theorem (see \cite[Theorem 2.1]{temam1977} and \cite{Chen2012,Simon1986}), every bounded subset of $ \mathfrak Y $ is a compact subset of the space
	\begin{equation}\label{embedd-space}
		\mathfrak V = \mathfrak V_T := \lbrace (\sigma,v)| \sigma , v \in L^\infty(0,T;L^2(\Omega)), \nabla v\in L^2(0,T;L^2(\Omega)) \rbrace.
	\end{equation}
	Let $ \mathfrak X = \mathfrak X_T $ be a bounded subset of $ \mathfrak Y $ defined by
	\begin{equation}\label{fnc-space-isen}
		\begin{aligned}
			\mathfrak X = & \mathfrak X_T := \bigl\lbrace (\sigma, v) \in \mathfrak Y | (\sigma, v)|_{t=0} = (\sigma_0, v_0), \dz v|_{z=0,1} = 0, \dz \sigma = 0, \\
			& \sigma^2 \geq \frac{1}{2} \underline{\rho} > 0, \sup_{0\leq t\leq T} \norm{\sigma(t)}{\Hnorm{2}}^2 \leq 2M_0, \sup_{0\leq t\leq T} \norm{\dt \sigma(t)}{\Hnorm{1}}^2 \leq C_2,\\
			& \sup_{0\leq t\leq T} \lbrace \norm{v(t)}{\Hnorm{2}}^2 + \norm{ v_t(t)}{\Lnorm{2}}^2 \rbrace + \int_0^T \biggl( \norm{v}{\Hnorm{3}}^2 + \norm{\dt v}{\Hnorm{1}}^2 \biggr)\,dt\\
			& ~~~~ \leq C_1M_1
			\bigr\rbrace,
		\end{aligned}
	\end{equation}
	where $\underline\rho$ is the positive lower bound of initial density profile as in \eqref{lower-bound-initial},
	for some positive constants $C_1 = C_1(M_0,\mu,\lambda,\underline\rho)$, $ C_2 = C_2(M_0,C_1M_1)$. Notice, for $ (\sigma, v) \in \mathfrak X $,
	\begin{equation*}
	\int_0^1 \dvh (\sigma^2 \widetilde{v}) \,dz =0.
	\end{equation*}
	Let $ (\sigma^o, v^o) \in \mathfrak X $. The linear system corresponding to \eqref{rfeq:isen-CPE} with respect to $ (\sigma^o, v^o ) $ is given in the system:
	\begin{equation}\label{eq:lin-isen}
	\begin{cases}
	\dt \sigma + \overline{v^o} \cdot \nablah \sigma  + \dfrac{1}{2} \sigma\dvh \overline{v^o}  = 0 & \text{in} ~ \Omega,\\	
	\rho^o \dt v + \rho^o v^o \cdot \nablah v^o + \sigma^o \sigma^o w^o \dz v^o + 2\gamma (\sigma^o)^{2\gamma-1}\nablah \sigma^o \\
	~~~~ ~~~~ = \mu \deltah v + \mu \partial_{zz} v + (\mu + \lambda) \nablah \dvh v & \text{in} ~ \Omega,\\
	\dz \sigma = 0 & \text{in} ~ \Omega.
	\end{cases}
	\end{equation}
	where $ \rho^o = (\sigma^o)^2 $ and $ w^o $ is determined, as in \eqref{vertical-isen-02}, by
	\begin{equation}\label{vertical-lin-isen}
	\sigma^o w^o := - \int_0^z \biggl( \sigma^o  \dvh {\widetilde{v^o}} + 2 \widetilde{v^o} \cdot \nablah \sigma^o \biggr) \,dz.
	\end{equation}
	The initial and boundary conditions for the linear equations \eqref{eq:lin-isen} are given by
	\begin{equation}\label{bd-conds-lin}
	(\sigma, v)|_{t=0} = (\sigma_0, v_0)  = (\rho_0^{1/2}, v_0), ~ \dz v|_{z=0,1} = 0.
	\end{equation}
	Here, in addition to the compatible conditions in \eqref{isen-cptbl-conds}, we require $ \rho_0 \geq \underline \rho > 0 $ , for some positive constant $ \underline \rho $ as in \eqref{lower-bound-initial}. Also, we denote by $ V_1 : = h_1 / \rho_0^{1/2} $. Recall that $ h_1 $ is given in \eqref{isen-cptbl-conds}. Then $ V_1 \in L^2 (\Omega) $ and we require $ \norm{\sigma_0}{\Hnorm{2}}^2 \leq M_0, \norm{v_0}{\Hnorm{2}}^2 + \norm{V_1}{\Lnorm{2}}^2 \leq M_1 $. Essentially $ V_1 = v_t|_{t=0} $.
	
	Then the map $ \mathcal{T} $, in the case without gravity, is defined as
	\begin{equation}\label{def:map-isen}
	\mathcal T:(\sigma^o, v^o) \leadsto (\sigma,v),
	\end{equation}
	where $ (\sigma, v) $ is the unique solution to \eqref{eq:lin-isen} for given $ (\sigma^o, v^o) \in \mathfrak X $.
	
	Then similar arguments to those in sections \ref{sec:solving-linear-g} and \ref{sec:apriori-linear-g} show that $ \mathcal{T} $ is well-defined and maps from $ \mathfrak X $ into $ \mathfrak X $. In particular, one has the following:
	\begin{proposition}\label{prop:maps-set2set}
		There is a $ T_v = T_v(M_0,M_1,\mu,\lambda,\underline\rho) >0 $ sufficiently small such that for every $  T \in (0, T_v] $, there is a unique solution $ ( \sigma, v) $ to \eqref{eq:lin-isen} in the set $ \mathfrak X = \mathfrak X_T $. Therefore, for such $ T $, the map $ \mathcal T $ defined in \eqref{def:map-isen} is a well defined map from $ \mathfrak X $ into $ \mathfrak X $.
	\end{proposition}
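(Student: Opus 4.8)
The plan is to transcribe the arguments of Sections~\ref{sec:solving-linear-g} and~\ref{sec:apriori-linear-g}, with $(\xi^o,v^o)$ replaced by $(\sigma^o,v^o)$, the coefficient $\rho^o=\xi^o+\frac12gz$ replaced by $\rho^o=(\sigma^o)^2$, and the pressure term $(2\xi^o+gz)\nablah\xi^o$ replaced by $2\gamma(\sigma^o)^{2\gamma-1}\nablah\sigma^o$. First I would establish the analog of Proposition~\ref{prop:exist-lin-isen-g}: for $(\sigma^o,v^o)\in C^\infty(\overline\Omega\times[0,T])\cap\mathfrak X_T$, system~\eqref{eq:lin-isen} with data~\eqref{bd-conds-lin} has a unique strong solution $(\sigma,v)\in\mathfrak Y_T$, with the higher regularity~\eqref{rg-aprox} when in addition $\sigma_0,v_0\in H^3(\Omega)$. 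The velocity equation~\subeqref{eq:lin-isen}{2} is linear parabolic with leading coefficient $\rho^o=(\sigma^o)^2\ge\frac12\underline\rho>0$, so the Galerkin scheme of~\eqref{ODE-g-2} carries over verbatim (the invertibility of the mass matrix uses only the strict positivity of $\rho^o$), and the bootstrap through~\eqref{09Jun}, the elliptic problem~\eqref{09Jun02}, and the time-differentiated Galerkin system yields $v\in L^\infty(0,T;H^2)\cap L^2(0,T;H^3)$, $\dt v\in L^\infty(0,T;L^2)\cap L^2(0,T;H^1)$, together with $\dz v|_{z=0,1}=0$; the $H^3$/$H^4$ regularity follows again from~\eqref{09Jun02}. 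The transport equation~\subeqref{eq:lin-isen}{1} for $\sigma$ (transport velocity $\overline{v^o}$, zeroth-order coefficient $\frac12\dvh\overline{v^o}$) is solved by the vanishing-viscosity regularization~\eqref{eq:regularized-hyper-g}, with the $\iota$-independent bounds obtained exactly as there, yielding $\sigma\in L^\infty(0,T;H^2)$, $\dt\sigma\in L^\infty(0,T;H^1)$. Uniqueness of the strong solution follows from the $L^2$ energy estimate of the homogeneous system, just as in the proof of Proposition~\ref{prop:exist-lin-isen-g}, using $\rho^o\ge\frac12\underline\rho$.

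Next I would establish the a priori bounds defining $\mathfrak X_T$ (the analogs of Propositions~\ref{prop:estimates-xi-g} and~\ref{prop:estimates-v-g}), in three steps. \emph{(i) Lower bound for $\sigma^2$.} Since $\rho_0\ge\underline\rho$ forces $\sigma_0\ge\underline\rho^{1/2}$ pointwise, the Stampacchia-type argument leading to~\eqref{lowerbounds-g}, applied to $\eta:=\sigma-\underline\rho^{1/2}+\int_0^tM(s)\,ds$ with $M(t):=C\hnorm{\dvh\overline{v^o}}{\Lnorm{\infty}}\lesssim C\norm{v^o}{\Hnorm{3}}$, gives $\sigma(t)\ge\underline\rho^{1/2}-CC_1^{1/2}M_1^{1/2}T^{1/2}$, hence $\sigma(t)\ge(\tfrac12\underline\rho)^{1/2}$ and $\sigma^2\ge\tfrac12\underline\rho$, provided $T$ is small. \emph{(ii) $H^2$ bound for $\sigma$ and $H^1$ bound for $\dt\sigma$.} Differentiating~\subeqref{eq:lin-isen}{1} by $\partial_{hh}$, pairing with $\partial_{hh}\sigma$ in $L^2(\Omega)$, and using~\eqref{ineq-supnorm} gives $\frac{d}{dt}\norm{\sigma}{\Hnorm{2}}^2\le C\norm{v^o}{\Hnorm{3}}(\norm{\sigma}{\Hnorm{2}}^2+\norm{\sigma}{\Hnorm{2}})$; Gr\"onwall's inequality together with $\int_0^T\norm{v^o}{\Hnorm{3}}\,dt\le CC_1^{1/2}M_1^{1/2}T^{1/2}$ then yields $\sup_{[0,T]}\norm{\sigma(t)}{\Hnorm{2}}^2\le2M_0$ for $T$ small, and~\subeqref{eq:lin-isen}{1} itself gives $\norm{\dt\sigma}{\Hnorm{1}}^2\le C_2$ with $C_2$ as in~\eqref{def:C2}.

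\emph{(iii) Estimates for $v$.} Applying $\partial_{hh}$, then $\dt$, then working with $\partial_{zz}v$ via the elliptic reformulation of~\subeqref{eq:lin-isen}{2} (the analogs of~\eqref{eq:horizontal-v-g}, \eqref{eq:temporal-v-g}, \eqref{eq:vertical-v-g-01} and~\eqref{eq:vertical-v-g-03}), pairing with $\partial_{hh}v$, $v_t$, $\dt\partial_{zz}v$ and $\partial\partial_{zz}v$ respectively, and estimating the right-hand sides by H\"older's inequality, the inequalities~\eqref{ineq-supnorm}, Minkowski's integral inequality, and---crucially for the terms containing $w^o$---the substitution of~\eqref{vertical-lin-isen} for $\sigma^ow^o$, I would obtain an energy inequality of the form of~\eqref{energy-ineq-g},
\[
\frac{d}{dt}\mathcal E(t)+c_{\mu,\lambda,\underline\rho}\bigl(\norm{v}{\Hnorm{1}}^2+\norm{\nablah v}{\Hnorm{1}}^2+\norm{\nablah^2v}{\Hnorm{1}}^2+\norm{v_t}{\Hnorm{1}}^2\bigr)\le\omega\norm{v_t}{\Hnorm{1}}^2+\omega\bigl(\norm{v^o}{\Hnorm{3}}^2+\norm{v_t^o}{\Hnorm{1}}^2\bigr)+C_\omega\mathcal H\,(\mathcal E(t)+1),
\]
where $\mathcal E(t)$ is equivalent to $\norm{v(t)}{\Hnorm{2}}^2+\norm{v_t(t)}{\Lnorm{2}}^2$ and $\mathcal H=\mathcal H(M_0,C_1M_1,C_2,\underline\rho)$. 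Fixing $\omega$ small and then $T$ small, Gr\"onwall's inequality and integration in time give $\sup_{[0,T]}(\norm{v(t)}{\Hnorm{2}}^2+\norm{v_t(t)}{\Lnorm{2}}^2)+\int_0^T(\norm{v}{\Hnorm{3}}^2+\norm{v_t}{\Hnorm{1}}^2)\,dt\le C_1M_1$ with $C_1$ as in~\eqref{def:C_1}. Combining (i)--(iii) shows $(\sigma,v)\in\mathfrak X_T$; together with the approximation argument used in the proof of Proposition~\ref{prop:maps-set2set-g} (approximating $(\sigma^o,v^o)$ in $C^\infty(\overline\Omega\times[0,T])\cap\mathfrak X_T$ and $(\sigma_0,v_0)$ in $H^3(\Omega)$), this shows $\mathcal T$ maps $\mathfrak X$ into $\mathfrak X$ for every $T\le T_v$, where $T_v=T_v(M_0,M_1,\mu,\lambda,\underline\rho)$ is the minimum of the finitely many smallness thresholds arising above.

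The only place where the argument genuinely departs from the gravity case is the pressure term $2\gamma(\sigma^o)^{2\gamma-1}\nablah\sigma^o$ for general $\gamma>1$, in place of the term $(2\xi^o+gz)\nablah\xi^o$, which is affine in $\xi^o$, available at $\gamma=2$. One must bound $\norm{(\sigma^o)^{2\gamma-1}}{\Hnorm{k}}$, $k=1,2$, in terms of $\norm{\sigma^o}{\Hnorm{2}}$; here the two-sided bound $\tfrac12\underline\rho\le(\sigma^o)^2\le\norm{\sigma^o}{\Lnorm{\infty}}^2\le CM_0$ is essential, since it lets one write $(\sigma^o)^{2\gamma-1}=F(\sigma^o)$ with $F\in C^\infty$ on a compact interval bounded away from $0$, so that the Moser-type composition and product estimates give $\norm{(\sigma^o)^{2\gamma-1}}{\Hnorm{2}}\le C(M_0,\underline\rho,\gamma)(1+\norm{\sigma^o}{\Hnorm{2}})$; the corresponding terms (the analogs of the $I_5$- and $I_9$-estimates of Section~\ref{sec:apriori-linear-g}) then close, with constants now also depending on $\gamma$ and $\underline\rho$. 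Everything else---the Galerkin scheme, the vanishing-viscosity solution of the transport equation, the Stampacchia lower bound, and the bookkeeping of the constants $C_1,C_2,T_v$---is a verbatim transcription of Sections~\ref{sec:solving-linear-g} and~\ref{sec:apriori-linear-g}; I expect this composition issue to be the main (and essentially only) obstacle.
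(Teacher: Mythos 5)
Your proposal is correct and follows essentially the same route as the paper, which explicitly omits this proof on the grounds that it repeats the arguments of Sections \ref{sec:solving-linear-g} and \ref{sec:apriori-linear-g} (Galerkin plus vanishing viscosity for the linear system, then the Stampacchia lower bound, the $H^2$/$H^1$ transport estimates for $\sigma$, and the horizontal/temporal/vertical energy estimates for $v$). Your additional observation about the only genuinely new ingredient --- bounding $2\gamma(\sigma^o)^{2\gamma-1}\nablah\sigma^o$ via a composition estimate using $\frac12\underline\rho\le(\sigma^o)^2\le CM_0$ --- is exactly the point the paper leaves implicit, and it is handled correctly.
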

	\noindent
	The proof is similar as Proposition \ref{prop:exist-lin-isen-g} and Proposition \ref{prop:maps-set2set-g} in sections \ref{sec:solving-linear-g} and \ref{sec:apriori-linear-g}  and therefore is omitted.
		

	\subsection{Existence theory}\label{sec:exist-thy}
	In this subsection, we will establish the existence theory for \eqref{isen-CPE-g} and \eqref{isen-CPE} for given corresponding initial data and boundary conditions.
	
	In particular, in section \ref{sec:exist-g} we will verify that $ \mathcal T  $ defined in \eqref{def:map-isen-g} is continuous in the topology of $ \mathfrak V $ ($ \supset \mathfrak Y \supset \mathfrak X$) (see \eqref{fnc-space-isen-g-02}, \eqref{embedd-space-g}, \eqref{fnc-space-isen-g}). In particular, it is continuous with respect to the norm given in \eqref{norm-bspace-g}. Then the Schauder-Tchonoff fixed point theorem guarantees that there is a fixed point of $ \mathcal T $ in $ \mathfrak X $. This yields the existence of strong solutions to \eqref{isen-CPE-g}.
	
	In section \ref{sec:exist-vacuum}, we first claim that when the initial density $ \rho_0 \geq \underline \rho > 0$, there exists a strong solution to \eqref{isen-CPE}. Then we will show some a priori estimates for \eqref{isen-CPE} which are independent of $ \underline\rho $. In the end of this subsection, we will show the existence of strong solutions for nonnegative initial density.
	
	\subsubsection{The case when there is gravity but no vacuum and $ \gamma = 2 $}\label{sec:exist-g}
	We will apply the Schauder-Tchonoff fixed point theorem to establish the existence of strong solutions to \eqref{isen-CPE-g}. In fact, as we have already shown $ \mathcal T $ is well-defined and from $ \mathfrak X_T $ to $ \mathfrak X_T $ for $ T $ sufficiently small depending on the initial bounds of data as in \eqref{bound-of-initial-data-linear-g} and $ \underline \rho $, thus it is sufficient to verify that $ \mathcal T $, defined by \eqref{def:map-isen-g}, is continuous in $ \mathfrak V = \mathfrak V_T $ given in \eqref{embedd-space-g}
	where the norm is given by
	\begin{equation*}\tag{\ref{norm-bspace-g}}
		\begin{aligned}
		\norm{(\xi,v)}{\mathfrak V}:= & \norm{\xi}{L^\infty(0,T; L^{2}(\Omega))} + \norm{v}{L^\infty(0,T; L^{2}(\Omega))} \\
		& ~~~~ + \norm{v}{L^2(0,T; H^{1}(\Omega))}.
		\end{aligned}
	\end{equation*}
	Recall that $ \mathfrak X_T \subset \mathfrak Y_T $ is compact in $ \mathfrak V_T $.
	
	In order to show this, let $ M_0 = B_{g,1} , M_1 = B_{g,2}  $ in $ \mathfrak X_T $ and $ T \in (0, T_g] $, with $ T_g $ given in Proposition \ref{prop:maps-set2set-g}. Here $ B_{g,1}, B_{g,2} $ are given in \eqref{bound-of-initial-g}. We denote $ (\xi_1^o,v_1^o),  (\xi_2^o,v_2^o) \in \mathfrak X_T $ and
	\begin{gather*}
		(\xi_1,v_1) = \mathcal T  (\xi_1^o,v_1^o),
		(\xi_2,v_2) = \mathcal T (\xi_1^o,v_1^o).
	\end{gather*}
	Then we have for $ i = 1 ,2  $, $ \rho^o_i = \xi^o_i + \frac{1}{2}gz $ and
	\begin{equation*}
	\begin{cases}
		\dt \xi_i + \overline{v^o_i} \cdot \nablah \xi_i + \xi_i \overline{\dvh v^o_i} + \dfrac{g}{2} \overline{z \dvh v^o_i} = 0, \\
		(\xi^o_i + \dfrac{1}{2}gz) ( \dt v_i + v^o_i \cdot\nablah v^o_i + w^o_i \dz v^o_i) + (2\xi^o_i +gz) \nablah \xi^o_i \\
		~~~~ ~~~~ ~~~~ = \mu \deltah v_i + \mu \partial_{zz} v_i + (\mu +\lambda) \nablah \dvh v_i, \\
		\dz \xi_i = 0, \\
		\rho^o_i w^o_i = (\xi^o_i + \dfrac{1}{2}gz) w^o_i = - \int_0^z \dvh (\xi^o_i \widetilde{v^o_i}) + \dfrac{g}{2} \widetilde{z \dvh v^o_i} \,dz.
	\end{cases}
	\end{equation*}
	Denote by $ \xi_{12} : = \xi_1 - \xi_2, v_{12} := v_1 - v_2, \xi_{12}^o : = \xi_1^o - \xi_2^o, v^o_{12} := v_1^o - v_2^o $. Then $ (\xi_{12}, v_{12})|_{t=0} = 0 $. By taking the differences of the above equations, we have
	\begin{equation}\label{eq:differences-g}
		\begin{cases}
			\dt \xi_{12} + \overline{v_1^o} \cdot \nablah \xi_{12} + \xi_{12} \overline{\dvh v_1^o} + \overline{v_{12}^o} \cdot \nablah \xi_2 + \xi_2 \overline{\dvh v^o_{12}}\\
			~~~~ ~~~~ + \dfrac{g}{2} \overline{z \dvh v_{12}^o} = 0, \\
			\rho_1^o \dt v_{12} - \mu \deltah v_{12} - \mu \partial_{zz} v_{12} - (\mu+\lambda) \nablah \dvh v_{12} = - \xi_{12}^o \dt v_2 \\
			~~~~ ~~~~ - \nablah (\xi_{12}^o(\rho_1^o + \rho_2^o)) - \xi_{12}^o v_1^o \cdot \nablah v_1^o - \rho_2^o v_{12}^o \cdot \nablah v_1^o  \\
			~~~~ ~~~~- \rho_2^o v_2^o \cdot\nablah v_{12}^o - (\rho_1^o w_1^o - \rho_2^o w_2^o) \dz v_1^o - \rho_2^o w_2^o \dz v_{12}^o.
		\end{cases}
	\end{equation}
	Now we perform standard $ L^2 $ estimates for \eqref{eq:differences-g}. Multiply \subeqref{eq:differences-g}{1} with $ 2 \xi_{12} $ and take the $ L^2 $-inner product of \subeqref{eq:differences-g}{2} with $ 2 v_{12} $. Integrating the resultants yields
	\begin{align}
		& \dfrac{d}{dt} \norm{\xi_{12}}{\Lnorm{2}}^2 =  - 2 \inth \biggl( (\overline{v_{12}^o} \cdot \nablah \xi_2) \xi_{12} + \xi_2 \overline{\dvh v_{12}^o} \xi_{12} \biggr) \idxh \nonumber \\
		& ~~~~ -  \inth \overline{\dvh v_1^o} \abs{\xi_{12}}{2} \idxh - g \inth \overline{z\dvh v_{12}^o} \xi_{12} \idxh  =: \sum_{i=1}^3 H_i, \label{weakly-conti-00101} \\
		& \dfrac{d}{dt} \norm{\sqrt{\rho_1^o}v_{12}}{\Lnorm{2}}^2 + 2 \bigl( \mu \norm{\nablah v_{12}}{\Lnorm{2}}^2 + \mu \norm{\partial_z v_{12}}{\Lnorm{2}}^2  \nonumber \\
		& ~~~~ ~~~~ + (\mu + \lambda) \norm{\dvh v_{12}}{\Lnorm{2}}^2 \bigr) = \intw \dt \xi_1^o \abs{v_{12}}{2} \idx - 2 \intw \xi_{12}^o \dt v_2 \cdot v_{12} \idx \nonumber \\
		& ~~~~ + 2 \intw  \xi_{12}^o(\rho_1^o + \rho_2^o) \dvh v_{12} \idx - 2 \intw \biggl( ( \xi_{12}^o v_1^o\cdot\nablah v_1^o ) \cdot v_{12} \nonumber \\
		& ~~~~ ~~~~ + ( \rho_2^o v_{12}^o \cdot \nablah v_1^o ) \cdot v_{12} + ( \rho_2^o v_2^o \cdot \nablah v_{12}^o ) \cdot v_{12} \biggr) \idx \nonumber \\
		& ~~~~ - 2 \intw (\rho_1^o w_1^o - \rho_2^o w_2^o ) \dz v_1^o \cdot v_{12} \idx - 2 \intw \rho_2^o w_2^o \dz v_{12}^o \cdot v_{12} \idx=: \sum_{i=4}^{9} H_i. {\label{weakly-conti-001}}
	\end{align}
	Then as before, we will list the estimates of the right-hand side terms of \eqref{weakly-conti-00101} and \eqref{weakly-conti-001}.
	\begin{align*}
		& H_1 \lesssim \hnorm{\overline{v_{12}^o}}{\Lnorm{4}}\hnorm{\nablah \xi_2}{\Lnorm{4}}\hnorm{\xi_{12}}{\Lnorm{2}} + \hnorm{\xi_2}{\Lnorm{\infty}} \hnorm{\overline{\dvh v_{12}^o}}{\Lnorm{2}} \hnorm{\xi_{12}}{\Lnorm{2}}\\
		& ~~~~ \lesssim \norm{\xi_2}{\Hnorm{2}}^2 \norm{\xi_{12}}{\Lnorm{2}}^2 + \norm{\nablah v_{12}^o}{\Lnorm{2}}^2 + \norm{v_{12}^o}{\Lnorm{2}}^2 . \\
		& H_2 \lesssim \norm{v_1^o}{\Hnorm{3}} \norm{\xi_{12}}{\Lnorm{2}}^2.\\
		& H_3 \lesssim \norm{\nablah v_{12}^o}{\Lnorm{2}}^2 + \norm{\xi_{12}}{\Lnorm{2}}^2.\\
		& H_4 \lesssim \norm{\dt \xi_{1}^o}{\Lnorm{2}}\norm{v_{12}}{\Lnorm{3}} \norm{v_{12}}{\Lnorm{6}} \lesssim \delta \norm{\nabla v_{12}}{\Lnorm{2}}^2 \\
		& ~~~~ ~~~~ +  C_{\delta}(\norm{\dt \xi_{1}^o}{\Lnorm{2}}^4 +1) \norm{v_{12}}{\Lnorm{2}}^2. \\
		& H_5 \lesssim \norm{\xi_{12}^o}{\Lnorm{2}} \norm{\dt v_2}{\Lnorm{6}} \norm{v_{12}}{\Lnorm{3}} \lesssim  \delta \norm{\nabla v_{12}}{\Lnorm{2}}^2 \\
		& ~~~~ ~~~~ +  C_\delta  (\norm{\dt v_2}{\Hnorm{1}}^2\norm{\xi_{12}^o}{\Lnorm{2}}^2 +\norm{v_{12}}{\Lnorm{2}}^2 ). \\
		& H_6 \lesssim \delta \norm{\nablah v_{12}}{\Lnorm{2}}^2 + C_\delta ( \norm{\rho_1^o}{\Hnorm{2}}^2 + \norm{\rho_2^o}{\Hnorm{2}}^2 ) \norm{\xi_{12}^o}{\Lnorm{2}}^2 . \\
		& H_7 \lesssim \norm{\xi_{12}^o}{\Lnorm{2}} \norm{v_1^o}{\Hnorm{2}} \norm{\nablah v_1^o}{\Lnorm{6}} \norm{v_{12}}{\Lnorm{3}} \\
		& ~~~~ ~~~~ + \norm{\rho_2^o}{\Hnorm{2}} \norm{v_{12}^o}{\Lnorm{2}} \norm{\nablah v_1^o}{\Lnorm{6}} \norm{v_{12}}{\Lnorm{3}}\\
		& ~~~~ ~~~~ + \norm{\rho_2^o}{\Hnorm{2}} \norm{v_2^o}{\Hnorm{2}} \norm{\nablah v_{12}^o}{\Lnorm{2}} \norm{v_{12}}{\Lnorm{2}}\\
		& ~~~~\lesssim \delta \norm{\nabla v_{12}}{\Lnorm{2}}^2 + \norm{\nabla v_{12}^o}{\Lnorm{2}}^2 + C_\delta ( \norm{\xi_{12}^o}{\Lnorm{2}}^2 +\norm{v_{12}^o}{\Lnorm{2}}^2\\
		& ~~~~ ~~~~ + (\norm{\rho_2^o}{\Hnorm{2}}^2 \norm{v_2^o}{\Hnorm{2}}^2 + \norm{\rho_2^o}{\Hnorm{2}}^4 \norm{v_1^o}{\Hnorm{2}}^4 + \norm{v_1^o}{\Hnorm{2}}^8 )\norm{v_{12}}{\Lnorm{2}}^2 ).
	\end{align*}
	Moreover, by plugging in the definition of $ \rho_i^o w_i^o $, we have
	\begin{align*}
		& H_8 = 2 \intw \biggl\lbrack \int_0^z \bigl\lbrack \dvh (\xi_{12}^o \widetilde{v_1^o}) + \dvh( \xi_2^o \widetilde{v_{12}^o}) + \dfrac{g}{2} \widetilde{z \dvh v_{12}^o} \bigr\rbrack \,dz' \\
		& ~~~~ \times \bigl( \dz v_{1}^o \cdot v_{12} \bigr) \biggr\rbrack \idx = -2 \intw  \biggl\lbrack \int_0^z  \bigl\lbrack \xi_{12}^o \widetilde{v_1^o} + \xi_2^o \widetilde{v_{12}^o} + \dfrac{g}{2} \widetilde{z v_{12}^o} \bigr\rbrack \,dz' \\
		& ~~~~ \cdot \nablah ( \dz v_1^o \cdot v_{12} ) \biggr\rbrack \idx \lesssim \int_0^1 \biggl( \hnorm{\xi_{12}^o}{\Lnorm{2}} \hnorm{\widetilde{v_1^o}}{\Lnorm{\infty}} + \hnorm{\xi_2^o}{\Lnorm{\infty}}\hnorm{\widetilde{v_{12}^o}}{\Lnorm{2}} + \hnorm{\widetilde{v_{12}^o}}{\Lnorm{2}} \biggr)  \,dz' \\
		& ~~~~ \times \int_0^1 \biggl( \hnorm{\nablah \dz v_1^o}{\Lnorm{4}} \hnorm{v_{12}}{\Lnorm{4}} + \hnorm{\dz v_1^o}{\Lnorm{\infty}} \hnorm{\nablah v_{12}}{\Lnorm{2}} \biggr) \,dz \lesssim \delta \norm{\nabla v_{12}}{\Lnorm{2}}^2 \\
		& ~~~~ + C_\delta \norm{v_1^o}{\Hnorm{3}}^2 (\norm{\xi_{12}^o}{\Lnorm{2}}^2 + \norm{v_{12}^o}{\Lnorm{2}}^2 ) + C_\delta \norm{v_1^o}{\Hnorm{2}}^2 ( \norm{v_1^o}{\Hnorm{2}}^4 + \norm{\xi_2^o}{\Hnorm{2}}^4 \\
		& ~~~~ ~~~~ + 1 ) (\norm{\xi_{12}^o}{\Lnorm{2}}^2 + \norm{v_{12}^o}{\Lnorm{2}}^2) + \norm{v_{12}}{\Lnorm{2}}^2, \\
		& H_9 = 2 \intw \biggl\lbrack \int_0^z \bigl( \dvh(\xi_2^o\widetilde{v_2^o}) + \dfrac{g}{2} \widetilde{v\dvh v_2^o}\bigr) \,dz'  \times ( \dz v_{12}^o \cdot v_{12}) \biggr\rbrack \idx \\
		& ~~~~ \lesssim \int_0^1 \biggl( \hnorm{\xi_2^o}{\Lnorm{\infty}} \hnorm{\widetilde{\nablah v_2^o}}{\Lnorm{4}}
		 + \hnorm{\nablah \xi_2^o}{\Lnorm{4}} \hnorm{\widetilde{v_2^o}}{\Lnorm{\infty}} + \hnorm{\widetilde{v_2^o}}{\Lnorm{4}} \biggr) \,dz' \\
		 & ~~~~ ~~~~ \times \int_0^1 \hnorm{\dz v_{12}^o}{\Lnorm{2}} \hnorm{v_{12}}{\Lnorm{4}} \,dz \lesssim \delta \norm{\nabla v_{12}}{\Lnorm{2}}^2
		 + C_\delta \norm{\nabla v_{12}^o}{\Lnorm{2}}^2 \\
		 & ~~~~ ~~~~ + C_\delta (\norm{\xi_2^o}{\Hnorm{2}}^4 + 1) (\norm{v_{2}^o}{\Hnorm{2}}^4 + 1) \norm{v_{12}}{\Lnorm{2}}^2.
	\end{align*}
	Here we have applied inequality \eqref{ineq-supnorm}.
	After summing up the inequalities above, with small enough $ \delta $, \eqref{weakly-conti-00101} and \eqref{weakly-conti-001} yield
	\begin{equation}{\label{weakly-conti-003}}
	\begin{aligned}
		& \dfrac{d}{dt} \norm{\xi_{12}}{\Lnorm{2}}^2 \leq ( \mathcal H(M_0,C_1M_1) + \norm{v_1^o}{\Hnorm{3}}^2 + 1 ) \norm{\xi_{12}}{\Lnorm{2}}^2 \\
		& ~~~~ ~~~~+  C \norm{v_{12}^o}{\Lnorm{2}}^2 + C \norm{\nabla v_{12}^o}{\Lnorm{2}}^2,\\
		& \dfrac{d}{dt} \norm{\sqrt{\rho_1^o}v_{12}}{\Lnorm{2}}^2 + c_{\mu,\lambda} \norm{\nabla v_{12}}{\Lnorm{2}}^2 \leq \underline{\rho}^{-1}\mathcal H(M_0,C_1M_1,C_2)  \norm{\sqrt{\rho_1^o}v_{12}}{\Lnorm{2}}^2 \\
		& ~~~~ ~~~~ + C \norm{\nabla v_{12}^o}{\Lnorm{2}}^2 + C ( \mathcal H(M_0,C_1M_1,C_2) + \norm{\dt v_2}{\Hnorm{1}}^2 + \norm{v_1^o}{\Hnorm{3}}^2) \\
		&~~~~ ~~~~ ~~~~ \times (\norm{\xi_{12}^o}{\Lnorm{2}}^2 + \norm{v_{12}^o}{\Lnorm{2}}^2).
	\end{aligned}
	\end{equation}
	Then the Gr\"onwall's inequality yields
	\begin{equation}\label{weakly-conti-002}
		\begin{aligned}
		& \sup_{0\leq t\leq T} (\norm{\xi_{12}(t)}{\Lnorm{2}}^2 + \norm{v_{12}(t)}{\Lnorm{2}}^2 ) + \int_0^T \norm{ \nabla v_{12}(t)}{\Lnorm{2}}^2 \,dt \\
		& ~~~~ \leq C_{M_0,C_1M_1,C_2,\underline\rho}\bigl( \sup_{0<t<T} (\norm{\xi_{12}^o}{\Lnorm{2}}^2 + \norm{v_{12}^o}{\Lnorm{2}}^2 ) \\
		& ~~~~ ~~~~ + \int_0^T \norm{ \nabla v_{12}^o}{\Lnorm{2}}^2 \,dt\bigr).
		\end{aligned}
	\end{equation}
	This yields the continuity of $ \mathcal T $ in $\mathfrak V$. Therefore, after applying the fixed point theorem mentioned before, we have the following:
	\begin{proposition}\label{prop:exist-isen-g}
		Consider $$ (\rho_0,v_0) = (\xi_0 + \dfrac{1}{2}gz, v_0), $$ given in  \eqref{isen-initial-g} satisfying \eqref{isen-cptbl-conds-g} and \eqref{bound-of-initial-g}. There is a positive constant $ T $ depending on the initial data such that there is a strong solution $ (\rho, v) = (\xi + \frac{1}{2}gz, v) $ to \eqref{isen-CPE-g} (or equivalently \eqref{rfeq:isen-CPE-g}) with the boundary conditions \eqref{bd-cnds} and with $ (\xi, v) \in \mathfrak X_T $.
	\end{proposition}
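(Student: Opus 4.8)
The plan is to obtain the solution as a fixed point of the map $\mathcal T$ defined in \eqref{def:map-isen-g} via the Schauder-Tchonoff fixed point theorem. Fix $M_0 = B_{g,1}$, $M_1 = B_{g,2}$ in the definition \eqref{fnc-space-isen-g} of $\mathfrak X = \mathfrak X_T$, and take $T \in (0, T_g]$ with $T_g$ as in Proposition \ref{prop:maps-set2set-g}, so that $\mathcal T$ is a well-defined self-map of $\mathfrak X_T$. First I would record that $\mathfrak X_T$ is a nonempty, bounded, convex subset of $\mathfrak Y_T$: convexity holds because every defining constraint cuts out a convex set --- the prescribed initial and boundary data together with $\dz \xi = 0$ define an affine subspace, the pointwise inequality $\xi + \frac12 gz \ge \frac12 \underline\rho$ is affine, and the bounds on $\sup_t \norm{\xi(t)}{\Hnorm 2}$, on $\sup_t \norm{\dt\xi(t)}{\Hnorm 1}$, and on $\sup_t(\norm{v(t)}{\Hnorm 2}^2 + \norm{v_t(t)}{\Lnorm 2}^2) + \int_0^T(\norm v{\Hnorm 3}^2 + \norm{\dt v}{\Hnorm 1}^2)\,dt$ are sublevel sets of convex functionals. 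By the Aubin compactness theorem $\mathfrak X_T$ is precompact in $\mathfrak V_T$ (see \eqref{embedd-space-g}), and it is closed in $\mathfrak V_T$: the uniform $\mathfrak Y_T$ bounds force any $\mathfrak V_T$-limit of a sequence in $\mathfrak X_T$ to be continuous in time with values in $L^2$ (so the initial condition is retained), to inherit the Sobolev-norm bounds by weak/weak-$\ast$ lower semicontinuity, and to satisfy the boundary, $\dz\xi = 0$, and lower-bound constraints. Hence $\mathfrak X_T$ is a convex compact subset of the Banach space $\mathfrak V_T$.

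Next I would assemble the hypotheses of the fixed point theorem from what is already proved. Propositions \ref{prop:estimates-xi-g} and \ref{prop:estimates-v-g}, combined in Proposition \ref{prop:maps-set2set-g}, give $\mathcal T(\mathfrak X_T) \subset \mathfrak X_T$ with all the constants depending only on $M_0, M_1, \mu, \lambda, \underline\rho$; and the difference estimate \eqref{weakly-conti-002} shows that $\mathcal T$ is Lipschitz, hence continuous, in the $\mathfrak V_T$-norm \eqref{norm-bspace-g}. With the self-map property, the convex compactness of $\mathfrak X_T$ in $\mathfrak V_T$, and the continuity of $\mathcal T$ in $\mathfrak V_T$ in hand, the Schauder-Tchonoff theorem recalled earlier yields a fixed point $(\xi,v) \in \mathfrak X_T$ of $\mathcal T$.

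It remains to identify this fixed point with a solution of the original system. By construction $(\xi,v)$ solves \eqref{eq:lin-isen-g} with $(\xi^o,v^o)$ replaced by $(\xi,v)$ and with $w$ recovered through \eqref{vertical-isen-g}; since \subeqref{eq:lin-isen-g}{1} with this input is precisely the vertical average \eqref{conti-isen-g} of \subeqref{rfeq:isen-CPE-g}{1}, and, given $\dz\xi=0$, \eqref{conti-isen-g} together with \eqref{vertical-isen-g} is equivalent to \subeqref{rfeq:isen-CPE-g}{1}, the pair $(\xi,v)$ is a strong solution of the reformulated system \eqref{rfeq:isen-CPE-g} with data \eqref{isen-initial-g}, boundary conditions \eqref{bd-cnds}, and $\mathfrak Y_T$-regularity. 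Because the lower bound $\xi(t) + \frac12 gz \ge \frac12\underline\rho > 0$ persists on $[0,T]$ by Proposition \ref{prop:estimates-xi-g}, the substitution is invertible: setting $\rho := \xi + \frac12 gz$ and keeping $w$ from \eqref{vertical-isen-g}, one checks directly that $(\rho,v)$ solves \eqref{isen-CPE-g} with \eqref{bd-cnds} --- the hydrostatic balance \subeqref{isen-CPE-g}{3} holds since $\dz\rho = \frac12 g$ and $P = \rho^2$, and the continuity and horizontal momentum equations follow by undoing the reformulation.

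I expect essentially no genuinely new obstacle in this proposition, since all the analytic content --- the uniform a priori bounds and the contraction-type estimate for the solution operator --- has already been established. The two points needing a little care are the verification that $\mathfrak X_T$ is closed, hence compact, in $\mathfrak V_T$ (which uses the uniform $\mathfrak Y_T$ bounds and lower semicontinuity) and the observation that a fixed point of $\mathcal T$ --- a priori only a solution of the \emph{linear} problem with itself as input --- actually solves the \emph{nonlinear} reformulated system; the latter is immediate once one notes that \eqref{eq:lin-isen-g} with input equal to the unknown coincides with \eqref{rfeq:isen-CPE-g} modulo the equivalence of \subeqref{rfeq:isen-CPE-g}{1} with its vertical average plus the recovery formula \eqref{vertical-isen-g}.
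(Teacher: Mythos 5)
Your plan coincides with the paper's proof: with $M_0=B_{g,1}$, $M_1=B_{g,2}$ and $T\le T_g$, it uses Proposition \ref{prop:maps-set2set-g} for the self-map property, the convexity and compactness of $\mathfrak X_T$ in $\mathfrak V_T$, and the $L^2$-difference estimate \eqref{weakly-conti-002} for continuity of $\mathcal T$ in the norm \eqref{norm-bspace-g}, then applies the Schauder-Tchonoff theorem and identifies the fixed point, via \eqref{conti-isen-g} and \eqref{vertical-isen-g}, as a strong solution of \eqref{rfeq:isen-CPE-g} and hence of \eqref{isen-CPE-g}. The one caveat is that \eqref{weakly-conti-002} is not a previously established result but is precisely the estimate the paper derives in proving this proposition, so a self-contained write-up must carry out that difference estimate (the bounds on the terms $H_1,\dots,H_9$) rather than cite it.
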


	\subsubsection{The case when there is vacuum but no gravity and $ \gamma > 1 $}\label{sec:exist-vacuum}
	
	When $ \rho_0 \geq \underline{\rho} > 0 $, the existence of strong solutions to \eqref{isen-CPE} follows from the estimates in section \ref{sec:lin-isen} and  similar arguments to those in section \ref{sec:exist-g}. In fact, taking $ M_0 = B_1, M_1 = B_2 + \underline\rho^{-1} B_2 $,  we have the following:
	\begin{proposition}\label{prop:exist-isen-novacuum}
	Suppose that \eqref{bound-of-initial-energy}, \eqref{isen-cptbl-conds}, \eqref{bound-of-initial} hold for the given initial data \eqref{isen-initial} with $ \rho_0 \geq
		\underline \rho  > 0 $. Then there is a positive constant $ T $, depending on the initial data and $ \underline\rho $, such that there exists a strong solution $(\rho, v) = (\sigma^2, v) $ to \eqref{isen-CPE} (or equivalently \eqref{rfeq:isen-CPE}) satisfying the boundary conditions \eqref{bd-cnds} and that $ (\sigma,v) \in \mathfrak X_T $.
	\end{proposition}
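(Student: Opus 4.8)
The plan is to repeat, almost verbatim, the fixed-point argument of Section~\ref{sec:exist-g}, now with the triple $(\xi,v,\rho)$ replaced by $(\sigma,v,\sigma^2)$, the linear system \eqref{eq:lin-isen-g} replaced by \eqref{eq:lin-isen}, and the map $\mathcal T$ of \eqref{def:map-isen-g} replaced by the one in \eqref{def:map-isen}. First I would fix the bounds $M_0 = B_1$ and $M_1 = B_2 + \underline{\rho}^{-1}B_2$ in the definition \eqref{fnc-space-isen} of $\mathfrak X = \mathfrak X_T$; the second choice is forced by the compatibility conditions \eqref{isen-cptbl-conds}, since $V_1 = h_1/\rho_0^{1/2}$ with $\rho_0 \ge \underline{\rho}$ gives $\norm{V_1}{\Lnorm{2}}^2 \le \underline{\rho}^{-1}\norm{h_1}{\Lnorm{2}}^2 \le \underline{\rho}^{-1}B_2$, hence $\norm{v_0}{\Hnorm{2}}^2 + \norm{V_1}{\Lnorm{2}}^2 \le M_1$. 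With these bounds, Proposition~\ref{prop:maps-set2set} furnishes $T_v = T_v(M_0,M_1,\mu,\lambda,\underline{\rho})>0$ such that for every $T \in (0,T_v]$ the map $\mathcal T$ sends the convex bounded set $\mathfrak X_T \subset \mathfrak Y_T$ into itself; and, as recorded after \eqref{embedd-space}, the Aubin compactness theorem makes $\mathfrak X_T$ a compact subset of the Banach space $\mathfrak V_T$ defined in \eqref{embedd-space}.

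It then remains to check that $\mathcal T$ is continuous in the topology of $\mathfrak V_T$. Given $(\sigma_1^o,v_1^o),(\sigma_2^o,v_2^o) \in \mathfrak X_T$ with images $(\sigma_i,v_i) = \mathcal T(\sigma_i^o,v_i^o)$, I would subtract the two copies of \eqref{eq:lin-isen} to obtain a linear system for $\sigma_{12} := \sigma_1 - \sigma_2$ and $v_{12} := v_1 - v_2$, with zero initial data and a right-hand side controlled by $\sigma_{12}^o := \sigma_1^o - \sigma_2^o$ and $v_{12}^o := v_1^o - v_2^o$. Testing the $\sigma_{12}$-equation against $2\sigma_{12}$ and the $v_{12}$-equation against $2v_{12}$, integrating by parts, and then invoking the Sobolev inequalities \eqref{ineq-supnorm}, the Minkowski inequality on the vertical integral defining $w^o$ in \eqref{vertical-lin-isen}, the bounds built into $\mathfrak X_T$, and Gr\"onwall's inequality, I expect to reach the exact analogue of \eqref{weakly-conti-002}:
\begin{equation*}
\begin{aligned}
& \sup_{0\le t\le T}\bigl(\norm{\sigma_{12}(t)}{\Lnorm{2}}^2 + \norm{v_{12}(t)}{\Lnorm{2}}^2\bigr) + \int_0^T \norm{\nabla v_{12}(t)}{\Lnorm{2}}^2\,dt \\
& ~~~~ \le C_{M_0,C_1M_1,C_2,\underline{\rho}}\Bigl(\sup_{0<t<T}\bigl(\norm{\sigma_{12}^o}{\Lnorm{2}}^2 + \norm{v_{12}^o}{\Lnorm{2}}^2\bigr) + \int_0^T \norm{\nabla v_{12}^o}{\Lnorm{2}}^2\,dt\Bigr),
\end{aligned}
\end{equation*}
which is precisely continuity of $\mathcal T$ with respect to the norm of $\mathfrak V_T$. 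The Schauder--Tchonoff fixed point theorem then yields $(\sigma,v) \in \mathfrak X_T$ with $\mathcal T(\sigma,v) = (\sigma,v)$; since membership in $\mathfrak X_T$ forces $\sigma^2 \ge \frac{1}{2}\underline{\rho} > 0$, the equivalence between \eqref{rfeq:isen-CPE} and \eqref{isen-CPE} noted in Section~\ref{sec:existencetheory} shows that $(\rho,v) = (\sigma^2,v)$ is the sought strong solution, with the stated regularity inherited from $\mathfrak Y_T$.

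The main obstacle is the continuity estimate above, which differs from the gravity case in two ways. First, the pressure gradient enters the linear system as $2\gamma(\sigma^o)^{2\gamma-1}\nablah\sigma^o = \nablah(\sigma^o)^{2\gamma}$ with a general exponent $\gamma > 1$, so in the difference one must bound $\nablah\bigl((\sigma_1^o)^{2\gamma} - (\sigma_2^o)^{2\gamma}\bigr)$; I would control it via the mean value theorem, using the two-sided bounds $\frac{1}{2}\underline{\rho} \le (\sigma_i^o)^2$ and $\norm{\sigma_i^o}{\Hnorm{2}}^2 \le 2M_0$ together with $H^2(\Omega) \hookrightarrow L^\infty(\Omega)$, after which this term is of the same type as $H_6$ in \eqref{weakly-conti-001}. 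Second, the convective term $\sigma^o\sigma^o w^o\dz v^o$, with $w^o$ given by the vertical integral \eqref{vertical-lin-isen}, requires---exactly as in the estimates of $H_8$ and $H_9$---moving the horizontal $\Lnorm{2}$ norm inside the $z'$-integral by Minkowski's inequality before applying \eqref{ineq-supnorm}; all the remaining terms are term-by-term analogues of $H_1$--$H_9$. Finally, since the constants in Proposition~\ref{prop:maps-set2set} depend on $\underline{\rho}$, the existence time produced here degenerates as $\underline{\rho} \to 0^+$; removing this dependence is the purpose of the $\underline{\rho}$-independent estimates derived in the remainder of Section~\ref{sec:exist-vacuum}.
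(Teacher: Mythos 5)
Your proposal is correct and follows the paper's own route: the paper proves this proposition precisely by taking $M_0 = B_1$, $M_1 = B_2 + \underline\rho^{-1}B_2$, invoking Proposition \ref{prop:maps-set2set} for the self-map property of $\mathcal T$ on $\mathfrak X_T$, and repeating the continuity-plus-Schauder--Tychonoff argument of Section \ref{sec:exist-g} (with the pressure and $w^o$ terms handled exactly as you indicate). Your filled-in details, including the justification of the choice of $M_1$ via $V_1 = h_1/\rho_0^{1/2}$ and the analogue of \eqref{weakly-conti-002}, match the intended argument.
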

	
	In the following, we shall present some estimates independent of $\underline{\rho} $ and show that for a given non-negative initial density $ \rho_0 \geq 0 $, there are strong solutions to equations \eqref{isen-CPE}. We will use here the notation $ \sigma^2 = \rho $ and the alternative form of equations \eqref{rfeq:isen-CPE}, as well as \eqref{isen-CPE}. Meanwhile, let us assume that
	\begin{equation}\label{initial-bound-001}
	\begin{gathered}
		\norm{\sigma_0}{\Hnorm{2}} = \norm{\rho_0^{1/2}}{\Hnorm{2}} \leq K_1,\\
		\norm{v_0}{\Hnorm{2}} + \norm{h_1}{\Lnorm{2}} \leq K_2,
	\end{gathered}
	\end{equation}
	for given $ K_1, K_2 > 0 $.
	Recall essentially $ h_1 = (\sigma v_t)|_{t=0} $ from \eqref{isen-cptbl-conds}. Also, taking inner product of \subeqref{isen-CPE}{2} with $ v $ yields, after integrating the resultant in the temporal variable, the following conservation of physical energy,
	\begin{equation}
	\begin{aligned}\label{conservation-energy}
		& \dfrac{1}{2}\intw \rho \abs{v}{2} \idx + \dfrac{1}{\gamma-1} \intw \rho^\gamma \idx + \int_0^T \intw \biggl( \mu \abs{\nabla v}{2} + (\mu+\lambda) \abs{\dvh v}{2} \biggr) \idx \\
		& ~~~~ = \dfrac{1}{2}\intw \rho_0 \abs{v_0}{2} \idx + \dfrac{1}{\gamma-1} \intw \rho_0^\gamma \idx < \infty,
	\end{aligned}	
	\end{equation}
	where \subeqref{isen-CPE}{1} is also applied. Also, integrating \subeqref{isen-CPE}{1} in $ \Omega \times (0,T) $ yields the conservation of total mass:
	\begin{equation}\label{conservation-mass}
		0 < \intw \rho \idx = \intw \rho_0 \idx = M < \infty.
	\end{equation}
	These facts are important when applying \eqref{ineq:embedding-weighted} in the following.
	
	{\par\hfill\par\noindent\bf A priori assumptions \par }
	Let $ (\sigma, v) $ be the solution to \eqref{rfeq:isen-CPE} given in Proposition \ref{prop:exist-isen-novacuum}.
	We assume first, for some constants $ C_d \geq K_2^2 $, $ T_d $ (may depend on $ \underline{\rho} $),
	\begin{equation}\label{priori-a-isen}
		\sup_{0\leq t\leq T_d} \bigl( \norm{v(t)}{\Hnorm{2}}^2 + \norm{\sigma v_t(t)}{\Lnorm{2}}^2 \bigr) + \int_0^{T_d}\biggl( \norm{v}{\Hnorm{3}}^2 + \norm{v_t}{\Hnorm{1}}^2 \biggr) \,dt < C_d.
	\end{equation}
	In the following, we will derive some a prior estimates independent of $ \underline{\rho} $. Also, we set $ T \in (0, T_d] $ 
	 to be determined later. We emphasize that the smallness of $ T $ in the following is independent of $ \underline\rho $.
	For the sake of simplicity, we will assume the solution $ (\sigma, v) $ to \eqref{rfeq:isen-CPE} is smooth enough that the manipulations below are allowed. To make the arguments rigorous, one has to perform parallel estimates on the solutions to the linear system \eqref{eq:lin-isen} with, as mentioned before in section \ref{sec:apriori-linear-g}, smooth enough initial data $ (\sigma_0, v_0) $ and inputs $ \sigma^o, v^o $. Then an approximating argument will yield the desired estimates.
	{\par\hfill\par\noindent\bf $ \underline{\rho} $-independent lower bound: non-negativity of $ \rho $  \par}
	We will use the same Stampaccia-like argument as before to derive the lower bound of $ \rho $.  Consider $$ \eta = \eta(x,y,t) := \dfrac{\rho}{\inf_{\vech{x} \in \Omega_h} \rho_0(\vech{x})} - 1  + \int_0^t 2 \hnorm{\dvh \overline v(s)}{\Lnorm{\infty}} \,ds. $$
	Then $ \eta $ satisfies the equation, due to \eqref{conti-isen},
	\begin{align*}
		& \dt \eta + \overline v \cdot \nablah \eta + \eta \dvh \overline{v} =   \bigl( \int_0^t 2 \hnorm{\dvh \overline v(s)}{\Lnorm{\infty}}\,ds - 1 \bigr) \times  \dvh \overline{v} \\
		& ~~~~ + 2 \hnorm{\dvh \overline v(s)}{\Lnorm{\infty}}
		 \geq - 2 \abs{\dvh \overline v}{} + 2 \hnorm{\dvh \overline v}{\Lnorm{\infty}} \geq 0,
	\end{align*}
	for every $ t \in [0,T] $ with $ T \in (0,T_1] $ and $ T_1 $ sufficiently small such that
	\begin{align*}
		& 2 \int_0^t \hnorm{\dvh \overline{ v}(s)}{\Lnorm\infty}\,ds \leq 2 C \int_0^t \norm{v(s)}{\Hnorm{3}}\,ds \\
		& ~~~~ \leq 2 C T^{1/2} \bigl(\int_0^t \norm{v(s)}{\Hnorm{3}}^2 \,ds \bigr)^{1/2} \leq 2 C C_d^{1/2} T_1^{1/2} \leq \dfrac{1}{2}.
	\end{align*}
	
	Denote by $ \eta_-: = - \eta \mathbbm 1_{\lbrace \eta < 0 \rbrace} \geq 0 $. Then multiplying the above equation with $ - \mathbbm 1_{\lbrace \eta < 0 \rbrace} $ and integrating the resultant in the spatial variable yield
	\begin{equation*}
		\dfrac{d}{dt} \inth \eta_- \idxh \leq 0.
	\end{equation*}
	Hence, $ \eta_- = 0 $ in $ \Omega_h \times (0,T] $, since $ \eta_-(0) \equiv 0 $. Therefore, $ \eta \geq 0 $ and
	\begin{equation}\label{lower-bound-conti}
		\begin{aligned}
			& \rho = \inf_{\vech{x}\in \Omega_h} \rho_0 (\vech{x}) \times \bigl( \eta + 1 - \int_0^t 2 \hnorm{\dvh \overline{v}(s)}{\Lnorm{\infty}} \,ds \bigr) \\
			& ~~~~ \geq \inf_{\vech{x}\in \Omega_h} \rho_0(\vech{x}) \times \bigl( 0 + 1 - \dfrac{1}{2} \bigr) = \dfrac{1}{2}  \inf_{\vech{x}\in \Omega_h} \rho_0(\vech x).
		\end{aligned}
	\end{equation}

	{\par\hfill\par\noindent\bf $ \underline{\rho} $-independent estimate: $ H^2(\Omega) $ for $ \sigma = \rho^{1/2} $  \par}
	After applying $ \partial_{hh} $ to \eqref{conti-isen-02}, one has
	\begin{equation}\label{eq:horizontal-conti}
	\begin{aligned}
		& \dt \partial_{hh} \sigma + \overline{v} \cdot \nablah \partial_{hh}\sigma + 2 \overline{\partial_h v} \cdot \nablah \partial_{h} \sigma + \dfrac{1}{2} \partial_{hh}\sigma \overline{\dvh v} + \overline{\partial_{hh} v} \cdot \nablah \sigma \\
		& ~~~~ + \partial_h \sigma \overline{\dvh \partial_h v} + \dfrac{1}{2} \sigma \overline{\dvh \partial_{hh} v} = 0.
	\end{aligned}
	\end{equation}
	Then after performing standard $ L^2 $ estimate of \eqref{eq:horizontal-conti} and similar estimates for lower order derivatives, one has
	\begin{equation*}
	\dfrac{d}{dt} \norm{\sigma}{\Hnorm{2}}^2 \leq C \norm{v}{\Hnorm{3}} \norm{\sigma}{\Hnorm{2}}^2.
	\end{equation*}
	Then the Gr\"onwall's inequality yields
	\begin{equation}\label{H2-conti}
	\sup_{0\leq t\leq T} \norm{\sigma(t)}{\Hnorm{2}}^2 \leq e^{C \int_0^T \norm{v}{\Hnorm{3}} \,dt } \norm{\sigma_0}{\Hnorm{2}}^2 \leq e^{CC_d^{1/2}T^{1/2}} K_1^2 < 2 K_1^2,
	\end{equation}
	for all $ T \in (0,T_2] $, provided
	$ T_2 $ is sufficiently small.

	{\par\hfill\par\noindent\bf $ \underline{\rho} $-independent estimate: $ H^1(\Omega) $ for $ \sigma_t $  \par}
	Applying $ \partial_h $ to \eqref{conti-isen-02} yields
	\begin{equation*}
		\dt \partial_h \sigma = - \overline{v} \cdot \nablah \partial_h \sigma - \overline{\partial_h v} \cdot \nablah \sigma - \dfrac{1}{2} \partial_h \sigma \overline{\dvh v} - \dfrac{1}{2} \sigma\overline{\dvh \partial_h v}.
	\end{equation*}
	Therefore, one has
	\begin{equation*}
		\norm{\dt \partial_h\sigma}{\Lnorm{2}} \lesssim \norm{v}{\Hnorm{2}}\norm{\sigma}{\Hnorm{2}}.
	\end{equation*}
	Similar estimate also holds for $ \norm{\dt\sigma}{\Lnorm{2}} $. Hence, we have together with \eqref{H2-conti},
	\begin{equation}\label{H1-t-conti}
		\norm{\dt \sigma(t)}{\Hnorm{1}} \leq C \norm{v(t)}{\Hnorm{2}}\norm{\sigma(t)}{\Hnorm{2}} \leq \sqrt{2} C C_d^{1/2} K_1 =: K_3',
	\end{equation}
	for all $ t \in [0,T] $.

	{\par\hfill\par\noindent\bf $ \underline{\rho} $-independent estimate: $ L^2(\Omega) $ for $ v_t $  \par}
	Taking the time derivative of \subeqref{rfeq:isen-CPE}{2} yields
	\begin{equation}\label{eq:temporal-v}
		\begin{aligned}
		& \sigma^2 \dt v_t - \mu \deltah v_t - \mu \partial_{zz} v_t - (\mu+\lambda) \nablah \dvh v_t =- 2 \sigma \dt \sigma \dt v \\
		& ~~~~ ~~~~ - \dt(\sigma^2 v \cdot\nablah v) - \dt (\sigma^2 w \dz v) - \dt \nablah \sigma^{2\gamma}.
		\end{aligned}
	\end{equation}
	Taking the $L^2$-inner product of \eqref{eq:temporal-v} with $ \dt v $ gives
	\begin{align}\label{001}
		& \dfrac{1}{2} \dfrac{d}{dt} \norm{\sigma v_t}{\Lnorm{2}}^2 + \mu \norm{\nablah v_t}{\Lnorm{2}}^2 + \mu \norm{\dz v_t}{\Lnorm{2}}^2+ (\mu+\lambda) \norm{\dvh v_t}{\Lnorm{2}}^2 \nonumber \\
		& ~~~~ = - \intw  \sigma \dt \sigma \abs{v_t}{2}\idx -  \intw \dt(\sigma^2 v \cdot\nablah v) \cdot v_t \idx \nonumber \\
		& ~~~~ ~~~~ - \intw \dt (\sigma^2 w \dz v) \cdot v_t \idx + \intw \dt \sigma^{2\gamma} \dvh v_t \idx =: \sum_{i=1}^{4} L_i.
	\end{align}
	Then one has the following estimates to the terms in the right-hand side of \eqref{001}.
	\begin{align*}
	& L_1 \lesssim \norm{\dt \sigma}{\Lnorm{2}} \norm{\sigma v_t}{\Lnorm{3}} \norm{v_t}{\Lnorm{6}} \lesssim \norm{\dt \sigma}{\Lnorm{2}}\norm{\sigma v_t}{\Lnorm{2}}^{1/2} \norm{\sigma}{\Lnorm{\infty}}^{1/2}\norm{v_t}{\Lnorm{6}}^{3/2} \\
	& ~~~~  \lesssim \delta \norm{\nabla v_t}{\Lnorm{2}}^2 + C_\delta C_d (K_1^2 K_3'^4 + 1) . \\
	& L_2 \lesssim \norm{\dt \sigma}{\Lnorm{2}} \norm{v}{\Lnorm{\infty}} \norm{\nablah v}{\Lnorm{6}} \norm{\sigma v_t}{\Lnorm{3}} + \norm{\sigma v_t}{\Lnorm{3}} \norm{\nablah v}{\Lnorm{6}} \norm{\sigma v_t}{\Lnorm{2}}\\
	& ~~~~ ~~~~ + \norm{\sigma}{\Lnorm{\infty}}\norm{v}{\Lnorm{\infty}} \norm{\nablah v_t}{\Lnorm{2}} \norm{\sigma v_t}{\Lnorm{2}} \lesssim \delta \norm{\nabla v_t}{\Lnorm{2}}^2 \\
	& ~~~~ ~~~~ + C_\delta ( C_d + 1 )( K_1^2 C_d + K_1 K_3' C_d^2 + 1) . \\
	& L_4 \lesssim \norm{\sigma}{\Lnorm{\infty}}^{2\gamma-1} \norm{\dt \sigma}{\Lnorm{2}}\norm{\nabla v_t}{\Lnorm{2}} \lesssim \delta \norm{\nabla v_t}{\Lnorm{2}}^2 + C_\delta K_1^{4\gamma-2}K_3'^2.
	\end{align*}
	We have applied above the H\"older inequality and \eqref{ineq:embedding-weighted}, i.e.,
	\begin{gather*}
	\norm{\sigma v_t}{\Lnorm{3}} \lesssim \norm{\sigma v_t}{\Lnorm{2}}^{1/2} \norm{\sigma v_t}{\Lnorm{6}}^{1/2} \lesssim \norm{\sigma}{\Lnorm{\infty}}^{1/2} \norm{\sigma v_t}{\Lnorm{2}}^{1/2} \norm{v_t}{\Lnorm{6}}^{1/2}, \\
	\norm{v_t}{\Lnorm{6}} \leq C \norm{\nabla v_t}{\Lnorm{2}} + C \norm{\sigma v_t}{\Lnorm{2}},
	\end{gather*}
	for some constant $ C = C( \intw \rho\idx , \intw \rho^\gamma \idx) $.
	Notice that $ \sigma = \rho^{1/2} $ and that the conservations of energy and mass \eqref{conservation-energy}, \eqref{conservation-mass} hold. Therefore, the constant $ C $ in the above inequality depends only on the initial energy and total mass.
	In order to estimate $ L_3 $ term, we substitute \eqref{vertical-isen} and integrate by parts. Then
	\begin{align*}
	& L_3 = - \intw \dt (\rho w) \dz v \cdot v_t \idx - \intw \rho w \dz v_t \cdot v_t \idx \\
	& ~~~~ = - \int_0^1 \inth \biggl\lbrack \biggl( \int_0^z (\sigma^2 \widetilde{v})_t \,dz' \biggr) \cdot \nablah (\dz v \cdot v_t) \biggr\rbrack \idxh \,dz \\
	& ~~~~ ~~~~ + \int_0^1 \inth \biggl\lbrack \biggl( \int_0^z \dvh (\sigma^2 \widetilde{v}) \,dz'  \biggr) \times \bigl( \dz v_t \cdot v_t \bigr) \biggr\rbrack \ \idxh \,dz =: L_3' + L_3''.
	\end{align*}
	Now we use \eqref{ineq-supnorm}, the Minkowski's and the H\"older inequalities,
	\begin{align*}
	& L_3' = - \int_0^1 \inth \biggl\lbrack \biggl( \int_0^z \bigl( \sigma \widetilde{v_t} + 2 \sigma_t \widetilde{v} \bigr) \,dz' \biggr) \cdot (\nablah \dz v \cdot \sigma v_t + \sigma \nablah v_t \cdot \dz v ) \biggr\rbrack \idxh \,dz \\
	& ~~~~ \lesssim \int_0^1 \bigl( \hnorm{\sigma \widetilde{v_t}}{\Lnorm{2}} + \hnorm{\sigma_t}{\Lnorm{2}}\hnorm{\widetilde v}{\Lnorm{\infty}}\biggr) \,dz' \times \int_0^1 \biggl( \hnorm{\sigma}{\Lnorm{\infty}} \hnorm{\nablah \dz v}{\Lnorm{4}} \hnorm{v_t}{\Lnorm{4}} \\
	& ~~~~ ~~~~ + \hnorm{\sigma}{\Lnorm{\infty}} \hnorm{\nablah v_t}{\Lnorm{2}} \hnorm{\dz v}{\Lnorm{\infty}} \biggr) \,dz \lesssim \int_0^1 \biggl( \hnorm{\sigma \widetilde{v_t}}{\Lnorm{2}} + \hnorm{\sigma_t}{\Lnorm{2}} \hnorm{\widetilde v}{\Hnorm{2}} \biggr) \,dz' \\
	& ~~~~ \times \int_0^1 \hnorm{\sigma}{\Hnorm{2}} \hnorm{\dz v}{\Hnorm{1}}^{1/2} \hnorm{\dz v}{\Hnorm{2}}^{1/2} \hnorm{v_t}{\Hnorm{1}} \,dz \lesssim \norm{\sigma}{\Hnorm{2}}(\norm{\sigma v_t}{\Lnorm{2}}\\
	& ~~~~ ~~~~ + \norm{\sigma_t}{\Lnorm{2}} \norm{v}{\Hnorm{2}})  \norm{v}{\Hnorm{2}}^{1/2} \norm{v}{\Hnorm{3}}^{1/2} ( \norm{\sigma v_t}{\Lnorm{2}} + \norm{\nabla v_t}{\Lnorm{2}} )\\
	& ~~~~ \lesssim \delta \norm{\nabla v_t}{\Lnorm{2}}^2 + \omega \norm{v}{\Hnorm{3}}^2 + C_{\delta,\omega} C_d ( K_1^4(K_3'^4 + 1) C_d^2 + 1),\\
	& L_3'' \lesssim \int_0^1 \biggl( \hnorm{\nablah \sigma}{\Lnorm{4}} \hnorm{\widetilde v}{\Lnorm{\infty}} + \hnorm{\sigma}{\Lnorm{\infty}} \hnorm{\widetilde{\nablah v}}{\Lnorm{4}} \biggr) \,dz' \times \int_0^1 \hnorm{\dz v_t}{\Lnorm{2}} \hnorm{\sigma v_t}{\Lnorm{4}} \,dz \\
	& ~~~~ \lesssim \norm{\sigma}{\Hnorm{2}}^{5/3} \norm{v}{\Hnorm{2}} \norm{\nabla v_t}{\Lnorm{2}}\norm{\sigma v_t}{\Lnorm{2}}^{1/3}(\norm{\sigma v_t}{\Lnorm{2}}^{2/3} + \norm{\nabla v_t}{\Lnorm{2}}^{2/3})\\
	& ~~~~ \lesssim \delta \norm{\nabla v_t}{\Lnorm{2}}^2 + C_\delta (K_1^{10} C_d^{4} + K_1^{10/3} C_d^2).
	\end{align*}
	Here we have applied the facts that $ \hnorm{\sigma}{\Hnorm{2}} = \norm{\sigma}{\Hnorm{2}} $ and that
	\begin{gather*}
		\norm{v_t}{\Hnorm{1}} \lesssim \norm{\sigma v_t}{\Lnorm{2}} + \norm{\nabla v_t}{\Lnorm{2}},\\
		\hnorm{\sigma v_t}{\Lnorm{4}} \lesssim \hnorm{\sigma v_t}{\Lnorm{2}}^{1/3} \hnorm{\sigma v_t}{\Lnorm{8}}^{2/3} \lesssim \hnorm{\sigma}{\Lnorm{\infty}}^{2/3} \hnorm{\sigma v_t}{\Lnorm{2}}^{1/3} \hnorm{v_t}{\Hnorm{1}}^{2/3},
	\end{gather*}
	where the first inequality results from \eqref{ineq:embedding-weighted}.
	After summing the above inequalities, \eqref{001} then implies
	\begin{equation}\label{Temp-v}
		\dfrac{d}{dt} \norm{\sigma v_t}{\Lnorm{2}}^2 + c_{\mu,\lambda} \norm{\nabla v_t}{\Lnorm{2}}^2 \leq \omega \norm{v}{\Hnorm{3}}^2 + C_\omega \mathcal H(K_1,K_3',C_d),
	\end{equation}
	where as before, $ \mathcal H $ denotes a polynomial quantity of its arguments.
	
	{\par\hfill\par\noindent\bf $ \underline{\rho} $-independent estimate: spatial derivatives of $ v $  \par}
	Now we are able to derive the estimates on the spatial derivatives of $ v $. Standard $ L^2 $ estimate of \subeqref{rfeq:isen-CPE}{2} yields the following
	\begin{equation}\label{L2-v}
		\begin{aligned}
			& \dfrac{d}{dt} \norm{\sigma v}{\Lnorm{2}}^2 + c_{\mu,\lambda} \norm{\nabla v}{\Lnorm{2}}^2 \leq C \norm{\sigma^{2\gamma}}{\Lnorm{2}}^2 \leq C \mathcal H(K_1).
		\end{aligned}
	\end{equation}
	Furthermore, taking the $ L^2 $-inner product of \subeqref{rfeq:isen-CPE}{2} with $ v_t $ yields
	\begin{align}\label{002}
		& \dfrac{1}{2} \dfrac{d}{dt} (\mu\norm{\nablah v}{\Lnorm{2}}^2 + \mu \norm{\dz v}{\Lnorm{2}}^2 + (\mu+\lambda) \norm{\dvh v}{\Lnorm{2}}^2 ) + \norm{\sigma v_t}{\Lnorm{2}}^2 \nonumber \\
		& ~~~~ = - \intw ( \sigma v \cdot \nablah v ) \cdot \sigma v_t \idx - \intw \sigma w \dz v \cdot \sigma v_t \idx \nonumber \\
		& ~~~~ ~~~~  - 2\gamma \intw \sigma^{2\gamma-2} \nablah \sigma \cdot \sigma v_t \idx =: \sum_{i=5}^{7} L_{i}.
	\end{align}
	As before, one has
	\begin{align*}
		& L_5 \lesssim \norm{\sigma}{\Hnorm{2}} \norm{v}{\Hnorm{2}} \norm{v}{\Hnorm{1}} \norm{\sigma v_t}{\Lnorm{2}} \lesssim \delta \norm{\sigma v_t}{\Lnorm{2}}^2 + C_\delta K_1^2 C_d^2, \\
		& L_7 \lesssim \delta \norm{\sigma v_t}{\Lnorm{2}}^2 + C_\delta K_1^{4\gamma-2}.
	\end{align*}
	After plugging in \eqref{vertical-isen-02},
	\begin{equation}\label{Impt-H2-sigma}
	\begin{aligned}
		& L_6 = \int_0^1 \inth \biggl\lbrack \int_0^z \bigl( \sigma  \widetilde{\dvh v} + 2 \widetilde v \cdot\nablah \sigma \bigr)  \,dz' \times \bigl( \dz v \cdot \sigma v_t \bigr) \biggr\rbrack \idxh \,dz \\
		& ~~~~ \lesssim \norm{\sigma}{\Hnorm{2}} \norm{v}{\Hnorm{2}}^2 \norm{\sigma v_t}{\Lnorm{2}} \lesssim \delta \norm{\sigma v_t}{\Lnorm{2}}^2 + C_\delta K_1^2 C_d^2.
	\end{aligned}
	\end{equation}
	Thus from \eqref{002}, one has
	\begin{equation}\label{H1-v}
		\begin{aligned}
			& 	\dfrac{d}{dt} (\mu\norm{\nablah v}{\Lnorm{2}}^2 + \mu \norm{\dz v}{\Lnorm{2}}^2 + (\mu+\lambda) \norm{\dvh v}{\Lnorm{2}}^2 ) + \norm{\sigma v_t}{\Lnorm{2}}^2 \\
			& ~~~~ \leq \mathcal H(K_1,C_d).
		\end{aligned}
	\end{equation}
	Next we estimate the second order spatial derivatives. Taking the $ L^2 $-inner product of \subeqref{rfeq:isen-CPE}{2} with $ \partial_{zz} v_t $ yields
	\begin{align}\label{003}
		& \dfrac{1}{2} \dfrac{d}{dt} ( \mu \norm{\nablah \partial_z v}{\Lnorm{2}}^2  + \mu \norm{\partial_{zz}v}{\Lnorm{2}}^2  + (\mu+\lambda) \norm{\dvh \partial_z v}{\Lnorm{2}}^2 ) + \norm{\sigma \partial_z v_t}{\Lnorm{2}}^2 \nonumber\\
		& ~~ = - \intw \dz ( \sigma^2 v \cdot \nablah v) \cdot \dz v_t \idx - \intw \dz( \sigma^2 w \dz v) \cdot \dz v_t \idx =: L_8 + L_9.
	\end{align}
	At the same time, taking the $ L^2 $-inner product of \subeqref{rfeq:isen-CPE}{2} with $ \deltah v_t $ yields
	\begin{align}\label{004}
	& \dfrac{1}{2} \dfrac{d}{dt} ( \mu \norm{\nablah^2 v}{\Lnorm{2}}^2 + \mu \norm{\nablah \dz v}{\Lnorm{2}}^2 + (\mu+\lambda) \norm{\nablah \dvh v}{\Lnorm{2}}^2) \nonumber \\
	& ~~~~ + \norm{\sigma \nablah v_t}{\Lnorm{2}}^2 = - 2 \intw
	\bigl( \sigma \nablah \sigma \cdot \nablah v_t \bigr) \cdot v_t \idx \nonumber \\
	& ~~~~ - \intw \nablah (\sigma^2 v \cdot \nablah v) : \nablah  v_t  \idx - \intw \nablah (\sigma^2 w \dz v) : \nablah v_t \idx \nonumber \\
	& ~~~~ - \intw \nablah^2 \sigma^{2\gamma} : \nablah v_t\idx =: \sum_{i=10}^{13} L_{i}.
	\end{align}
	Now we list the corresponding estimates for the terms in the right-hand side of \eqref{003} and \eqref{004}.
	\begin{align*}
		& L_8 \lesssim \norm{\sigma}{\Hnorm{2}} \norm{v}{\Hnorm{2}}^2 \norm{\sigma \dz v_t}{\Lnorm{2}} \lesssim \delta \norm{\sigma \dz v_t}{\Lnorm{2}}^2 + C_\delta K_1^2 C_d^2, \\
		& L_{10} \lesssim \norm{\nablah \sigma}{\Lnorm{6}} \norm{\nablah v_t}{\Lnorm{2}} \norm{\sigma v_t}{\Lnorm{3}} \lesssim \norm{\sigma}{\Hnorm{2}}^{3/2} \norm{\nablah v_t}{\Lnorm{2}} \norm{\sigma v_t}{\Lnorm{2}}^{1/2} \\
		& ~~~~  ~~~~ \times (\norm{\sigma v_t}{\Lnorm{2}}^{1/2} + \norm{\nabla v_t}{\Lnorm{2}}^{1/2} ) \lesssim \omega \norm{\nabla v_t}{\Lnorm{2}}^2 + C_\omega C_d (K_1^6 + 1) , \\
		& L_{11} \lesssim \norm{\sigma}{\Hnorm{2}} \norm{v}{\Hnorm{2}}^2 \norm{\sigma \nablah v_t}{\Lnorm{2}} \lesssim \delta \norm{\sigma \nablah v_t}{\Lnorm{2}}^2 + C_\delta K_1^2 C_d^2, \\
		& L_{13} \lesssim \omega\norm{\nabla v_t}{\Lnorm{2}}^2 + C_\omega K_1^{4\gamma}.
	\end{align*}
	To estimate $ L_9, L_{12} $, we plug in \eqref{vertical-isen} in the corresponding expressions. Indeed
	\begin{align*}
		& L_9 = - \intw \dz (\sigma^2 w) \dz v \cdot \dz v_t \idx  - \intw \sigma^2 w \partial_{zz} v \cdot \dz v_t \idx \\
		& ~~~~ = \intw ( \sigma \widetilde{\dvh v}+ 2 \widetilde v \cdot \nablah \sigma ) \times ( \dz v \cdot \sigma \dz v_t ) \idx \\
		& ~~~~ ~~~~ + \int_0^1 \inth \biggl\lbrack \int_0^z \bigl( \sigma \widetilde{\dvh v} + 2 \widetilde v \cdot \nablah \sigma \bigr) \,dz' \times ( \partial_{zz} v \cdot \sigma \dz v_t ) \biggr\rbrack \idxh \,dz\\
		& ~~~~ \lesssim  \norm{\sigma}{\Hnorm{2}} \norm{v}{\Hnorm{2}}^2 \norm{\sigma \dz v_t}{\Lnorm{2}} + \norm{\sigma}{\Hnorm{2}}\norm{v}{\Hnorm{2}}^{3/2} \norm{v}{\Hnorm{3}}^{1/2} \norm{\sigma \dz v_t}{\Lnorm{2}}\\
		& ~~~~ \lesssim \delta \norm{\sigma \dz v_t}{\Lnorm{2}}^2 + \omega \norm{v}{\Hnorm{3}}^2 + C_{\delta,\omega} (C_d^2 K_1^2 + C_d^{3} K_1^4), \\
		& L_{12} \lesssim \int_0^1 \inth \biggl\lbrack \biggl\lbrack \biggl( \int_0^z \bigl( \sigma^2  \widetilde{\nablah  \dvh v} + 2 \sigma \widetilde{\dvh v} \nablah\sigma + 2 \widetilde v \cdot \nablah \sigma \nablah \sigma \\
		& ~~~~ ~~~~ ~~~~ + 2 \sigma \widetilde v \cdot\nablah \nablah \sigma + 2 \sigma  \widetilde{\nablah  v} \cdot \nablah \sigma \bigr) \,dz' \biggr) \otimes \dz v \biggr\rbrack : \nablah v_t  \biggr\rbrack \idxh \,dz \\
		& ~~~~ ~~~~ + \int_0^1 \inth \biggl\lbrack \int_0^z \bigl( \sigma \widetilde{\dvh v} + 2 \widetilde v \cdot \nablah \sigma \bigr) \,dz' \times \bigl( \nablah \partial_{z} v : \sigma \nablah v_t \bigr) \biggr\rbrack \idxh \,dz\\
		& ~~~~ \lesssim \norm{\sigma}{\Hnorm{2}}^2 \norm{v}{\Hnorm{2}}^{3/2} \norm{v}{\Hnorm{3}}^{1/2} \norm{\nablah v_t}{\Lnorm{2}} \lesssim \omega \norm{\nablah v_t}{\Lnorm{2}}^2 + \omega \norm{v}{\Hnorm{3}}^2 \\
		& ~~~~ ~~~~ + C_{\omega} C_d^3 K_1^8.
	\end{align*}
	Therefore \eqref{003} and \eqref{004} yield, after summing the above inequalities,
	\begin{equation}\label{H2-v}
		\begin{aligned}
			& \dfrac{d}{dt} (\mu \norm{\nablah^2 v}{\Lnorm{2}}^2 + 2\mu \norm{\nablah \dz v}{\Lnorm{2}}^2 + \mu \norm{\partial_{zz} v}{\Lnorm{2}}^2\\
			& ~~~~ ~~~~ + (\mu + \lambda) \norm{\nabla \dvh v}{\Lnorm{2}}^2)  + \norm{\sigma \nabla v_t}{\Lnorm{2}}^2 \\
			& ~~~~ \leq \omega (\norm{\nabla v_t}{\Lnorm{2}}^2 + \norm{v}{\Hnorm{3}}^2) + C_\omega \mathcal H(K_1,C_d).
		\end{aligned}
	\end{equation}
	Finally, we provide estimates for the third spatial derivative of $ v $. Applying $ \partial \in \lbrace \partial_x, \partial_y,\partial_z \rbrace $ to \subeqref{rfeq:isen-CPE}{2} yields
	\begin{equation}\label{eq:3-spatial-deirvative}
		\begin{aligned}
			& \mu \deltah \partial v + \mu \partial_{zz} \partial v + (\mu+ \lambda) \nablah \dvh \partial v = \partial (\sigma^2 v_t) + \partial (\sigma^2 v\cdot \nablah v) \\
			& ~~~~ + \partial (\sigma^2 w \dz v) + \partial \nablah \sigma^{2\gamma}.
		\end{aligned}
	\end{equation}
	We first consider the case when $ \partial = \partial_h \in \lbrace \partial_x, \partial_y \rbrace $ followed by the case when $ \partial = \partial_z  $. Taking the $ L^2 $-inner product of  \eqref{eq:3-spatial-deirvative}, when $ \partial = \partial_h $, with  $ \deltah \partial_h v $ and integrating by parts give
	\begin{align*}
	&  \mu \norm{\nablah^2 \partial_h v}{\Lnorm{2}}^2 +\mu \norm{\nablah \partial_{hz} v}{\Lnorm{2}}^2  + (\mu+\lambda)  \norm{\nablah \dvh \partial_h v}{\Lnorm{2}}^2   \\
	& =  \intw (\partial_h(\sigma^2 v_t) + \partial_h (\sigma^2 v\cdot\nablah v) + \partial_h(\sigma^2 w \dz v) + \partial_h \nablah \sigma^{2\gamma} ) \cdot \deltah \partial_h v \idx.
	\end{align*}
	Similarly, taking the $ L^2 $-inner product of \eqref{eq:3-spatial-deirvative}, when $ \partial = \partial_z $, with $ \partial_{zzz} v $ yields
	\begin{align*}
		& \mu \norm{\nablah \partial_{zz} v}{\Lnorm{2}}^2 + \mu \norm{\partial_{zzz} v}{\Lnorm{2}}^2 + (\mu+\lambda) \norm{\dvh \partial_{zz} v}{\Lnorm{2}}^2 \\
		& = \intw  (\partial_z(\sigma^2 v_t) + \partial_z (\sigma^2 v\cdot\nablah v) + \partial_z (\sigma^2 w \dz v) ) \cdot  \partial_{zzz} v \idx.
	\end{align*}
	Therefore, one obtains, after applying the Cauchy-Schwarz inequality,
	\begin{align}\label{005}
		& \norm{\nabla^3 v}{\Lnorm{2}}^2 \lesssim \norm{\nabla(\sigma^2 v_t)}{\Lnorm{2}}^2 + \norm{\nabla (\sigma^2 v\cdot \nablah v)}{\Lnorm{2}}^2 + \norm{\nabla (\sigma^2 w \dz v)}{\Lnorm{2}}^2 \nonumber \\
		& ~~~~ ~~~~ + \norm{\nablah ^2 \sigma^{2\gamma}}{\Lnorm{2}}^2.
	\end{align}
	Then, as before,
	\begin{align*}
		& \norm{\nabla(\sigma^2 v_t)}{\Lnorm{2}}^2 \lesssim \norm{\nabla \sigma}{\Lnorm{6}}^2 \norm{\sigma v_t}{\Lnorm{3}}^2 + \norm{\sigma}{\Hnorm{2}}^4 \norm{\nabla v_t}{\Lnorm{2}}^2  \\
		& ~~~~ \lesssim \norm{\sigma}{\Hnorm{2}}^3 \norm{\sigma v_t}{\Lnorm{2}} ( \norm{\nabla v_t}{\Lnorm{2}} + \norm{\sigma v_t}{\Lnorm{2}}) + \norm{\sigma}{\Hnorm{2}}^4 \norm{\nabla v_t}{\Lnorm{2}}^2\\
		& ~~~~ \lesssim K_1^4 \norm{\nabla v_t}{\Lnorm{2}}^2 + C_d (K_1^2 + K_1^3), \\
		& \norm{\nabla (\sigma^2 v\cdot \nablah v)}{\Lnorm{2}}^2 \lesssim \norm{\sigma}{\Hnorm{2}}^4 \norm{v}{\Hnorm{2}}^4 \lesssim C_d^2 K_1^4, \\
		&  \norm{\nablah ^2 \sigma^{2\gamma}}{\Lnorm{2}}^2 \lesssim K_1^{4\gamma}.
	\end{align*}
	On the other hand,
	\begin{align*}
		& \partial_z (\sigma^2 w \dz v) = - \dvh (\sigma^2 \widetilde v) \dz v - \biggl( \int_0^z \dvh(\sigma^2 \widetilde v) \,dz' \biggr) \times  \partial_{zz} v,\\
		& \partial_h (\sigma^2 w \dz v) = - \biggl( \int_0^z \partial_h \dvh (\sigma^2 \widetilde v) \,dz' \biggr)\times \dz v  - \biggl( \int_0^z \dvh (\sigma^2 \widetilde v) \,dz' \biggr) \times \partial_{hz} v.
	\end{align*}
	Then by applying \eqref{ineq-supnorm}, the H\"older and Minkowski's inequalities, it yields
	\begin{align*}
		& \norm{\nabla (\sigma^2 w \dz v)}{\Lnorm{2}}^2 \lesssim \norm{\sigma}{\Hnorm{2}}^4 \norm{v}{\Hnorm{2}}^4 + \biggl(\int_0^1 \bigl( \hnorm{\nablah \sigma}{\Lnorm{4}} \hnorm{\sigma}{\Lnorm{\infty}} \hnorm{\widetilde v}{\Lnorm{\infty}}\\
		& ~~~~ ~~~~ + \hnorm{\sigma}{\Lnorm{\infty}}^2 \hnorm{\widetilde{\nablah v}}{\Lnorm{4}}\bigr)\,dz'\biggr)^2 \times \int_0^1 \hnorm{\nabla \dz v}{\Lnorm{4}}^2\,dz + \biggl(\int_0^1 \hnorm{\nablah^2 (\sigma^2 \widetilde v)}{\Lnorm{2}} \,dz' \biggr)^2\\
		& ~~~~ \times \int_0^1 \hnorm{\dz v}{\Hnorm{1}} \hnorm{\dz v}{\Hnorm{2}}\,dz \lesssim \delta \norm{v}{\Hnorm{3}}^2 + C_\delta (C_d^2 K_1^4 + C_d^3 K_1^8).
	\end{align*}
	Notice that $ \norm{v}{\Hnorm{3}}^2 \lesssim \norm{\nabla^3 v}{\Lnorm{2}}^2 + C_d $, where $ C_d $ is as in \eqref{priori-a-isen}.
	Hence, after summing the above estimates with an appropriate $ \delta $, \eqref{005} yields
	\begin{equation}\label{H3-v-diss}
		\norm{v}{\Hnorm{3}}^2 \leq \norm{\nabla^3 v}{\Lnorm{2}}^2 + C_d \leq C K_1^4 \norm{\nabla v_t}{\Lnorm{2}}^2 + \mathcal H(K_1,C_d).
	\end{equation}
	
	We summarize the estimates obtained, so far, in this section in the following:
	\begin{proposition}\label{prop:lwbd-inde-ests}
			Consider the solution $(\sigma,v) = (\rho^{1/2},v) $  to \eqref{isen-CPE} with the bound \eqref{priori-a-isen} and initial data satisfying \eqref{bound-of-initial-energy}, \eqref{initial-bound-001}. There is a positive constant $ T^*=T^*(C_d,K_1,K_2) $, sufficiently small, such that $ (\sigma,v) $ admits the following bounds, for $ T = \min \lbrace T^*, T_d \rbrace $,
			\begin{gather*}
				\inf_{(\vec{x},t)\in \Omega \times [0,T]}\rho(\vec{x},t) \geq \dfrac{1}{2} \inf_{\vec{x}\in \Omega}\rho_0 > 0 , ~
				\sup_{0\leq t\leq T} \norm{\sigma(t)}{\Hnorm{2}} \leq 2 K_1, \\
				\sup_{0\leq t\leq T} \norm{\dt \sigma(t)}{\Hnorm{1}} \leq K_3, \\
				\sup_{0\leq t\leq T} \bigl( \norm{v(t)}{\Hnorm{2}}^2 + \norm{(\sigma v_t)(t)}{\Lnorm{2}}^2 \bigr) + \int_0^T \biggl( \norm{v(t)}{\Hnorm{3}}^2 + \norm{v_t(t)}{\Hnorm{1}}^2 \biggr) \,dt \leq K_4^2,
			\end{gather*}
		where $ K_4 = \sqrt{2C_{\mu,\lambda}} K_2, K_3 = CK_1K_4 $ are given in \eqref{Total-Bd-v} and \eqref{H1-t-conti-final}. Notably, the bounds in these estimates depend only on the initial bounds $ K_1, K_2 $ and do not depend on the lower bound of density. Also, the smallness of $ T^* $ does not 
		depend on $ \underline\rho $, even though $ T_d $ may depend on $ \underline\rho $.
	\end{proposition}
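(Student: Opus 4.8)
The plan is to close, on a short time interval, the $\underline\rho$-independent estimates already assembled in this subsection — the lower bound \eqref{lower-bound-conti}, the bounds \eqref{H2-conti}, \eqref{H1-t-conti} for $\sigma$ and $\dt\sigma$, and the differential inequalities \eqref{Temp-v}, \eqref{L2-v}, \eqref{H1-v}, \eqref{H2-v} together with the elliptic bound \eqref{H3-v-diss} — into a single a priori estimate whose constants depend only on $K_1,K_2$ (and on $\mu,\lambda$ and the conserved mass and energy), not on $C_d$ or $\underline\rho$. All of these were derived under the hypothesis \eqref{priori-a-isen}, hence hold on $[0,T_d]$; the argument will take place on $[0,T]$ with $T=\min\{T^*,T_d\}$ for a small $T^*=T^*(C_d,K_1,K_2)$ fixed at the end. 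First I would dispose of the density-side quantities: for $T\le T_1$ the Stampacchia argument behind \eqref{lower-bound-conti} already gives $\rho\ge\tfrac12\inf_{\vech{x}}\rho_0>0$, and for $T\le T_2$ the estimate \eqref{H2-conti} gives $\sup_{[0,T]}\norm{\sigma}{\Hnorm2}^2<2K_1^2$, both thresholds being free of $\underline\rho$.

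Next I would introduce a total energy $\mathcal E(t)$ for $v$, formed from the left-hand sides of \eqref{Temp-v}, \eqref{L2-v}, \eqref{H1-v}, \eqref{H2-v}, i.e.\ $\norm{\sigma v_t}{\Lnorm2}^2+\norm{\sigma v}{\Lnorm2}^2$ together with the viscous quantities $\mu\norm{\nabla v}{\Lnorm2}^2$, $\mu\norm{\nabla^2 v}{\Lnorm2}^2$, $\mu\norm{\nabla\dz v}{\Lnorm2}^2$, $(\mu+\lambda)\norm{\dvh v}{\Lnorm2}^2$, $(\mu+\lambda)\norm{\nabla\dvh v}{\Lnorm2}^2$; by \eqref{ineq:embedding-weighted} and the conservation laws \eqref{conservation-energy}, \eqref{conservation-mass}, $\mathcal E(t)$ is comparable to $\norm{v(t)}{\Hnorm2}^2+\norm{\sigma v_t(t)}{\Lnorm2}^2$. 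Summing the four inequalities and replacing $\norm{v}{\Hnorm3}^2$ on the resulting right-hand side by $CK_1^4\norm{\nabla v_t}{\Lnorm2}^2+\mathcal H(K_1,C_d)$ via \eqref{H3-v-diss}, the right-hand side takes the form $\omega(1+CK_1^4)\norm{\nabla v_t}{\Lnorm2}^2+C_\omega\mathcal H(K_1,C_d)$. Choosing $\omega=\omega(\mu,\lambda,K_1)$ small enough to absorb the $\norm{\nabla v_t}{\Lnorm2}^2$ term into the dissipation on the left, and using \eqref{H3-v-diss} and \eqref{ineq:embedding-weighted} once more to restore $\norm{v}{\Hnorm3}^2+\norm{v_t}{\Hnorm1}^2$ on the left up to the constant $\mathcal H(K_1,C_d)$, one reaches
\[ \dfrac{d}{dt}\mathcal E(t)+c_{\mu,\lambda}\bigl(\norm{v(t)}{\Hnorm3}^2+\norm{v_t(t)}{\Hnorm1}^2\bigr)\le\mathcal H(K_1,C_d), \]
whose right-hand side no longer involves $\mathcal E$ — so that no Gr\"onwall step is needed for the $v$-part, the inequalities \eqref{Temp-v}--\eqref{H2-v} all having additive right-hand sides — and a plain integration suffices.

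Integrating over $[0,t]$, $t\le T\le T^*$, gives $\mathcal E(t)+c_{\mu,\lambda}\int_0^t(\norm{v}{\Hnorm3}^2+\norm{v_t}{\Hnorm1}^2)\,ds\le\mathcal E(0)+t\,\mathcal H(K_1,C_d)$. Since $(\sigma v_t)|_{t=0}=h_1$ by \eqref{isen-cptbl-conds}, and the remaining pieces of $\mathcal E(0)$ are controlled by $\norm{v_0}{\Hnorm2}^2$ (the $\sigma$-weighted ones through \eqref{conservation-energy}, \eqref{conservation-mass}), one has $\mathcal E(0)\le C_{\mu,\lambda}K_2^2$. Choosing $T^*$ so small that $T^*\mathcal H(K_1,C_d)\le C_{\mu,\lambda}K_2^2$ then yields $\sup_{[0,T]}\mathcal E(t)\le 2C_{\mu,\lambda}K_2^2$, hence $\sup_{[0,T]}(\norm{v}{\Hnorm2}^2+\norm{\sigma v_t}{\Lnorm2}^2)+\int_0^T(\norm{v}{\Hnorm3}^2+\norm{v_t}{\Hnorm1}^2)\,dt\le K_4^2$ with $K_4=\sqrt{2C_{\mu,\lambda}}\,K_2$. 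Finally, feeding the improved bounds $\norm{v(t)}{\Hnorm2}\le K_4$ and $\norm{\sigma(t)}{\Hnorm2}\le 2K_1$ back into \eqref{H1-t-conti} upgrades the $\dt\sigma$ estimate to $\norm{\dt\sigma(t)}{\Hnorm1}\le C\norm{v(t)}{\Hnorm2}\norm{\sigma(t)}{\Hnorm2}\le CK_1K_4=:K_3$ on $[0,T]$; taking $T^*$ to be the least of all the smallness thresholds encountered completes the argument.

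I expect the main obstacle to be keeping the output constants free of $\underline\rho$ and of $C_d$ simultaneously. Freedom from $\underline\rho$ is built into the list of estimates by working throughout with $\sigma=\rho^{1/2}$ rather than dividing by $\rho$; freedom from $C_d$ holds because $C_d$ enters the closed estimate only through the additive term $T^*\mathcal H(K_1,C_d)$, which the smallness of $T^*$ defeats. The single point where these two requirements interact is the absorption of $\norm{v}{\Hnorm3}^2$ into the viscous dissipation: this works only because \eqref{H3-v-diss} controls $\norm{v}{\Hnorm3}^2$ by $\norm{\nabla v_t}{\Lnorm2}^2$ with a coefficient $CK_1^4$ that relies on the $H^2(\Omega)$-regularity of $\sigma$ — available precisely because there is no gravity — and carries no negative power of $\rho$. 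Checking that, after all the substitutions, the coefficient of $\norm{\nabla v_t}{\Lnorm2}^2$ on the right is strictly below the dissipation coefficient, so that $\omega$ can be fixed accordingly, is the delicate bookkeeping step; the rest is routine.
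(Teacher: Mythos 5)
Your proposal is correct and follows essentially the same route as the paper: the same energy $\mathcal E(t)$ assembled from \eqref{Temp-v}, \eqref{L2-v}, \eqref{H1-v}, \eqref{H2-v}, plain time integration (no Gr\"onwall), recovery of the $\int(\norm{v}{\Hnorm{3}}^2+\norm{v_t}{\Hnorm{1}}^2)\,dt$ dissipation through \eqref{H3-v-diss} and \eqref{ineq:embedding-weighted}, smallness of $T^*=T^*(C_d,K_1,K_2)$ to defeat the additive $\mathcal H(K_1,C_d)$ terms, and the feedback of $\norm{v}{\Hnorm{2}}\leq K_4$ into \eqref{H1-t-conti} to produce $K_3=CK_1K_4$. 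The only deviation is in one absorption step: you fold the $\omega\bigl(\norm{v}{\Hnorm{3}}^2+\norm{\nabla v_t}{\Lnorm{2}}^2\bigr)$ right-hand terms into the $\norm{\nabla v_t}{\Lnorm{2}}^2$ dissipation via \eqref{H3-v-diss} with $\omega=\omega(\mu,\lambda,K_1)$, whereas the paper integrates them, bounds the integral by $\omega C_d$ using \eqref{priori-a-isen}, and then chooses $\omega$ small depending on $C_d$; both variants close the estimate with constants independent of $\underline\rho$ and yield the stated conclusion.
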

	\begin{proof}{}
		Denote by
		\begin{equation}\label{total-energy}
			\begin{aligned}
				& \mathcal E(t) := \norm{\sigma v_t}{\Lnorm{2}}^2 + \norm{\sigma v}{\Lnorm{2}}^2 + \mu \norm{\nabla v}{\Lnorm{2}}^2 + (\mu+\lambda) \norm{\dvh v}{\Lnorm{2}}^2  \\
				& ~~~~ + \mu \norm{\nablah^2 v}{\Lnorm{2}}^2 + 2 \mu \norm{\nablah \dz v}{\Lnorm{2}}^2 + \mu \norm{\partial_{zz} v}{\Lnorm{2}}^2 \\
				& ~~~~ + (\mu+\lambda)\norm{\nabla \dvh v}{\Lnorm{2}}^2.
			\end{aligned}
		\end{equation}
		Then from \eqref{Temp-v}, \eqref{L2-v}, \eqref{H1-v} and \eqref{H2-v}, we have
		\begin{align*}
			& \dfrac{d}{dt} \mathcal E(t) + c_{\mu,\lambda} \bigl( \norm{\nabla v_t}{\Lnorm{2}}^2 + \norm{\nabla v}{\Lnorm{2}}^2 + \norm{\sigma v_t}{\Lnorm{2}}^2 + \norm{\sigma \nabla v_t}{\Lnorm{2}}^2 \bigr)\\
			& ~~~~ \leq \omega \bigl( \norm{v}{\Hnorm{3}}^2 + \norm{\nabla v_t}{\Lnorm{2}}^2 \bigr) + C_\omega \mathcal H (K_1,K_3',C_d).
		\end{align*}
		Then integrating the above inequality yields, for $ T \in (0, T_d] $, where $ T_d $ is as in \eqref{priori-a-isen},
			\begin{align*}
				& \sup_{0\leq  t\leq  T} \mathcal E(t) + c_{\mu,\lambda} \int_0^T \biggl( \norm{\nabla v_t}{\Lnorm{2}}^2 + \norm{\nabla v}{\Lnorm{2}}^2 + \norm{\sigma v_t}{\Lnorm{2}}^2 \\
				& ~~~~ + \norm{\sigma \nabla v_t}{\Lnorm{2}}^2 \biggr) \,dt  \leq \mathcal E(0) + \omega C_d + C_\omega T \mathcal H(K_1,K_3',C_d).
			\end{align*}
		Then together with \eqref{H3-v-diss}, we have, after choosing $ \omega $ small enough and then $ T $ sufficiently small,
		\begin{equation}\label{Total-Bd-v}
			\begin{aligned}
				& \sup_{0\leq t\leq T} \bigl( \norm{v(t)}{\Hnorm{2}}^2 + \norm{\sigma v_t(t)}{\Lnorm{2}}^2 \bigr) + \int_0^T \biggl( \norm{v}{\Hnorm{3}}^2 + \norm{v_t}{\Hnorm{1}}^2 \biggr) \,dt \\
				& ~~~~ \leq C_{\mu,\lambda}K_2^2 +  \omega C_d C_{\mu,\lambda}(K_1^4+1) + C_\omega T \mathcal H(K_1,K_3',C_d) \\
				& ~~~~ \leq 2 C_{\mu,\lambda} K_2^2 =: K_4^2,
			\end{aligned}
		\end{equation}
		where we have employed inequality \eqref{ineq:embedding-weighted} and the fact for some positive constant $ C_{\mu,\lambda} > 0 $ we have
		$$ C^{-1}_{\mu,\lambda}(\norm{v}{\Hnorm{2}}^2 + \norm{\sigma v_t}{\Lnorm{2}}^2) \leq  \mathcal E(t) \leq C_{\mu,\lambda}(\norm{v}{\Hnorm{2}}^2 + \norm{\sigma v_t}{\Lnorm{2}}^2).  $$
		Then plugging in \eqref{Total-Bd-v} back into \eqref{H1-t-conti} implies
		\begin{equation}\label{H1-t-conti-final}
			\norm{\dt \sigma}{\Hnorm{1}} \leq C K_4 K_1 =: K_3.
		\end{equation}
		Thus the conclusion is drawn from \eqref{lower-bound-conti}, \eqref{H2-conti}, \eqref{Total-Bd-v} and \eqref{H1-t-conti-final}.
	\end{proof}
		
	{\par\hfill\par\noindent\bf Existence of strong solutions with vacuum but no gravity and $ \gamma > 1 $ \par}
	Now we are in the place to remove the strict positivity (of the initial density profile) assumption in Proposition \ref{prop:exist-isen-novacuum}. In order to do so, we introduce a sequence of approximating initial data $ (\rho_{0,n}, v_{0,n} ) $ satisfying in addition to \eqref{bound-of-initial-energy}, \eqref{isen-cptbl-conds}, \eqref{bound-of-initial}, $$ \rho_{0,n} \geq \dfrac{1}{n} > 0, $$ such that $$ \rho_{0,n}^{1/2} \rightarrow \rho_0^{1/2}, v_{0,n} \rightarrow v_0 $$ in $ H^2(\Omega) $, as $ n \rightarrow \infty $, where $ (\rho_0, v_0 ) $ (or equivalently $ (\sigma_0, v_0) $) is given in \eqref{isen-initial} satisfying \eqref{isen-cptbl-conds} and \eqref{bound-of-initial}.
	We require that the initial physical energy and total mass given in \eqref{bound-of-initial-energy} with $ \rho_0, v_0 $ replaced by $ \rho_{0,n}, v_{0,n} $ satisfy
	\begin{gather*}
		0 < \intw \rho_{0,n} \idx  = M < \infty, \\
		0 < \intw \rho_{0,n} \abs{v_{0,n}}{2} \idx + \dfrac{1}{\gamma-1} \intw \rho_{0,n}^\gamma \idx \leq E_0 + 1 < \infty,
	\end{gather*}
	uniformly in $ n $, so that when we apply the inequality \eqref{ineq:embedding-weighted}, the constant for the inequality is independent of $ n $. 
	
	Now we apply Proposition \ref{prop:exist-isen-novacuum} with the initial data $ (\rho_{0,n},v_{0,n}) $. Indeed, consider $ M_0 = B_1 $, $ M_1 = B_2 + n B_2 $. Then Proposition \ref{prop:exist-isen-novacuum} guarantees that there is a $ T_1 = T_1(n,B_1,B_2) $ such that \eqref{isen-CPE}  admits a strong solution $ (\rho_n, v_n) = (\sigma_n^2 , v_n) $ satisfying
	\begin{align*}
		& \sup_{0\leq t\leq T_1} \norm{\sigma_{n}(t)}{\Hnorm{2}}^2 \leq 2B_1, ~~ \sup_{0\leq t\leq T_1} \norm{\dt \sigma_n(t)}{\Hnorm{1}}^2 \leq \mathcal C_1(B_1,B_2,n), \\
		& \sup_{0\leq t\leq T_1} \bigl( \norm{v_n(t)}{\Hnorm{2}}^2 + \norm{\dt v_{n}(t)}{\Lnorm{2}}^2 \bigr) + \int_0^{T_1}\bigl( \norm{v_n(t)}{\Hnorm{3}}^2 + \norm{\dt v_n(t)}{\Hnorm{1}}^2 \bigr) \,dt \\
		& ~~~~ ~~~~ \leq \mathcal C_2(B_1,B_2,n), ~~ \text{and} ~ \rho_n = \sigma_n^2 \geq \dfrac{1}{2n} .
	\end{align*}
	Sequently, we apply Proposition \ref{prop:lwbd-inde-ests} with $ K_1 = B_1^{1/2}, K_2 = 2B_2^{1/2} $, $ C_d = (1 + 2 n) \mathcal C_2(B_1,B_2,n)  $ and $ T_d = T_1 $. It yields that there is a $ T_2 = T_2(B_1,B_2,n) \leq T_1 $ such that the following bounds are satisfied
	\begin{equation}\label{uniform-bound-vcm}
		\begin{gathered}
			\sup_{0\leq t\leq T_2} \norm{\sigma_n(t)}{\Hnorm{2}}\leq 2 B_1^{1/2}, ~~ \sup_{0\leq t\leq T_2} \norm{\dt \sigma_n(t)}{\Hnorm{1}} \leq \mathcal C_3 (B_1,B_2),\\
			\sup_{0\leq t\leq T_2} \bigl( \norm{v_n(t)}{\Hnorm{2}}^2 + \norm{(\sigma_n v_{n,t})(t)}{\Lnorm{2}}^2 \bigr) \\
			 ~~~~ + \int_0^{T_2} \bigl( \norm{v_n(t)}{\Hnorm{3}}^2 + \norm{v_{n,t}(t)}{\Hnorm{1}}^2 \bigr) \,dt
			 \leq \mathcal C_{4}(B_1,B_2), \\
			\text{and} ~ \inf_{(\vec{x},t) \in \Omega\times [0,T_2]} \rho_n \geq \dfrac{1}{2n}.
		\end{gathered}
	\end{equation}
	
	Next, let $ (\sigma_n,v_n)|_{t=T_2} $ as a new initial data for \eqref{isen-CPE}. The same arguments as above yield the bound \eqref{uniform-bound-vcm} with lower bound of $ \rho_n $ replaced by $ \frac{1}{4n} $, $ B_1 $ replaced by $ 4 B_1 $ and $ B_2 $ replaced by $ \mathcal C_4(B_1,B_2) $. That is, for some $ \delta T =\delta T(B_1,B_2,n) > 0 $,
	\begin{align*}
		& \sup_{T_2<t<T_2+\delta T} \norm{\sigma_n}{\Hnorm{2}}\leq 4 B_1^{1/2},  \sup_{T_2<t<T_2+\delta T} \norm{\dt \sigma_n}{\Hnorm{1}} \leq \mathcal C_3 (4B_1,\mathcal C_{4}(B_1,B_2)),\\
			& \sup_{T_2<t<T_2+\delta T} \bigl( \norm{v_n}{\Hnorm{2}}^2 + \norm{\sigma_n v_{n,t}}{\Lnorm{2}}^2 \bigr) + \int_{T_2}^{T_2+\delta T} \norm{v_n}{\Hnorm{3}}^2 + \norm{v_{n,t}}{\Hnorm{1}}^2 \,dt \\
			& ~~~~ \leq \mathcal C_{4}(4B_1,\mathcal C_{4}(B_1,B_2)), ~~ \inf_{(\vec{x},t) \in \Omega\times (T_2,T_2+\delta T)} \rho_n \geq \dfrac{1}{4n}.
	\end{align*}
	Now we apply Proposition \ref{prop:lwbd-inde-ests} with $ T_d = T_2 + \delta T $ and $ C_d = \mathcal C_{4}(4B_1,\mathcal C_{4}(B_1,B_2)) $ in the time interval $ (0,T_d) $. This will yield that there is a $ T^* = T^*(B_1,B_2) $ and $ T_3 := \min \lbrace T^*, T_2 + \delta T \rbrace $, the bounds in \eqref{uniform-bound-vcm} hold with $ T_2 $ replaced by $ T_3 $.
	
	If $ T_3 = T^* $, we have got an existence time independent of $ n $ and this finishes the job. Otherwise, let $ (\sigma_n,v_n) |_{t= T_3} $ as a new initial data and repeat the arguments above to get the bounds in \eqref{uniform-bound-vcm} with $ T_2 $ replaced by $ T_4:= \min\lbrace T^*, T_3 + \delta T\rbrace = \min\lbrace T^*, T_2 + 2 \delta T\rbrace $. Keep repeating this process, one will eventually get that there is a $ m \in \mathbb Z^+ $ sufficiently large that $ T_m := \min\lbrace T^*, T_2 + (m-2) \delta T \rbrace = T^* $.
	
	Therefore, we have got a sequence of approximating solutions $ (\rho_n, v_n) = (\sigma_n^2, v_n) $ with a uniform existence time $ T^* $ independent of $ n $ for the approximating initial data $ (\rho_{0,n}, v_{0,n}) $ constructed above. In particular, $ (\sigma_n, v_n) $ satisfies the bounds in \eqref{uniform-bound-vcm} with $ T_2 $ replaced by $ T^* $. Thus by taking $ n\rightarrow \infty $, it is straightforward to check that we have got a strong solution $ (\rho, v) = (\sigma^2, v) $ to \eqref{isen-CPE}.
	In fact, we have the following:
	
	\begin{proposition}\label{prop:exist-isen-vacuum}
		Consider the initial data $ (\rho_0, v_0) $ (or equivalently $ (\sigma_0, v_0) $) given in \eqref{isen-initial} satisfying \eqref{bound-of-initial-energy}, \eqref{isen-cptbl-conds} and \eqref{bound-of-initial}. There is a constant $ T^* > 0 $ such that there exists a solution $ (\rho, v) = (\sigma^{2}, v) $ to equation \eqref{isen-CPE} satisfying
	\begin{equation}\label{space-isen-vacuum}
	\begin{gathered}
		\sigma \in L^\infty(0,T^*;H^2(\Omega)), ~~ \dt \sigma \in L^\infty(0,T^*; H^1(\Omega)), \\
		v \in L^\infty(0,T^*;H^2(\Omega))\cap L^2(0,T^*;H^3(\Omega)), ~~ \dt v \in L^2(0,T^*;H^1(\Omega))\\
		\sigma \dt v \in L^\infty(0,T^*;L^2(\Omega)),
	\end{gathered}
	\end{equation}
	and
	\begin{equation}\label{bound-isen-vacuum}
	\begin{aligned}
		& \sup_{0\leq t\leq T^*} \norm{\sigma(t)}{\Hnorm{2}}\leq 2 B_1^{1/2}, ~~ \sup_{0\leq t\leq T^*} \norm{\dt \sigma(t)}{\Hnorm{1}} \leq \mathcal C_3 (B_1,B_2),\\
			& \sup_{0\leq t\leq T^*} \bigl( \norm{v(t)}{\Hnorm{2}}^2 + \norm{(\sigma v_{t})(t)}{\Lnorm{2}}^2 \bigr) + \int_0^{T^*} \bigl(\norm{v}{\Hnorm{3}}^2 + \norm{v_{t}}{\Hnorm{1}}^2 \bigr) \,dt \\
			& ~~~~ \leq \mathcal C_{4}(B_1,B_2), ~~ \text{and} ~ \inf_{(\vec{x},t) \in \Omega\times [0,T^*]} \rho \geq 0,
	\end{aligned}
	\end{equation}
	for some constant $ \mathcal C_3 = \mathcal C_3(B_1,B_2),  \mathcal C_4 = \mathcal C_4(B_1,B_2) $.
	\end{proposition}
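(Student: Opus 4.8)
The plan is to obtain $(\rho,v)$ by passing to the limit $n\to\infty$ in the sequence of approximating solutions $(\rho_n,v_n)=(\sigma_n^2,v_n)$ constructed above, which, thanks to the iteration argument, exist on a common interval $[0,T^*]$ with $T^*=T^*(B_1,B_2)>0$ and satisfy the bounds \eqref{uniform-bound-vcm} with $T_2$ replaced by $T^*$, all uniform in $n$. First I would extract, by the Banach--Alaoglu theorem, a subsequence (not relabeled) with $\sigma_n\buildrel\ast\over\rightharpoonup\sigma$ in $L^\infty(0,T^*;H^2(\Omega))$, $\dt\sigma_n\buildrel\ast\over\rightharpoonup\dt\sigma$ in $L^\infty(0,T^*;H^1(\Omega))$, $v_n\buildrel\ast\over\rightharpoonup v$ in $L^\infty(0,T^*;H^2(\Omega))$, $v_n\rightharpoonup v$ in $L^2(0,T^*;H^3(\Omega))$, $\dt v_n\rightharpoonup\dt v$ in $L^2(0,T^*;H^1(\Omega))$ and $\sigma_n v_{n,t}\buildrel\ast\over\rightharpoonup\zeta$ in $L^\infty(0,T^*;L^2(\Omega))$ for some $\zeta$. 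Weak lower semicontinuity of the norms then yields at once the regularity \eqref{space-isen-vacuum}, the bounds \eqref{bound-isen-vacuum} (with $\sigma\dt v$ temporarily read as $\zeta$), and $\rho=\sigma^2\geq 0$.

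Next I would apply the Aubin compactness theorem to upgrade to strong convergence: since $\sigma_n$ is bounded in $L^\infty(0,T^*;H^2(\Omega))$ and $\dt\sigma_n$ in $L^\infty(0,T^*;H^1(\Omega))$, one gets $\sigma_n\to\sigma$ in $C([0,T^*];H^s(\Omega))$ for every $1<s<2$, hence uniformly on $\overline\Omega\times[0,T^*]$; likewise $v_n$ bounded in $L^\infty(0,T^*;H^2(\Omega))$ with $\dt v_n$ bounded in $L^2(0,T^*;L^2(\Omega))$ gives $v_n\to v$ in $C([0,T^*];H^s(\Omega))$ for every $1<s<2$. These convergences let me pass to the limit in each nonlinear term of \eqref{isen-CPE}: $\sigma_n^2\to\sigma^2$ and $\sigma_n^{2\gamma}\to\sigma^{2\gamma}$ uniformly, $\sigma_n^2 v_n\otimes v_n\to\sigma^2 v\otimes v$ and $\sigma_n^2 v_n\cdot\nablah v_n\to\sigma^2 v\cdot\nablah v$ in $C([0,T^*];L^p(\Omega))$ for some $p>1$; for the vertical transport I would keep the regular quantity $\sigma_n w_n=-\int_0^z(\sigma_n\widetilde{\dvh v_n}+2\widetilde{v_n}\cdot\nablah\sigma_n)\,dz$ (cf.\ \eqref{vertical-isen-02}), which is bounded in $L^\infty(0,T^*;H^1(\Omega))$ and, after a further subsequence, converges strongly, so that $\sigma_n^2 w_n\dz v_n=\sigma_n(\sigma_n w_n)\dz v_n\rightharpoonup\sigma(\sigma w)\dz v$ with $\sigma w$ as in \eqref{vertical-isen}; and for $\rho_n\dt v_n=\sigma_n(\sigma_n v_{n,t})$ the uniform convergence $\sigma_n\to\sigma$ together with $\sigma_n v_{n,t}\buildrel\ast\over\rightharpoonup\zeta$ and the distributional identification $\sigma_n v_{n,t}\to\sigma\dt v$ (product of a uniformly convergent and a weakly convergent factor) force $\zeta=\sigma\dt v$ and $\rho_n\dt v_n\rightharpoonup\rho\dt v$. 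Passing to the limit in the weak formulations of \subeqref{isen-CPE}{1} and \subeqref{isen-CPE}{2}, in \subeqref{isen-CPE}{3} (trivially $\dz\sigma=0$), in the boundary conditions \eqref{bd-cnds}, and in the initial data (using $\sigma_n(0)=\rho_{0,n}^{1/2}\to\sigma_0$, $v_n(0)=v_{0,n}\to v_0$ in $H^2(\Omega)$ and the $C([0,T^*];H^s)$ convergence) then shows that $(\rho,v)=(\sigma^2,v)$ is a strong solution of \eqref{isen-CPE} with the asserted properties; the identities \eqref{conti-isen-02}, \eqref{vertical-isen-02} hold for the limiting $(\sigma,v)$ because it is regular enough, as justified at the beginning of section \ref{sec:continuous-dependence}.

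The delicate point --- rather than a genuine obstacle, since the $\underline\rho$-independent a priori estimates already did the hard work in Proposition \ref{prop:lwbd-inde-ests} --- is that one must never pass to the limit in $w_n$ or in $\dt v_n$ separately, as neither is controlled uniformly as the density degenerates; the argument has to be carried through using only the ``good'' combinations $\sigma_n w_n$ (equivalently $\rho_n w_n$) and $\sigma_n v_{n,t}$, and one must correctly identify their weak limits with $\sigma w$ and $\sigma\dt v$ on the (possibly nonempty) vacuum set $\{\sigma=0\}$. A secondary bookkeeping point is that the limit object a priori satisfies only the $\rho=\sigma^2$ form of the continuity equation, so recovering the $\sigma$-equations \eqref{conti-isen-02}--\eqref{vertical-isen-02}, valid across $\{\sigma=0\}$, relies on the regularity-based justification deferred to section \ref{sec:continuous-dependence}.
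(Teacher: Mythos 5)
Your proposal follows exactly the paper's route: approximate by strictly positive initial densities, invoke Proposition \ref{prop:exist-isen-novacuum}, the $\underline\rho$-independent bounds of Proposition \ref{prop:lwbd-inde-ests} and the continuation/iteration argument to reach a common existence time $T^*$ with the uniform bounds \eqref{uniform-bound-vcm}, and then let $n\to\infty$. The paper dismisses the limit passage as ``straightforward to check''; your compactness argument (Aubin plus identification of the degenerate terms through the combinations $\sigma_n w_n$ and $\sigma_n v_{n,t}$) correctly supplies those details and is consistent with the stated regularity and bounds.
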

	
\section{Continuous dependence on initial data and uniqueness}\label{sec:cn-ddy-uni}
	In this section, we will show the continuous dependence of the solutions of  \eqref{isen-CPE-g} and \eqref{isen-CPE} on the initial data. This will also imply the uniqueness of strong solutions constructed in Proposition \ref{prop:exist-isen-g} and Proposition \ref{prop:exist-isen-vacuum}.
	\subsection{The case when there is gravity but no vacuum and $ \gamma = 2 $}
	Consider two sets of initial data $ (\rho_{i,0},v_{i,0}) =  (\xi_{i,0}+\frac{1}{2}gz ,v_{i,0}),  i = 1,2, $ in \eqref{isen-initial-g} for \eqref{isen-CPE-g} satisfying \eqref{isen-cptbl-conds-g}, \eqref{bound-of-initial-g}.
	Denote $ (\rho_i,v_i) = (\xi_i+\frac 1 2 gz, v_i), i = 1,2, $ as the corresponding strong solutions constructed in Proposition \ref{prop:exist-isen-g} in the interval $ [0,T] $ for some $ T > 0 $. Then we have $(\xi_i, v_i) \in \mathfrak X_T, i=1,2 $. Throughout this section we will denote the constant $ C > 0 $ which may be different from line to line and depends on $ \mu, \lambda, B_{g,1}, B_{g,2},\underline\rho, T $.
	Also, we will use the notations
	\begin{gather*}
		\xi_{12} := \xi_1 - \xi_2, ~ v_{12} := v_1- v_2, \\
		\xi_{12,0}  := \xi_{1,0} - \xi_{2,0}, ~ v_{12,0} := v_{1,0}- v_{2,0}.
	\end{gather*}
	Taking the difference of the equations satisfied by $ (\xi_i, v_i), i=1,2 $,  as in \eqref{eq:differences-g}, then $ (\xi_{12}, v_{12}) $ satisfies
	\begin{equation*}
		\begin{cases}
			\dt \xi_{12} + \overline{v_1} \cdot \nablah \xi_{12} + \xi_{12} \overline{\dvh v_1} + \overline{v_{12}} \cdot \nablah \xi_2 + \xi_2 \overline{\dvh v_{12}}\\
			~~~~ ~~~~ + \dfrac{g}{2} \overline{z \dvh v_{12}} = 0, \\
			\rho_1 \dt v_{12} - \mu \deltah v_{12} - \mu \partial_{zz} v_{12} - (\mu+\lambda) \nablah \dvh v_{12} = - \xi_{12} \dt v_2 \\
			~~~~ ~~~~ - \nablah (\xi_{12}(\rho_1 + \rho_2)) - \xi_{12} v_1 \cdot \nablah v_1 - \rho_2 v_{12} \cdot \nablah v_1  \\
			~~~~ ~~~~- \rho_2 v_2 \cdot\nablah v_{12} - (\rho_1 w_1 - \rho_2 w_2) \dz v_1 - \rho_2 w_2 \dz v_{12}.
		\end{cases}
	\end{equation*}
	As in \eqref{weakly-conti-003}, we will have the following inequalities
	\begin{align*}
		& \dfrac{d}{dt} \norm{\xi_{12}}{\Lnorm{2}}^2 \leq ( \norm{v_1}{\Hnorm{3}}^2 + C ) \norm{\xi_{12}}{\Lnorm{2}}^2  + C \norm{\sqrt{\rho_1}v_{12}}{\Lnorm{2}}^2\\
		& ~~~~ ~~~~ + C \norm{\nabla v_{12}}{\Lnorm{2}}^2,\\
		& \dfrac{d}{dt} \norm{\sqrt{\rho_1}v_{12}}{\Lnorm{2}}^2 + c_{\mu,\lambda} \norm{\nabla v_{12}}{\Lnorm{2}}^2 \leq C  \norm{\sqrt{\rho_1}v_{12}}{\Lnorm{2}}^2 \\
		& ~~~~ + C (\norm{\dt v_2}{\Hnorm{1}}^2 + \norm{v_1}{\Hnorm{3}}^2 + 1)  (\norm{\xi_{12}}{\Lnorm{2}}^2 + \norm{\sqrt{\rho_1}v_{12}}{\Lnorm{2}}^2).
	\end{align*}
	After taking a suitable linear combination of the above inequalities, we have
	\begin{equation*}
		\begin{aligned}
			& \dfrac{d}{dt} \bigl( \norm{\xi_{12}}{\Lnorm{2}}^2 + C_{\mu,\lambda}' \norm{\sqrt{\rho_1}v_{12}}{\Lnorm{2}}^2 \bigr) + C_{\mu,\lambda}'' \norm{\nabla v_{12}}{\Lnorm{2}}^2 \\
			& ~~~~ \leq C(\norm{v_1}{\Hnorm{3}}^2 + \norm{\dt v_2}{\Hnorm{1}}^2 + 1 )( \norm{\xi_{12}}{\Lnorm{2}}^2 + C_{\mu,\lambda}' \norm{\sqrt{\rho_1}v_{12}}{\Lnorm{2}}^2),
		\end{aligned}
	\end{equation*}
	for some positive constants $ C_{\mu, \lambda}', C_{\mu, \lambda}'' $ depending on $ \mu, \lambda $.
	Now we apply the Gr\"onwall's inequality to obtain
	\begin{align*}
		&\sup_{0<\leq t\leq T} (\norm{\xi_{12}(t)}{\Lnorm{2}}^2 + C_{\mu,\lambda}' \norm{(\sqrt{\rho_1}v_{12})(t)}{\Lnorm{2}}^2) + \int_0^T \norm{\nabla v_{12}}{\Lnorm{2}}^2 \,dt \\
		& ~~~~ \leq C e^{C \int_0^T ( \norm{v_1}{\Hnorm{3}}^2 + \norm{\dt v_2}{\Hnorm{1}}^2 + 1 ) \,dt }  \times (\norm{\xi_{12,0}}{\Lnorm{2}}^2 + C_{\mu,\lambda}' \norm{\sqrt{\rho_{1,0}}v_{12,0}}{\Lnorm{2}}^2).
	\end{align*}
	Thus we have shown the following:
	\begin{proposition}\label{prop:uniqueness-g}
	Given two sets of initial data $ (\rho_{i,0},v_{i,0}) =  (\xi_{i,0}+\frac{1}{2}gz ,v_{i,0}),  i = 1,2 $, satisfying \eqref{isen-cptbl-conds-g} and \eqref{bound-of-initial-g}, the corresponding strong solutions $ (\rho_i,v_i) = (\xi_i+\frac 1 2 gz, v_i), i = 1,2 $, of \eqref{isen-CPE-g} constructed in Proposition \ref{prop:exist-isen-g} in the interval $ [0,T] $, for some $ T > 0 $, satisfy
	\begin{equation*}
	\begin{aligned}
		& \norm{\rho_1 - \rho_2}{L^\infty(0,T;L^2(\Omega))} + \norm{v_1- v_2}{L^\infty(0,T;L^2(\Omega))}\\
		& ~~~~ + \norm{\nabla (v_1 - v_2)}{L^2(0,T;L^2(\Omega))} \leq C_{\mu, \lambda, B_{g,1}, B_{g,2},\underline\rho, T} \\
		& ~~~~ ~~~~ \times (\norm{\rho_{1,0}-\rho_{2,0}}{L^2(\Omega))} + \norm{v_{1,0}-v_{2,0}}{L^2(\Omega))}).
	\end{aligned}
	\end{equation*}
	In particular, if $ \rho_{1,0} = \rho_{2,0}, v_{1,0} = v_{2,0} $, we have $ \rho_1 =\rho_2, v_1 = v_2 $ in $ [0,T] $.
	\end{proposition}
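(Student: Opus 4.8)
The plan is to run the same $L^2$ energy estimate on the difference of two solutions that was used in Section~\ref{sec:exist-g} to prove continuity of the map $\mathcal T$ in the topology of $\mathfrak V$ (see \eqref{weakly-conti-00101}--\eqref{weakly-conti-003}), but now applied directly to the two genuine solutions $(\xi_1,v_1)$ and $(\xi_2,v_2)$, each playing the role of its own input. Since both solutions belong to $\mathfrak X_T$ (cf.\ \eqref{fnc-space-isen-g} and Proposition~\ref{prop:exist-isen-g}), we have $\rho_i=\xi_i+\frac12 gz\ge\frac12\underline\rho>0$, the quantities $\norm{\xi_i}{\Hnorm{2}}$, $\norm{\dt\xi_i}{\Hnorm{1}}$, $\sup_t\norm{v_i}{\Hnorm{2}}$, $\sup_t\norm{\dt v_i}{\Lnorm{2}}$ are bounded by constants depending only on $B_{g,1},B_{g,2},\underline\rho$, and the maps $t\mapsto\norm{v_1(t)}{\Hnorm{3}}^2$ and $t\mapsto\norm{\dt v_2(t)}{\Hnorm{1}}^2$ are integrable on $[0,T]$; these facts replace the a priori bounds on $(\xi^o,v^o)$ exploited in Section~\ref{sec:exist-g}.

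First I would write out the difference system for $(\xi_{12},v_{12})=(\xi_1-\xi_2,\,v_1-v_2)$; it is exactly \eqref{eq:differences-g} with the superscripts $o$ deleted, supplemented by $(\xi_{12},v_{12})|_{t=0}=(\xi_{12,0},v_{12,0})$ and $\dz v_{12}|_{z=0,1}=0$. Then I pair the $\xi_{12}$-equation with $2\xi_{12}$ and the $v_{12}$-equation with $2v_{12}$ in $L^2(\Omega)$, integrating by parts (no boundary terms appear by \eqref{bd-cnds}); the parabolic part yields the dissipation $2\bigl(\mu\norm{\nablah v_{12}}{\Lnorm{2}}^2+\mu\norm{\dz v_{12}}{\Lnorm{2}}^2+(\mu+\lambda)\norm{\dvh v_{12}}{\Lnorm{2}}^2\bigr)$, and $\rho_1\ge\frac12\underline\rho$ makes $\norm{\sqrt{\rho_1}v_{12}}{\Lnorm{2}}$ equivalent to $\norm{v_{12}}{\Lnorm{2}}$. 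The right-hand side is estimated term by term exactly as $H_1,\dots,H_9$ in \eqref{weakly-conti-00101}--\eqref{weakly-conti-001}, using \eqref{ineq-supnorm}: the pressure term $-\nablah(\xi_{12}(\rho_1+\rho_2))$ becomes, after integration by parts, $\xi_{12}(\rho_1+\rho_2)\dvh v_{12}$ and is absorbed by $\delta\norm{\nabla v_{12}}{\Lnorm{2}}^2$; the vertical-transport difference $(\rho_1 w_1-\rho_2 w_2)\dz v_1$ is handled by substituting \eqref{vertical-isen-g}, splitting $\rho_1 w_1-\rho_2 w_2$ into the three pieces generated by $\dvh(\xi_{12}\widetilde{v_1})$, $\dvh(\xi_2\widetilde{v_{12}})$, $\frac g2\widetilde{z\dvh v_{12}}$, integrating by parts in $\vech x$ to move one horizontal derivative onto $v_{12}$, and then applying Minkowski's and the Sobolev embedding inequalities as for $H_8$. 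This produces, as in \eqref{weakly-conti-003},
\begin{gather*}
\frac{d}{dt}\norm{\xi_{12}}{\Lnorm{2}}^2\le\bigl(\norm{v_1}{\Hnorm{3}}^2+C\bigr)\norm{\xi_{12}}{\Lnorm{2}}^2+C\norm{\sqrt{\rho_1}v_{12}}{\Lnorm{2}}^2+C\norm{\nabla v_{12}}{\Lnorm{2}}^2,\\
\frac{d}{dt}\norm{\sqrt{\rho_1}v_{12}}{\Lnorm{2}}^2+c_{\mu,\lambda}\norm{\nabla v_{12}}{\Lnorm{2}}^2\le C\bigl(\norm{\dt v_2}{\Hnorm{1}}^2+\norm{v_1}{\Hnorm{3}}^2+1\bigr)\bigl(\norm{\xi_{12}}{\Lnorm{2}}^2+\norm{\sqrt{\rho_1}v_{12}}{\Lnorm{2}}^2\bigr).
\end{gather*}
Taking the combination $\Phi:=\norm{\xi_{12}}{\Lnorm{2}}^2+C'_{\mu,\lambda}\norm{\sqrt{\rho_1}v_{12}}{\Lnorm{2}}^2$ with $C'_{\mu,\lambda}$ chosen so that $C'_{\mu,\lambda}c_{\mu,\lambda}$ dominates the constant $C$ multiplying $\norm{\nabla v_{12}}{\Lnorm{2}}^2$ in the first inequality, one gets $\frac{d}{dt}\Phi+C''_{\mu,\lambda}\norm{\nabla v_{12}}{\Lnorm{2}}^2\le C(\norm{v_1}{\Hnorm{3}}^2+\norm{\dt v_2}{\Hnorm{1}}^2+1)\Phi$, with $\Phi(0)\le C(\norm{\xi_{12,0}}{\Lnorm{2}}^2+\norm{v_{12,0}}{\Lnorm{2}}^2)$. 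Since the coefficient is integrable on $[0,T]$, Grönwall's inequality yields $\sup_{[0,T]}\Phi+\int_0^T\norm{\nabla v_{12}}{\Lnorm{2}}^2\,dt\le C_{\mu,\lambda,B_{g,1},B_{g,2},\underline\rho,T}\,\Phi(0)$; translating back via $\rho_1\ge\frac12\underline\rho$ and noting $\rho_1-\rho_2=\xi_{12}$ and $\rho_{1,0}-\rho_{2,0}=\xi_{12,0}$ (the $\frac12 gz$ cancels) gives the stated inequality, and the final assertion is the special case $\xi_{12,0}=0$, $v_{12,0}=0$.

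I expect the only nontrivial point to be the bookkeeping in the $H_8$-type estimate: in the difference $\rho_1 w_1-\rho_2 w_2$ one must distribute the anisotropic derivatives so that at most one factor is measured in an $H^3$-type norm (producing the integrable coefficient $\norm{v_1}{\Hnorm{3}}^2$) while the genuinely small factors $\xi_{12}$ and $v_{12}$ and their horizontal derivatives appear only at the $L^2$ level — the $\dz v_1$ factor then goes into $L^\infty_h$ and one pays with $\delta\norm{\nabla v_{12}}{\Lnorm{2}}^2$. One must also check that the apparent loss of a horizontal derivative in the $\xi_{12}$-equation (through $\xi_2\overline{\dvh v_{12}}$ and $\overline{v_{12}}\cdot\nablah\xi_2$) is compensated purely by the parabolic dissipation $\norm{\nabla v_{12}}{\Lnorm{2}}^2$ and by $\nablah\xi_2\in L^4$ (from $\xi_2\in H^2$), never by anything involving $\norm{v_{12}}{\Hnorm{2}}$, which is not available at this regularity level; it is precisely the strict positivity $\rho_1\ge\frac12\underline\rho$ that makes the weighted energy $\norm{\sqrt{\rho_1}v_{12}}{\Lnorm{2}}^2$ coercive and closes the argument, and this is exactly what fails in the vacuum case.
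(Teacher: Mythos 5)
Your proposal is correct and follows essentially the same route as the paper: take the difference system (which is \eqref{eq:differences-g} with the superscripts $o$ removed), perform the weighted $L^2$ estimates on $\xi_{12}$ and $\sqrt{\rho_1}\,v_{12}$ exactly as in the $H_1,\dots,H_9$ bounds leading to \eqref{weakly-conti-003}, take the linear combination so the parabolic dissipation absorbs the $\norm{\nabla v_{12}}{\Lnorm{2}}^2$ term, and apply Gr\"onwall with the integrable coefficient $\norm{v_1}{\Hnorm{3}}^2+\norm{\dt v_2}{\Hnorm{1}}^2+1$ furnished by $(\xi_i,v_i)\in\mathfrak X_T$. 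Your closing remarks on the $H_8$-type bookkeeping, the cancellation of $\tfrac12 gz$, and the role of $\rho_1\ge\tfrac12\underline\rho$ match the paper's argument.
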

	
	\subsection{The case when there is vacuum but no gravity and $ \gamma > 1 $}\label{sec:continuous-dependence}
	First, we claim that any solution $ (\rho, v) = (\sigma^2, v) $ to \eqref{isen-CPE} satisfying \eqref{space-isen-vacuum} with the bounds in \eqref{bound-isen-vacuum} will also satisfy the following equations
	\begin{equation*}{\tag{\ref{isen-CPE}'}}\label{rf:isen-CPE}
		\begin{cases}
			\dt \sigma + \overline{v} \cdot \nablah \sigma + \dfrac{1}{2} \sigma\overline{\dvh v} = 0 & \text{in} ~ \Omega,\\
			\sigma w = - \int_0^z \sigma \widetilde{\dvh v} + 2 \widetilde{v} \cdot \nablah \sigma \,dz & \text{in} ~ \Omega,\\
			\sigma^2 \dt v +\sigma^2 v\cdot\nablah v + \sigma \sigma w \dz v + \nablah \sigma^{2\gamma} \\
			~~~~ ~~~~ = \mu \deltah v + \mu \partial_{zz} v + (\mu+\lambda) \nablah \dvh v & \text{in} ~ \Omega, \\
			\dz \sigma = 0 & \text{in} ~ \Omega.
		\end{cases}
	\end{equation*}
	
	To show this claim, we first consider the non-degenerate variable $ \rho + \varepsilon = \sigma^2 + \varepsilon $, for some constant $ \varepsilon > 0 $. From \eqref{conti-isen}, one has
	\begin{equation*}
		\dt (\rho + \varepsilon) + \overline{v} \cdot\nablah (\rho + \varepsilon) + (\rho + \varepsilon) \overline{\dvh v} - \varepsilon \overline{\dvh v} = 0.
	\end{equation*}
	Then after dividing $ (\rho + \varepsilon)^{1/2} $, one has
	\begin{equation}\label{eqeq:001}
		\begin{aligned}
			& 2 \dt (\rho + \varepsilon)^{1/2} + 2 \overline{v} \cdot \nablah (\rho + \varepsilon)^{1/2} + (\rho + \varepsilon)^{1/2} \overline{\dvh v} \\
			& ~~~~ ~~~~ ~~~~ - \dfrac{\varepsilon}{(\rho + \varepsilon)^{1/2}} \overline{\dvh v} = 0.
		\end{aligned}
	\end{equation}
	Now it is easy to verify that \eqref{eqeq:001} will converge to \subeqref{rf:isen-CPE}{1} in the sense of distribution as $ \varepsilon \rightarrow 0 $.
	On the other hand, from \eqref{vertical-isen}, one has
	\begin{equation*}
		\sigma^2 w = - \sigma \int_0^z \bigl( \sigma \widetilde{\dvh v} + 2 \widetilde{v} \cdot\nablah \sigma \bigr) \,dz.
	\end{equation*}
	We define
	\begin{equation*}
		\sigma w_\sigma : = - \int_0^z \sigma \widetilde{\dvh v} + 2 \widetilde{v} \cdot\nablah \sigma \,dz.
	\end{equation*}
	Then $ \sigma \sigma w_\sigma = \rho w $ and we will use hereafter the notation $ \sigma w = \sigma w_\sigma $. As before it is easy to verify that \subeqref{rf:isen-CPE}{3} is equivalent to \subeqref{isen-CPE}{2} in the sense of distribution. Summing up the facts above, we have shown that the solutions to \eqref{isen-CPE} satisfying the \eqref{space-isen-vacuum} regularity with the bounds in \eqref{bound-isen-vacuum} are also solutions to \eqref{rf:isen-CPE}.
	
	Consider two sets of initial data $ ( \rho_{i,0},v_{i,0}) = (\sigma_{i,0}^2, v_{i,0}), i = 1,2 $, in \eqref{isen-initial} for \eqref{isen-CPE} satisfying \eqref{isen-cptbl-conds} and \eqref{bound-of-initial}. Denote $ (\rho_i,v_i) = (\sigma_i^2, v_i), i = 1,2 $, as the corresponding strong solutions constructed in Proposition \ref{prop:exist-isen-vacuum} in the interval $ [0,T^*] $, for some $ T^* > 0 $. Then we have $ (\sigma_i, v_i), i=1,2 $, satisfying the bounds in \eqref{bound-isen-vacuum}. Also $ (\sigma_i, v_i), i=1,2 $, are solutions to \eqref{rf:isen-CPE}. Throughout this section, we will denote the constant $ C > 0 $ which may be different from line to line and depends on $ \mu, \lambda, B_1,B_2, T^* $. Also, we will use the notations
	\begin{gather*}
		\sigma_{12} := \sigma_1 - \sigma_2, ~ v_{12} := v_1- v_2, \\
		\sigma_{12,0}  := \sigma_{1,0} - \sigma_{2,0}, ~ v_{12,0} := v_{1,0}- v_{2,0}.
	\end{gather*}	
	Taking the difference of the equations satisfied by $ (\sigma_i, v_i), i=1,2 $, we have
	\begin{equation}\label{eq:differences-vacuum}
		\begin{cases}
			\dt \sigma_{12} + \overline{v_1} \cdot\nablah \sigma_{12} + \dfrac{1}{2} \sigma_{12} \overline{\dvh v_1} + \overline{v_{12}} \cdot \nablah \sigma_2 \\
			~~~~ ~~~~ + \dfrac{1}{2} \sigma_2 \overline{\dvh v_{12}} = 0,\\
			\sigma_{1}^2 \dt v_{12} - \mu \deltah v_{12} - \mu \partial_{zz} v_{12} - (\mu+\lambda)\nablah \dvh v_{12} \\
			~~~~  = - \sigma_{12} ( \sigma_{1}+\sigma_2)\dt v_{2}
			 - \nablah \bigl( \sigma_{12}\dfrac{\sigma_1^{2\gamma} - \sigma_2^{2\gamma}}{\sigma_1 - \sigma_2}\bigr) \\
			~~~~ ~~~~  - \sigma_{12}(\sigma_1 + \sigma_2) v_2 \cdot \nablah v_2 - \sigma_1^2 v_{12} \cdot \nablah v_2\\
			~~~~ ~~~~  - \sigma_1^2 v_1 \cdot \nablah v_{12} - \sigma_{12} \sigma_2 w_2 \dz v_2 -\sigma_1 (\sigma_1 w_1 - \sigma_2 w_2) \dz v_2 \\
			~~~~ ~~~~  - \sigma_1 \sigma_1 w_1 \dz v_{12},\\
			\sigma_i w_i = - \int_0^z \bigl( \sigma_i \widetilde{\dvh v_i} + 2 \widetilde{v_i} \cdot \nablah \sigma_i \bigr) \,dz, ~~~~~ i = 1,2.
		\end{cases}
	\end{equation}
	Next, multiply \subeqref{eq:differences-vacuum}{1} with $ 2 \sigma_{12} $ and take the $ L^2 $-inner product of \subeqref{eq:differences-vacuum}{2} with $ 2 v_{12} $. Integrating the resultants yields
	\begin{align}
		& \dfrac{d}{dt} \norm{\sigma_{12}}{\Lnorm{2}}^2 = - \inth \bigl( 2 \overline{v_{12}} \cdot\nablah \sigma_2 \sigma_{12} + \sigma_2 \overline{\dvh v_{12}} \sigma_{12} \bigr) \idxh =: G_1, \label{cn-dpcy-vacuum-001} \\
		& \dfrac{d}{dt} \norm{\sigma_1 v_{12}}{\Lnorm{2}}^2 + 2 \bigl( \mu \norm{\nablah v_{12}}{\Lnorm{2}}^2 + \mu \norm{\dz v_{12}}{\Lnorm{2}}^2 + (\mu+\lambda) \norm{\dvh v_{12}}{\Lnorm{2}}^2 \bigr) \nonumber \\
		& ~~ =   2 \intw  \sigma_1 \dt \sigma_1 \abs{v_{12}}{2} \idx - 2 \intw \biggl( \sigma_{12}(\sigma_1+\sigma_2) \dt v_2 \cdot v_{12} \biggr) \idx \nonumber \\
		& ~~~~  + 2 \intw \sigma_{12} \dfrac{\sigma_1^{2\gamma} - \sigma_2^{2\gamma}}{\sigma_1-\sigma_2} \dvh v_{12} \idx - 2 \intw \biggl( (\sigma_{12}(\sigma_1 + \sigma_2) v_2 \cdot \nablah v_2 ) \cdot v_{12} \nonumber \\
		& ~~~~  + ( \sigma_1^2 v_{12} \cdot \nablah v_2 ) \cdot v_{12} + ( \sigma_1^2 v_1 \cdot \nablah v_{12} ) \cdot v_{12} \biggr) \idx - 2 \intw \biggl( \sigma_{12} \sigma_2 w_2 \dz v_2 \cdot v_{12} \nonumber \\
		& ~~~~  + \sigma_1\sigma_1 w_1 \dz v_{12} \cdot v_{12} \biggr) \idx - 2 \intw \sigma_1 (\sigma_1 w_1 -\sigma_2 w_2) \dz v_2 \cdot v_{12}  \idx =: \sum_{i=2}^{7} G_i. \label{cn-dpcy-vacuum-002}
	\end{align}
	As before, we will list the estimates for $ G_i $ terms above, in the following.
	\begin{align*}
		& G_1 \lesssim \hnorm{\overline{v_{12}}}{\Lnorm{4}} \hnorm{\nablah \sigma_2}{\Lnorm{4}} \hnorm{\sigma_{12}}{\Lnorm{2}} + \hnorm{\sigma_2}{\Lnorm{\infty}} \hnorm{\overline{\nablah v_{12}}}{\Lnorm{2}} \hnorm{\sigma_{12}}{\Lnorm{2}} \\
		& ~~~~ \lesssim  \norm{\nabla v_{12}}{\Lnorm{2}}^2 + \norm{\sigma_2}{\Hnorm{2}}^2  \norm{\sigma_{12}}{\Lnorm{2}}^2 + \norm{\sigma_1 v_{12}}{\Lnorm{2}}^2. \\
		& G_2 \lesssim \norm{\dt \sigma_1}{\Lnorm{2}} \norm{\sigma_1 v_{12}}{\Lnorm{3}} \norm{v_{12}}{\Lnorm{6}} \lesssim \norm{\sigma_{1}}{\Lnorm{\infty}}^{1/2} \norm{\dt \sigma_1}{\Lnorm{2}} \norm{\sigma_1 v_{12}}{\Lnorm{2}}^{1/2} \\
		& ~~~~ ~~~~ \times \norm{v_{12}}{\Lnorm{6}}^{3/2} \lesssim \delta \norm{\nabla v_{12}}{\Lnorm{2}}^2 + C_\delta (\norm{\sigma_1}{\Hnorm{2}}^2 \norm{\dt \sigma_1}{\Lnorm{2}}^4 + 1) \\
		& ~~~~ ~~~~ \times \norm{\sigma_1 v_{12}}{\Lnorm{2}}^2. \\
		& G_{3} \lesssim \norm{\sigma_{12}}{\Lnorm{2}}(\norm{\sigma_1}{\Lnorm{\infty}} + \norm{\sigma_2}{\Lnorm{\infty}}) \norm{\dt v_{2}}{\Lnorm{3}} \norm{v_{12}}{\Lnorm{6}} \lesssim \delta\norm{\nabla v_{12}}{\Lnorm{2}}^2\\
		& ~~~~ + C_\delta \norm{\sigma_1v_{12}}{\Lnorm{2}}^2 + C_\delta ( \norm{\sigma_1}{\Hnorm{2}}^2 + \norm{\sigma_2}{\Hnorm{2}}^2 ) \norm{\dt v_{2}}{\Hnorm{1}}^2 \norm{\sigma_{12}}{\Lnorm{2}}^2. \\
		& G_4 \lesssim \norm{\sigma_{12}}{\Lnorm{2}} (\norm{\sigma_1}{\Lnorm{\infty}}^{2\gamma-1} + \norm{\sigma_2}{\Lnorm{\infty}}^{2\gamma-1}) \norm{\nabla v_{12}}{\Lnorm{2}} \lesssim \delta \norm{\nabla v_{12}}{\Lnorm{2}}^2 \\
		& ~~~~  + C_\delta (\norm{\sigma_1}{\Hnorm{2}}^{4\gamma-2} + \norm{\sigma_2}{\Hnorm{2}}^{4\gamma-2}) \norm{\sigma_{12}}{\Lnorm{2}}^2.\\
		& G_5 \lesssim \norm{\sigma_{12}}{\Lnorm{2}}(\norm{\sigma_1}{\Lnorm{\infty}} + \norm{\sigma_2}{\Lnorm{\infty}}) \norm{v_2}{\Lnorm{\infty}} \norm{\nabla v_2}{\Lnorm{3}} \norm{v_{12}}{\Lnorm{6}} \\
		& ~~~~ ~~~~ + \norm{\sigma_1}{\Lnorm{\infty}} \norm{\sigma_1 v_{12}}{\Lnorm{2}} \norm{\nabla v_2}{\Lnorm{3}} \norm{v_{12}}{\Lnorm{6}} \\
		& ~~~~ ~~~~ + \norm{\sigma_1}{\Lnorm{\infty}} \norm{v_1}{\Lnorm{\infty}} \norm{\nabla v_{12}}{\Lnorm{2}}\norm{\sigma_1 v_{12}}{\Lnorm{2}}\\
		& ~~~~ \lesssim \delta \norm{\nabla v_{12}}{\Lnorm{2}}^2 + C_{\delta}( \norm{\sigma_1}{\Hnorm{2}}^2 + \norm{\sigma_2}{\Hnorm{2}}^2) \norm{v_2}{\Hnorm{2}}^4 \norm{\sigma_{12}}{\Lnorm{2}}^2\\
		& ~~~~ ~~~~ + C_\delta ( \norm{\sigma_1}{\Hnorm{2}}^2 \norm{v_{1}}{\Hnorm{2}}^2 + \norm{\sigma_1}{\Hnorm{2}}^2 \norm{v_2}{\Hnorm{2}}^2 +1)\norm{\sigma_1v_{12}}{\Lnorm{2}}^2.
	\end{align*}
	We have applied in the above estimates \eqref{ineq:embedding-weighted}
	\begin{equation*}
		\norm{v_{12}}{\Lnorm{q}} \lesssim \norm{\nabla  v_{12}}{\Lnorm{2}} + \norm{\sigma_1 v_{12}}{\Lnorm{2}},
	\end{equation*}
	for $ q \in [2,6] $. Meanwhile, after plugging in \subeqref{eq:differences-vacuum}{3},
	we have
	\begin{align*}
		& G_{6} =  \intw 2 \biggl( \sigma_{12} \int_0^z ( \sigma_2 \widetilde{\dvh v_2} + 2\widetilde{v_2} \cdot \nablah \sigma_2 ) \,dz' \times ( \dz v_2 \cdot v_{12} ) \\
		& ~~~~ ~~~~ - \sigma_1 (\sigma_1 \widetilde{\dvh v_1} + 2 \widetilde{v_1} \cdot \nablah \sigma_1) \abs{v_{12}}{2} \biggr) \idx \\
		& ~~~~ \lesssim \int_0^1 \bigl( \hnorm{\sigma_2}{\Lnorm{\infty}} \hnorm{\widetilde{\nablah v_2}}{\Lnorm{8}} + \hnorm{\widetilde{v_2}}{\Lnorm{\infty}} \hnorm{\nablah \sigma_2}{\Lnorm{8}} \bigr) \,dz'\\
		& ~~~~ ~~~~ \times \int_0^1 \hnorm{\sigma_{12}}{\Lnorm{2}} \hnorm{\dz v_{2}}{\Lnorm{8}} \hnorm{v_{12}}{\Lnorm{4}} \,dz \\
		& ~~~~  + \norm{\sigma_1 v_{12}}{\Lnorm{2}} \norm{v_{12}}{\Lnorm{6}}
		( \norm{\sigma_1}{\Lnorm{\infty}} \norm{\nabla v_1}{\Lnorm{3}} + \norm{v_1}{\Lnorm{\infty}} \norm{\nabla \sigma_1}{\Lnorm{3}}) \\
		&  \lesssim \delta \norm{\nabla v_{12}}{\Lnorm{2}}^2
		+ C_\delta \norm{\sigma_2}{\Hnorm{2}}^2 \norm{v_2}{\Hnorm{2}}^4 \norm{\sigma_{12}}{\Lnorm{2}}^2 \\
		& ~~~~ + C_\delta ( \norm{\sigma_1}{\Hnorm{2}}^2 \norm{v_1}{\Hnorm{2}}^2 +1)
		\norm{\sigma_1 v_{12}}{\Lnorm{2}}^2.
	\end{align*}
	We have applied in the above estimate \eqref{ineq:embedding-weighted} and the fact that $ \hnorm{\sigma_{12}}{\Lnorm{2}} = \norm{\sigma_{12}}{\Lnorm{2}} $. Also, after integrating by parts,
	\begin{align*}
		& G_7 = 2 \intw \biggl( \sigma_1 \int_0^z \bigl( \sigma_{12} \widetilde{\dvh v_1} + \sigma_2 \widetilde{\dvh v_{12}} + 2 \widetilde{v_1} \cdot \nablah \sigma_{12} + 2 \widetilde{v_{12}} \cdot\nablah \sigma_2 \bigr) \,dz' \\
		& ~~~~ \times ( \dz v_2 \cdot v_{12} ) \biggr) \idx =  - 4 \intw \biggl\lbrack \sigma_1 \biggl( \int_0^z \bigl( \sigma_{12} \widetilde{v_1} + \sigma_2 \widetilde{v_{12}} \bigr) \,dz' \biggr) \cdot \nablah (\dz v_2 \cdot v_{12} ) \biggr\rbrack \idx \\
		& ~~~~ - 2  \intw \biggl( \sigma_1 \int_0^z \bigl( \sigma_{12} \widetilde{\dvh v_1} + \sigma_2 \widetilde{\dvh v_{12}}\bigr) \,dz' \times (\dz v_2 \cdot v_{12})\biggr)\idx = : G_7' + G_7''.
		\end{align*}
		Notice that
		\begin{align*}
		& G_7' \lesssim \int_0^1 \biggl( \hnorm{\sigma_1}{\Lnorm{\infty}}\hnorm{\sigma_{12}}{\Lnorm{2}} \hnorm{\widetilde{v_{1}}}{\Lnorm{\infty}} + \hnorm{\sigma_2}{\Lnorm{\infty}} \hnorm{\widetilde{\sigma_1 v_{12}}}{\Lnorm{2}} \biggr) \,dz'\\
		& ~~~~ \times \int_0^1 \biggl( \hnorm{\nablah \dz v_2}{\Lnorm{4}}  \hnorm{v_{12}}{\Lnorm{4}} + \hnorm{\dz v_2}{\Lnorm{\infty}} \hnorm{\nablah v_{12}}{\Lnorm{2}} \biggr)   \,dz\\
		& ~~~~ \lesssim ( \norm{\sigma_1}{\Hnorm{2}}\norm{v_1}{\Hnorm{2}} \norm{\sigma_{12}}{\Lnorm{2}} + \norm{\sigma_2}{\Hnorm{2}} \norm{\sigma_1 v_{12}}{\Lnorm{2}})\\
		& ~~~~ ~~~~ \times  \norm{v_2}{\Hnorm{2}}^{1/2} \norm{v_2}{\Hnorm{3}}^{1/2} (\norm{\sigma_1 v_{12}}{\Lnorm{2}} + \norm{\nabla v_{12}}{\Lnorm{2}})\\
		& ~~~~ \lesssim \delta \norm{\nabla v_{12}}{\Lnorm{2}}^2 + C_\delta (\norm{v_2}{\Hnorm{3}}^2 + \norm{v_2}{\Hnorm{2}}^2 \norm{\sigma_1}{\Hnorm{2}}^4 \norm{v_1}{\Hnorm{2}}^{4} \\
		& ~~~~ ~~~~ + \norm{v_2}{\Hnorm{2}}^2\norm{\sigma_2}{\Hnorm{2}}^4 + 1 ) (\norm{\sigma_{12}}{\Lnorm{2}}^2 + \norm{\sigma_1 v_{12}}{\Lnorm{2}}^2),\\
		& \text{and} \\
		& G_7'' \lesssim \int_0^1 \biggl( \hnorm{\sigma_{12}}{\Lnorm{2}} \hnorm{\widetilde{\nablah v_1}}{\Lnorm{8}} + \hnorm{\sigma_2}{\Lnorm{8}} \hnorm{\widetilde{\nablah v_{12}}}{\Lnorm{2}} \biggr) \,dz' \\
		& ~~~~ ~~~~ \times  \int_0^1 \hnorm{\dz v_2}{\Lnorm{8}} \hnorm{\sigma_1 v_{12}}{\Lnorm{2}}^{1/4} \hnorm{\sigma_1v_{12}}{\Lnorm{6}}^{3/4} \,dz\\
		& ~~~~ \lesssim \int_0^1 \biggl( \hnorm{\sigma_{12}}{\Lnorm{2}} \hnorm{\widetilde{v_1}}{\Hnorm{2}} + \hnorm{\sigma_2}{\Hnorm{1}} \hnorm{\widetilde{\nablah v_{12}}}{\Lnorm{2}} \biggr) \,dz' \\
		& ~~~~ ~~~~ \times  \int_0^1 \hnorm{\dz v_2}{\Hnorm{1}} \hnorm{\sigma_1 v_{12}}{\Lnorm{2}}^{1/4} \hnorm{\sigma_1v_{12}}{\Lnorm{6}}^{3/4} \,dz \\
		& ~~~~ \lesssim (\norm{\sigma_{12}}{\Lnorm{2}} \norm{v_1}{\Hnorm{2}} + \norm{\sigma_2}{\Hnorm{2}} \norm{\nabla v_{12}}{\Lnorm{2}}) \norm{v_2}{\Hnorm{2}} \norm{\sigma_1 v_{12}}{\Lnorm{2}}^{1/4}\\
		& ~~~~ ~~~~ \times \norm{\sigma_1}{\Lnorm{\infty}}^{3/4} \norm{v_{12}}{\Lnorm{6}}^{3/4} \lesssim \delta \norm{\nabla v_{12}}{\Lnorm{2}}^2 + C_\delta \norm{v_{1}}{\Hnorm{2}}^2 \norm{\sigma_{12}}{\Lnorm{2}}^2 \\
		& ~~~~ + C_\delta (( \norm{\sigma_2}{\Hnorm{2}}^8 +1)\norm{v_2}{\Hnorm{2}}^8 \norm{\sigma_1}{\Hnorm{2}}^{6} + 1) \norm{\sigma_1 v_{12}}{\Lnorm{2}}^2.
	\end{align*}
	We denote $ C $ to be a positive constant depending on $ \mu, \lambda, B_1, B_2, T^* $ which may be different from line to line. Then after summing the estimates above, from \eqref{cn-dpcy-vacuum-001} and \eqref{cn-dpcy-vacuum-002} we have
	\begin{align*}
		& \dfrac{d}{dt} \bigl( \norm{\sigma_{12}}{\Lnorm{2}}^2 + C_{\mu,\lambda}'\norm{\sigma_1 v_{12}}{\Lnorm{2}}^2 \bigr) + c_{\mu,\lambda} \norm{\nabla v_{12}}{\Lnorm{2}}^2\\
		& ~~~~ \leq C (\norm{\dt v_2}{\Hnorm{1}}^2 + \norm{v_2}{\Hnorm{3}}^2 + 1  )\bigl(\norm{\sigma_{12}}{\Lnorm{2}}^2 + C_{\mu,\lambda}'\norm{\sigma_1 v_{12}}{\Lnorm{2}}^2\bigr),
	\end{align*}
	for some positive constants $ C_{\mu,\lambda}', c_{\mu,\lambda} $.
	Then after applying the Gr\"onwall's inequality, one has
	\begin{align*}
		& \sup_{0\leq t\leq T^*} \bigl( \norm{\sigma_{12}(t)}{\Lnorm{2}}^2 + C_{\mu,\lambda}' \norm{(\sigma_1v_{12})(t)}{\Lnorm{2}}^2 \bigr) + \int_0^{T^*}\norm{\nabla v_{12}}{\Lnorm{2}}^2 \,dt \\
		& ~~~~ \leq C \bigl( \norm{\sigma_{12,0}}{\Lnorm{2}}^2 + C_{\mu,\lambda}' \norm{\sigma_{1,0}v_{12,0}}{\Lnorm{2}}^2 \bigr) \leq C \bigl( \norm{\sigma_{12,0}}{\Lnorm{2}}^2 + \norm{v_{12,0}}{\Lnorm{2}}^2 \bigr).
	\end{align*}
	Therefore, after employing \eqref{ineq:embedding-weighted} and noticing the fact that we can interchange $ (\sigma_1, v_1), (\sigma_2,v_2) $ in the previous arguments, we will have the following:
	\begin{proposition}\label{prop:uniqueness-vacuum}
		Given two sets of initial data $ (\rho_{i,0},v_{i,0}) =  (\sigma_{i,0}^2 ,v_{i,0}),  i = 1,2 $, for \eqref{isen-CPE} satisfying \eqref{bound-of-initial-energy}, \eqref{isen-cptbl-conds} and \eqref{bound-of-initial}, the corresponding strong solutions $ (\rho_i,v_i) = (\sigma_i^2, v_i), i = 1,2 $, constructed in Proposition \ref{prop:exist-isen-vacuum} in the interval $[0,T^*] $, for some $ T^* > 0 $, satisfy
	\begin{equation*}
	\begin{aligned}
		& \norm{\sigma_1 - \sigma_2}{L^\infty(0,T^*;L^2(\Omega))} + \norm{\sigma_1(v_1- v_2)}{L^\infty(0,T^*;L^2(\Omega))}\\
		& ~~~~ + \norm{\sigma_2(v_1-v_2)}{L^\infty(0,T^*;L^2(\Omega))} + \norm{v_1 - v_2}{L^2(0,T^*;L^2(\Omega))} \\
		& ~~~~ + \norm{\nabla (v_1 - v_2)}{L^2(0,T^*;L^2(\Omega))}\\
		& ~~ \leq C_{\mu, \lambda, B_{1}, B_{2}, T^*}\Big(\norm{\sigma_{1,0}-\sigma_{2,0}}{L^2(\Omega))} + \norm{v_{1,0}-v_{2,0}}{L^2(\Omega))}\Big).
	\end{aligned}
	\end{equation*}
	In particular, if $ \rho_{1,0} = \rho_{2,0}, v_{1,0} = v_{2,0} $, we have $ \rho_1 =\rho_2, v_1 = v_2 $ in $ [0,T^*] $.
	\end{proposition}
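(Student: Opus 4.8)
The plan is to assemble into the stated inequality the a priori estimates established in the preceding paragraphs. First I would record that, by the justification at the beginning of Section~\ref{sec:continuous-dependence}, both strong solutions $(\sigma_i,v_i)$, $i=1,2$, satisfy the regularity \eqref{space-isen-vacuum} and the bounds \eqref{bound-isen-vacuum}, hence they solve the reformulated system \eqref{rf:isen-CPE}; consequently the difference $(\sigma_{12},v_{12})$ solves \eqref{eq:differences-vacuum}, where the pressure difference has been written with the factor $\sigma_{12}$ extracted via $\nablah(\sigma_1^{2\gamma}-\sigma_2^{2\gamma})=\nablah\bigl(\sigma_{12}\,\tfrac{\sigma_1^{2\gamma}-\sigma_2^{2\gamma}}{\sigma_1-\sigma_2}\bigr)$. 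The structural point is that the momentum estimate must be weighted by the density $\sigma_1^2$ of the first solution: I test \subeqref{eq:differences-vacuum}{2} against $2v_{12}$ and track $\tfrac{d}{dt}\norm{\sigma_1 v_{12}}{\Lnorm 2}^2$ together with the viscous dissipation, rather than $\tfrac{d}{dt}\norm{v_{12}}{\Lnorm 2}^2$, since in the presence of vacuum the unweighted $L^2$ norm of $v_{12}$ is not directly controllable. Every time a norm $\norm{v_{12}}{\Lnorm q}$ with $q\in[2,6]$ appears it is replaced, via \eqref{ineq:embedding-weighted}, by $\norm{\nabla v_{12}}{\Lnorm 2}+\norm{\sigma_1 v_{12}}{\Lnorm 2}$, with a constant depending only on the conserved total mass \eqref{conservation-mass} and physical energy \eqref{conservation-energy}.

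Next I would carry out the term-by-term bounds $G_1,\dots,G_7$ appearing in \eqref{cn-dpcy-vacuum-001}--\eqref{cn-dpcy-vacuum-002}. The routine terms are disposed of by H\"older's inequality and the Sobolev embeddings \eqref{ineq-supnorm}, combined with the uniform bounds \eqref{bound-isen-vacuum}. The only genuinely delicate contributions are the ones carrying the vertical velocity, $G_6$ and $G_7$: for these I would substitute the nonlocal relation \subeqref{eq:differences-vacuum}{3}, i.e. $\sigma_i w_i=-\int_0^z(\sigma_i\widetilde{\dvh v_i}+2\widetilde{v_i}\cdot\nablah\sigma_i)\,dz'$, integrate by parts in the horizontal variable so as to move a derivative off $v_{12}$ where needed, and apply Minkowski's inequality to pull the $z$-integral outside the horizontal $L^p$ norms. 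The bookkeeping must be arranged so that in every product the ``difference'' factor occurs only as $\sigma_{12}$ (controlled in $L^2$), as $\sigma_1 v_{12}$ (the weighted norm), or as $\nabla v_{12}$ (to be absorbed into the viscous dissipation after choosing the auxiliary parameter $\delta$ small). Summing up, one is led to
\begin{align*}
	& \dfrac{d}{dt}\bigl(\norm{\sigma_{12}}{\Lnorm 2}^2 + C_{\mu,\lambda}'\norm{\sigma_1 v_{12}}{\Lnorm 2}^2\bigr) + c_{\mu,\lambda}\norm{\nabla v_{12}}{\Lnorm 2}^2 \\
	& \qquad \leq C\bigl(\norm{\dt v_2}{\Hnorm 1}^2 + \norm{v_2}{\Hnorm 3}^2 + 1\bigr)\bigl(\norm{\sigma_{12}}{\Lnorm 2}^2 + C_{\mu,\lambda}'\norm{\sigma_1 v_{12}}{\Lnorm 2}^2\bigr),
\end{align*}
for suitable positive constants $C_{\mu,\lambda}',c_{\mu,\lambda}$.

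Then I would apply Gr\"onwall's inequality on $[0,T^*]$: since $\int_0^{T^*}(\norm{\dt v_2}{\Hnorm 1}^2+\norm{v_2}{\Hnorm 3}^2)\,dt<\infty$ by \eqref{bound-isen-vacuum} and $\norm{\sigma_{1,0}v_{12,0}}{\Lnorm 2}\leq\norm{\sigma_{1,0}}{\Lnorm\infty}\norm{v_{12,0}}{\Lnorm 2}\lesssim\norm{v_{12,0}}{\Lnorm 2}$ by Sobolev embedding and \eqref{bound-of-initial}, this gives
\begin{align*}
	& \sup_{0\leq t\leq T^*}\bigl(\norm{\sigma_{12}(t)}{\Lnorm 2}^2+\norm{(\sigma_1 v_{12})(t)}{\Lnorm 2}^2\bigr)+\int_0^{T^*}\norm{\nabla v_{12}}{\Lnorm 2}^2\,dt \\
	& \qquad \leq C\bigl(\norm{\sigma_{12,0}}{\Lnorm 2}^2+\norm{v_{12,0}}{\Lnorm 2}^2\bigr).
\end{align*}
Finally, \eqref{ineq:embedding-weighted} turns the control of $\norm{\nabla v_{12}}{\Lnorm 2}+\norm{\sigma_1 v_{12}}{\Lnorm 2}$ into control of $\norm{v_1-v_2}{L^2(0,T^*;L^2)}$, and running the same argument with the two solutions interchanged (now weighting the momentum estimate by $\sigma_2^2$) produces the remaining $\norm{\sigma_2(v_1-v_2)}{L^\infty(0,T^*;L^2)}$ term. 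Taking $\sigma_{12,0}=0$ and $v_{12,0}=0$ then forces $\sigma_{12}\equiv 0$ and $\sigma_1 v_{12}\equiv 0$, and by symmetry $\sigma_2 v_{12}\equiv 0$ and $v_{12}\equiv 0$ in $L^2(0,T^*;L^2)$, which yields the asserted uniqueness. I expect the delicate control of $G_6$ and $G_7$ --- closing the nonlocal vertical-velocity contributions against only $\norm{\sigma_{12}}{\Lnorm 2}$, $\norm{\sigma_1 v_{12}}{\Lnorm 2}$ and the dissipation, without ever needing an unweighted $L^2$ bound on $v_{12}$ --- to be the principal obstacle.
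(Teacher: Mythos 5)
Your plan coincides with the paper's own proof: after passing to the reformulated system \eqref{rf:isen-CPE}, the paper performs exactly the $\sigma_1^2$-weighted $L^2$ estimate on the difference system \eqref{eq:differences-vacuum}, handles the vertical-velocity terms $G_6,G_7$ by substituting \subeqref{eq:differences-vacuum}{3}, integrating by parts and using Minkowski together with \eqref{ineq:embedding-weighted}, arrives at the same differential inequality, and closes with Gr\"onwall plus the symmetry (interchange of the two solutions) to recover the $\sigma_2(v_1-v_2)$ and $v_1-v_2$ norms. Your identification of the delicate points (avoiding any unweighted $L^2$ bound on $v_{12}$, closing the nonlocal terms against $\norm{\sigma_{12}}{\Lnorm 2}$, $\norm{\sigma_1 v_{12}}{\Lnorm 2}$ and the dissipation) is exactly where the paper's computation concentrates, so the proposal is correct and essentially identical in approach.
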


\subsection{Proofs of the main theorems}
Theorem \ref{thm:gravity} follows from Proposition \ref{prop:exist-isen-g} and Proposition \ref{prop:uniqueness-g}. Theorem \ref{thm:vacuum} follows from Proposition \ref{prop:exist-isen-vacuum} and Proposition \ref{prop:uniqueness-vacuum}.

\section{A formulation of the free boundary problem for the atmospheric  dynamics}\label{sec:fm-fb}

In this section, we formulate the free boundary problem of \eqref{isen-CPE-g}. For simplicity, only the inviscid case is considered here. We write down the following form of inviscid compressible primitive equations: 
\begin{equation*}\tag{FBCPE}\label{isen-CPE-fb}
	\begin{cases}
		\dt \rho + \dvh (\rho v) + \dz (\rho w) = 0 & \text{in} ~ \Omega(t), \\
	\dt (\rho v) + \dvh (\rho v \otimes v) + \dz (\rho w v) + \nablah P = 0
	& \text{in} ~ \Omega(t),\\
	\dz P + \rho g = 0 & \text{in} ~ \Omega(t),
	\end{cases}
\end{equation*}
where 
 $ P = \rho^\gamma, \gamma > 1 $. Here $ \Omega(t) = \mathbb T^2 \times (0, Z(\vech{x},t)) $ is evolving domain, with $ z = 0 $ being the ground and $ z = Z(\vech{x},t) $ being the interface between the atmosphere and the vacuum universe. We impose the following boundary conditions
\begin{equation}\label{bc-fb}
	\begin{gathered}
	- \dt Z - v|_{z = Z(\vech{x},t)} \cdot \nablah Z + w|_{z = Z(\vech{x},t)}=0 ,\\
	 P|_{z = Z(\vech{x},t)} = 0,
	w|_{z = 0} = 0.
	\end{gathered}
\end{equation}
We remark here that the first boundary condition \subeqref{bc-fb}{1} represents the kinematic boundary condition on the moving boundary, while \subeqref{bc-fb}{2} represent that the pressure in continuous on the interface between the atmosphere and the vacuum universe, where the later vanishes; and the normal velocity vanishes on the ground (i.e., no penetration).

Similarly as before, the hydrostatic balance equation \subeqref{isen-CPE-fb}{3} , thanks to \subeqref{bc-fb}{2}, yields
\begin{align*}
	\rho^{\gamma-1}(\vech{x},z,t) & = \dfrac{\gamma-1}{\gamma} g(- z + Z(\vech{x},t)) + \rho^{\gamma-1}(\vech{x}, Z(\vech{x},t),t)\\
	& = \dfrac{\gamma-1}{\gamma} g(- z + Z(\vech{x},t)), ~~ \text{in} ~ \Omega(t).
\end{align*}
On the other hand, from \subeqref{isen-CPE-fb}{1}, we have
\begin{equation}\label{CPE-fb-001}
	\begin{aligned}
		& \dt \rho^{\gamma-1} + v \cdot \nablah \rho^{\gamma-1} + w \dz  \rho^{\gamma-1} \\
		& ~~~~ + (\gamma-1) \rho^{\gamma-1}(\dvh v + \dz w) = 0, ~~ \text{in} ~ \Omega(t).
	\end{aligned}
\end{equation}
Now we define the new coordinates as
\begin{equation}\label{new-coordinates}
	\begin{gathered}
		 \vech{x}' = (x',y') := \vech{x} = (x,y), ~ t' = t, \\
		 \eta := \dfrac{\rho^{\gamma-1}}{\rho^{\gamma-1}|_{z=0}} =  \dfrac{-z + Z(\vech{x},t)}{Z(\vech{x},t)}= 1 - \dfrac{z}{Z(\vech{x},t)} .
	 \end{gathered}
\end{equation}
In these new coordinates, $ \eta = 0 , 1 $ correspond to the upper atmosphere and the ground, respectively.
It is easy to verify $ \nabla_{\vech{x}'} Z = \nabla_{\vech{x}} Z, \dt Z = \partial_{t'} Z $.
Also we have the following change of variables for the differential operators
\begin{align*}
	\partial_{x} & = \partial_{x'} + \biggl( \dfrac{z}{Z^2} \partial_{x'} Z \biggr)  \partial_{\eta} = \partial_{x'} - \biggl( \dfrac{\eta-1}{Z} \partial_{x'} Z \biggr) \partial_{\eta}, \\
	\partial_{y} & = \partial_{y'} + \biggl( \dfrac{z}{Z^2} \partial_{y'} Z \biggr)  \partial_{\eta} = \partial_{y'} - \biggl( \dfrac{\eta-1}{Z} \partial_{y'} Z \biggr) \partial_{\eta}, \\
	\partial_z & = - \dfrac{1}{Z} \partial_\eta ,\\
	\partial_t & = \partial_{t'} + \biggl( \dfrac{z}{Z^2} \partial_{t'} Z \biggr)  \partial_{\eta} = \partial_{t'} - \biggl( \dfrac{\eta-1}{Z} \partial_{t'} Z \biggr) \partial_{\eta}, \\
	(\vech{x}', \eta) & \in \Omega_h \times(0,1) =: \Omega'.
\end{align*}
Now we rewrite equations \eqref{isen-CPE-fb}, \eqref{CPE-fb-001} in the new coordinates $ (\vech{x}',\eta,t') $:
\begin{equation*}
	\begin{cases}
		\partial_{t'} Z + v \cdot \nabla_{\vech{x}'} Z - w + (\gamma-1) \eta \bigl( Z \dv_{\vech{x}'} v \\
		~~~~ ~~~~ ~~~~ - (\eta-1) \nabla_{\vech{x}'} Z \cdot \partial_\eta v - \partial_{\eta} w\bigr) = 0 & \text{in} ~ \Omega',\\
		\rho \bigl(\partial_{t'} v + v \cdot \nabla_{\vech{x}'} v - \dfrac{1}{Z}( (\eta-1) \partial_{t'}Z + (\eta-1) v \cdot \nabla_{\vech{x}'} Z\\
		~~~~ ~~~~ ~~~~ + w )\partial_{\eta} v  + g \nabla_{\vech{x}'} Z  \bigr) = 0 
		& \text{in} ~ \Omega',\\
		\partial_\eta Z = 0 & \text{in} ~ \Omega',
	\end{cases}
\end{equation*}
where \begin{equation}\label{def:density} \rho = \biggl( \dfrac{\gamma-1}{\gamma} g \eta Z \biggr)^{1/(\gamma-1)}.\end{equation}
Moreover, we define the new unknown
\begin{equation}\label{def:new-vertical-v}
	W := - \dfrac{(\eta-1) \partial_{t'}Z + (\eta-1) v \cdot \nabla_{\vech{x}'} Z + w }{Z}.
\end{equation}
Then after dropping the prime sign, we end up with the following equations
\begin{equation}\label{rfeq:isen-CPE-fb}
	\begin{cases}
		\gamma \eta(\dt Z + v \cdot \nablah Z) + ( \gamma-1) \eta Z ( \dvh v + \partial_\eta W) + Z W = 0 & \text{in} ~ \Omega,\\
		\rho (\dt v + v \cdot \nablah v + W \partial_\eta v + g \nablah Z) = 0 
		& \text{in} ~ \Omega,\\
		\partial_\eta Z = 0 & \text{in} ~ \Omega,\\
		\rho = \biggl( \dfrac{\gamma-1}{\gamma} g \eta Z \biggr)^{1/(\gamma-1)} & \text{in} ~ \Omega,
	\end{cases}
\end{equation}
with $ \Omega = \Omega_h \times (0,1) $. We now find the corresponding form of the boundary conditions in \eqref{bc-fb}. Indeed, we have, recalling the definitions in \eqref{new-coordinates} and \eqref{def:new-vertical-v},
\begin{equation}\label{new-bc-01}
	W|_{\eta = 0 , 1}=0.
\end{equation}
Notice, the free boundary problem \eqref{isen-CPE-fb} now has been formulated as a fixed boundary problem \eqref{isen-CPE-fb} with the new unknowns $ (Z, v, W) $. We recall that $ Z $ is the graph of the free boundary and is also in proportion to the square of sound speed on the ground (recall that the square of sound speed on the ground is $ \partial_\rho P|_{\eta=1} = \gamma \rho^{\gamma-1}|_{\eta=1} = (\gamma-1) g Z $),
 from which the pressure on the ground can be recovered as $$ P_s = \bigl( \dfrac{\gamma-1}{\gamma} g Z\bigr)^{\frac{\gamma}{\gamma-1}}. $$ Observe that $ W $ serves as if it is the new `vertical velocity' in this formulation.

In the following, we shall perform some formal analysis on the reformulated equations \eqref{rfeq:isen-CPE-fb}. Multiply \subeqref{rfeq:isen-CPE-fb}{1} with $  \eta^{1/(\gamma-1) - 1} $ to obtain
\begin{equation*}
	\gamma \eta^{\frac{1}{\gamma-1}}(\dt Z + v \cdot \nablah Z) + (\gamma-1) \eta^{\frac{1}{\gamma-1}} \dvh v Z + (\gamma-1) Z \partial_\eta(\eta^{\frac{1}{\gamma-1}} W) = 0.
\end{equation*}
Noticing that $ \partial_\eta Z = 0 $, integrating the above equation along the vertical variable $ \eta $ yields
\begin{equation}\label{new-interface}
	(\gamma-1) \dt Z + \gamma \overline{\eta^{\frac{1}{\gamma-1}} v} \cdot \nablah Z + (\gamma-1) \overline{ \eta^{\frac{1}{\gamma-1}} \dvh v} Z =0,
\end{equation}
where we use, again, the notations
\begin{equation*}
	\overline{f} := \int_0^1 f \,d\eta.
\end{equation*}
Then after eliminating $ \dt Z $ from the above two equations, we have
\begin{align*}
	& (\gamma-1) Z \partial_\eta (\eta^{\frac{1}{\gamma-1}} W) = \dfrac{\gamma}{\gamma-1} \eta^{\frac{1}{\gamma-1}} \bigl( \gamma \overline{\eta^{\frac{1}{\gamma-1}} v} \cdot\nablah Z + ( \gamma-1) \overline{\eta^{\frac{1}{\gamma-1}} \dvh v} Z \bigr) \\
	& ~~~~ - \gamma \eta^{\frac{1}{\gamma-1}} v \cdot\nablah Z - (\gamma-1) \eta^{\frac{1}{\gamma-1}} \dvh v Z.
\end{align*}
Therefore, we have the following representation of $ W $ after using the fundamental theorem of calculus,
\begin{equation}\label{new-verticle-v}
	\begin{aligned}
		& (\gamma-1) \eta^{\frac{1}{\gamma-1}} Z W= \eta^{\frac{\gamma}{\gamma-1}}\bigl( \gamma \overline{\eta^{\frac{1}{\gamma-1}} v} \cdot\nablah Z + ( \gamma-1) \overline{\eta^{\frac{1}{\gamma-1}} \dvh v} Z \bigr) \\
		& ~~ - \gamma \int_0^\eta \zeta^{\frac{1}{\gamma-1}}v(\vech{x},\zeta,t) \,d\zeta \cdot\nablah Z - (\gamma-1) Z \int_0^\eta \zeta^{\frac{1}{\gamma-1}}\dvh v(\vech{x},\zeta,t) \,d\zeta.
	\end{aligned}
\end{equation}
This recovers the `vertical velocity'. We leave the study of \eqref{rfeq:isen-CPE-fb} for future work. We make a final comment on the deviation of \eqref{rfeq:isen-CPE-fb}. In \cite[Chapter 3]{Washington2005}, the $ \sigma $-coordinate system is introduced to reformulate \eqref{isen-CPE-fb}. Our choice of coordinates \eqref{new-coordinates} shares the same philosophy as the $ \sigma $-coordinates.

\subsection*{Acknowledgement}

The authors would like to thank the \'{E}cole Polytechnique for its kind hospitality, where this work was completed, and the \'{E}cole Polytechnique Foundation for its partial financial support through the 2017-2018 ``Gaspard Monge Visiting Professor" Program. This work is supported in part by the NSF grant number DMS-1516866 and by the ONR grant N00014-15-1-2333. The work of E.S.T. was also supported in part by the Einstein Stiftung/Foundation - Berlin, through the Einstein Visiting Fellow Program.

\bibliographystyle{plain}
\begin{bibdiv}
	\begin{biblist}
		
		\bib{Brenier1999}{article}{
			author={Brenier, Yann},
			title={{Homogeneous hydrostatic flows with convex velocity profiles}},
			date={1999may},
			ISSN={0951-7715},
			journal={Nonlinearity},
			volume={12},
			number={3},
			pages={495\ndash 512},
			url={http://stacks.iop.org/0951-7715/12/i=3/a=004?key=crossref.79e7550355fae67f785509feac154a46},
		}
		
		\bib{Cao2015}{article}{
			author={Cao, Chongsheng},
			author={Ibrahim, Slim},
			author={Nakanishi, Kenji},
			author={Titi, Edriss~S.},
			title={{Finite-time blowup for the inviscid primitive equations of
					oceanic and atmospheric dynamics}},
			date={2015jul},
			ISSN={0010-3616},
			journal={Commun. Math. Phys.},
			volume={337},
			number={2},
			pages={473\ndash 482},
			eprint={arXiv:1210.7337v1},
			url={http://link.springer.com/10.1007/s00220-015-2365-1},
		}
		
		\bib{Cao2014}{article}{
			author={Cao, Chongsheng},
			author={Li, Jinkai},
			author={Titi, Edriss~S.},
			title={{Global well-posedness of strong solutions to the 3D primitive
					equations with horizontal eddy diffusivity}},
			date={2014jan},
			pages={1\ndash 23},
			eprint={arXiv:1401.1234},
			url={http://arxiv.org/abs/1401.1234v1 http://arxiv.org/abs/1401.1234},
		}
		
		\bib{Cao2014b}{article}{
			author={Cao, Chongsheng},
			author={Li, Jinkai},
			author={Titi, Edriss~S.},
			title={{Local and global well-posedness of strong solutions to the 3D
					primitive equations with vertical eddy diffusivity}},
			date={2014oct},
			ISSN={0003-9527},
			journal={Arch. Ration. Mech. Anal.},
			volume={214},
			number={1},
			pages={35\ndash 76},
			url={http://link.springer.com/10.1007/s00205-014-0752-y},
		}
		
		\bib{Cao2016}{article}{
			author={Cao, Chongsheng},
			author={Li, Jinkai},
			author={Titi, Edriss~S.},
			title={{Global well-posedness of the three-dimensional primitive
					equations with only horizontal viscosity and diffusion}},
			date={2016aug},
			ISSN={00103640},
			journal={Commun. Pure Appl. Math.},
			volume={69},
			number={8},
			pages={1492\ndash 1531},
			url={http://doi.wiley.com/10.1002/cpa.21576},
		}
		
		\bib{Cao2016a}{article}{
			author={Cao, Chongsheng},
			author={Li, Jinkai},
			author={Titi, Edriss~S.},
			title={{Strong solutions to the 3D primitive equations with only
					horizontal dissipation: near $H^1$ initial data}},
			date={2016jul},
			eprint={arXiv:1607.06252},
			url={http://arxiv.org/abs/1607.06252},
		}
		
		\bib{Cao2017}{article}{
			author={Cao, Chongsheng},
			author={Li, Jinkai},
			author={Titi, Edriss~S.},
			title={{Global well-posedness of the 3D primitive equations with
					horizontal viscosity and vertical diffusivity}},
			date={2017mar},
			eprint={arXiv:1703.02512},
			url={http://arxiv.org/abs/1703.02512},
		}
		
		\bib{Cao2007}{article}{
			author={Cao, Chongsheng},
			author={Titi, Edriss~S.},
			title={{Global well-posedness of the three-dimensional viscous primitive
					equations of large scale ocean and atmosphere dynamics}},
			date={2007jul},
			ISSN={0003-486X},
			journal={Ann. Math.},
			volume={166},
			number={1},
			pages={245\ndash 267},
			eprint={arXiv:0503028},
			url={http://annals.math.princeton.edu/2007/166-1/p07},
		}
		
		\bib{Cao2003}{article}{
			author={Cao, Chongsheng},
			author={Titi, Edriss~S.},
			title={{Global well-posedness and finite-dimensional global attractor
					for a 3-D planetary geostrophic viscous model}},
			date={2003feb},
			ISSN={0010-3640},
			journal={Commun. Pure Appl. Math.},
			volume={56},
			number={2},
			pages={198\ndash 233},
			url={http://dx.doi.org/10.1002/cpa.10056
				http://doi.wiley.com/10.1002/cpa.10056},
		}
		
		\bib{Cao2012}{article}{
			author={Cao, Chongsheng},
			author={Titi, Edriss~S.},
			title={{Global well-posedness of the 3D primitive equations with partial
					vertical turbulence mixing heat diffusion}},
			date={2012mar},
			ISSN={0010-3616},
			journal={Commun. Math. Phys.},
			volume={310},
			number={2},
			pages={537\ndash 568},
			eprint={arXiv:1010.5286v1},
			url={http://link.springer.com/10.1007/s00220-011-1409-4},
		}
		
		\bib{Chen2012}{article}{
		  title={Two nonlinear compactness theorems in $ L^p (0, T; B) $ },
		  author={Chen, Xiuqing} author={Liu, Jian-Guo},
		  journal={Applied Mathematics Letters},
		  volume={25},
		  number={12},
		  pages={2252--2257},
		  year={2012},
		  publisher={Elsevier}
		}
		
		\bib{Cho2004}{article}{
			author={Cho, Yonggeun},
			author={Choe, Hi~Jun},
			author={Kim, Hyunseok},
			title={{Unique solvability of the initial boundary value problems for
					compressible viscous fluids}},
			date={2004feb},
			ISSN={00217824},
			journal={J. Math. Pures Appl.},
			volume={83},
			number={2},
			pages={243\ndash 275},
			url={http://linkinghub.elsevier.com/retrieve/pii/S0021782403001156},
		}
		
		\bib{Cho2006a}{article}{
			author={Cho, Yonggeun},
			author={Kim, Hyunseok},
			title={{Existence results for viscous polytropic fluids with vacuum}},
			date={2006sep},
			ISSN={00220396},
			journal={J. Differ. Equ.},
			volume={228},
			number={2},
			pages={377\ndash 411},
			url={http://linkinghub.elsevier.com/retrieve/pii/S0022039606002014},
		}
		
		\bib{Cho2006c}{article}{
			author={Cho, Yonggeun},
			author={Kim, Hyunseok},
			title={{On classical solutions of the compressible Navier-Stokes
					equations with nonnegative initial densities}},
			date={2006may},
			ISSN={0025-2611},
			journal={manuscripta Math.},
			volume={120},
			number={1},
			pages={91\ndash 129},
			url={http://link.springer.com/10.1007/s00229-006-0637-y},
		}
		
		\bib{Coutand2010}{article}{
			author={Coutand, Daniel},
			author={Lindblad, Hans},
			author={Shkoller, Steve},
			title={{A priori estimates for the free-boundary 3D compressible Euler
					equations in physical vacuum}},
			date={2010jun},
			ISSN={0010-3616},
			journal={Commun. Math. Phys.},
			volume={296},
			number={2},
			pages={559\ndash 587},
			url={http://link.springer.com/10.1007/s00220-010-1028-5},
		}
		
		\bib{Coutand2011a}{article}{
			author={Coutand, Daniel},
			author={Shkoller, Steve},
			title={{Well-posedness in smooth function spaces for moving-boundary 1-D
					compressible Euler equations in physical vacuum}},
			date={2011},
			journal={Commun. pure Appl. Math.},
			volume={LXIV},
			pages={0328\ndash 0366},
		}
		
		\bib{Coutand2012}{article}{
			author={Coutand, Daniel},
			author={Shkoller, Steve},
			title={{Well-posedness in smooth function spaces for the moving-boundary
					three-dimensional compressible Euler equations in physical vacuum}},
			date={2012nov},
			ISSN={0003-9527},
			journal={Arch. Ration. Mech. Anal.},
			volume={206},
			number={2},
			pages={515\ndash 616},
			url={http://link.springer.com/10.1007/s00205-012-0536-1},
		}
		
		\bib{Ersoy2012}{article}{
			author={Ersoy, Mehmet},
			author={Ngom, Timack},
			title={{Existence of a global weak solution to compressible primitive
					equations}},
			date={2012apr},
			ISSN={1631073X},
			journal={Comptes Rendus Math.},
			volume={350},
			number={7-8},
			pages={379\ndash 382},
			url={http://dx.doi.org/10.1016/j.crma.2012.04.013
				http://linkinghub.elsevier.com/retrieve/pii/S1631073X1200115X},
		}
		
		\bib{Ersoy2011a}{article}{
			author={Ersoy, Mehmet},
			author={Ngom, Timack},
			author={Sy, Mamadou},
			title={{Compressible primitive equations: Formal derivation and
					stability of weak solutions}},
			date={2011},
			ISSN={09517715},
			journal={Nonlinearity},
			volume={24},
			number={1},
			pages={79\ndash 96},
		}
		
		\bib{Feireisl2004}{book}{
			author={Feireisl, Eduard},
			title={{Dynamics of Viscous Compressible Fluids}},
			series={Oxford Lecture Series in Mathematics and its Applications, 26},
			publisher={Oxford University Press},
			date={2004},
		}
		
		\bib{feireisl2011flows}{article}{
		  title={Flows of viscous compressible fluids under strong stratification: incompressible limits for long-range potential forces},
		  author={Feireisl, Eduard},
		  journal={Mathematical Models and Methods in Applied Sciences},
		  volume={21},
		  number={01},
		  pages={7--27},
		  year={2011},
		  publisher={World Scientific}
		}
		
		\bib{feireisl2016singular}{article}{
		  title={On singular limits arising in the scale analysis of stratified fluid flows},
		  author={Feireisl, Eduard}
		  author={Klein, Rupert}
		  author={Novotn{\`y}, Antonin}
		  author={Zatorska, Ewelina},
		  journal={Mathematical Models and Methods in Applied Sciences},
		  volume={26},
		  number={03},
		  pages={419--443},
		  year={2016},
		  publisher={World Scientific}
		}
		
		\bib{feireisl2009singular}{book}{
		  title={Singular Limits in Thermodynamics of Viscous Fluids},
		  author={Feireisl, Eduard}
		  author={Novotn{\`y}, Anton{\'\i}n}
		  year={2009},
		  publisher={Springer}
		}

		\bib{Gatapov2005}{article}{
			author={Gatapov, B.~V.},
			author={Kazhikhov, A.~V.},
			title={{Existence of a global solution to one model problem of
					atmosphere dynamics}},
			date={2005sep},
			ISSN={0037-4466},
			journal={Sib. Math. J.},
			volume={46},
			number={5},
			pages={805\ndash 812},
			url={http://link.springer.com/10.1007/s11202-005-0079-x},
		}
		
		\bib{Gerard-Varet2018}{article}{
			author={Gerard-Varet, David},
			author={Masmoudi, Nader},
			author={Vicol, Vlad},
			title={{Well-posedness of the hydrostatic Navier-Stokes equations}},
			date={2018apr},
			journal={J. Math. Fluid Mech.},
			volume={14},
			number={2},
			pages={355\ndash 361},
			eprint={arXiv:1804.04489},
			url={http://arxiv.org/abs/1804.04489},
		}
		\bib{guillen2001anisotropic}{article}{
		  title={Anisotropic estimates and strong solutions of the primitive equations},
		  author={Guill{\'e}n-Gonz{\'a}lez, Francisco Manuel}
		  author={Masmoudi, Nader}
		  author={Rodr{\'\i}guez Bellido, Mar{\'\i}a {\'A}ngeles},
		  journal={Differential and Integral Equations, 14 (11), 1381-1408},
		  year={2001},
		  publisher={Ohio University Press}
		}
		
		\bib{Hieber2016}{article}{
			author={Hieber, Matthias},
			author={Kashiwabara, Takahito},
			title={{Global strong well-posedness of the three dimensional primitive
					equations in ${L^p}$-spaces}},
			date={2016sep},
			ISSN={0003-9527},
			journal={Arch. Ration. Mech. Anal.},
			volume={221},
			number={3},
			pages={1077\ndash 1115},
			eprint={arXiv:1509.01151v1},
			url={http://link.springer.com/10.1007/s00205-016-0979-x},
		}
		
		\bib{Ignatova2012}{article}{
			author={Ignatova, Mihaela},
			author={Kukavica, Igor},
			author={Ziane, Mohammed},
			title={{Local existence of solutions to the free boundary value problem
					for the primitive equations of the ocean}},
			date={2012oct},
			ISSN={0022-2488},
			journal={J. Math. Phys.},
			volume={53},
			number={10},
			pages={103101},
			url={http://aip.scitation.org/doi/10.1063/1.4753991},
		}
		
		\bib{Jang2008a}{article}{
			author={Jang, Juhi},
			title={{Nonlinear instability in gravitational Euler-Poisson systems for
					$ \gamma = \dfrac 6 5 $}},
			date={2008},
			journal={Arch. Ration. Mech. Anal.},
			volume={188},
			pages={265\ndash 307},
		}
		
		\bib{Jang2010}{article}{
			author={Jang, Juhi},
			title={{Local well-posedness of dynamics of viscous gaseous stars}},
			date={2010mar},
			ISSN={0003-9527},
			journal={Arch. Ration. Mech. Anal.},
			volume={195},
			number={3},
			pages={797\ndash 863},
			url={http://link.springer.com/10.1007/s00205-009-0253-6},
		}
		
		\bib{Jang2014}{article}{
			author={Jang, Juhi},
			title={{Nonlinear instability theory of Lane-Emden stars}},
			date={2014sep},
			ISSN={00103640},
			journal={Commun. Pure Appl. Math.},
			volume={67},
			number={9},
			pages={1418\ndash 1465},
			url={http://onlinelibrary.wiley.com/doi/10.1002/cpa.21499/abstract
				http://doi.wiley.com/10.1002/cpa.21499},
		}
		
		\bib{Jang2009b}{article}{
			author={Jang, Juhi},
			author={Masmoudi, Nader},
			title={{Well-posedness for compressible Euler equations with physical
					vacuum singularity}},
			date={2009},
			journal={Commun. Pure Appl. Math.},
			volume={LXII},
			pages={1327\ndash 1385},
		}
		
		\bib{Jang2011}{incollection}{
			author={Jang, Juhi},
			author={Masmoudi, Nader},
			title={{Vacuum in gas and fluid dynamics}},
			date={2011},
			booktitle={Nonlinear Conservation Laws and Applications },
			editor={Bressan, Alberto},
			editor={Chen, Gui-Qiang~G.},
			editor={Lewicka, Marta},
			editor={Wang, Dehua},
			series={The IMA Volumes in Mathematics and its Applications},
			volume={153},
			publisher={Springer US},
			address={Boston, MA},
			pages={315\ndash 329},
			url={http://link.springer.com/10.1007/978-1-4419-9554-4},
		}
		
		\bib{Jang2015}{article}{
			author={Jang, Juhi},
			author={Masmoudi, Nader},
			title={{Well-posedness of compressible Euler equations in a physical
					vacuum}},
			date={2015},
			journal={Commun. Pure Appl. Math.},
			volume={LXVIII},
			pages={0061\ndash 0111},
		}
		
		\bib{Jang2013a}{article}{
			author={Jang, Juhi},
			author={Tice, Ian},
			title={{Instability theory of the Navier-Stokes-Poisson equations}},
			date={2013nov},
			ISSN={1948-206X},
			journal={Anal. PDE},
			volume={6},
			number={5},
			pages={1121\ndash 1181},
			url={http://msp.org/apde/2013/6-5/p06.xhtml},
		}
		
		\bib{Choe2003}{article}{
			author={{Jun Choe}, Hi},
			author={Kim, Hyunseok},
			title={{Strong solutions of the Navier-Stokes equations for isentropic
					compressible fluids}},
			date={2003may},
			ISSN={00220396},
			journal={J. Differ. Equ.},
			volume={190},
			number={2},
			pages={504\ndash 523},
			url={http://linkinghub.elsevier.com/retrieve/pii/S0022039603000159},
		}
		
		\bib{Kobelkov2006}{article}{
			author={Kobelkov, Georgij~M.},
			title={{Existence of a solution `in the large' for the 3D large-scale
					ocean dynamics equations}},
			date={2006aug},
			ISSN={1631073X},
			journal={Comptes Rendus Math.},
			volume={343},
			number={4},
			pages={283\ndash 286},
			url={http://linkinghub.elsevier.com/retrieve/pii/S1631073X06001853},
		}
		
		\bib{Kukavica2014}{article}{
			author={Kukavica, Igor},
			author={Pei, Yuan},
			author={Rusin, Walter},
			author={Ziane, Mohammed},
			title={{Primitive equations with continuous initial data}},
			date={2014jun},
			ISSN={0951-7715},
			journal={Nonlinearity},
			volume={27},
			number={6},
			pages={1135\ndash 1155},
			url={http://stacks.iop.org/0951-7715/27/i=6/a=1135?key=crossref.7faa52e63900f739c82d64054c610d26},
		}
		
		\bib{Kukavica2011}{article}{
			author={Kukavica, Igor},
			author={Temam, Roger},
			author={Vicol, Vlad~C.},
			author={Ziane, Mohammed},
			title={{Local existence and uniqueness for the hydrostatic Euler
					equations on a bounded domain}},
			date={2011feb},
			ISSN={00220396},
			journal={J. Differ. Equ.},
			volume={250},
			number={3},
			pages={1719\ndash 1746},
			url={http://dx.doi.org/10.1016/j.jde.2010.07.032
				http://linkinghub.elsevier.com/retrieve/pii/S0022039610002676},
		}
		
		\bib{Kukavica2007}{article}{
			author={Kukavica, Igor},
			author={Ziane, Mohammed},
			title={{On the regularity of the primitive equations of the ocean}},
			date={2007dec},
			ISSN={0951-7715},
			journal={Nonlinearity},
			volume={20},
			number={12},
			pages={2739\ndash 2753},
			url={http://stacks.iop.org/0951-7715/20/i=12/a=001?key=crossref.c133af23a15d36091473a7e709a2b456},
		}
		
		\bib{Kukavica2007a}{article}{
			author={Kukavica, Igor},
			author={Ziane, Mohammed},
			title={{The regularity of solutions of the primitive equations of the
					ocean in space dimension three}},
			date={2007sep},
			ISSN={1631073X},
			journal={Comptes Rendus Math.},
			volume={345},
			number={5},
			pages={257\ndash 260},
			url={http://linkinghub.elsevier.com/retrieve/pii/S1631073X07003330},
		}
		
		\bib{Li2017a}{article}{
			author={Li, Jinkai},
			author={Titi, Edriss~S},
			title={{Existence and uniqueness of weak solutions to viscous primitive
					equations for a certain class of discontinuous initial data}},
			date={2017jan},
			ISSN={0036-1410},
			journal={SIAM J. Math. Anal.},
			volume={49},
			number={1},
			pages={1\ndash 28},
			url={http://epubs.siam.org/doi/10.1137/15M1050513},
		}
		
		\bib{Lions2000}{article}{
			author={Lions, Jacques-Louis},
			author={Temam, Roger},
			author={Wang, Shouhong},
			title={{On mathematical problems for the primitive equations of the
					ocean: the mesoscale midlatitude case}},
			date={2000apr},
			ISSN={0362546X},
			journal={Nonlinear Anal. Theory, Methods Appl.},
			volume={40},
			number={1-8},
			pages={439\ndash 482},
			url={http://linkinghub.elsevier.com/retrieve/pii/S0362546X00850269},
		}
		
		\bib{JLLions1994}{article}{
			author={Lions, Jacques-Louis},
			author={Temam, R.},
			author={Wang, S.},
			title={{Geostrophic asymptotics of the primitive equations of the
					atmosphere}},
			date={1994},
			journal={Topol. Methods Nonlinear Anal.},
			volume={4},
			pages={253 \ndash  287},
		}
		
		\bib{JLLions1992}{article}{
			author={Lions, Jacques-Louis},
			author={Temam, R.},
			author={Wang, Shouhong},
			title={{On the equations of the large-scale ocean}},
			date={1992sep},
			ISSN={0951-7715},
			journal={Nonlinearity},
			volume={5},
			number={5},
			pages={1007\ndash 1053},
			url={http://stacks.iop.org/0951-7715/5/i=5/a=002?key=crossref.0f6759a2eb8a73f427308e78c49747aa},
		}
		
		\bib{Lions1992}{article}{
			author={Lions, Jacques-Louis},
			author={Temam, Roger},
			author={Wang, Shouhong},
			title={{New formulations of the primitive equations of atmosphere and
					applications}},
			date={1992mar},
			ISSN={0951-7715},
			journal={Nonlinearity},
			volume={5},
			number={2},
			pages={237\ndash 288},
			url={http://stacks.iop.org/0951-7715/5/i=2/a=001?key=crossref.b100f09ce9e432ac26419e003283bdae},
		}
		
		\bib{Lions1996}{book}{
			author={Lions, Pierre Louis},
			title={{Mathematical Topics in Fluid Mechanics. Volume 1. Incompressible
					Models}},
			series={Oxford Lecture Series in Mathematics and Its Applications, 3},
			publisher={Oxford University Press},
			date={1996},
			ISBN={9780198514879,0198514875},
		}
		
		\bib{Liu1996}{article}{
			author={Liu, Tai-Ping},
			title={{Compressible flow with damping and vacuum}},
			date={1996},
			journal={Japan J. Indust. Appl. Math.},
			volume={13},
			pages={25\ndash 32},
		}
		
		\bib{LuoXinZeng2014}{article}{
			author={Luo, Tao},
			author={Xin, Zhouping},
			author={Zeng, Huihui},
			title={{Well-posedness for the motion of physical vacuum of the
					three-dimensional compressible Euler equations with or without
					self-gravitation}},
			date={2014sep},
			ISSN={0003-9527},
			journal={Arch. Ration. Mech. Anal.},
			volume={213},
			number={3},
			pages={763\ndash 831},
			eprint={arXiv:1402.3640},
			url={http://arxiv.org/abs/1402.3640
				http://link.springer.com/10.1007/s00205-014-0742-0},
		}
		
		\bib{LuoXinZeng2015}{article}{
			author={Luo, Tao},
			author={Xin, Zhouping},
			author={Zeng, Huihui},
			title={{Nonlinear asymptotic stability of the Lane-Emden solutions for
					the viscous gaseous star problem with degenerate density dependent
					viscosities}},
			date={2016nov},
			ISSN={0010-3616},
			journal={Commun. Math. Phys.},
			volume={347},
			number={3},
			pages={657\ndash 702},
			eprint={arXiv:1507.01069},
			url={http://arxiv.org/abs/1507.01069
				http://link.springer.com/10.1007/s00220-016-2753-1},
		}
		
		\bib{LuoXinZeng2016}{article}{
			author={Luo, Tao},
			author={Xin, Zhouping},
			author={Zeng, Huihui},
			title={{On nonlinear asymptotic stability of the Lane-Emden solutions
					for the viscous gaseous star problem}},
			date={2016mar},
			ISSN={00018708},
			journal={Adv. Math. (N. Y).},
			volume={291},
			pages={90\ndash 182},
			eprint={arXiv:1506.03906},
			url={http://arxiv.org/abs/1506.03906
				http://linkinghub.elsevier.com/retrieve/pii/S0001870816000074},
		}
		
		\bib{majda2003introduction}{book}{
		  title={Introduction to PDEs and Waves for the Atmosphere and Ocean},
		  series={Courant Lecture Notes in Mathematics 9}
		  author={Majda, A.},
		publisher={American Mathematical Soc.}
		  year={2003}
		}
		
		\bib{masmoudi2007rigorous}{article}{
		  title={Rigorous derivation of the anelastic approximation},
		  author={Masmoudi, Nader},
		  journal={Journal de math{\'e}matiques pures et appliqu{\'e}es},
		  volume={88},
		  number={3},
		  pages={230--240},
		  year={2007},
		  publisher={Elsevier}
		}
		
		\bib{Wong2012}{article}{
			author={Masmoudi, Nader},
			author={Wong, Tak~Kwong},
			title={{On the $H^s$ theory of hydrostatic Euler equations}},
			date={2012apr},
			ISSN={0003-9527},
			journal={Arch. Ration. Mech. Anal.},
			volume={204},
			number={1},
			pages={231\ndash 271},
			url={http://www.kellogg.northwestern.edu/faculty/parker/htm/research/Parker
				Euler Equations palgrave.pdf
				http://link.springer.com/10.1007/s00205-011-0485-0},
		}
		
		\bib{Petcu2005}{article}{
			author={Petcu, M.},
			author={Wirosoetisno, D.},
			title={{Sobolev and Gevrey regularity results for the primitive
					equations in three space dimensions}},
			date={2005aug},
			ISSN={0003-6811},
			journal={Appl. Anal.},
			volume={84},
			number={8},
			pages={769\ndash 788},
			url={http://www.tandfonline.com/doi/abs/10.1080/00036810500130745},
		}
		
		\bib{rajagopal1996oberbeck}{article}{
		  title={On the Oberbeck-Boussinesq approximation},
		  author={Rajagopal, K.~R.}
		  author={Ruzicka, M.}
		  author={Srinivasa, A.~R.},
		  journal={Mathematical Models and Methods in Applied Sciences},
		  volume={6},
		  number={08},
		  pages={1157--1167},
		  year={1996},
		  publisher={World Scientific}
		}
		
		\bib{Richardson1965}{book}{
			author={Richardson, Lewis~F.},
			title={{Weather Prediction by Numerical Process}},
			date={1965},
		}
		
		\bib{Simon1986}{article}{
		  title={Compact sets in the space $ L^p(0, T; B) $},
		  author={Simon, Jacques},
		  journal={Annali di Matematica pura ed applicata},
		  volume={146},
		  number={1},
		  pages={65--96},
		  year={1986},
		  publisher={Springer}
		}
		
		\bib{Tang2015}{article}{
			author={Tang, Tong},
			author={Gao, Hongjun},
			title={{On the stability of weak solution for compressible primitive
					equations}},
			date={2015},
			ISSN={15729036},
			journal={Acta Appl. Math.},
			volume={140},
			number={1},
			pages={133\ndash 145},
			url={http://dx.doi.org/10.1007/s10440-014-9982-0},
		}
		
		\bib{taylorPDE01}{book}{
  		title={Partial Differential Equations I: Basic Theory},
 		author={Taylor, Michael E},
 		 year={2011},
 		 publisher={Springer}
		}
		
		\bib{temam1977}{book}{
		  title={Navier-Stokes Equations: Theory and Numerical Analysis},
		  author={Temam, R.}
		  isbn={9781483256856},
		  series={Studies in Mathematics and its Applications},
		  url={https://books.google.fr/books?id=sydfDAAAQBAJ},
		  year={2016},
		  publisher={Elsevier Science}
		}
		
		\bib{Washington2005}{book}{
			author={Washington, Warren~M.},
			author={Parkinson, Claire~L.},
			title={{An Introduction to Three-Dimensional Climate Modeling}},
			date={2005},
		}
		
		\bib{Wong2014}{article}{
			author={Wong, Tak~Kwong},
			title={{Blowup of solutions of the hydrostatic Euler equations}},
			date={2014nov},
			ISSN={0002-9939},
			journal={Proc. Am. Math. Soc.},
			volume={143},
			number={3},
			pages={1119\ndash 1125},
			url={http://www.ams.org/proc/2015-143-03/S0002-9939-2014-12243-X/},
		}
		
		\bib{Zelati2015}{article}{
			author={Zelati, Michele~Coti},
			author={Huang, Aimin},
			author={Kukavica, Igor},
			author={Temam, Roger},
			author={Ziane, Mohammed},
			title={{The primitive equations of the atmosphere in presence of vapour
					saturation}},
			date={2015mar},
			ISSN={0951-7715},
			journal={Nonlinearity},
			volume={28},
			number={3},
			pages={625\ndash 668},
			eprint={arXiv:1406.3165},
			url={http://stacks.iop.org/0951-7715/28/i=3/a=625?key=crossref.03d93a70a4bf343294ce7ca636585971},
		}
		
		\bib{HuTemamZiane2003}{article}{
			author={Ziane, Mohammed},
			author={Temam, Roger},
			author={Hu, Changbing},
			title={{The primitive equations on the large scale ocean under the small
					depth hypothesis}},
			date={2002nov},
			ISSN={1078-0947},
			journal={Discret. Contin. Dyn. Syst.},
			volume={9},
			number={1},
			pages={97\ndash 131},
			url={http://www.aimsciences.org/journals/displayArticles.jsp?paperID=258},
		}

	\end{biblist}
\end{bibdiv}

\end{document}